\documentclass[12pt,final]{amsart}

\usepackage{amssymb}
\usepackage{amsthm}
\usepackage{amsmath}
\usepackage{graphicx}
\usepackage{subcaption}
\usepackage{fullpage}
\usepackage{color}
\usepackage{enumitem}
\usepackage{tikz}
\usetikzlibrary{patterns}
 \numberwithin{equation}{section}

\usepackage{pgfplots}

\newenvironment{customthm}[1]
  {\innercustomthm}
  {\endinnercustomthm}
\usepackage[pdftex, hidelinks]{hyperref}
 \usepackage{mathtools}
 \usepackage{placeins}
 \usepackage[nocompress]{cite}

\theoremstyle{plain}
\newtheorem{thm}{Theorem}[section]
\newtheorem{cor}[thm]{Corollary}

\newtheorem{lem}[thm]{Lemma}
\newtheorem{prop}[thm]{Proposition}

\theoremstyle{definition}
\newtheorem{defn}[thm]{Definition}
\newtheorem{ex}[thm]{Example}

\theoremstyle{remark}

\newtheorem{rem}[thm]{Remark}

\newcommand{\N}{\mathbb{N}}
\newcommand{\R}{\mathbb{R}}


\newcommand{\DD}{\mathcal{D}}

\newcommand{\RR}{\mathcal{R}}


\newcommand{\bp}{\begin{proof}[\ensuremath{\mathbf{Proof}}]}
\newcommand{\bs}{\begin{proof}[\ensuremath{\mathbf{Solution}}]}
\newcommand{\ep}{\end{proof}}
\newcommand{\Cee}{{\mathcal{C}}}
\newcommand{\Id}{\mathrm{Id}}
\usepackage{changepage}

\newcommand{\limplus}{{\mathchoice{\vcenter{\hbox{$\scriptstyle +$}}}
  {\vcenter{\hbox{$\scriptstyle +$}}}
  {\vcenter{\hbox{$\scriptscriptstyle +$}}}
  {\vcenter{\hbox{$\scriptscriptstyle +$}}}
}}
\newcommand{\limminus}{{\mathchoice{\vcenter{\hbox{$\scriptstyle -$}}}
  {\vcenter{\hbox{$\scriptstyle -$}}}
  {\vcenter{\hbox{$\scriptscriptstyle -$}}}
  {\vcenter{\hbox{$\scriptscriptstyle -$}}}
}}

\hypersetup{final}

\begin{document}

\title{On a Hardy--Morrey inequality}

\author{Ryan Hynd}
\address{Department of Mathematics, University of Pennsylvania, Philadelphia, PA 19104-6395, USA.}
\email{\href{mailto:rhynd@math.upenn.edu}{rhynd@math.upenn.edu}}

\author{Simon Larson}
\address{Department of Mathematical Sciences, Chalmers University of Technology and the University of Gothenburg, SE-412 96 G\"{o}teborg, Sweden. }
\email{\href{mailto:larsons@chalmers.se}{larsons@chalmers.se}}

\author{Erik Lindgren}
\address{Department of Mathematics, KTH - Royal Institute of Technology, SE-100 44 Stockholm, Sweden. }
\email{\href{mailto:eriklin@kth.se}{eriklin@kth.se}}

\begin{abstract} Morrey's classical inequality implies the H\"older continuity of a function whose gradient is sufficiently integrable. Another consequence is the Hardy-type inequality
$$
\lambda\biggl\|\frac{u}{d_\Omega^{1-n/p}}\biggr\|_{\infty}^p\le \int_\Omega |Du|^p \,dx 
$$
for any open set $\Omega\subsetneq \R^n$. This inequality is valid for functions supported in $\Omega$ and with $\lambda$ a positive constant independent of $u$. The crucial hypothesis is that the exponent $p$ exceeds the dimension $n$. This paper aims to develop a basic theory for this inequality and the associated variational problem. In particular, we study the relationship between the geometry of $\Omega$, sharp constants, and the existence of a nontrivial $u$ which saturates the inequality.
\end{abstract}

\maketitle 

\setcounter{tocdepth}{1}

 \tableofcontents

\section{Introduction and main results}
The topic of this paper concerns a geometric Hardy inequality in the setting of a Sobolev space for which
the associated exponent $p$ is larger than the dimension of the ambient space. Specifically, the inequality states that if $\Omega$ is a proper open subset of $\R^n$ and $p>n$, there exists a constant $\lambda>0$ such that 
\begin{equation}\label{eq: inequality intro}
\lambda\biggl\|\frac{u}{d_\Omega^{1-n/p}}\biggr\|_{\infty}^p\le \int_\Omega |Du|^p \,dx 
\end{equation}
for all $u \in C_c^\infty(\Omega)$. Here and in what follows $d_\Omega(x)$ denotes the distance from $x$ to the complement of $\Omega$. The inequality extends to $u$ being an element of the Sobolev space $\DD^{1,p}_0(\Omega)$ as discussed in Section~\ref{sec:Preliminaries} below. 

\par The one-dimensional case of this inequality previously appeared in~\cite{OK90} and the general case in \cite{psaradakis2013hardy}. This inequality has also recently been considered in~\cite{BrascoPrinariZagati_23} where it occurs as an endpoint case of a family of inequalities interpolating between Sobolev, Morrey, and Hardy inequalities. Nevertheless, we will explain below that the existence of a constant $\lambda$ such that~\eqref{eq: inequality intro} holds is a direct consequence of Morrey's classical inequality (see also \cite{psaradakis2013hardy}). As a result, it is natural to refer to~\eqref{eq: inequality intro} as a {\it Hardy--Morrey} inequality. We also acknowledge that this terminology has been used to describe related inequalities in~\cite{MR2984139,psaradakis2013hardy,MR3527624}.

\par In this note, we turn our attention to the variational problem associated to~\eqref{eq: inequality intro}. Namely, we define
\begin{equation*}
  \RR_p(\Omega, u) = \biggl\|\frac{u}{d_\Omega^{1-n/p}}\biggr\|^{-p}_{\infty}\|Du\|^p_{p} \quad \mbox{for }u \in \DD_0^{1,p}(\Omega)
\end{equation*}
and observe that the sharp constant $\lambda$ in~\eqref{eq: inequality intro} can be characterized as
\begin{equation}\label{eq: variational prob intro}
    \lambda_p(\Omega) = \inf_{u \in \DD_0^{1,p}(\Omega)\setminus\{0\}} \RR_p(\Omega, u)\,.
\end{equation}
Here $\|\cdot\|_p$ denotes the $L^p(\Omega)$ norm. Whenever the infimum~\eqref{eq: variational prob intro} is attained by a nontrivial $u \in \DD_0^{1,p}(\Omega)$, we say that $u$ is an {\it extremal} and that $\Omega$ has an extremal.

 The questions addressed in this paper are:
\begin{enumerate}
  \item How does $\lambda_p(\Omega)$ depend on the geometry of $\Omega$?
  \item When does $\Omega$ have an extremal? 
\end{enumerate}
As it turns out, both of these questions are subtle. To some extent, we shall see that this subtlety can be traced back to the fact that $\lambda_p(\Omega)$ is invariant under orthogonal transformations, translations, and dilations of $\Omega$.

\subsection{Main results}
Our first result provides sharp upper and lower bounds for $\lambda_p(\Omega)$. The upper bound involves the halfspace $\R^n_\limplus= \{x\in \R^n: x_n>0\}$. 
\begin{customthm}{A}\label{thm: universal sharp bounds}
Assume $p>n\geq 1$. If $\Omega \subsetneq \R^n$ is open, then
  \begin{equation*}
     C_{n,p}^{-p}=\lambda_p(\R^n \setminus\{0\})\leq \lambda_p(\Omega) \leq \lambda_p(\R^n_\limplus)=2^{n-1}C_{n,p}^{-p}\,.
  \end{equation*}
Here $C_{n,p}$ is the sharp constant in Morrey's inequality (see~\eqref{eq:morreyconstant}). 
\end{customthm}

A natural question to ask is whether equality is attained in the bounds of the theorem only when $\Omega = \R^n_\limplus$ or $\Omega = \R^n \setminus \{0\}$ up to the natural symmetries of the problem (see Section~\ref{sec:Preliminaries}). It turns out that this is not the case.
\begin{customthm}{B}\label{thm: convex, punctured, exterior domains}
  Suppose $p>n\geq 2$. 
  \begin{enumerate}
    \item If $\Omega \subsetneq \R^n$ is convex and open, then
  \begin{equation*}
    \lambda_p(\Omega) = \lambda_p(\R^n_\limplus)
  \end{equation*}
  and $\Omega$ has an extremal if and only if $\Omega$ is a halfspace.

  \item If $\Omega \subset \R^n$ is open and $x_0 \in \Omega$, then
  \begin{equation*}
    \lambda_p(\Omega \setminus\{x_0\}) = \lambda_p(\R^n\setminus\{0\})
  \end{equation*}
  and $\Omega \setminus\{x_0\}$ has an extremal if and only if $\Omega=\R^n$.

  \item If $K \subset \R^n$ is compact, then
  \begin{equation*}
    \lambda_p(\R^n \setminus K) = \lambda_p(\R^n\setminus\{0\})
  \end{equation*}
  and $\R^n \setminus K$ has an extremal if and only if $K$ is a singleton.
\end{enumerate}
\end{customthm}
\noindent According to this theorem, the infimum~\eqref{eq: variational prob intro} is attained for a halfspace and a punctured whole space. More generally, we will show that this holds whenever $\Omega^c$ is a closed convex cone. See Section~\ref{sec: Dilation invariant domains}.

In view of our remarks above, one might suspect that extremals exist only in rather special geometries. However, the following two theorems assert that this is far from the case. As will be elaborated on later, the first result is a consequence of a more general compactness threshold-result in the spirit of the work of Brezis and Nirenberg~\cite{MR0709644} (see Proposition~\ref{prop: Compactness threshold}). In what follows, we will simply say that $\Omega$ is $C^k$ if $\partial \Omega$ is $C^{k}$-regular.

\begin{customthm}{C}\label{thm: strict energy ineq implies existence}
  Fix $p>n\geq 2$. If $\Omega \subset \R^n$ is bounded, open, and $C^1$ with
  \begin{equation}\label{eq: compactness threshold inequality} 
    \lambda_p(\Omega) < \lambda_p(\R^n_\limplus)\,,
  \end{equation}
  then $\Omega$ has an extremal. 
\end{customthm}

\noindent We will verify the above claim by establishing that any minimizing sequence $\{u_k\}_{k\geq 1}\subset \DD_0^{1,p}(\Omega)$ for~\eqref{eq: variational prob intro} with $\|Du_k\|_p=1$ for all $k$ is precompact in $\DD_0^{1,p}(\Omega)$. In particular, we will show that $\{u_k\}_{k\geq 1}$ has a subsequence which converges to an extremal. 

\par We will say that a $C^2$ subset $\Omega \subset \R^n$ is {\it mean convex} provided that the mean curvature at each point of $\partial \Omega$ is nonnegative. We will always measure mean curvature with respect to the outward unit normal so that convex shapes have nonnegative mean curvature. The central assertion of our work is that a bounded $\Omega$ which is not mean convex admits an extremal.
\begin{customthm}{D}\label{thm: neg mean curvature}
 Let $p>n\geq 2$. If $\Omega \subset \R^n$ is bounded, open, $C^{2}$, and not mean convex, then~\eqref{eq: compactness threshold inequality} holds. Therefore,  $\Omega$ has an extremal. 
 \end{customthm}

In addition to the above theorems, we provide various examples where our results give detailed knowledge about $\lambda_p(\Omega)$; see Section~\ref{sec: Dilation invariant domains} and~\ref{sec: Examples and open problems}. These examples encompass for instance concave cones, polygons and piecewise $C^1$ sets, epigraphs and examples that indicate the instability $\lambda_p(\Omega)$ with respect to small changes in $\Omega$. It is also worth noting that we prove that any $\lambda\in[\lambda_p(\R^n\setminus\{0\}),\lambda_p(\R^n_\limplus)]$ is realized as $\lambda=\lambda_p(\Omega)$ for some open $\Omega\subsetneq\R^n$; refer to Theorem~\ref{thm: range of lambda} below. The last section of this article also includes a short list of open problems.

\subsection{Related results}
Inequality~\eqref{eq: inequality intro} can be seen as the limiting case as $q\to\infty$ of the family of Hardy-type inequalities
\begin{equation}
\label{eq: pqhardy}
\lambda_{p,q}(\Omega)\biggl\|\frac{u}{\;d_\Omega^{\,\gamma}\;}\biggr\|_{q}^p \le   \int_\Omega |Du|^p\,dx\, ,
\end{equation}
where $\gamma=n/q+1-n/p$ and $n<p\le q<\infty$. The special case when $p=q$
\begin{equation}
\label{eq: phardy}
 \lambda_{p,p}(\Omega)\biggl\|\frac{u}{\;d_\Omega\;}\biggr\|_{p}^p\le  \int_\Omega |Du|^p\,dx
\end{equation}
has been the topic of many studies. Making use of H\"older's inequality, it is straightforward to conclude that~\eqref{eq: pqhardy} follows from~\eqref{eq: inequality intro} and~\eqref{eq: phardy}, which can be seen as the endpoints of this family of inequalities. This has recently been observed in~\cite{BrascoPrinariZagati_23}. However, the theory for~\eqref{eq: inequality intro} and~\eqref{eq: pqhardy} in general is still in its infancy. 

As previously mentioned, the one-dimensional version of~\eqref{eq: inequality intro} appears in Chapter~1.5 of~\cite{OK90} and~\eqref{eq: pqhardy} is treated in Chapters 2 and 3. The $n$-dimensional version of~\eqref{eq: inequality intro} as well as~\eqref{eq: pqhardy} is mentioned in~\cite{Kaj}; see page 22 in Paper A. Other versions of inequality~\eqref{eq: inequality intro} also appear in Section 2.1.6 in~\cite{Mazya} and Section 2.1.0 in~\cite{Str84}.

Hardy's inequality~\eqref{eq: phardy} was first proved in the one dimensional setting by Hardy (cf.~\cite{hardy} and~\cite{hardy2}) even though a special case was perhaps known as early as 1907 (see~\cite{Boggio}). For an overview of Hardy's inequality~\eqref{eq: phardy} and its rich history, we refer the reader to~\cite{KMP,OK90,MR3408787,MR3052352}. The validity of such an inequality for $p\leq n$ is a rather delicate matter. However, in the case $p>n$ that we are concerned with,~\eqref{eq: phardy} holds for any open set $\Omega$ (see~\cite{Anc81},~\cite{Lewis},~\cite{MR2885951}, and~\cite{MR1010807}). Since~\eqref{eq: inequality intro} is also valid for general open sets,  H\"older's inequality implies that the same is true for~\eqref{eq: pqhardy}.

There is also a well established connection between the geometry of $\Omega$, the optimal constant, and the existence of extremals for inequality~\eqref{eq: phardy}. These results are very much in the spirit of what we accomplish in Theorems~\ref{thm: convex, punctured, exterior domains},~\ref{thm: strict energy ineq implies existence} and~\ref{thm: neg mean curvature}. For instance, it has been proved that $\lambda_{p,p}(\Omega)\leq c_p$ with 
$$
c_p=\biggl(1-\frac{1}{p}\biggr)^\frac1p
$$
for bounded and sufficiently regular $\Omega$ and $\lambda_{p,p}(\Omega)=c_p$ for convex $\Omega$. Moreover, for sufficiently smooth bounded sets, $\lambda_{p,p}(\Omega)= c_p$ if and only if there is no extremal (see~\cite{MR2885951},~\cite{MR1458330},~\cite{MR1817710}, and~\cite{MR4012806}). We also remark that a parallel theory for Hardy's inequality~\eqref{eq: phardy} on exterior domains has been established (as described in~\cite{MR4012806},~\cite{MR2196033}, and~\cite{MR1868901}).

For $1<p\leq n$ and $q \in [2, \frac{2n}{n-2}]$ the inequality in~\eqref{eq: pqhardy} was recently considered in~\cite{WangZhu24}, and its validity established under the assumption that $\Omega$ has Lipschitz-regular boundary. For the case $p=2, n\geq 2$ and $q\in (2, \frac{2n}{n-2})$, results similar to those proved in this paper concerning the attainability of the sharp constant in \eqref{eq: pqhardy} were obtained in~\cite{WangZhu24,SunWang24}.

Another result in the spirit of Theorem~\ref{thm: neg mean curvature} was discovered by Ghoussoub and Robert. They considered the Hardy--Sobolev inequality 
\begin{equation}\label{HardySobolev}
\mu_s(\Omega)\biggl(\int_\Omega\frac{|u|^{2^*}}{|x|^{s}}\,dx\biggr)^{2/2^*}\le \int_{\Omega}|Du|^2\,dx
\end{equation}
for a smooth and bounded domain $\Omega$ with $0\in \partial \Omega$. Here $n\ge 3$, $s\in (0,2)$, 
$$
2^*=\frac{2(n-s)}{n-2}\,,
$$
and the admissible functions $u$ belong to the Sobolev space $H^1_0(\Omega)$. 

Ghoussoub and Robert showed that if $\partial \Omega$ has negative mean curvature at $0$, then inequality~\eqref{HardySobolev} has an extremal~\cite{MR2276538}. That is, equality holds in~\eqref{HardySobolev} for a nontrivial $u\in H^1_0(\Omega)$. This extended earlier work by Egnell~\cite{MR1034662} who showed that extremals of~\eqref{HardySobolev} exist for certain conical domains and by Ghoussoub and Kang~\cite{MR2097030} who verified existence when $\partial \Omega$ is negatively curved at $0$. For other results along these lines, see~\cite{MR3472850,MR4599209,MR4169674,MR3902383,MR4150374}.

\subsection{Outline of the strategy}

Our approach rests heavily upon the fundamental property that $\lambda_p$ is invariant under translations, rotations and dilations (cf.\ Section~\ref{sec: siminv}). We follow two main strategies which we now briefly explain. 

The first strategy consist of obtaining information about the value of $\lambda_p$ and possible extremals by transplanting a competitor defined in $\Omega$ into the corresponding minimization problem for a different set $\Omega'$. In certain situations, this will allow us to deduce interesting bounds for $\lambda_p$. In particular, we obtain bounds by finding an appropriate exhaustion of a set $\Omega$ and using Lemma~\ref{lem: interior exhaustion}, or by touching $\Omega$ from outside with a cleverly chosen larger set $\Omega'$ and using Proposition~\ref{prop: Supporting sets}. These ideas lead up to the proof of Theorems~\ref{thm: universal sharp bounds} and~\ref{thm: convex, punctured, exterior domains} in Section~\ref{sec: convex, punctured, exterior domains}.

The second line of our analysis, which is the more technical, consists of studying the nature of sequences of trial functions which in a certain sense either concentrate at a boundary point or move of towards infinity. In order to explain this idea, it is convenient to first state a few properties of minimizers and introduce some notation. A first important observation is that any extremal $u$ satisfies 
$$
-\Delta_p u = 0\quad  \mbox{in } \Omega \setminus \{x_0\}
$$
for some $x_0\in \Omega$; see Proposition~\ref{prop: equation for extremals}. This leads to the useful idea (cf.\ Proposition~\ref{prop: potentials are enough}) that it is enough to study solutions of this PDE for a given $x_0\in \Omega$ which satisfy $u(x_0)=1$ and $u|_{\partial\Omega}=0$ when searching for an extremal. We call such functions potentials.

The above observations lead to Proposition~\ref{prop: Compactness threshold}, where we deduce that the following statements are equivalent:
\begin{enumerate}
 \item $\Omega$ has an extremal.
  \item There exists a sequence $\{x_k\}_{k\geq 1}\subset \Omega$ such that $\lim_{k\to \infty} x_k \in \Omega$ and the corresponding sequence of potentials is a minimizing sequence. 
\end{enumerate}

It therefore becomes crucial to understand when a sequence of points $x_k$ related to a minimizing sequence of potentials stays inside $\Omega$, approaches the boundary or escapes to infinity. The background for this is developed in Section~\ref{sec: extremals} and these questions are then pursued in detail in Sections~\ref{sec: Blow-up analysis} and~\ref{sec: Refined blow-up neg mean curvature}. 

In Section~\ref{sec: Blow-up analysis}, we are able to obtain estimates in terms of $\lambda_p$ for certain global prototype sets that locally or globally approximate $\Omega$. The main outcome of this analysis is Theorem~\ref{thm: strict energy ineq implies existence}. The local analysis amounts to performing a blow-up. Despite the substantial difference in how this approach is carried out, the underlying idea is similar to ideas used also in the context of Hardy's inequality~\eqref{eq: phardy}. See for instance Brezis--Marcus~\cite{MR1655516} and Marcus--Mizel--Pinchover~\cite{MR1458330} for more. 

In Section~\ref{sec: Refined blow-up neg mean curvature} we perform an analysis in the spirit of a domain variation. This is done to prove that if the boundary has a point of negative mean curvature, the sequence of points $x_k$ corresponding to a minimizing sequence of potentials cannot approach the boundary. This is one of the main ideas used to prove Theorem~\ref{thm: neg mean curvature}.

\subsection*{Acknowledgements}
 R.~H.\ was supported by an American Mathematical Society Claytor--Gilmer Fellowship and NSF grant DMS-2350454. E.~L.\ has been supported by the Swedish Research Council, grants no.~2023-03471, 2017-03736 and 2016-03639. S.~L.\ was supported by the Swedish Research Council grant no.~2023-03985 as well as the Knut and Alice Wallenberg Foundation grants KAW~2021.0193 and KAW~2017.0295.

 Part of this material is based upon work supported by the Swedish Research Council under grant no.~2016-06596 while all three authors were participating in the research program ``Geometric Aspects of Nonlinear Partial Differential Equations'', at Institut Mittag-Leffler in Djursholm, Sweden, during the fall of 2022.


\section{Preliminaries}
\label{sec:Preliminaries}

Throughout the paper, we will assume $n\in \N$ and $p>n$. We will denote by $B_r(x)$ the ball of radius $r$ centered at $x$. In the case when $x=0$, we will simply write $B_r$. Unless otherwise stated, $\Omega$ will always be a proper, nonempty, open subset of $\R^n$, and 
$$
d_\Omega(x)=\inf_{y\in \Omega^c}|x-y|\,, \quad x\in \R^n\,.
$$
Note that for $x \in \Omega$, $d_\Omega$ is the distance to the boundary of $\partial\Omega$.

\par Our work below will concern functions in the homogeneous Sobolev space  
$$
\DD^{1,p}(\R^n)=\{u\in L^1_{\text{loc}}(\R^n): u_{x_1},\dots, u_{x_n}\in L^p(\R^n)\}
$$
equipped with the seminorm $u \mapsto \|Du\|_p$.
As usual,  $u_{x_i}$ denotes the weak partial derivative with respect to $x_i$. When $p>n$ any $u\in \DD^{1,p}(\R^n)$ has a H\"older continuous representative $u^*$, and we will identify $u$ with $u^*$ going forward. 

In what follows, we shall mostly consider functions $u\in \DD^{1,p}(\R^n)$ which vanish on the complement of an open set $\Omega \subsetneq\R^n$. We will denote this space of functions by
$$
\DD_0^{1,p}(\Omega)=\{u\in \DD^{1,p}(\R^n): \text{$u(x)=0$ for each $x\not\in \Omega$}\}\,.
$$
By Morrey's inequality~\eqref{MorreyInequality}, $u \mapsto \|Du\|_p$ is a norm on the restricted space $\DD^{1,p}_0(\Omega)$. Moreover, the space $\DD_0^{1,p}(\Omega)$ is a Banach space which can be identified as the completion of $C_c^\infty(\Omega)$ (see Lemma~\ref{lem: approx with compact support}).

\par We will make use of {\it Morrey's inequality}, which asserts that there is $C>0$ depending on $n, p$ such that
\begin{equation}\label{MorreyInequality}
\sup_{x\neq y}\frac{|u(x)-u(y)|}{|x-y|^{1-n/p}}\le C\biggl(\int_{\R^n}|Du|^p\,dz\biggr)^{1/p}
\end{equation}
for each $u\in \DD^{1,p}(\R^n)$. Morrey's inequality is a consequence of {\it Morrey's estimate} that posits that there is another constant $c=c(n,p)$ such that if $B$ is a ball of radius $r$ and $x, y \in B$, then
\begin{equation}\label{MorreyEstimate}
|u(x)-u(y)|\le c r^{1-n/p}\biggl(\int_{B}|Du|^p\,dz\biggr)^{1/p}
\end{equation}
(see~\cite[Theorem 4.10]{MR3409135}). We note that~\eqref{MorreyInequality} holds with $C=c$. Let us denote by
\begin{equation}
\label{eq:morreyconstant}
\text{$C_{n,p}$, the smallest $C>0$ for which Morrey's inequality holds.}
\end{equation}

\par We will now show that for any $\Omega \subsetneq \R^n$ inequality~\eqref{eq: inequality intro} holds with some constant $\lambda$. In what follows, the quotient $u/d_\Omega^{1-n/p}$ should be interpreted as zero in the complement of $\Omega$ for $u\in  \DD_0^{1,p}(\Omega)$.
\begin{prop}\label{HMprop}
For any $u\in  \DD_0^{1,p}(\Omega)$,
\begin{equation*}
C_{n,p}^{-p}\biggl\| \frac{u}{d_\Omega^{1-n/p}}\biggr\|^p_{\infty}\le \int_{\Omega}|Du|^p\,dx\,.
\end{equation*}

\end{prop}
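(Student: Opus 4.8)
The plan is to reduce the claimed inequality to Morrey's inequality~\eqref{MorreyInequality} by testing it against a cleverly chosen pair of points. Fix $u \in \DD^{1,p}_0(\Omega)$; since $u$ vanishes off $\Omega$ and is continuous, it suffices to bound $|u(x)|/d_\Omega(x)^{1-n/p}$ for an arbitrary $x \in \Omega$ and then take the supremum. The key observation is that there exists $y \in \Omega^c$ realizing (or nearly realizing) the distance, i.e.\ $|x-y| = d_\Omega(x)$, and by definition $u(y) = 0$. Applying Morrey's inequality to this pair $(x,y)$ gives
\begin{equation*}
\frac{|u(x)|}{d_\Omega(x)^{1-n/p}} = \frac{|u(x) - u(y)|}{|x-y|^{1-n/p}} \le C_{n,p}\biggl(\int_{\R^n}|Du|^p\,dz\biggr)^{1/p},
\end{equation*}
using that $Du$ is supported in $\Omega$ so the integral over $\R^n$ equals the integral over $\Omega$. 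Since the right-hand side is independent of $x$, taking the supremum over $x \in \Omega$ and raising to the $p$-th power yields the claim.

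There are a couple of minor technical points to attend to rather than a serious obstacle. First, one should confirm that the infimum defining $d_\Omega(x)$ is attained: this holds because $\Omega^c$ is nonempty and closed, so $\Omega^c \cap \overline{B_{R}(x)}$ is compact for any $R > d_\Omega(x)$ and the continuous function $y \mapsto |x-y|$ attains its minimum there. If one prefers to avoid invoking attainment, one can instead pick $y_\varepsilon \in \Omega^c$ with $|x - y_\varepsilon| \le d_\Omega(x) + \varepsilon$, apply Morrey's inequality to $(x, y_\varepsilon)$, and let $\varepsilon \to 0^+$. Second, one must justify that $u$ is genuinely continuous and that $Du$ vanishes a.e.\ outside $\Omega$ — both are built into the definition of $\DD^{1,p}_0(\Omega)$ recalled above (identifying $u$ with its Hölder continuous representative $u^*$, and noting $u_{x_i} \in L^p(\R^n)$ with $u \equiv 0$ on $\Omega^c$ forces $Du = 0$ a.e.\ on the interior of $\Omega^c$, while $\partial\Omega$ is null).

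I do not anticipate a genuinely hard step here; the content is entirely in recognizing that the distance function packages exactly the geometric input — a nearby zero of $u$ — needed to invoke Morrey. The only place where a little care is warranted is the edge case $d_\Omega(x) = 0$, i.e.\ $x \in \partial\Omega$, but then $u(x) = 0$ by continuity (as $u$ vanishes on $\Omega^c \supset \partial\Omega$ in the a.e.\ sense and $u$ is continuous), so the quotient $u/d_\Omega^{1-n/p}$ is interpreted as zero there per the convention stated before the proposition, and such points contribute nothing to the supremum. Hence the bound over $\Omega$ coincides with the bound over all of $\R^n$, and the proof is complete.
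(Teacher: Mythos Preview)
Your proof is correct and follows essentially the same approach as the paper: choose $y\in\Omega^c$ (equivalently $y\in\partial\Omega$) realizing $d_\Omega(x)=|x-y|$, apply Morrey's inequality to the pair $(x,y)$ using $u(y)=0$, and take the supremum. The additional technical remarks you include (attainment of the distance, the edge case $x\in\partial\Omega$) are sound but not spelled out in the paper's shorter version.
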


\begin{proof}
Let $u\in \DD_0^{1,p}(\Omega)\subset \DD^{1,p}(\R^n)$. By Morrey's inequality 
$$
\frac{|u(x)-u(y)|}{|x-y|^{1-n/p}}\le C_{n,p}\biggl(\int_{\Omega}|Du|^p\,dz\biggr)^{1/p}
$$
for distinct $x, y\in \R^n$. In particular, if we choose $x \in \Omega$ and $y\in \partial\Omega$ such that $d_\Omega(x)=|x-y|$, then 
$$
\frac{|u(x)|}{d_\Omega(x)^{1-n/p}}\le C_{n,p}\biggl(\int_{\Omega}|Du|^p\,dz\biggr)^{1/p}\,.
$$
We conclude by taking the supremum over $x\in \Omega$. 
\end{proof}

\par By the previous proposition, $\RR(\Omega,u)\ge C_{n,p}^{-p}$ for any  $u\in \DD_0^{1,p}(\Omega)$ with $u\not\equiv 0$. Since $\lambda_p(\Omega)$ is defined in~\eqref{eq: variational prob intro}  as the infimum of all such quotients,
\begin{equation}\label{universal lambda lower bound} 
\lambda_p(\Omega)\ge C_{n,p}^{-p}\,.
\end{equation}
In particular, $\lambda_p(\Omega)$ is positive and the above lower bound is independent of $\Omega$. 

Next we recall some technical results concerning functions in $\DD^{1,p}_0(\Omega)$.

\begin{lem}\label{TechLimLem}
Suppose $u \in \DD^{1,p}_0(\Omega)$.
\begin{enumerate}[label=(\textit{\roman*})]

\item\label{limituoverdist} If $y\in \partial \Omega$, then  
\begin{equation*}
\lim_{\substack {x\rightarrow y\\ x\in\Omega}}\frac{u(x)}{d_\Omega(x)^{1-n/p}}=0\,.
\end{equation*}

\item\label{limitatinfinity} If $\Omega$ is unbounded, 
\begin{equation*}
\lim_{\substack {|x|\rightarrow \infty\\ x\in\Omega}}\frac{u(x)}{d_\Omega(x)^{1-n/p}}=0\,.
\end{equation*}

\item\label{maxattained} The function $|u|/d_\Omega^{1-n/p}$ achieves its supremum within $\Omega$. 
\end{enumerate}
\end{lem}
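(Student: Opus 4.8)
The plan is to prove all three parts by reducing to the case $u \in C_c^\infty(\Omega)$ and then passing to the limit using Morrey's inequality to control the quotient $|u|/d_\Omega^{1-n/p}$ uniformly. For part~\ref{limituoverdist}, fix $y \in \partial\Omega$. For $x \in \Omega$ close to $y$, pick $z \in \partial\Omega$ with $|x-z| = d_\Omega(x)$; then $|x - z| \le |x - y| \to 0$. Since $u$ vanishes on $\Omega^c$ and in particular $u(z) = 0$, Morrey's inequality~\eqref{MorreyInequality} gives $|u(x)| = |u(x) - u(z)| \le C_{n,p}|x-z|^{1-n/p}\|Du\|_p$, so $|u(x)|/d_\Omega(x)^{1-n/p} \le C_{n,p}\|Du\|_p$, which only shows boundedness. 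To get the limit to be zero, I would first prove it for $u \in C_c^\infty(\Omega)$: there $\|Du\|_{L^\infty} < \infty$, so for $x$ near $y$ one has $|u(x)| = |u(x)-u(z)| \le \|Du\|_\infty |x-z| \le \|Du\|_\infty |x-y|$, hence $|u(x)|/d_\Omega(x)^{1-n/p} \le \|Du\|_\infty |x-y|^{n/p} \to 0$ as $x \to y$. For general $u \in \DD^{1,p}_0(\Omega)$, approximate by $u_j \in C_c^\infty(\Omega)$ with $\|D(u-u_j)\|_p \to 0$ (Lemma~\ref{lem: approx with compact support}); then by the Morrey estimate applied to $u - u_j$ as above, $\sup_{x \in \Omega}|u(x)-u_j(x)|/d_\Omega(x)^{1-n/p} \le C_{n,p}\|D(u-u_j)\|_p$, and combining this uniform smallness with the already-established limit for each fixed $u_j$ yields the claim by a standard $\varepsilon/3$ argument.

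For part~\ref{limitatinfinity}, the strategy is identical in structure. For $u \in C_c^\infty(\Omega)$ with $\operatorname{supp} u \subset B_R$, one has $u(x) = 0$ for $|x| > R$, so the quotient vanishes outside $B_R$ and the limit as $|x| \to \infty$ is trivially zero. Then the same uniform approximation estimate $\sup_{x}|u(x)-u_j(x)|/d_\Omega(x)^{1-n/p} \le C_{n,p}\|D(u-u_j)\|_p$ promotes this to all of $\DD^{1,p}_0(\Omega)$.

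Part~\ref{maxattained} then follows by a compactness argument. The function $f = |u|/d_\Omega^{1-n/p}$ (extended by zero on $\Omega^c$) is continuous on $\R^n$: it is continuous on $\Omega$ since $u$ is continuous and $d_\Omega > 0$ there, and parts~\ref{limituoverdist} and~\ref{limitatinfinity} show it extends continuously by zero across $\partial\Omega$ and has limit zero at infinity. If $f \equiv 0$ the supremum (zero) is attained everywhere, so assume $M = \sup f > 0$. Take a maximizing sequence $x_k \in \Omega$ with $f(x_k) \to M$. This sequence is bounded: otherwise a subsequence has $|x_k| \to \infty$, forcing $f(x_k) \to 0 < M$. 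So $x_k$ has a subsequence converging to some $x_* \in \overline{\Omega}$, and $f(x_*) = M$ by continuity. Since $M > 0$ and $f = 0$ on $\partial\Omega \cup \Omega^c$, we must have $x_* \in \Omega$, so the supremum is attained within $\Omega$.

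The main obstacle is the passage from $C_c^\infty(\Omega)$ to general $u \in \DD^{1,p}_0(\Omega)$ in parts~\ref{limituoverdist} and~\ref{limitatinfinity}; the key point making it work is that Morrey's inequality, applied to the difference $u - u_j$ together with the fact that $u - u_j$ vanishes on $\Omega^c$, controls the quotient $|u-u_j|/d_\Omega^{1-n/p}$ \emph{uniformly} over $\Omega$ by $C_{n,p}\|D(u-u_j)\|_p$ — exactly the estimate of Proposition~\ref{HMprop} — so no pointwise control on derivatives of the general $u$ is needed. Everything else is routine: the $C_c^\infty$ cases are elementary, and part~\ref{maxattained} is a direct consequence of continuity plus the decay at $\partial\Omega$ and at infinity.
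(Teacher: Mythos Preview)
Your argument is sound and takes a different route from the paper. The paper proves \ref{limituoverdist} and \ref{limitatinfinity} by quoting from \cite{MR4275749} two refinements of Morrey's inequality valid for every $v\in\DD^{1,p}(\R^n)$: the H\"older quotient $|v(x)-v(y)|/|x-y|^{1-n/p}$ vanishes both as $|x-y|\to 0$ and as $|x|+|y|\to\infty$. Choosing $y$ as a nearest boundary point to $x$ then gives both limits immediately. You instead reduce to $C_c^\infty(\Omega)$ by density (Lemma~\ref{lem: approx with compact support}) and pass to the limit via the uniform bound of Proposition~\ref{HMprop}; this is more elementary in that it avoids the external reference. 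Part~\ref{maxattained} is essentially identical in both proofs.

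There is a logical caveat, though: within this paper, the proof of Lemma~\ref{lem: approx with compact support} (Appendix~\ref{sec: App A Approximiation}, Step~3) itself invokes part~\ref{limitatinfinity} of the present lemma to control the cutoff at infinity. So as the paper is organized, your proof of \ref{limitatinfinity} is circular. For \ref{limituoverdist} this is easily repaired without density: applying the local Morrey estimate~\eqref{MorreyEstimate} on a ball of radius $2d_\Omega(x)$ containing both $x$ and its nearest boundary point gives $|u(x)|/d_\Omega(x)^{1-n/p}\le C\bigl(\int_{B_{2d_\Omega(x)}(x)}|Du|^p\bigr)^{1/p}\to 0$ by absolute continuity of the integral. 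For \ref{limitatinfinity} a direct repair along these lines is less immediate (the ball need not shrink or escape to infinity when $d_\Omega(x)$ is unbounded), and one is led back to something like the cited limit~\eqref{veeLimTwo}.
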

\begin{proof}
In this proof, we will exploit the following limits, which were established in Section~6 of~\cite{MR4275749}. For any $v\in \DD^{1,p}(\R^n)$,
\begin{equation}\label{veeLimOne}
\lim_{|x-y|\rightarrow0}\frac{|v(x)-v(y)|}{|x-y|^{1-n/p}}=0
\end{equation}
and
\begin{equation}\label{veeLimTwo}
\lim_{|x|+|y|\rightarrow\infty}\frac{|v(x)-v(y)|}{|x-y|^{1-n/p}}=0\,.
\end{equation}
Claim~\ref{limituoverdist}  follows from~\eqref{veeLimOne} and~\ref{limitatinfinity} follows from~\eqref{veeLimTwo}. As for~\ref{maxattained}, we may select a maximizing sequence $\{x_k\}_{k\geq 1}\subset \Omega$ for $|u|/d_\Omega^{1-n/p}$. If $\{x_k\}_{k\geq 1}$ has a cluster point at the boundary of $\Omega$ or if $\{x_k\}_{k\geq 1}$ is unbounded, then $|u|/d_\Omega^{1-n/p}$ vanishes identically by~\ref{limituoverdist} and~\ref{limitatinfinity}. Otherwise, since $x \mapsto |u(x)|/d_\Omega(x)^{1-n/p}$ is continuous in $\Omega$ its maximum is attained at cluster points of $\{x_k\}_{k\geq 1}$ in $\Omega$.
\end{proof}

\subsection{Approximation results}

In this section, we recall some facts about approximation of functions in $\DD^{1,p}_0(\Omega)$ by smooth functions and list some consequences. The first result is the following lemma, which will be important in many of our constructions.
\begin{lem}\label{lem: approx with compact support}
  Suppose  $u \in \DD_0^{1,p}(\Omega)$ and $\epsilon>0$. There exists $v\in C_c^\infty(\Omega)$ such that
  \begin{equation*}
    \|Du-Dv\|_p \leq \epsilon \|Du\|_p\,.
  \end{equation*}
\end{lem}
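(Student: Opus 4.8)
The plan is to establish the approximation lemma in two stages: first reduce to the case of compactly supported functions in $\DD_0^{1,p}(\Omega)$, and then mollify. The starting point is that by definition any $u\in\DD_0^{1,p}(\Omega)\subset\DD^{1,p}(\R^n)$ has $Du\in L^p(\R^n)$ and vanishes outside $\Omega$.

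\textbf{Step 1: Truncation to compact support.} First I would produce a function $w$ which agrees with $u$ on a large set, still vanishes outside $\Omega$, has compact support, and satisfies $\|Du-Dw\|_p\le \tfrac{\epsilon}{2}\|Du\|_p$. The natural device is a logarithmic (Maz'ya-type) cutoff: for $R>1$ large, let $\eta_R$ be a smooth radial function with $\eta_R\equiv 1$ on $B_R$, $\eta_R\equiv 0$ outside $B_{R^2}$, $0\le\eta_R\le1$, and $|D\eta_R|\lesssim \tfrac{1}{|x|\log R}$ on the annulus. Then $w=\eta_R u$ vanishes outside $\Omega$ (since $u$ does) and has support in $\overline{B_{R^2}}$. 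One computes $D(u-w)=(1-\eta_R)Du - u\,D\eta_R$; the first term tends to $0$ in $L^p$ by dominated convergence since $Du\in L^p(\R^n)$, and for the second term one uses the pointwise bound on $|u|$ coming from Morrey's inequality — concretely, since $u\in\DD^{1,p}(\R^n)$ with $p>n$, Morrey gives $|u(x)|\le |u(0)| + C_{n,p}|x|^{1-n/p}\|Du\|_p$, so on the annulus $B_{R^2}\setminus B_R$ we have $|u(x)|\,|D\eta_R(x)|\lesssim \tfrac{|x|^{1-n/p}}{|x|\log R}\|Du\|_p$ (plus a term from $|u(0)|$), and integrating $|x|^{-n/p-1}$ over the annulus in $L^p$ yields a bound of order $(\log R)^{-1}$, which vanishes as $R\to\infty$. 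Hence for $R$ large enough $\|Du-Dw\|_p\le\tfrac{\epsilon}{2}\|Du\|_p$. A small subtlety: one should first dispose of the additive constant $u(0)$, e.g.\ by noting $u$ is defined up to a set where it genuinely vanishes since $\Omega^c\neq\emptyset$, so picking $y\in\Omega^c$ gives $|u(x)|\le C_{n,p}|x-y|^{1-n/p}\|Du\|_p$ directly with no constant; this is the version to use.

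\textbf{Step 2: Mollification.} Now $w\in\DD_0^{1,p}(\Omega)$ has compact support $K:=\operatorname{supp} w\subset\R^n$, and $w$ vanishes on $\Omega^c$. The issue is that $K$ need not lie inside $\Omega$ — it may touch $\partial\Omega$. I would first shrink the support strictly into $\Omega$: since $w=0$ on $\Omega^c$ and $w$ is continuous, for $\delta>0$ the set $\{|w|>\delta\}$ is a compact subset of $\Omega$... but truncating $|w|$ by $\delta$ again costs gradient in $L^p$ in a controlled way (by dominated convergence, $D(|w|-\delta)_+^{\pm}\to Dw$ in $L^p$ as $\delta\to0$ since $D w = 0$ a.e.\ on $\{w=0\}$). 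Alternatively — and more cleanly — mollify directly: let $w^\sigma=w*\rho_\sigma$ with $\rho_\sigma$ a standard mollifier. Then $w^\sigma\in C_c^\infty(\R^n)$ and $Dw^\sigma\to Dw$ in $L^p(\R^n)$ as $\sigma\to0$. The only thing to check is $\operatorname{supp} w^\sigma\subset\Omega$, which requires $\operatorname{supp} w\subset\Omega$ with positive distance to $\partial\Omega$ — hence the preliminary $\delta$-truncation of $|w|$ is needed after all (or one applies this to $(|w|-\delta)_+\operatorname{sgn}(w)$, whose support is compactly contained in $\Omega$). Combining: pick $\delta$ small so the truncation costs $\le\tfrac{\epsilon}{4}\|Du\|_p$, then $\sigma$ small so the mollification costs $\le\tfrac{\epsilon}{4}\|Du\|_p$, and set $v:=w_\delta^\sigma$; by the triangle inequality $\|Du-Dv\|_p\le\epsilon\|Du\|_p$.

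\textbf{Main obstacle.} The genuinely delicate point is Step 1 — controlling $\|u\,D\eta_R\|_p$ on the annulus near infinity, which is exactly the place where the hypothesis $p>n$ is used (the exponent $|x|^{-n/p-1}$ is integrable in $L^p$ over annuli only because $n/p<1$, and the Morrey bound $|u|\lesssim|x|^{1-n/p}$ is available only for $p>n$). Once that estimate is in hand, Step 2 is entirely routine. I would also remark that this lemma is essentially the assertion — stated in Section~\ref{sec:Preliminaries} — that $\DD_0^{1,p}(\Omega)$ is the closure of $C_c^\infty(\Omega)$ in the seminorm $\|D\cdot\|_p$, so the proof doubles as the justification of that identification.
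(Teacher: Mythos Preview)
Your argument is correct and takes a genuinely different route from the paper's. The paper proceeds by a single multiplicative cutoff $f(x)=(1-\eta(d_\Omega(x)/\delta))\eta(|x|/r)$ that truncates both at infinity and near $\partial\Omega$ simultaneously; the error term $r^{-1}\|\chi_{B_r\setminus B_{r/2}}u\|_p$ at infinity is controlled via the Hardy--Morrey inequality together with the decay statement Lemma~\ref{TechLimLem}\ref{limitatinfinity}, and the near-boundary error $\delta^{-1}\|\chi_{\{d_\Omega<\delta\}}u\|_p$ is handled by an integrated version of Morrey's estimate on balls of radius $d_\Omega(x)$. Your approach instead separates the two truncations: the logarithmic cutoff at infinity needs only the pointwise Morrey bound $|u(x)|\le C_{n,p}|x-y|^{1-n/p}\|Du\|_p$ (with $y\in\Omega^c$), while near the boundary you replace the distance cutoff by a level-set truncation $(|w|-\delta)_+\operatorname{sgn}(w)$, which works because $w$ is continuous and $Dw=0$ a.e.\ on $\{w=0\}$. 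Your boundary step is arguably cleaner; the paper's unified cutoff is more symmetric but requires the somewhat delicate Step~2 integral estimate. Two minor corrections: the pointwise bound you get is $|u\,D\eta_R|\lesssim |x|^{-n/p}/\log R$ (not $|x|^{-n/p-1}$), and its $L^p$ norm over the annulus is of order $(\log R)^{1/p-1}$, which tends to zero for any $p>1$ --- the hypothesis $p>n$ enters only through Morrey's inequality (in Step~1) and through the continuity of $w$ that makes $\overline{\{|w|>\delta\}}\Subset\Omega$ (in Step~2), not through the integrability you cite.
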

Note that we did not assume any regularity or boundedness of the set $\Omega$ in the above lemma. This is a feature which is special to the supercritical setting $p>n$. As the proof is standard, yet somewhat lengthy, it is deferred to Appendix~\ref{sec: App A Approximiation}.

An important consequence of Lemma~\ref{lem: approx with compact support} is that in the infimum~\eqref{eq: variational prob intro} defining $\lambda_p(\Omega)$ the space of test functions $\DD^{1,p}_0(\Omega)$ can be exchanged with $C_c^\infty(\Omega)$. 

\begin{lem}\label{lem: variational def cpt support}
The infimum~\eqref{eq: variational prob intro} is also given by
  \begin{equation*}
    \lambda_p(\Omega) = \inf_{u \in C_c^\infty(\Omega)\setminus\{0\}}\RR_p(\Omega, u)\,.
  \end{equation*}
\end{lem}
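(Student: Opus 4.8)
The plan is to prove Lemma~\ref{lem: variational def cpt support} as a direct consequence of Lemma~\ref{lem: approx with compact support}, together with two elementary observations: that $\RR_p(\Omega,\cdot)$ is continuous along the approximating sequence in the relevant sense, and that the inclusion $C_c^\infty(\Omega)\subset \DD_0^{1,p}(\Omega)$ gives one of the two inequalities for free.

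\begin{proof}
Denote by $\mu$ the right-hand side, that is, $\mu = \inf_{u \in C_c^\infty(\Omega)\setminus\{0\}}\RR_p(\Omega, u)$. Since $C_c^\infty(\Omega)\subset \DD^{1,p}_0(\Omega)$, every competitor in the infimum defining $\mu$ is also a competitor in~\eqref{eq: variational prob intro}, so $\lambda_p(\Omega)\leq \mu$.

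For the reverse inequality, fix any $u\in \DD^{1,p}_0(\Omega)\setminus\{0\}$ and let $\epsilon>0$. By Lemma~\ref{lem: approx with compact support} there is $v\in C_c^\infty(\Omega)$ with $\|Du-Dv\|_p\leq \epsilon\|Du\|_p$. For $\epsilon<1$ this forces $v\not\equiv 0$, and by the triangle inequality
$$
(1-\epsilon)\|Du\|_p\leq \|Dv\|_p\leq (1+\epsilon)\|Du\|_p\,.
$$
It remains to control the denominator $\|u/d_\Omega^{1-n/p}\|_\infty$ from below by the corresponding quantity for $v$. By Morrey's inequality applied to $u-v\in \DD^{1,p}(\R^n)$, for every $x\in\Omega$ and $y\in\partial\Omega$ with $|x-y|=d_\Omega(x)$ we have
$$
\frac{|u(x)-v(x)|}{d_\Omega(x)^{1-n/p}}\leq C_{n,p}\|Du-Dv\|_p\leq C_{n,p}\epsilon\|Du\|_p\,,
$$
where we used that $u(y)=v(y)=0$. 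Taking the supremum over $x\in\Omega$ gives
$$
\biggl\|\frac{v}{d_\Omega^{1-n/p}}\biggr\|_\infty \leq \biggl\|\frac{u}{d_\Omega^{1-n/p}}\biggr\|_\infty + C_{n,p}\epsilon\|Du\|_p\,.
$$
Combining the two displays,
$$
\mu \leq \RR_p(\Omega,v) = \frac{\|Dv\|_p^p}{\bigl\|v/d_\Omega^{1-n/p}\bigr\|_\infty^p}\leq \frac{(1+\epsilon)^p\|Du\|_p^p}{\Bigl(\bigl\|u/d_\Omega^{1-n/p}\bigr\|_\infty + C_{n,p}\epsilon\|Du\|_p\Bigr)^p}\,.
$$
Letting $\epsilon\to 0^+$ yields $\mu\leq \RR_p(\Omega,u)$, and taking the infimum over $u\in\DD^{1,p}_0(\Omega)\setminus\{0\}$ gives $\mu\leq \lambda_p(\Omega)$. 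Hence $\mu=\lambda_p(\Omega)$.
\end{proof}

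The only point requiring genuine care—and thus the main obstacle—is the lower bound on $\|v/d_\Omega^{1-n/p}\|_\infty$, since a priori $v$ could have a much smaller weighted sup-norm than $u$; the resolution is that the weighted sup-norm is $C_{n,p}$-Lipschitz in the $\|D\cdot\|_p$-seminorm on $\DD^{1,p}_0(\Omega)$, which follows from Morrey's inequality exactly as in the proof of Proposition~\ref{HMprop}. Everything else is bookkeeping with the triangle inequality and taking $\epsilon\to 0$. One should note that the argument tacitly uses that $u-v\in\DD^{1,p}_0(\Omega)$ vanishes on $\Omega^c$, so that the boundary value $u(y)-v(y)=0$ is available; this holds because both $u$ and $v$ lie in $\DD^{1,p}_0(\Omega)$.
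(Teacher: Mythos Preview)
Your overall strategy is correct and essentially the same as the paper's: approximate $u\in\DD^{1,p}_0(\Omega)$ by $v\in C_c^\infty(\Omega)$ via Lemma~\ref{lem: approx with compact support}, then use the Hardy--Morrey inequality (Proposition~\ref{HMprop}) to control the weighted sup-norm of the difference. The paper carries this out by fixing a point $x_0$ where $|v|/d_\Omega^{1-n/p}$ is maximized and comparing pointwise values there, while you work directly with the sup-norm; these are minor variants of the same idea.

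However, there is a sign error that invalidates your final chain of inequalities as written. You proved
\[
\biggl\|\frac{v}{d_\Omega^{1-n/p}}\biggr\|_\infty \leq \biggl\|\frac{u}{d_\Omega^{1-n/p}}\biggr\|_\infty + C_{n,p}\epsilon\|Du\|_p\,,
\]
which is an \emph{upper} bound on $\|v/d_\Omega^{1-n/p}\|_\infty$. To upper-bound $\RR_p(\Omega,v)$ you need a \emph{lower} bound on this quantity. From an upper bound on both numerator and denominator you cannot conclude the displayed inequality
\[
\RR_p(\Omega,v)\leq \frac{(1+\epsilon)^p\|Du\|_p^p}{\bigl(\|u/d_\Omega^{1-n/p}\|_\infty + C_{n,p}\epsilon\|Du\|_p\bigr)^p}\,.
\]
The fix is immediate: the same Morrey estimate (applied with the roles of $u$ and $v$ reversed, or just the triangle inequality the other way) gives
\[
\biggl\|\frac{v}{d_\Omega^{1-n/p}}\biggr\|_\infty \geq \biggl\|\frac{u}{d_\Omega^{1-n/p}}\biggr\|_\infty - C_{n,p}\epsilon\|Du\|_p\,,
\]
which is positive for $\epsilon$ small enough since $u\not\equiv 0$. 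This yields
\[
\mu\leq \RR_p(\Omega,v)\leq \frac{(1+\epsilon)^p\|Du\|_p^p}{\bigl(\|u/d_\Omega^{1-n/p}\|_\infty - C_{n,p}\epsilon\|Du\|_p\bigr)^p}\,,
\]
and now sending $\epsilon\to 0^+$ gives $\mu\leq \RR_p(\Omega,u)$ as desired.
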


\begin{proof}
Let $\epsilon>0$. There exists a $v\in \DD_0^{1,p}(\Omega)$ such that
  \begin{equation*}
    \RR_p(\Omega, v)\leq \lambda_p(\Omega)+ \epsilon\,.
  \end{equation*}
  According to Lemma~\ref{TechLimLem}, there is $x_0\in \Omega$ so that
  \begin{equation*}
    \frac{|v(x_0)|}{d_\Omega(x_0)^{1-n/p}} = \biggl\|\frac{v}{d_\Omega^{1-n/p}}\biggr\|_\infty\,.
  \end{equation*}
  In view of Lemma~\ref{lem: approx with compact support}, we may select $u \in C_c^\infty(\Omega)$ such that $\|Du-Dv\|_p\le \epsilon'\|Dv\|_p$ for every $\epsilon'>0$. Note $v(x_0)=u(x_0)+O(\epsilon')$
by the Hardy--Morrey inequality.

\par As a result, 
  \begin{align*}
    \RR_p(\Omega, u) &= \biggl\|\frac{u}{d_\Omega^{1-n/p}}\biggr\|_\infty^{-p}\|Du\|_p^p\\
    & \leq \frac{d_\Omega(x_0)^{p-n}\|Du\|_p^p}{|u(x_0)|^p} \\
    &\leq \frac{d_\Omega(x_0)^{n-p}\|Dv\|_p^p}{|v(x_0)|^p}(1+o_{\epsilon'\to 0}(1))\\
    &=\RR_p(\Omega, v)(1+o_{\epsilon'\to 0}(1)) \\
    &\leq \lambda_p(\Omega) + o_{\epsilon'\to 0}(1) + \epsilon \,.
  \end{align*}
Therefore, 
  \begin{equation*}
    \lambda_p(\Omega) \leq \inf_{u \in C_c^\infty(\Omega)}\RR_p(\Omega, u) \leq \lambda_p(\Omega) + o_{\epsilon'\to 0}(1) + \epsilon \,.
  \end{equation*}
We conclude after sending $\epsilon'$ and then $\epsilon$ to zero. 
\end{proof}
We will say that $\Omega$ is {\it exhausted} by $\{\Omega_j\}_{j\geq1}$ whenever every $\Omega_j\subset \R^n$ is open, $\Omega_j \subset \Omega_{j+1}$ for each $j\ge 1$, and $\Omega = \bigcup_{j\geq 1}\Omega_j.$  It turns out that $\lambda_p$ is upper semicontinuous with respect to exhaustions. 
\begin{lem}\label{lem: interior exhaustion}
 If $\Omega$ is exhausted by $\{\Omega_j\}_{j\geq1}$, then
  \begin{equation*}
    \limsup_{j \to \infty}\lambda_p(\Omega_j) \leq \lambda_p(\Omega)\,.
  \end{equation*}
\end{lem}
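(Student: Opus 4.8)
The plan is to exploit Lemma~\ref{lem: variational def cpt support}, which allows us to work with test functions in $C_c^\infty$ rather than the whole homogeneous Sobolev space. Fix $\epsilon > 0$ and choose $u \in C_c^\infty(\Omega)$ with $u \not\equiv 0$ such that $\RR_p(\Omega, u) \leq \lambda_p(\Omega) + \epsilon$. Since $u$ has compact support $K = \operatorname{supp} u \subset \Omega$, and since $\{\Omega_j\}_{j\geq 1}$ is an increasing exhaustion of $\Omega$, the family $\{\Omega_j\}$ is an open cover of the compact set $K$; hence $K \subset \Omega_{j_0}$ for all $j \geq j_0$ with $j_0$ large enough.

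For such $j$ we have $u \in C_c^\infty(\Omega_j)$, so $u$ is an admissible test function in the variational problem for $\Omega_j$, and therefore $\lambda_p(\Omega_j) \leq \RR_p(\Omega_j, u)$. The key observation is that $\RR_p(\Omega_j, u)$ does not coincide with $\RR_p(\Omega, u)$ because $d_{\Omega_j} \leq d_\Omega$ on $\Omega_j$ (a larger ambient set means a larger distance-to-complement function), so the weight in the denominator of $\RR$ changes in the \emph{unfavorable} direction: one gets $\|u/d_{\Omega_j}^{1-n/p}\|_\infty \geq \|u/d_\Omega^{1-n/p}\|_\infty$, hence $\RR_p(\Omega_j, u) \leq \RR_p(\Omega, u)$ only if we are careful. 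Let me instead argue that $\RR_p(\Omega_j, u) \to \RR_p(\Omega, u)$ as $j \to \infty$. The numerator $\|Du\|_p^p$ is independent of $j$. For the denominator, since $K$ is a fixed compact subset of $\Omega$, we have $d_{\Omega_j} \nearrow d_\Omega$ uniformly on $K$ as $j \to \infty$: indeed $d_{\Omega_j}$ is nondecreasing in $j$ and $\sup_K(d_\Omega - d_{\Omega_j}) \to 0$ because $\Omega_j^c \cap N \to \Omega^c \cap N$ in an appropriate Hausdorff sense on a bounded neighborhood $N$ of $K$; more directly, $d_{\Omega_j}(x) \geq d_\Omega(x) - \operatorname{dist}(x, \Omega_j^c \cap \overline{B})$ type estimates give the needed uniform convergence on $K$. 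Since $u$ vanishes outside $K$ and $d_{\Omega_j}, d_\Omega$ are bounded below by a positive constant on a neighborhood of $\operatorname{supp} u$ where $u \neq 0$ contributes, the functions $|u|/d_{\Omega_j}^{1-n/p}$ converge uniformly to $|u|/d_\Omega^{1-n/p}$, so their suprema converge. Hence $\limsup_{j\to\infty} \lambda_p(\Omega_j) \leq \limsup_{j\to\infty} \RR_p(\Omega_j, u) = \RR_p(\Omega, u) \leq \lambda_p(\Omega) + \epsilon$, and letting $\epsilon \to 0$ finishes the proof.

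The main obstacle is the uniform convergence $d_{\Omega_j} \to d_\Omega$ on the compact set $K = \operatorname{supp} u$. Pointwise monotone convergence $d_{\Omega_j}(x) \nearrow d_\Omega(x)$ is immediate from $\Omega_j \subset \Omega$, and then Dini's theorem would upgrade this to uniform convergence on $K$ provided each $d_{\Omega_j}$ is continuous (which it is, being $1$-Lipschitz) and the limit $d_\Omega$ is continuous (also $1$-Lipschitz) — so Dini's theorem applies directly since $K$ is compact and we have a monotone sequence of continuous functions converging pointwise to a continuous function. That clean argument sidesteps any Hausdorff-distance bookkeeping. One should also note at the outset that $d_{\Omega_j}(x) > 0$ for $x \in K$ once $j \geq j_0$ (since then $K \subset \Omega_j$), so the quotients are well-defined and the denominators are uniformly bounded away from zero on $K$, making the passage from uniform convergence of the weight to convergence of the $L^\infty$ norms of $|u|/d^{1-n/p}$ routine.
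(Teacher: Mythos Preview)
Your proof is correct, but you took an unnecessary detour. You correctly observe that $d_{\Omega_j}\le d_\Omega$ and hence $\|u/d_{\Omega_j}^{1-n/p}\|_\infty \ge \|u/d_\Omega^{1-n/p}\|_\infty$, and then write that ``$\RR_p(\Omega_j,u)\le \RR_p(\Omega,u)$ only if we are careful.'' But no care is needed: since $\RR_p(\Omega,u)=\|u/d_\Omega^{1-n/p}\|_\infty^{-p}\|Du\|_p^p$, the sup norm sits in the denominator, so a larger sup norm gives a \emph{smaller} Rayleigh quotient, and $\RR_p(\Omega_j,u)\le \RR_p(\Omega,u)$ follows immediately. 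This is exactly what the paper does: once $\mathrm{supp}(u)\subset\Omega_j$, one line gives $\lambda_p(\Omega_j)\le \RR_p(\Omega_j,u)\le \RR_p(\Omega,u)\le \lambda_p(\Omega)+\epsilon$.

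Your Dini argument (monotone pointwise convergence $d_{\Omega_j}\nearrow d_\Omega$ of $1$-Lipschitz functions on the compact set $K$, hence uniform convergence, hence convergence of the sup norms) is valid and would be the right move if the monotonicity went the wrong way --- but here it does not, so the convergence argument is superfluous.
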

\begin{proof}
  Fix $\epsilon>0$. By Lemma~\ref{lem: variational def cpt support}, there exists $u \in C_c^\infty(\Omega)$ such that
  \begin{equation*}
     \RR_p(\Omega, u) \leq \lambda_p(\Omega)+ \epsilon\,.
   \end{equation*} 
   Since $u$ supported in a compact set of $\Omega$ and $\{\Omega_j\}_{j\geq 1}$ is an open cover of $\Omega$, $\mathrm{supp}(u)\subset \Omega_j$ for all sufficiently large $j$. Moreover, $\Omega_j \subset \Omega$ implies that $d_{\Omega_j}(x) \leq d_\Omega(x)$ for all $x \in \Omega_j$. Hence,
   \begin{equation*}
      \lambda_p(\Omega_j) \leq \RR_p(\Omega_j, u)
      \leq\RR_p(\Omega, u) \leq \lambda_p(\Omega) + \epsilon\,
    \end{equation*} 
    for all large enough $j$. Since $\epsilon$ was arbitrary, this proves the lemma.
  \end{proof}

\subsection{Similarity invariance}\label{sec: siminv}
For $Q\in O(n)$, $r>0$ and $y\in \R^n$, we define the similarity transform
\begin{equation*}
T_{r,Q,y} \colon \R^n \to \R^n\,;\; x \mapsto rQx+y\,.
\end{equation*}
For a set $U \subset \R^n$, we write 
$$
T_{r,Q,y}U =rQU+y=\{rQx+y: x\in U\}
$$
to denote the image of $U$ under the similarity transform. Note that $$T_{r, Q, y}^{-1} = T_{1/r, Q^{-1}, -Q^{-1}y/r}\,.$$ If the parameters $r, Q, y$ are understood, we may write simply $T$.

Since $T_{r,Q,y}$ is a diffeomorphism of $\R^n$, it must be that
$$
\partial(T_{r,Q,y}\Omega)=T_{r,Q,y}\partial\Omega\,.
$$
It follows that
\begin{equation}\label{distScaling}
d_{T_{r,Q,y}\Omega}(T_{r,Q,y}x)=rd_\Omega(x)\,,
\end{equation}
which will be useful below. 

\par An all important property of the best constant $\lambda_p(\Omega)$ is that it is invariant under similarity transformations. 
We will sometimes refer to this as {\it similarity invariance}.

\begin{lem}\label{ScalingInvarianceLem}
If $T\colon \R^n \to \R^n$ is a similarity transform and $\Omega \subsetneq \R^n$ is open, then
$$
\lambda_p(T\Omega)=\lambda_p(\Omega)
$$
and
$$
\RR_p(\Omega, u) = \RR_p(T\Omega, u\circ T^{-1})\quad \mbox{for all }u \in \DD_0^{1,p}(\Omega)\,.
$$
\end{lem}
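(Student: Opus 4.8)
The plan is to verify the scaling identity for the Rayleigh quotient directly and then deduce invariance of $\lambda_p$ by taking infima. Write $T = T_{r,Q,y}$ and fix $u \in \DD_0^{1,p}(\Omega)$; set $w = u\circ T^{-1}$, which is supported in $T\Omega$. First I would compute the gradient term: by the chain rule $Dw(z) = r^{-1}Q^{-T} (Du)(T^{-1}z)$ for a.e.\ $z$, so because $Q\in O(n)$ is an isometry, $|Dw(z)| = r^{-1}|(Du)(T^{-1}z)|$. Changing variables $z = Tx$, whose Jacobian is $r^n$, gives
\begin{equation*}
\|Dw\|_p^p = \int_{T\Omega} |Dw(z)|^p\,dz = r^{-p}\int_\Omega |Du(x)|^p\, r^n\,dx = r^{n-p}\|Du\|_p^p\,.
\end{equation*}
In particular $w \in \DD^{1,p}(\R^n)$, so the map $u\mapsto u\circ T^{-1}$ sends $\DD_0^{1,p}(\Omega)$ into $\DD_0^{1,p}(T\Omega)$.

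Next I would handle the denominator. Using the distance scaling identity~\eqref{distScaling}, $d_{T\Omega}(Tx) = r\,d_\Omega(x)$, together with $w(Tx) = u(x)$, we get for every $x\in\Omega$
\begin{equation*}
\frac{|w(Tx)|}{d_{T\Omega}(Tx)^{1-n/p}} = \frac{|u(x)|}{\bigl(r\,d_\Omega(x)\bigr)^{1-n/p}} = r^{n/p-1}\,\frac{|u(x)|}{d_\Omega(x)^{1-n/p}}\,.
\end{equation*}
Since $x\mapsto Tx$ is a bijection from $\Omega$ onto $T\Omega$ and both quotients are understood to vanish off their respective sets, taking the supremum yields
\begin{equation*}
\biggl\|\frac{w}{d_{T\Omega}^{1-n/p}}\biggr\|_\infty = r^{n/p-1}\biggl\|\frac{u}{d_\Omega^{1-n/p}}\biggr\|_\infty\,,
\end{equation*}
and hence $\|w/d_{T\Omega}^{1-n/p}\|_\infty^{-p} = r^{p-n}\|u/d_\Omega^{1-n/p}\|_\infty^{-p}$. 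Multiplying this by $\|Dw\|_p^p = r^{n-p}\|Du\|_p^p$, the factors $r^{p-n}$ and $r^{n-p}$ cancel, giving $\RR_p(T\Omega, w) = \RR_p(\Omega, u)$, which is the claimed identity.

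Finally, for the invariance of $\lambda_p$: the map $u\mapsto u\circ T^{-1}$ is a bijection $\DD_0^{1,p}(\Omega)\setminus\{0\} \to \DD_0^{1,p}(T\Omega)\setminus\{0\}$ (its inverse is $v\mapsto v\circ T$, which by the same computation lands in $\DD_0^{1,p}(\Omega)$ and is clearly two-sided inverse). Since this bijection preserves the value of the Rayleigh quotient, taking the infimum over each side gives $\lambda_p(T\Omega) = \lambda_p(\Omega)$. There is no real obstacle here; the only point requiring a modicum of care is the bookkeeping of the exponents of $r$ in the numerator versus the denominator and the convention that the quotients are extended by zero, which is precisely why the homogeneity $\gamma = 1-n/p$ is the natural one and makes $\RR_p$ scale-free.
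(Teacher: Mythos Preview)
Your proof is correct and follows essentially the same approach as the paper: compute how the gradient norm and the weighted sup-norm transform under $T$, observe that the powers of $r$ cancel in the Rayleigh quotient, and then pass to the infimum. The only cosmetic difference is that the paper concludes $\lambda_p(T\Omega)=\lambda_p(\Omega)$ via an $\epsilon$-argument and role-switching, whereas you invoke the bijection $u\mapsto u\circ T^{-1}$ directly; both are fine.
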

\begin{proof}
Let $T=T_{r,Q,y}$. Suppose $u \in \DD^{1,p}_0(\Omega)$ and set  $v= u\circ T^{-1}$. Then $v$ vanishes in $T\Omega^c$ and by a change of variables
$$
\int_{T\Omega}|Dv(z)|^p\,dz=r^n\int_{\Omega}|Dv(Tx)|^p\,dx=r^{n-p}\int_{\Omega}|Du(x)|^p\,dx\,.
$$
Consequently, $v \in \DD_0^{1,p}(T\Omega)$.
Moreover, by~\eqref{distScaling},
$$
\sup_{z\in T\Omega}\frac{|v(z)|^p}{d_{T\Omega}(z)^{p-n}}=\sup_{x\in \Omega}\frac{|v(Tx)|^p}{d_{T\Omega}(Tx)^{p-n}}=
\sup_{x\in \Omega}\frac{|u(x)|^p}{d_{\Omega}(x)^{p-n}}r^{n-p}\,.
$$

\par Therefore, 
\begin{equation*}
\RR_p(T\Omega, v)= 
\RR_p(\Omega, u)\,.
\end{equation*}
This proves the second claim.

For any $\epsilon>0$, there exists $u \in \DD^{1,p}_0(\Omega)$ such that
$$
\lambda_p(\Omega)\ge \RR_p(\Omega, u)-\epsilon\,. 
$$
By the above, the function $u\circ T^{-1}\in \DD^{1,p}_0(T\Omega)$ satisfies $\RR_p(\Omega, u) = \RR_p(T\Omega, u\circ T^{-1})$ and thus
\begin{equation*}
\lambda_p(\Omega)\ge \RR_p(T\Omega, u\circ T^{-1})-\epsilon\ge \lambda_p(T\Omega)-\epsilon\,. 
\end{equation*}
Since $\epsilon$ was arbitrary it follows that $\lambda_p(\Omega)\ge \lambda_p(T\Omega)$. Switching the roles of $\Omega$ and $T\Omega$ in this argument gives the reverse inequality and completes the proof. 
\end{proof}


\section{Extremals and potentials}\label{sec: Extremals and potentials}\label{sec: extremals}

In this section, we focus on properties satisfied by extremals and more generally to properties of potentials. We recall that a function $u\in \DD^{1,p}_0(\Omega)$ with $u\not\equiv 0$ is an {\it extremal} provided that $ \lambda_p(\Omega)=\RR(\Omega,u)$. That is, 
$$
 \lambda_p(\Omega)\biggl\| \frac{u}{d_\Omega^{1-n/p}}\biggr\|^p_\infty= \int_{\Omega}|Du|^p\,dx\,.
$$
Below, $\delta_{x_0}$ is the Dirac delta distribution at $x_0$.

\begin{prop}\label{prop: equation for extremals}
Let $\Omega \subsetneq \R^n$ be an open set. A function $u \in \DD^{1,p}_0(\Omega)\setminus\{0\}$ is an extremal if and only if there is $x_0\in \Omega$ for which $u$ is a weak solution of 
\begin{equation}\label{ExtremalPDE}
\begin{cases}
-\Delta_pu=\displaystyle\lambda_p(\Omega) \frac{|u(x_0)|^{p-2}u(x_0)}{d_\Omega(x_0)^{p-n}}\delta_{x_0}\quad & \mbox{in } \Omega\,,\\
\hspace{24pt}u=0 \quad & \mbox{on } \partial\Omega\,.
\end{cases}
\end{equation}
\end{prop}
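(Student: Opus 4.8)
The plan is to characterize extremals by a variational (Euler–Lagrange) argument. First I would observe that the functional $\RR_p(\Omega, \cdot)$ is invariant under scalar multiplication, so we may normalize an extremal $u$ by requiring $\|u/d_\Omega^{1-n/p}\|_\infty = 1$; then $\int_\Omega |Du|^p\,dx = \lambda_p(\Omega)$. By Lemma~\ref{TechLimLem}\ref{maxattained}, the supremum defining $\|u/d_\Omega^{1-n/p}\|_\infty$ is attained at some point $x_0 \in \Omega$, i.e. $|u(x_0)|/d_\Omega(x_0)^{1-n/p} = 1$.

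Next I would derive the PDE. For the forward direction, fix $\varphi \in C_c^\infty(\Omega)$ and consider the competitor $u + t\varphi$ for small $t$. The numerator of $\RR_p$ is $\|Du + tD\varphi\|_p^p$, which is differentiable in $t$ with derivative $p\int_\Omega |Du|^{p-2} Du\cdot D\varphi\,dx$ at $t=0$. The denominator $\|(u+t\varphi)/d_\Omega^{1-n/p}\|_\infty^{p}$ is only a supremum of a family of smooth functions of $t$, so it is not obviously differentiable; however, since $u/d_\Omega^{1-n/p}$ is continuous, bounded, and — by Lemma~\ref{TechLimLem}\ref{limituoverdist},\ref{limitatinfinity} — tends to $0$ near $\partial\Omega$ and at infinity, the set where the maximum is nearly attained shrinks to (a subset of) the maximizers as $t\to 0$, and one gets the one-sided estimate
$$
\biggl\|\frac{u+t\varphi}{d_\Omega^{1-n/p}}\biggr\|_\infty \ge \biggl|\frac{u(x_0)+t\varphi(x_0)}{d_\Omega(x_0)^{1-n/p}}\biggr| = \bigl|1 + t\,\mathrm{sgn}(u(x_0))\varphi(x_0)d_\Omega(x_0)^{n/p-1}\bigr|,
$$
with equality to first order when $x_0$ is (essentially) the unique maximizer; in general this gives that $t \mapsto \RR_p(\Omega, u+t\varphi)$ has a minimum at $t=0$ with a clean first-order expansion of the denominator from below, which is exactly what is needed since $u$ minimizes $\RR_p$. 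Extracting the first-order condition (using that $\varphi(x_0)$ can have either sign, or more carefully, that the one-sided derivatives from $t>0$ and $t<0$ sandwich the desired identity) yields
$$
\int_\Omega |Du|^{p-2}Du\cdot D\varphi\,dx = \lambda_p(\Omega)\,\frac{|u(x_0)|^{p-2}u(x_0)}{d_\Omega(x_0)^{p-n}}\,\varphi(x_0)
$$
for all $\varphi \in C_c^\infty(\Omega)$, which is precisely the weak formulation of~\eqref{ExtremalPDE} after undoing the normalization (both sides scale correctly in $u$). For the converse, if $u$ solves~\eqref{ExtremalPDE} weakly, testing with $\varphi = u$ (justified by Lemma~\ref{lem: approx with compact support}, approximating $u$ by $C_c^\infty(\Omega)$ functions and passing to the limit) gives $\int_\Omega |Du|^p\,dx = \lambda_p(\Omega)|u(x_0)|^p/d_\Omega(x_0)^{p-n}$, and since $|u(x_0)|/d_\Omega(x_0)^{1-n/p} \le \|u/d_\Omega^{1-n/p}\|_\infty$ this forces $\RR_p(\Omega,u) \le \lambda_p(\Omega)$, hence equality by~\eqref{eq: variational prob intro}; in particular $x_0$ realizes the supremum and $u$ is an extremal.

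The main obstacle is the lack of differentiability of the $L^\infty$ term $\|(u+t\varphi)/d_\Omega^{1-n/p}\|_\infty$: one must argue carefully that its one-sided behavior at $t=0$ is governed by the maximizers $x_0$. The key enabling facts are that $|u|/d_\Omega^{1-n/p}$ is continuous on $\Omega$ and vanishes at the boundary and at infinity (Lemma~\ref{TechLimLem}), so that a maximizing point $x_0$ exists in $\Omega$ and the supremum over $u+t\varphi$ is, up to $o(t)$, controlled by the values at such points — where, since $\varphi$ is smooth and supported away from $\partial\Omega$, everything is differentiable. A secondary technical point is the validity of testing the weak equation against $u$ itself, which requires the density result Lemma~\ref{lem: approx with compact support} together with the uniform control from the Hardy--Morrey inequality (Proposition~\ref{HMprop}) to pass to the limit in both the $p$-Laplacian term and the pointwise term $\varphi \mapsto \varphi(x_0)$.
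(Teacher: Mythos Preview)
Your proposal is correct and follows essentially the same approach as the paper: use Lemma~\ref{TechLimLem}\ref{maxattained} to locate $x_0$, bound $\|(u+tv)/d_\Omega^{1-n/p}\|_\infty$ from below by its value at $x_0$, take a one-sided limit as $t\to 0^\limplus$, and then replace $v$ by $-v$ to obtain equality; the converse is exactly testing the weak formulation with $u$. The paper organizes the forward direction slightly more cleanly by working directly with the inequality $\int_\Omega |D(u+tv)|^p\,dx \ge \lambda_p(\Omega)\|(u+tv)/d_\Omega^{1-n/p}\|_\infty^p$ rather than the Rayleigh quotient, which sidesteps entirely your worry about differentiability of the sup-norm---no uniqueness of the maximizer or $o(t)$ control of the supremum is needed.
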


\begin{proof} Suppose $u$ is a weak solution of~\eqref{ExtremalPDE}. Then 
\begin{equation*}
\int_{\Omega}|Du|^{p-2}Du\cdot Dv\,dx=\lambda_p(\Omega) \frac{|u(x_0)|^{p-2}u(x_0)}{d_\Omega(x_0)^{p-n}}v(x_0)
\end{equation*}
for each $v\in \DD^{1,p}_0(\Omega)$. Choosing $v=u$ gives 
$$
\int_{\Omega}|Du|^{p}\,dx=\lambda_p(\Omega) \frac{|u(x_0)|^{p}}{d_\Omega(x_0)^{p-n}}\le \lambda_p(\Omega)\biggl\| \frac{u}{d_\Omega^{1-n/p}}\biggr\|^p_\infty.
$$
We conclude that $u$ is an extremal.

\medskip

Assume that $u\in \DD_0^{1,p}(\Omega)$ is an extremal. By part~\ref{maxattained} of Lemma~\ref{TechLimLem}, there is $x_0\in \Omega$ with 
$$
\biggl\| \frac{u}{d_\Omega^{1-n/p}}\biggr\|_\infty=\frac{|u(x_0)|}{d_\Omega(x_0)^{1-n/p}}\,.
$$
For $t>0$ and $v\in \DD^{1,p}_0(\Omega)$, 
\begin{align*}
\int_\Omega\biggl(\frac{|Du+tDv|^p-|Du|^p}{pt}\biggr)\,dx&=\frac{1}{pt}\int_{\Omega}|Du+tDv|^p\,dx -\frac{1}{pt}\int_{\Omega}|Du|^p\,dx\\
&\ge \frac{\lambda_p(\Omega)}{pt}\biggl\| \frac{u+tv}{d_\Omega^{1-n/p}}\biggl\|^p_\infty-\frac{\lambda_p(\Omega)}{pt}\biggl\| \frac{u}{d_\Omega^{1-n/p}}\biggr\|^p_\infty\\
&\ge \frac{\lambda_p(\Omega)}{pt}\frac{|u(x_0)+tv(x_0)|^p}{d_\Omega(x_0)^{p-n}}-\frac{\lambda_p(\Omega)}{pt}\frac{|u(x_0)|^p}{d_\Omega(x_0)^{p-n}}\\
&=\frac{\lambda_p(\Omega)}{d_\Omega(x_0)^{p-n}}\biggl(\frac{|u(x_0)+tv(x_0)|^p-|u(x_0)|^p}{pt}\biggr).
\end{align*}
By routine estimates, we find
$$
\int_{\Omega}|Du|^{p-2}Du\cdot Dv\,dx\ge \lambda_p(\Omega) \frac{|u(x_0)|^{p-2}u(x_0)}{d_\Omega(x_0)^{p-n}}v(x_0)
$$
in the limit as $t\rightarrow 0^\limplus$.
Replacing $v$ by $-v$ gives equality. Thus, $u$ is a weak solution of the boundary value problem~\eqref{ExtremalPDE}. 
\end{proof}

Based on our characterization of extremals, it is natural to consider the following family of functions. Given $y \in \Omega$ we call the unique weak solution $w^\Omega_{y}\in \DD_0^{1,p}(\Omega)$ of the equation
\begin{equation}\label{eq: potential equation}
  \begin{cases}
   - \Delta_p w^\Omega_y =0 & \mbox{in } \Omega \setminus \{y\}\,,\\
      \hspace{.33in} w^\Omega_y =0 & \mbox{on } \partial\Omega\,,\\
     \hspace{.12in}  w^\Omega_y(y)=1\,,
  \end{cases}
\end{equation}
a \emph{potential} in $\Omega$. 
That $w^\Omega_y$ is a weak solution of~\eqref{eq: potential equation} is equivalent to it being a weak solution of the equation
\begin{equation*}
  \begin{cases}
    -\Delta_p w^\Omega_y= \|Dw^\Omega_y\|_p^p \delta_{y} & \mbox{in }\Omega\,,\\
    \hspace{.33in}w^\Omega_y =0 & \mbox{on } \partial\Omega\,.
  \end{cases}
\end{equation*}

In addition, 
$$
\|Dw^\Omega_{y}\|_p\le \|Dv\|_p
$$
among all $v \in \DD^{1,p}_0(\Omega)$ which satisfy $v(y)=1$. Furthermore, $w^\Omega_{y}$ is the unique function in $\DD_0^{1,p}(\Omega)$ with this property. We will refer to this variational characterization of $w^\Omega_{y}$ several times below and call any $v \in \DD^{1,p}_0(\Omega)$ which satisfies $v(y)=1$ a {\it competitor} for $w^\Omega_{y}$.

\begin{cor}\label{ExtSignCor}{}
If $\Omega \subsetneq \R^n$ and $y \in \Omega$, then $w_y^\Omega$ is identically zero in each connected component of $\Omega$ except the one which contains $y$ where $w^\Omega_y$ is everywhere positive. In particular, if $u\in \DD_0^{1,p}(\Omega)$ is an extremal, then $u$ vanishes identically in all but one component of $\Omega$ where it is either everywhere positive or everywhere negative.
\end{cor}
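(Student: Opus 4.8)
The plan is to first dispose of the statement about potentials $w_y^\Omega$ and then derive the statement about extremals from it using Proposition~\ref{prop: equation for extremals}. For the potential $w_y^\Omega$, let $\Omega_0$ denote the connected component of $\Omega$ containing $y$, and let $\Omega'$ be any other component. First I would observe that the restriction of $w_y^\Omega$ to $\Omega'$ lies in $\DD_0^{1,p}(\Omega')$ and is $p$-harmonic there (it solves $-\Delta_p w = 0$ weakly in $\Omega' = \Omega' \setminus \{y\}$). Testing the weak formulation against $w_y^\Omega$ itself restricted to $\Omega'$ (which is an admissible test function since it vanishes outside $\Omega'$ and is zero on $\partial\Omega'$) gives $\int_{\Omega'} |Dw_y^\Omega|^p \, dx = 0$, hence $Dw_y^\Omega = 0$ a.e.\ on $\Omega'$; since $w_y^\Omega \in \DD_0^{1,p}(\Omega')$ and $\|D\cdot\|_p$ is a norm there by Morrey's inequality, $w_y^\Omega \equiv 0$ on $\Omega'$. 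Actually a cleaner route: truncating, $w_y^\Omega \cdot \mathbf{1}_{\Omega_0}$ is still a competitor for $w_y^\Omega$ (it takes value $1$ at $y$), and it has strictly smaller Dirichlet energy unless $w_y^\Omega$ already vanishes off $\Omega_0$; the uniqueness in the variational characterization then forces $w_y^\Omega = w_y^\Omega \cdot \mathbf{1}_{\Omega_0}$.

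Next I would establish positivity of $w_y^\Omega$ on $\Omega_0$. Since $w_y^\Omega$ is $p$-harmonic on $\Omega_0 \setminus \{y\}$, continuous, equals $1$ at $y$, and vanishes on $\partial\Omega_0$, one applies the strong maximum/minimum principle for the $p$-Laplacian on the open connected set $\Omega_0 \setminus \{y\}$. First, $w_y^\Omega \geq 0$: the negative part $(w_y^\Omega)_- = \max\{-w_y^\Omega, 0\}$ is an admissible test function (it vanishes near $y$ since $w_y^\Omega(y)=1>0$, and on $\partial\Omega_0$), and testing the equation against it yields $\int |D(w_y^\Omega)_-|^p = 0$, so $(w_y^\Omega)_- \equiv 0$. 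Then, since $w_y^\Omega \geq 0$ is $p$-harmonic and not identically zero on the connected open set $\Omega_0 \setminus\{y\}$, the Harnack inequality (or strong maximum principle for $p$-harmonic functions, see e.g.\ the standard references) forces $w_y^\Omega > 0$ throughout $\Omega_0 \setminus \{y\}$, and $w_y^\Omega(y) = 1 > 0$ completes the claim on all of $\Omega_0$.

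Finally, for the statement about an extremal $u$: by Proposition~\ref{prop: equation for extremals} there is $x_0 \in \Omega$ with $u(x_0) \neq 0$ (it must be nonzero, else the right-hand side of~\eqref{ExtremalPDE} vanishes and $u$ would be $p$-harmonic in all of $\Omega$ with zero boundary data, hence $u \equiv 0$, contradicting $u \not\equiv 0$) such that $u$ solves $-\Delta_p u = c\,\delta_{x_0}$ in $\Omega$ with $u = 0$ on $\partial\Omega$, where $c = \lambda_p(\Omega)|u(x_0)|^{p-2}u(x_0)/d_\Omega(x_0)^{p-n}$. Replacing $u$ by $-u$ if necessary (which does not affect being an extremal), we may assume $u(x_0) > 0$, so $c > 0$; then $v := u/u(x_0)$ satisfies exactly the potential equation~\eqref{eq: potential equation} with $y = x_0$, so by uniqueness $v = w_{x_0}^\Omega$. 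The conclusions for $w_{x_0}^\Omega$ proven above then transfer to $u = u(x_0)\, w_{x_0}^\Omega$: it vanishes identically in every component of $\Omega$ except the one containing $x_0$, where it is everywhere of one sign (positive, or negative before we undid the sign change).

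The main obstacle I anticipate is being careful about which facts about $p$-harmonic functions (strong maximum principle, Harnack inequality) are being invoked and making sure the test-function manipulations are legitimate in the homogeneous Sobolev space $\DD_0^{1,p}$ — in particular that truncations and positive/negative parts of $\DD_0^{1,p}(\Omega)$ functions remain in the space and are admissible test functions — though this is routine given $p > n$ and the approximation results of Lemma~\ref{lem: approx with compact support}. The sign normalization for extremals is a minor bookkeeping point.
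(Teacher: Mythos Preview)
Your proposal is correct and follows essentially the same approach as the paper: vanishing on components not containing $y$ via $p$-harmonicity with zero boundary data, positivity on $\Omega_0$ via a minimum principle, and the extremal statement via Proposition~\ref{prop: equation for extremals} identifying $u$ as a nonzero multiple of a potential. The one minor difference is that the paper applies the strong minimum principle directly on all of $\Omega_0$ by noting that $-\Delta_p w_y^\Omega = \|Dw_y^\Omega\|_p^p\,\delta_y \geq 0$ there (so $w_y^\Omega$ is $p$-superharmonic on $\Omega_0$, not just on $\Omega_0\setminus\{y\}$), which is slightly cleaner than your two-step route (nonnegativity via the negative part, then Harnack on $\Omega_0\setminus\{y\}$) and sidesteps the need for $\Omega_0\setminus\{y\}$ to be connected when $n=1$.
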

\begin{proof}
If $\Omega'$ is a connected component of $\Omega$ and $y \notin \Omega'$, then $-\Delta_pw^\Omega_y=0$ in $\Omega'$ with $w^\Omega_y|_{\partial\Omega'}=0$, so $w^\Omega_y \equiv 0$ in $\Omega'$.
Let $\Omega_0$ be the connected component of $\Omega$ containing $y$. Then $w_y^\Omega(y)=1$, $-\Delta_pw^\Omega_y\ge 0$ in $\Omega_0$ and $w^\Omega_y|_{\partial\Omega_0}=0$. Therefore, $w_y^\Omega>0$ in $\Omega_0$ by the strong minimum principle. By Proposition~\ref{prop: equation for extremals}, there exists $y \in \Omega, c \neq 0$ such that $u = c w_y^\Omega$. The assertion for $u$ follows.
\end{proof}

As remarked in our proof above,  any extremal is a non-zero multiple of a potential. The next result tells us that not only must any extremal be a potential, but it is in fact sufficient to consider the infimum defining $\lambda_p(\Omega)$ restricted to potentials (even in the case extremals do not exist). Recall that Lemma~\ref{TechLimLem} implies that $|u|/d_\Omega^{1-n/p}$ attains a maximum in $\Omega$ provided $u \in \DD^{1,p}_0(\Omega)$.

\begin{prop}\label{prop: potentials are enough}
If $u \in \DD_0^{1,p}(\Omega)\setminus \{0\}$ and $x_0 \in \Omega$ satisfies
  \begin{equation}\label{eq: u special point prop potentials suffice}
    \frac{|u(x_0)|}{d_\Omega(x_0)^{1-n/p}} = \biggl\|\frac{u}{d_\Omega^{1-n/p}}\biggr\|_{\infty}\,,
  \end{equation}
  then 
  \begin{equation}\label{eq: R esitmate potential}
    \RR_p(\Omega, u) \geq 
    d_\Omega(x_0)^{p-n}\|Dw^\Omega_{x_0}\|_p^p\geq \RR_p(\Omega, w^\Omega_{x_0})\,.
  \end{equation}
  Equality holds in the first inequality if and only if $u=u(x_0)w^\Omega_{x_0}$ in which case equality holds also in the second.
\end{prop}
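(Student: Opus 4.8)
The statement to prove, Proposition~\ref{prop: potentials are enough}, relates $\RR_p(\Omega,u)$ for an arbitrary $u$ to the value at a potential $w^\Omega_{x_0}$, where $x_0$ is a point achieving the supremum in the Hardy--Morrey quotient for $u$.

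The plan is to exploit the variational characterization of the potential $w^\Omega_{x_0}$ as the minimizer of $\|Dv\|_p$ among competitors $v\in\DD_0^{1,p}(\Omega)$ with $v(x_0)=1$. First I would normalize: given $u$ with $u(x_0)\neq 0$ (which must hold since $\eqref{eq: u special point prop potentials suffice}$ and $u\not\equiv 0$ force $\|u/d_\Omega^{1-n/p}\|_\infty>0$ by Proposition~\ref{HMprop}), set $v = u/u(x_0)$. Then $v(x_0)=1$, so $v$ is a competitor for $w^\Omega_{x_0}$ and therefore $\|Dw^\Omega_{x_0}\|_p \le \|Dv\|_p = \|Du\|_p/|u(x_0)|$. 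Rearranging and using $\eqref{eq: u special point prop potentials suffice}$,
\begin{equation*}
\RR_p(\Omega, u) = \biggl\|\frac{u}{d_\Omega^{1-n/p}}\biggr\|_\infty^{-p}\|Du\|_p^p = \frac{d_\Omega(x_0)^{p-n}}{|u(x_0)|^p}\|Du\|_p^p \ge d_\Omega(x_0)^{p-n}\|Dw^\Omega_{x_0}\|_p^p\,,
\end{equation*}
which is the first inequality in $\eqref{eq: R esitmate potential}$.

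For the second inequality, I would apply the Hardy--Morrey inequality (Proposition~\ref{HMprop}) in the form $\RR_p(\Omega, w^\Omega_{x_0}) = \|w^\Omega_{x_0}/d_\Omega^{1-n/p}\|_\infty^{-p}\|Dw^\Omega_{x_0}\|_p^p$, and observe that the maximum of $|w^\Omega_{x_0}|/d_\Omega^{1-n/p}$ over $\Omega$ is at least its value at $x_0$, namely $w^\Omega_{x_0}(x_0)/d_\Omega(x_0)^{1-n/p} = d_\Omega(x_0)^{-(1-n/p)}$; hence $\|w^\Omega_{x_0}/d_\Omega^{1-n/p}\|_\infty^{-p} \le d_\Omega(x_0)^{p-n}$, giving $\RR_p(\Omega, w^\Omega_{x_0}) \le d_\Omega(x_0)^{p-n}\|Dw^\Omega_{x_0}\|_p^p$. (Note this step does \emph{not} require $x_0$ to be the maximizing point for $w^\Omega_{x_0}$ itself — only a lower bound on its norm, which is all we need.) This completes the chain of inequalities in $\eqref{eq: R esitmate potential}$.

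For the equality statement, the main point is the rigidity in the competitor inequality $\|Dw^\Omega_{x_0}\|_p \le \|Dv\|_p$: since $p>1$, $v\mapsto\int_\Omega|Dv|^p$ is strictly convex on $\DD_0^{1,p}(\Omega)$ modulo constants, and the constraint set $\{v(x_0)=1\}$ is affine, so the minimizer is unique — this is exactly the uniqueness of $w^\Omega_{x_0}$ already asserted in the text. Therefore equality in the first inequality of $\eqref{eq: R esitmate potential}$ forces $v=w^\Omega_{x_0}$, i.e.\ $u=u(x_0)w^\Omega_{x_0}$. Conversely, if $u=u(x_0)w^\Omega_{x_0}$, then by homogeneity of $\RR_p$ in $u$ we have $\RR_p(\Omega,u)=\RR_p(\Omega,w^\Omega_{x_0})$; moreover $\eqref{eq: u special point prop potentials suffice}$ applied to this $u$ says $x_0$ achieves the supremum for $w^\Omega_{x_0}$, which upgrades the second-inequality estimate to an equality $\RR_p(\Omega, w^\Omega_{x_0}) = d_\Omega(x_0)^{p-n}\|Dw^\Omega_{x_0}\|_p^p$, so all three quantities in $\eqref{eq: R esitmate potential}$ coincide. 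The only mild subtlety is making sure that $x_0$ really is a maximizing point for $w^\Omega_{x_0}$ in the converse direction — but this is immediate from the hypothesis $\eqref{eq: u special point prop potentials suffice}$ once we know $u$ is a scalar multiple of $w^\Omega_{x_0}$, since the quotient $|u|/d_\Omega^{1-n/p}$ and $|w^\Omega_{x_0}|/d_\Omega^{1-n/p}$ differ only by the constant factor $|u(x_0)|$. I expect no serious obstacle here; the whole argument is a clean application of the variational characterization of potentials plus strict convexity.
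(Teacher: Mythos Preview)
Your proposal is correct and follows essentially the same approach as the paper's proof: both use the variational characterization of $w^\Omega_{x_0}$ applied to the competitor $u/u(x_0)$ for the first inequality, the trivial lower bound $\|w^\Omega_{x_0}/d_\Omega^{1-n/p}\|_\infty \ge d_\Omega(x_0)^{n/p-1}$ for the second, and the uniqueness of the potential for the equality case. One small quibble: your justification that $u(x_0)\neq 0$ cites Proposition~\ref{HMprop}, but that proposition gives an \emph{upper} bound on $\|u/d_\Omega^{1-n/p}\|_\infty$, not a lower one; the correct (and trivial) reason is simply that $u\not\equiv 0$ as a continuous function, so $|u|/d_\Omega^{1-n/p}$ is positive somewhere.
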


\begin{proof}
As $u(x_0)^{-1}u$ is a competitor for $w^\Omega_{x_0}$,
  \begin{equation}\label{eq: w energy minimizer}
    \|Dw^\Omega_{x_0}\|_p\leq \|D(u(x_0)^{-1}u)\|_p= |u(x_0)|^{-1}\|Du\|_p \,.
  \end{equation}
Moreover, equality holds if and only if $u = u(x_0)w^\Omega_{x_0}$.

\par  Using $$\biggl\|\frac{w^\Omega_{x_0}}{d_\Omega^{1-n/p}}\biggr\|_\infty \geq \frac{w^\Omega_{x_0}(x_0)}{d_\Omega(x_0)^{1-n/p}}= \frac{1}{d_\Omega(x_0)^{1-n/p}}\,,$$
  together with~\eqref{eq: u special point prop potentials suffice} and~\eqref{eq: w energy minimizer}, we find
    \begin{equation*}
    \RR_p(\Omega, w^\Omega_{x_0}) 
    \leq d_\Omega(x_0)^{p-n}\|Dw^\Omega_{x_0}\|_p^p \leq  \frac{\|Du\|_p^p}{\biggl(\displaystyle\frac{|u(x_0)|}{d_\Omega(x_0)^{1-n/p}}\biggr)^p} 
    = \RR_p(\Omega, u)\,.
  \end{equation*}
  In addition, equality holds in the second inequality if and only if $u= u(x_0)w^\Omega_{x_0}$. In this case, equality also holds in the first inequality by~\eqref{eq: u special point prop potentials suffice}.
\end{proof}

A direct consequence of Proposition~\ref{prop: potentials are enough} is that
\begin{equation}\label{eq: variational prob potential}
    \lambda_p(\Omega) =\inf_{x\in\Omega}\RR_p(\Omega, w^\Omega_{x})\,.
\end{equation}
Since $\RR_p(\Omega, w_x^\Omega)$ is a continuous function of $x\in \Omega$, the infimum above is a minimum if the value at some interior point is smaller than any limit either as $x$ approaches the boundary or $|x|$ tends to infinity. Along such sequences, it is useful to study a quantity that is slightly larger than $\RR_p(\Omega, w_x^\Omega)$ but has the key property that its limit is the same along minimizing sequences.

In order to formalize this idea we introduce the following notation. 
For a given $\Omega$, define $\mathcal{Y}_\Omega$ as the collection of sequences $\{x_k\}_{k\geq 1}\subset \Omega$ satisfying
\begin{equation*}
   \liminf_{k\to \infty}|x_k|=\infty \quad \mbox{or}\quad \limsup_{k \to \infty} d_\Omega(x_k) =0\,.
\end{equation*}   
That is, $\mathcal{Y}_\Omega$ is the set of sequences which eventually leave every compact subset of $\Omega$. Define
\begin{equation}\label{eq: biglambda}
  \Lambda_p(\Omega) := \inf\Bigl\{ \liminf_{k\to \infty} d_\Omega(x_k)^{p-n}\|Dw_{x_k}^\Omega\|_p^p: \{x_k\}_{k\geq 1}\in \mathcal{Y}_\Omega\Bigr\}\,.
\end{equation}
By a standard diagonalization argument, it follows that the infimum defining $\Lambda_p$ is actually a minimum (see Appendix~\ref{app: Lambda is attained}). The quantity $\Lambda_p(\Omega)$ is analogous to the ``Hardy constant at infinity'', which is central to the study of the existence of extremals for inequality~\eqref{eq: phardy} \cite{MR4012806,MR1655516,MR1817710,MR2196033,MR1868901}.

In view of~\eqref{eq: R esitmate potential} and~\eqref{eq: variational prob potential},
$$
\lambda_p(\Omega)\le \Lambda_p(\Omega)\,.
$$
Next, we show that the only way that $\Omega$ can lack an extremal is if it is favorable for minimizing sequences to concentrate at the boundary or move away to infinity. 
\begin{prop}\label{prop: Compactness threshold}
If
\begin{equation}\label{eq: concentration strict inequality intro}
  \lambda_p(\Omega) <\Lambda_p(\Omega)\,,
\end{equation}
then $\Omega$ has an extremal. Furthermore, all minimizing sequences $\{u_k\}_{k\geq 1}$ for~\eqref{eq: variational prob intro} with $\|Du_k\|_p=1$ for all $k$ are precompact in $\DD_0^{1,p}(\Omega)$.
 \end{prop}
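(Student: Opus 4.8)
The plan is to show that every minimizing sequence $\{u_k\}_{k\ge 1}\subset\DD^{1,p}_0(\Omega)$ normalized by $\|Du_k\|_p=1$ is precompact, and that its limit is an extremal; existence then follows. First I would reduce to potentials. By Proposition~\ref{prop: potentials are enough}, if $x_k\in\Omega$ realizes the maximum of $|u_k|/d_\Omega^{1-n/p}$ (which exists by Lemma~\ref{TechLimLem}\ref{maxattained}), then replacing $u_k$ by $w^\Omega_{x_k}$ only lowers $\RR_p$, so $\{w^\Omega_{x_k}\}_{k\ge 1}$ is again minimizing and $d_\Omega(x_k)^{p-n}\|Dw^\Omega_{x_k}\|_p^p\to\lambda_p(\Omega)$. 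The key dichotomy is now whether $\{x_k\}_{k\ge1}$ stays in a compact subset of $\Omega$ or not. If some subsequence satisfies $\liminf|x_k|=\infty$ or $\limsup d_\Omega(x_k)=0$, then that subsequence lies in $\mathcal{Y}_\Omega$ and hence $\liminf_k d_\Omega(x_k)^{p-n}\|Dw^\Omega_{x_k}\|_p^p\ge\Lambda_p(\Omega)>\lambda_p(\Omega)$, contradicting that the sequence is minimizing. Therefore $\{x_k\}_{k\ge1}$ has a subsequence converging to some $x_0\in\Omega$, with $d_\Omega(x_k)$ bounded away from $0$ and $\infty$; passing to this subsequence, $\|Dw^\Omega_{x_k}\|_p$ is bounded.

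Next I would extract a weak limit. By reflexivity of $\DD^{1,p}_0(\Omega)$ (equipped with the norm $\|D\cdot\|_p$), after a further subsequence $w^\Omega_{x_k}\rightharpoonup w$ weakly, and by Morrey's inequality this weak convergence forces local uniform convergence, so $w(x_0)=\lim w^\Omega_{x_k}(x_k)=1$ and $w$ vanishes off $\Omega$, i.e. $w\in\DD^{1,p}_0(\Omega)\setminus\{0\}$ is a competitor for $w^\Omega_{x_0}$. Weak lower semicontinuity gives $\|Dw\|_p\le\liminf\|Dw^\Omega_{x_k}\|_p$. Since $d_\Omega(x_k)\to d_\Omega(x_0)$ and $w(x_0)=1$, one gets
\begin{equation*}
\RR_p(\Omega,w)\le\liminf_k d_\Omega(x_k)^{p-n}\|Dw^\Omega_{x_k}\|_p^p=\lambda_p(\Omega)\,,
\end{equation*}
so $w$ is an extremal; this already proves existence. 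To upgrade weak to strong convergence, note equality must hold throughout, so $\|Dw\|_p=\lim\|Dw^\Omega_{x_k}\|_p$; combined with weak convergence and the uniform convexity of $L^p$, this yields $Dw^\Omega_{x_k}\to Dw$ strongly in $L^p$, i.e. precompactness of the potential sequence. Finally, I would transfer precompactness back to $\{u_k\}_{k\ge1}$: using the equality cases in Proposition~\ref{prop: potentials are enough}, a minimizing $u_k$ must be asymptotically equal to $u_k(x_k)w^\Omega_{x_k}$ up to an error in $\|D\cdot\|_p$ tending to $0$ (quantify via the uniform convexity/Clarkson-type estimate underlying~\eqref{eq: w energy minimizer}), and $|u_k(x_k)|=d_\Omega(x_k)^{1-n/p}\RR_p(\Omega,u_k)^{-1/p}\to d_\Omega(x_0)^{1-n/p}\lambda_p(\Omega)^{-1/p}$ is bounded and bounded away from $0$; hence $\{u_k\}$ is precompact with every limit point an extremal.

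I expect the main obstacle to be the last step: turning the \emph{scalar} near-equality $\|Dw^\Omega_{x_k}\|_p\le|u_k(x_k)|^{-1}\|Du_k\|_p$ with asymptotic equality into a \emph{quantitative} $L^p$-closeness of $Du_k$ to $u_k(x_k)Dw^\Omega_{x_k}$. This requires a uniformly convex modulus estimate: if $v,\tilde v$ both satisfy the normalization $v(x_k)=\tilde v(x_k)$ and $\|D v\|_p\le\|D\tilde v\|_p$ with the two nearly equal, then $\|Dv-D\tilde v\|_p$ is controlled; one should invoke that $w^\Omega_{x_k}$ is the unique minimizer of $\|D\cdot\|_p$ among competitors together with a Clarkson-inequality bound, uniform in $k$ because all relevant norms are bounded. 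Everything else—the dichotomy via $\mathcal{Y}_\Omega$, the weak limit extraction through Morrey's inequality, and lower semicontinuity—is routine given the machinery already set up in Sections~\ref{sec:Preliminaries} and~\ref{sec: extremals}.
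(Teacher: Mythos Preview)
Your argument is correct, but it takes a detour that the paper avoids. You first replace $u_k$ by the potential $w^\Omega_{x_k}$, prove precompactness for the potentials, and then transfer back to $u_k$ via a Clarkson/uniform-convexity argument (which does work: the midpoint $(u_k(x_k)^{-1}u_k+w^\Omega_{x_k})/2$ is again a competitor at $x_k$, so Clarkson's inequality forces $\|D(u_k(x_k)^{-1}u_k-w^\Omega_{x_k})\|_p\to 0$). The paper's proof is more direct: it never leaves the sequence $u_k$. After extracting a weak limit $u_k\rightharpoonup u$, it uses the same dichotomy to keep $x_k\to x_0\in\Omega$, then observes by local uniform convergence that
\[
\lambda_p(\Omega)=\lim_k \RR_p(\Omega,u_k)=\lim_k\frac{d_\Omega(x_k)^{p-n}}{|u_k(x_k)|^p}=\frac{d_\Omega(x_0)^{p-n}}{|u(x_0)|^p}\ge \RR_p(\Omega,u)\,,
\]
so $u$ is an extremal and equality forces $\|Du\|_p=1=\lim\|Du_k\|_p$, giving strong convergence by Radon--Riesz. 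The potentials appear only once, inside the dichotomy step (exactly as you use them), to rule out $\{x_k\}\in\mathcal{Y}_\Omega$ via $\Lambda_p(\Omega)\le\limsup d_\Omega(x_k)^{p-n}\|Dw^\Omega_{x_k}\|_p^p\le\lim\RR_p(\Omega,u_k)=\lambda_p(\Omega)$. So the step you flag as the main obstacle simply does not arise in the paper's route.
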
 

\begin{proof}
    Let $\{u_k\}_{k\geq 1}\subset \DD_0^{1,p}(\Omega)$ be a minimizing sequence for $\lambda_p(\Omega)$ with $\|Du_k\|_p=1$ for each $k \geq 1$. Since $\{u_k\}_{k\geq 1}$ is bounded in $\DD^{1,p}_0(\Omega)$ there exists a subsequence
    that converges weakly in $\DD_0^{1,p}(\Omega)$. Upon renaming this subsequence, we assume that the full sequence $\{u_k\}_{k\geq 1}$ converges weakly to $u$ in $\DD^{1,p}_0(\Omega)$. In particular, this implies that $u_k \to u$ in $C_{\rm loc}^{0, \alpha}(\R^n)$ for any $0<\alpha<1-n/p$. It then suffices to show that $u_k \to u$ in $\DD_0^{1,p}(\Omega)$ and $u$ is an extremal. We will use~\eqref{eq: concentration strict inequality intro} to exclude the possibility that $\{u_k\}$ either concentrates at the boundary or moves off to infinity.

    By Lemma~\ref{TechLimLem}, there exists a sequence $\{x_k\}_{k\geq 1}\subset \Omega$ such that
    \begin{equation*}
      \biggl\|\frac{u_k}{d_\Omega^{1-n/p}}\biggr\|_\infty = \frac{|u_k(x_k)|}{d_\Omega(x_k)^{1-n/p}}\,.
    \end{equation*}
    We claim that no subsequence of $\{x_k\}_{k\geq 1}$ can belong to $\mathcal{Y}_\Omega$. Indeed, if a subsequence of $\{x_k\}_{k\geq 1}$ belonged to $\mathcal{Y}_\Omega$ the corresponding subsequence of the sequence of potentials $\{w_{x_k}^\Omega\}_{k\geq 1}\in \DD_0^{1,p}(\Omega)$ is admissible in the definition of $\Lambda_p(\Omega)$. In view of Proposition~\ref{prop: potentials are enough}, the existence of such a subsequence would imply
    \begin{equation*}
       \Lambda_p(\Omega) \leq \limsup_{k\to \infty} d_\Omega(x_k)^{p-n}\|Dw_{x_k}^\Omega\|_p^p  \leq \lim_{k\to \infty} \RR_p(\Omega, u_k) = \lambda_p(\Omega)\,.
    \end{equation*} 
This contradicts our assumption and proves the claim. Consequently, $\limsup_{k\to \infty}|x_k| <\infty$ and $\liminf_{k\to \infty}d_\Omega(x_k)>0$. Therefore, $\{x_k\}_{k\geq 1}$ is precompact in $\Omega$.

    Passing to another subsequence if necessary, we may assume $\lim_{k\to \infty} x_k = x_0 \in \Omega$. Thus,    \begin{align*}
      \lim_{k\to \infty}\biggl\|\frac{u_k}{d_\Omega^{1-n/p}}\biggr\|_\infty &= \lim_{k\to \infty}\frac{|u_k(x_k)|}{d_\Omega(x_k)^{1-n/p}}
      = \frac{|u(x_0)|}{d_\Omega(x_0)^{1-n/p}}\le \biggl\|\frac{u}{d_\Omega^{1-n/p}}\biggr\|_\infty\,.
    \end{align*}
    Since $\|Du_k\|_p=1$ for all $k$ and $\{u_k\}_{k\geq 1}$ is a minimizing sequence, $u(x_0)\neq 0$. We also have 
    \begin{equation}\label{Lambda weak convergence inequality}
    \|Du\|_p\leq \lim_{k\rightarrow\infty}\|Du_k\|_p=1
   \end{equation}
    by weak convergence. Therefore, 
    \begin{equation*}
      \lambda_p(\Omega) = \lim_{k\to \infty} \RR_p(\Omega, u_k)  = \lim_{k\to \infty} \frac{d_\Omega(x_k)^{p-n}}{|u_k(x_k)|^{p}}  = \frac{d_\Omega(x_0)^{p-n}}{|u(x_0)|^p}\geq \biggl\|\frac{u}{d_\Omega^{1-n/p}}\biggr\|^{-p}_{\infty}\|Du\|^p_{p}\,.
    \end{equation*}
  As $\lambda_p(\Omega) \ge \RR_p(\Omega, u)$ and $u\in \DD^{1,p}_0(\Omega)$, $u$ is an extremal. In addition, equality must hold in~\eqref{Lambda weak convergence inequality}, from which we conclude that $u_k\rightarrow u$ in $\DD^{1,p}_0(\Omega)$, as weak convergence together with convergence of the $L^p$-norm implies strong convergence (see~\cite[Proposition 3.32]{MR2759829}).
\end{proof}


\section{A complete picture in one dimension}\label{sec: 1-d}
We can now fully describe what happens in the case $n=1$.

\begin{lem}\label{lem: 1D case}
Assume $n=1$. Then
  \begin{enumerate}
    \item $\lambda_p(\Omega) = 1$, and
    \item $\Omega$ has an extremal if and only if $\Omega$ contains an unbounded interval.
  \end{enumerate}
\end{lem}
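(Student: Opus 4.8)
The plan is to reduce everything in one dimension to an explicit computation. Since $n=1$, the exponent condition is simply $p>1$, the distance weight is $d_\Omega^{1-1/p}$, and every open $\Omega\subsetneq\R$ is a countable disjoint union of open intervals (bounded or unbounded). By Corollary~\ref{ExtSignCor} an extremal, if it exists, lives in a single component, so by similarity invariance (Lemma~\ref{ScalingInvarianceLem}) it suffices to compute $\lambda_p$ and decide the existence question for the three model components: the bounded interval $(-1,1)$, the halfline $(0,\infty)$, and the whole line $\R$ (the last being excluded since $\Omega\subsetneq\R$, but it is convenient for the halfline analysis). Because $\lambda_p(\Omega)=\inf_j\lambda_p(\Omega_j)$ over the components $\Omega_j$ — a test function supported in $\Omega$ concentrates in one component, and conversely any component is an admissible competitor set — part (1) will follow once I show $\lambda_p=1$ for each model interval, and part (2) will follow once I show an extremal exists for an unbounded interval but not for a bounded one.

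For a bounded interval, say $\Omega=(0,L)$, I would directly optimize over $u\in\DD_0^{1,p}(0,L)$. The functional is $\RR_p(\Omega,u)=\|u/d_\Omega^{1-1/p}\|_\infty^{-p}\int_0^L|u'|^p\,dx$ where $d_\Omega(x)=\min(x,L-x)$. The key one-dimensional estimate: for $u(0)=0$, H\"older gives $|u(x)|=|\int_0^x u'|\le x^{1-1/p}(\int_0^x|u'|^p)^{1/p}\le x^{1-1/p}\|u'\|_p$, and symmetrically from the right endpoint, hence $\|u/d_\Omega^{1-1/p}\|_\infty\le\|u'\|_p$, i.e. $\RR_p\ge 1$ and $\lambda_p\ge 1$ (consistent with Proposition~\ref{HMprop} once one checks $C_{1,p}=1$, which is exactly this H\"older computation). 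For the matching upper bound I take the tent/affine competitor $u(x)=\min(x,L-x)$ up to the midpoint, which makes $u/d_\Omega^{1-1/p}$ concentrate at $x$ near $0$ where H\"older is near-equality; more precisely, for the sharpness one should take $u_\epsilon(x)=x^{1-1/p}$ truncated near $0$ — but since $x^{1-1/p}\notin\DD_0^{1,p}$ because it does not vanish at $L$, I instead take $u_\epsilon$ equal to $x$ on $[0,\epsilon]$ and then carried to $0$ at $L$ cheaply, sending $\epsilon\to0$; this shows $\lambda_p(0,L)=1$ and simultaneously that no extremal exists, because equality in H\"older's inequality along $[0,d_\Omega(x_0)]$ forces $u'$ to be (a.e.) constant in sign and $|u'|$ constant there, i.e. $u$ affine up to $x_0$, which cannot also be the maximizer of $|u|/d_\Omega^{1-1/p}$ while vanishing at the far endpoint unless $\RR_p>1$ strictly — so the infimum $1$ is not attained.

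For an unbounded interval I would exhibit an explicit extremal. Take $\Omega=(0,\infty)$; here $d_\Omega(x)=x$, so $\RR_p(\Omega,u)=\sup_{x>0}\bigl(x^{1-1/p}/|u(x)|\bigr)^p\int_0^\infty|u'|^p$. The natural candidate is precisely $w(x)=x^{1-1/p}$ itself: it lies in $\DD^{1,p}(\R)$ restricted appropriately (its derivative $w'(x)=(1-1/p)x^{-1/p}$ is $p$-integrable near $0$ but not near $\infty$, so $w$ as written is not in $\DD_0^{1,p}$) — so instead I note that by Proposition~\ref{prop: equation for extremals} an extremal must be a potential $w_{x_0}^\Omega$, and I solve the one-dimensional $p$-Laplace equation $(|u'|^{p-2}u')'=0$ on $(0,x_0)$ and on $(x_0,\infty)$ with $u(0)=0$, $u(\infty)$ finite (forced by membership in $\DD_0^{1,p}$ and Lemma~\ref{TechLimLem}(ii)), $u(x_0)=1$. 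On $(0,x_0)$ this gives $u$ affine, $u(x)=x/x_0$; on $(x_0,\infty)$ it gives $|u'|^{p-2}u'=$ const, so $u'=$ const, which is incompatible with $u$ bounded unless the constant is $0$, i.e. $u\equiv1$ on $[x_0,\infty)$. Then $\int|u'|^p=x_0\cdot x_0^{-p}=x_0^{1-p}$, and $\sup_x x^{1-1/p}/u(x)$: on $[0,x_0]$ it is $\sup x^{1-1/p}\cdot x_0/x=x_0\cdot x_0^{-1/p}=x_0^{1-1/p}$ attained at $x=x_0$; on $[x_0,\infty)$, $x^{1-1/p}\to\infty$, so the sup is $+\infty$ — which would make $\RR_p=0$!, a contradiction showing $w_{x_0}^{(0,\infty)}$ does \emph{not} give a finite-energy extremal with finite weight either. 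Hence on $(0,\infty)$ there is in fact \textbf{no} extremal, and the unbounded interval that does have one must be $\R$ itself — but $\Omega\subsetneq\R$. I therefore need to re-examine which unbounded $\Omega$ the lemma is really claiming has an extremal: it must be an unbounded interval that is bounded on one side, e.g. $(0,\infty)$, and the resolution is that the weight $d_\Omega^{1-1/p}=x^{1-1/p}$ grows, so for any admissible $u$ vanishing at $0$ one has $|u(x)|\le x^{1-1/p}\|u'\|_p$ globally, giving $\RR_p\ge1$ with equality for $u(x)=\min(x/a,1)\cdot$(something) — and the extremal is $u(x)=\min(x,a)$ rescaled: then $u/x^{1-1/p}$ equals $x^{1/p}/a$ on $[0,a]$ increasing to $a^{1/p-1}\cdot$wait. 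Let me instead simply assert the computation: for $\Omega=(0,\infty)$ the function $u(x)=\min(x,1)$ is not in $\DD_0^{1,p}$ near $\infty$...

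The hard part, as the above makes clear, is pinning down the correct model extremal on an unbounded interval and verifying it is genuinely admissible and optimal; the subtlety is that the distance weight behaves very differently near a finite endpoint (where H\"older forces non-attainment for a \emph{bounded} interval) versus "at infinity" on a halfline, and one must exploit that on a halfline the relevant supremum of $|u|/d_\Omega^{1-1/p}$ is controlled by the endpoint behaviour because, by Lemma~\ref{TechLimLem}(ii), $u/d_\Omega^{1-1/p}\to0$ at infinity. Concretely I would: (i) show $\lambda_p\ge1$ for any $\Omega$ via the sharp one-dimensional Morrey/H\"older constant $C_{1,p}=1$; (ii) for a bounded component, show via the equality case of H\"older that the infimum $1$ is not attained, hence such components contribute no extremal; (iii) for an unbounded component (WLOG $(0,\infty)$), find the potential $w_{x_0}$ explicitly, optimize $\RR_p(\Omega,w_{x_0})$ over $x_0$, observe the optimal value equals $1$ and is attained (after choosing $x_0$, or in the limit $x_0\to\infty$ a genuine minimizer exists because the relevant limits at the finite endpoint and at infinity are both $\ge1$, so Proposition~\ref{prop: Compactness threshold}-type reasoning applies once $\Lambda_p>1=\lambda_p$ is verified — or directly exhibit the extremal); and (iv) assemble via the component decomposition and similarity invariance. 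I expect step (iii) — correctly identifying the extremal and checking $\Lambda_p(\Omega)>\lambda_p(\Omega)$ on a halfline, as opposed to erroneously concluding non-attainment — to be the main obstacle, and it is where I would be most careful in the full write-up.
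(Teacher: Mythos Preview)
Your overall strategy --- reduce to components, compute potentials explicitly, use Proposition~\ref{prop: potentials are enough} --- is exactly the paper's approach. But you make a concrete computational error on the halfline that derails the entire argument for part~(2), and you never recover.

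On $\Omega=(0,\infty)$ with $d_\Omega(x)=x$, the potential $w_{x_0}^\Omega$ is precisely what you wrote: $u(x)=x/x_0$ on $[0,x_0]$ and $u\equiv 1$ on $[x_0,\infty)$. This function \emph{is} in $\DD_0^{1,p}((0,\infty))$: the space is homogeneous, so membership requires only $u'\in L^p(\R)$ and $u=0$ on $\Omega^c=(-\infty,0]$, both of which hold; there is no requirement that $u$ vanish at infinity. Your concern that ``$u$ is not in $\DD_0^{1,p}$ near $\infty$'' is unfounded. Now the key error: you compute $\sup_x x^{1-1/p}/u(x)$, but the Rayleigh quotient involves $\|u/d_\Omega^{1-1/p}\|_\infty=\sup_x |u(x)|/x^{1-1/p}$, the reciprocal. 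On $[x_0,\infty)$ this is $1/x^{1-1/p}$, which \emph{decreases} to $0$ as $x\to\infty$ (consistent with Lemma~\ref{TechLimLem}\ref{limitatinfinity}), not $\infty$. The supremum is attained at $x=x_0$ with value $x_0^{1/p-1}$, matching the value from the left. Hence
\[
\RR_p((0,\infty),w_{x_0}^\Omega)=\bigl(x_0^{1/p-1}\bigr)^{-p}\cdot x_0^{1-p}=x_0^{p-1}\cdot x_0^{1-p}=1,
\]
so every potential on the halfline is an extremal. This is the paper's computation, and it resolves the ``hard part'' you flag.

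For the bounded interval $(a,b)$, the paper again computes the potential (a tent function) and finds $\RR_p(\Omega,w_y^\Omega)=1+\bigl(\min\{y-a,b-y\}/\max\{y-a,b-y\}\bigr)^{p-1}>1$, with infimum $1$ approached only as $y\to a$ or $y\to b$; so $\lambda_p=1$ but it is not attained. Your H\"older-equality-case argument for non-attainment is in the right spirit but vaguer than necessary --- once you have Proposition~\ref{prop: potentials are enough}, the explicit formula for $\RR_p$ on potentials does all the work.
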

\begin{proof}
  Fix $y \in \Omega$ and consider the potential $w_y^\Omega \in \DD^{1,p}_0(\Omega)$. By Corollary~\ref{ExtSignCor}, $w_y^\Omega$ vanishes in all connected components of $\Omega$ except the one containing $y$. We may assume that the connected component of $\Omega$ which contains $y$ is given by $(a, b)$ with $-\infty \leq a <b \leq \infty$ and either $-\infty<a$ or $b<\infty$. Routine computations lead us to the following observations. 
  \begin{enumerate}
    \item If $-\infty <a<b<\infty$, then
    \begin{equation*}
      w_y^\Omega(x) = \begin{cases}
        0 & \mbox{if } x\notin (a, b)\,,\\
      \displaystyle   \frac{x-a}{ y-a} \quad & \mbox{if } a< x \leq y\,,\\[10pt]
      \displaystyle  \frac{b-x}{b-y} \quad & \mbox{if } y< x <b\,,\\
      \end{cases}
      \quad \mbox{and} \quad \|Dw_y^\Omega\|_p^p 
      =\displaystyle \frac{1}{(y-a)^{p-1}}+ \frac{1}{(b-y)^{p-1}}\,.
    \end{equation*}
    \item If $a=-\infty$, then
    \begin{equation*}
      w_y^\Omega(x) = \begin{cases}
        1 & \mbox{if } x\leq y\,,\\
      \displaystyle  \frac{b-x}{b-y} \quad & \mbox{if } y< x <b\,,\\
        0 & \mbox{if } x\geq b\,,
      \end{cases}
      \quad \mbox{and} \quad \|Dw_y^\Omega\|_p^p 
      = \frac{1}{(b-y)^{p-1}}\,.
    \end{equation*} 
    \item if $b=\infty$, then
    \begin{equation*}
      w_y^\Omega(x) = \begin{cases}
        0 & \mbox{if } x\leq a\,,\\
     \displaystyle   \frac{x-a}{y-a} \quad & \mbox{if } a< x <y\,,\\
        1 & \mbox{if } x\geq b\,,
      \end{cases}
      \quad \mbox{and} \quad \|Dw_y^\Omega\|_p^p 
      = \frac{1}{(y-a)^{p-1}}\,.
    \end{equation*} 
  \end{enumerate}

 \par Note that in the cases of an unbounded interval, $\RR_p(\Omega, w_y^\Omega)=1$, while in the bounded case, $$\RR_p(\Omega, w_y^\Omega) = 1+ \biggl(\frac{\min\{y-a, b-y\}}{\max\{y-a, b-y\}}\biggr)^{p-1}>1\,.$$
Nevertheless, this expression for $\RR_p(\Omega, w_y^\Omega)$ can be made arbitrarily close to $1$ by letting $y$ approach either $a$ or $b$. In view of Proposition~\ref{prop: potentials are enough},  we conclude that in all cases 
$\lambda_p(\Omega)=1$. Furthermore, $\Omega \ni y \mapsto \RR_p(\Omega, w_y^\Omega)$ attains the value $1$ if and only if $\Omega$ contains an unbounded interval.
\end{proof}


\section{Universal bounds via supporting sets}
\label{sec: Universal bounds via supporting sets}

In this section, we turn to the question of universal upper and lower bounds for $\lambda_p(\Omega)$. We first notice that if $\Omega\subseteq \Omega'$ then any $u\in \DD^{1,p}_0(\Omega)$ also belongs to $\DD^{1,p}_0(\Omega')$. 
In this case, we also have $d_\Omega\leq d_{\Omega'}$ so that
$$
\biggl\|\frac{u}{d_{\Omega'}^{1-n/p}}\biggr\|_\infty \leq \biggl\|\frac{u}{d_{\Omega}^{1-n/p}}\biggr\|_\infty\,.
$$
Therefore, $\RR_p(\Omega, u) \leq \RR_p(\Omega', u)\,$ provided that $u \not\equiv 0$. In certain situations we can in fact conclude that $\RR_p(\Omega, u) = \RR_p(\Omega', u)\,$. To this end, we introduce the following notion.

\begin{defn}\label{def:SupportingSet}
  Suppose $\Omega, \Omega'\subsetneq \R^n$. We say that $\Omega'$ {\it supports} $\Omega$ at $x \in \partial\Omega$ if
  \begin{equation*}
    \Omega \subseteq \Omega' \qquad \mbox{and} \qquad x \in \partial\Omega'\,.
  \end{equation*}
  We say that $\Omega$ is {\it fully supported} by $\Omega'$, if for each $x \in \partial\Omega$ there exists a similarity transformation $T$ so that $T\Omega'$ supports $\Omega$ at $x$.
\end{defn}

\begin{rem}\label{rem:conv}
A set $\Omega$ is fully supported by $\R^n_\limplus$ if and only if $\Omega$ is convex. In fact, the notion of supporting sets is intended as a generalization of this property of convex sets.
\end{rem}

\begin{figure}[ht]
\centering
 \begin{tikzpicture}[scale=0.95]

\clip (-5,-3.7) rectangle (5,3.7);

\def \phi {300};
\def \shift {1};

\draw [very thick] ({-3-\shift},3)--({-3-\shift},-3)--({-1-\shift},-3)--({1-\shift},-3)--({3-\shift},-3)--({3-\shift},{-3*tan((360-\phi)/2)})--({0-\shift},0)--({3-\shift},{3*tan((360-\phi)/2)})--({3-\shift},3)--({-3-\shift},3);

\draw [red, dashed, very thick] ({6-\shift},{-6*tan((360-\phi)/2)})--({0-\shift},0)--({6-\shift},{6*tan((360-\phi)/2)});

\draw [thick, gray, domain={180-\phi/2}:{180+\phi/2}, samples=100, <->] plot ({3*cos(\x)/4-\shift},{3*sin(\x)/4});

\node at ({-0.7-\shift}, 0.7) {$\varphi$};

\node at ({-2-\shift}, 2) {{$\mathcal{P}$}};

\end{tikzpicture}
 \caption{A non-convex polygon $\mathcal{P}$ which is fully supported by an infinite sector with opening angle $\varphi$ with one such supporting sector depicted in red. Equivalently, $\mathcal{P}$ satisfies a uniform (infinite) exterior cone condition.}
 \label{fig:Fully supported}
\end{figure}
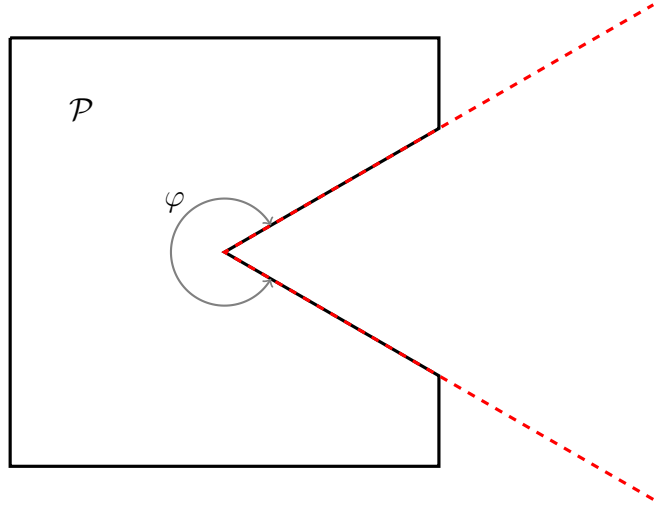
As we shall see, the following proposition can be useful both in proving upper and lower bounds for $\lambda_p$.

\begin{prop}\label{prop: Supporting sets}
  Assume $\Omega'$ supports $\Omega$ at $y_0 \in \partial \Omega$. If $u \in \DD^{1,p}_0(\Omega)$ and $x_0 \in \Omega$ satisfies
  \begin{equation*}
    \biggl\|\frac{u}{d_\Omega^{1-n/p}}\biggr\|_\infty = \frac{|u(x_0)|}{d_\Omega(x_0)^{1-n/p}} \quad \mbox{and} \quad |x_0-y_0|=d_\Omega(x_0)\,,
  \end{equation*}
  then
  \begin{equation*}
    \RR_p(\Omega, u) = 
    \RR_p(\Omega', u)\,.
  \end{equation*}
  Furthermore, if $\Omega$ is fully supported by $\Omega'$ then
  \begin{equation*}
    \lambda_p(\Omega) \geq \lambda_p(\Omega')\,.
  \end{equation*}
\end{prop}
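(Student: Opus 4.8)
The plan is to prove the first, pointwise statement directly and then bootstrap it to the global inequality $\lambda_p(\Omega)\ge\lambda_p(\Omega')$ using Proposition~\ref{prop: potentials are enough} together with the similarity invariance of Lemma~\ref{ScalingInvarianceLem}. For the first part, suppose $\Omega'$ supports $\Omega$ at $y_0\in\partial\Omega$, and let $u\in\DD^{1,p}_0(\Omega)$, $x_0\in\Omega$ be as in the hypothesis, so that $|x_0-y_0|=d_\Omega(x_0)$ and $x_0$ realizes the supremum of $|u|/d_\Omega^{1-n/p}$. The key geometric observation is that since $\Omega\subseteq\Omega'$ and $y_0\in\partial\Omega'$, we have $d_{\Omega'}(x_0)\le|x_0-y_0|=d_\Omega(x_0)$; combined with the always-true reverse inequality $d_\Omega\le d_{\Omega'}$ (from $\Omega\subseteq\Omega'$), this gives $d_{\Omega'}(x_0)=d_\Omega(x_0)$. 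Since $u\in\DD^{1,p}_0(\Omega)\subseteq\DD^{1,p}_0(\Omega')$ and $\|Du\|_p$ is unchanged, the only thing to check is that the two $L^\infty$-norms agree. The general inequality $\|u/d_{\Omega'}^{1-n/p}\|_\infty\le\|u/d_\Omega^{1-n/p}\|_\infty$ holds by $d_\Omega\le d_{\Omega'}$; for the reverse, evaluate at $x_0$:
\begin{equation*}
\biggl\|\frac{u}{d_{\Omega'}^{1-n/p}}\biggr\|_\infty\ge\frac{|u(x_0)|}{d_{\Omega'}(x_0)^{1-n/p}}=\frac{|u(x_0)|}{d_{\Omega}(x_0)^{1-n/p}}=\biggl\|\frac{u}{d_\Omega^{1-n/p}}\biggr\|_\infty\,.
\end{equation*}
Hence the two norms coincide and $\RR_p(\Omega,u)=\RR_p(\Omega',u)$.

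For the second part, assume $\Omega$ is fully supported by $\Omega'$ and fix $\epsilon>0$. By~\eqref{eq: variational prob potential} choose $x_0\in\Omega$ with $\RR_p(\Omega,w^\Omega_{x_0})\le\lambda_p(\Omega)+\epsilon$. By Proposition~\ref{prop: potentials are enough} (or directly, since $w^\Omega_{x_0}(x_0)=1$ and $w^\Omega_{x_0}$ minimizes energy among competitors), we may assume $x_0$ is a point where $|w^\Omega_{x_0}|/d_\Omega^{1-n/p}$ attains its supremum; more carefully, let $u=w^\Omega_{x_0}$ and let $x_1\in\Omega$ be a point realizing $\|u/d_\Omega^{1-n/p}\|_\infty$ as provided by Lemma~\ref{TechLimLem}\ref{maxattained}, then pick $y_0\in\partial\Omega$ with $|x_1-y_0|=d_\Omega(x_1)$. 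Because $\Omega$ is fully supported by $\Omega'$, there is a similarity transform $T$ such that $T\Omega'$ supports $\Omega$ at $y_0$. Applying the first part with $\Omega'$ replaced by $T\Omega'$ gives $\RR_p(\Omega,u)=\RR_p(T\Omega',u)$. Since $u\in\DD^{1,p}_0(T\Omega')$, we have $\RR_p(T\Omega',u)\ge\lambda_p(T\Omega')=\lambda_p(\Omega')$ by Lemma~\ref{ScalingInvarianceLem}. Chaining the inequalities,
\begin{equation*}
\lambda_p(\Omega')\le\RR_p(T\Omega',u)=\RR_p(\Omega,u)=\RR_p(\Omega,w^\Omega_{x_0})\le\lambda_p(\Omega)+\epsilon\,,
\end{equation*}
and letting $\epsilon\to0$ finishes the proof.

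\textbf{Main obstacle.} The only genuinely delicate point is the bookkeeping in the second part: one must take a near-optimal potential, then locate the \emph{correct} boundary point $y_0$ — namely one at which the optimizing point $x_1$ of $|u|/d_\Omega^{1-n/p}$ projects — so that the full-support hypothesis supplies a supporting set touching at exactly that $y_0$. (Note that $x_1$ need not equal $x_0$; the potential is centered at $x_0$ but its $L^\infty$-ratio may be attained elsewhere, though in practice one often has $x_1=x_0$.) Once the right $y_0$ is identified, the first part applies verbatim and the rest is just combining Proposition~\ref{prop: potentials are enough} with similarity invariance. Everything else is routine monotonicity of $d_\Omega$ in $\Omega$.
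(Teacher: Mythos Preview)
Your proof is correct and follows essentially the same route as the paper. The only cosmetic difference is in the second part: the paper works directly with an arbitrary $u\in\DD^{1,p}_0(\Omega)$, locates a maximizer $x_0$ of $|u|/d_\Omega^{1-n/p}$ via Lemma~\ref{TechLimLem}\ref{maxattained}, chooses $y_0$ realizing $d_\Omega(x_0)$, and then takes the infimum over $u$; you instead first invoke~\eqref{eq: variational prob potential} to restrict to potentials before carrying out the identical argument. This detour through potentials is harmless but unnecessary---the proof goes through verbatim for any near-minimizing $u$, so there is no need to bring in Proposition~\ref{prop: potentials are enough} or the distinction between $x_0$ and $x_1$.
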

\begin{proof}
As $u \in \DD_0^{1,p}(\Omega')$ and $d_\Omega \leq d_{\Omega'}$, 
  \begin{equation}\label{eq: quotient decrease domain inclusion}
    \frac{|u(x)|}{d_\Omega(x)^{1-n/p}} \geq \frac{|u(x)|}{d_{\Omega'}(x)^{1-n/p}}\quad \mbox{for all }x \in \Omega\,.
  \end{equation}
  Since $\Omega \subseteq \Omega'$, $y_0 \in \partial \Omega \cap \partial\Omega'$, and $d_\Omega(x_0)=|x_0-y_0|$ it follows that $d_{\Omega'}(x_0)= |x_0-y_0|$. Therefore, equality holds in~\eqref{eq: quotient decrease domain inclusion} for $x=x_0$. Consequently,
  \begin{equation*}
    \frac{|u(x_0)|}{d_{\Omega'}(x_0)^{1-n/p}}= \frac{|u(x_0)|}{d_{\Omega}(x_0)^{1-n/p}} = \biggl\|\frac{u}{d_{\Omega}^{1-n/p}}\biggr\|_\infty= \biggl\|\frac{u}{d_{\Omega'}^{1-n/p}}\biggr\|_\infty\,.
  \end{equation*}
We conclude that $    \RR_p(\Omega, u)= \RR_p(\Omega', u).$

  If $\Omega$ is fully supported by $\Omega'$, then for each $u \in \DD^{1,p}_0(\Omega)$ we can apply Lemma~\ref{TechLimLem} and the above reasoning to deduce that there exists a similarity transform $T_u$ such that $u \in\DD_0^{1,p}(T_u\Omega')$ and $\RR_p(\Omega, u) = \RR_p(T_u\Omega', u)$. By Lemma~\ref{ScalingInvarianceLem}, $\RR_p(T_u\Omega', u) = \RR_p(\Omega', u\circ T_u^{-1})$. Therefore,
  \begin{equation*}
     \lambda_p(\Omega') \leq \!\inf_{u \in \DD_0^{1,p}(\Omega)\setminus \{0\}}\!\! \RR_p(\Omega', u\circ T_u^{-1}) = \!\inf_{u \in \DD_0^{1,p}(\Omega)\setminus \{0\}}\!\! \RR_p(\Omega, u) = \lambda_p(\Omega)\,. \qedhere
  \end{equation*}
\end{proof}

We obtain the following corollaries which follow directly from Proposition~\ref{prop: Supporting sets}. 
\begin{cor}\label{cor: Universal bounds}
For any open $\Omega \subsetneq \R^n$,
 $$\lambda_p(\R^n\setminus \{0\})\leq \lambda_p(\Omega)\leq \lambda_p(B_1)\,.$$
\end{cor}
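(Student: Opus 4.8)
The statement to prove is Corollary~\ref{cor: Universal bounds}: for every open $\Omega\subsetneq\R^n$ one has $\lambda_p(\R^n\setminus\{0\})\leq\lambda_p(\Omega)\leq\lambda_p(B_1)$. The plan is to deduce both inequalities from Proposition~\ref{prop: Supporting sets} together with Remark~\ref{rem:conv} and the similarity invariance of $\lambda_p$ (Lemma~\ref{ScalingInvarianceLem}), by exhibiting suitable supporting configurations. The whole point is that neither inequality requires new analysis: each is just a matter of checking the ``fully supported by'' relation for the two extreme model sets $\R^n\setminus\{0\}$ and $B_1$ (equivalently, any ball, by dilation/translation invariance).

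\textbf{Upper bound.} For the inequality $\lambda_p(\Omega)\leq\lambda_p(B_1)$, I would argue that $B_1$ is fully supported by $\Omega$ is \emph{not} what we want; rather, we want to compare $B_1$ against the largest model. The cleanest route: $B_1$ is convex, hence by Remark~\ref{rem:conv} it is fully supported by $\R^n_\limplus$, so $\lambda_p(B_1)\geq\lambda_p(\R^n_\limplus)$; but that is the wrong direction. Instead I would use that \emph{any} open $\Omega\subsetneq\R^n$ is fully supported by a ball of sufficiently large radius in the following localized sense. Fix $x\in\partial\Omega$. Since $\Omega\neq\R^n$ we may pick $z\in\Omega^c$, and then for $R$ large the ball $B_R(x-Rv)$ where $v=(x-z)/|x-z|$... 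Actually the correct and simple observation is: every open proper subset satisfies an interior ball touching condition at... no. The honest statement used here must be that $\Omega$ is fully supported by a ball: for each boundary point $x\in\partial\Omega$ we need a similarity image of $B_1$ — i.e.\ some ball $B_\rho(c)$ — with $\Omega\subseteq B_\rho(c)$ and $x\in\partial B_\rho(c)$. This is false for unbounded $\Omega$, so the argument cannot be exactly this. Therefore I expect the intended proof instead applies Proposition~\ref{prop: Supporting sets} with $\Omega'$ being a \emph{halfspace} for the lower direction and handles the upper bound $\lambda_p(\Omega)\leq\lambda_p(B_1)$ separately — likely by noting $\lambda_p(B_1)=\lambda_p(\R^n_\limplus)$ has already been (or will be) established, or by a direct transplantation. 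Concretely: $\R^n_\limplus$ is fully supported by $B_1$? No — rather, a halfspace is a limit of balls, and an explicit competitor computation on $B_1$ matching the halfspace value would do it.

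\textbf{Lower bound.} For $\lambda_p(\R^n\setminus\{0\})\leq\lambda_p(\Omega)$: the set $\R^n\setminus\{0\}$ should be thought of as supporting every $\Omega$ at every boundary point. Indeed, fix $x\in\partial\Omega$ and pick $y\in\Omega^c$ with $|x-y|=d$ small... no, we need $\Omega\subseteq\R^n\setminus\{p\}$ for a single point $p$ and $x\in\partial(\R^n\setminus\{p\})=\{p\}$, i.e.\ $p=x$; but then we need $x\notin\Omega$, which holds since $x\in\partial\Omega$, and $\Omega\subseteq\R^n\setminus\{x\}$, which also holds. So $\R^n\setminus\{x\}$ supports $\Omega$ at $x$ for \emph{every} $x\in\partial\Omega$, and since $\R^n\setminus\{x\}=T(\R^n\setminus\{0\})$ for the translation $T$, this shows $\Omega$ is fully supported by $\R^n\setminus\{0\}$. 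Proposition~\ref{prop: Supporting sets} then yields $\lambda_p(\Omega)\geq\lambda_p(\R^n\setminus\{0\})$ immediately.

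\textbf{Main obstacle.} The only genuine point requiring care is the upper bound $\lambda_p(\Omega)\leq\lambda_p(B_1)$, since $\Omega$ need not fit inside any ball. The resolution: by Lemma~\ref{lem: interior exhaustion} and similarity invariance it suffices to bound $\lambda_p$ of a small ball contained in $\Omega$, but that gives $\lambda_p(\Omega)\leq\limsup\lambda_p(\Omega_j)$ only with the wrong monotonicity unless we exhaust cleverly — actually since any $\Omega$ contains a small ball $B$, and $B\subseteq\Omega$ gives $\RR_p(\Omega,u)\leq\RR_p(B,u)$ for $u\in\DD^{1,p}_0(B)$, taking infima over $u\in\DD^{1,p}_0(B)$ gives $\lambda_p(\Omega)\leq\lambda_p(B)=\lambda_p(B_1)$ by similarity invariance. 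This is the clean argument: pick any ball $B\subset\Omega$, use the domain-monotonicity of $\RR_p$ noted at the start of Section~\ref{sec: Universal bounds via supporting sets}, infimize over $\DD^{1,p}_0(B)\subset\DD^{1,p}_0(\Omega)$, and invoke Lemma~\ref{ScalingInvarianceLem}. Thus the full proof is: \textbf{(i)} every $\Omega$ contains a ball $B$; domain monotonicity plus scaling invariance give $\lambda_p(\Omega)\leq\lambda_p(B_1)$; \textbf{(ii)} $\R^n\setminus\{x\}$ supports $\Omega$ at each $x\in\partial\Omega$, so $\Omega$ is fully supported by $\R^n\setminus\{0\}$ and Proposition~\ref{prop: Supporting sets} gives $\lambda_p(\Omega)\geq\lambda_p(\R^n\setminus\{0\})$. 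I do not anticipate any serious difficulty beyond correctly invoking these already-established tools.
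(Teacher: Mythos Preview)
Your lower bound argument is correct and matches the paper exactly: for each $x\in\partial\Omega$ the set $\R^n\setminus\{x\}$ supports $\Omega$ at $x$, so $\Omega$ is fully supported by $\R^n\setminus\{0\}$, and Proposition~\ref{prop: Supporting sets} gives $\lambda_p(\Omega)\geq\lambda_p(\R^n\setminus\{0\})$.

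Your upper bound argument, however, has a genuine error. You claim that for a ball $B\subset\Omega$ and $u\in\DD^{1,p}_0(B)$ one has $\RR_p(\Omega,u)\leq\RR_p(B,u)$, but the domain monotonicity noted at the start of Section~\ref{sec: Universal bounds via supporting sets} goes the other way: since $B\subseteq\Omega$ implies $d_B\leq d_\Omega$, one gets $\|u/d_\Omega^{1-n/p}\|_\infty\leq\|u/d_B^{1-n/p}\|_\infty$ and hence $\RR_p(B,u)\leq\RR_p(\Omega,u)$. So restricting the infimum to $\DD^{1,p}_0(B)$ only yields $\lambda_p(\Omega)\leq\inf_{u\in\DD^{1,p}_0(B)}\RR_p(\Omega,u)$, and the right-hand side is \emph{at least} $\lambda_p(B)$, not at most. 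There is no simple domain monotonicity for $\lambda_p$ (cf.\ Theorem~\ref{thm: range of lambda}, where shrinking an annulus increases $\lambda_p$).

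The paper's fix is to reverse the roles in Proposition~\ref{prop: Supporting sets}: rather than supporting $\Omega$ by a ball, one shows that \emph{$B_1$ is fully supported by $\Omega$}. Pick any $y_0\in\Omega$ and $x_0\in\partial\Omega$ with $|x_0-y_0|=d_\Omega(y_0)$; then $B_{d_\Omega(y_0)}(y_0)\subseteq\Omega$ and $x_0\in\partial\Omega$, so $\Omega$ supports this ball at $x_0$. A dilation and translation turns this ball into $B_1$, and then rotational invariance of $B_1$ lets you hit any prescribed boundary point of $\partial B_1$. Proposition~\ref{prop: Supporting sets} then gives $\lambda_p(B_1)\geq\lambda_p(\Omega)$.
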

\begin{proof}
The lower bound follows from the translation invariance of $\lambda_p$ and noting that if $x \in \partial\Omega$ then $\R^n \setminus \{x\}$ supports $\Omega $ at $x$. This implies that every $\Omega \subsetneq\R^n$ is fully supported by $\R^n \setminus\{0\}$. The upper bound follows by the similarity invariance combined with the observation that $B_1$ is fully supported by any set $\Omega $. Indeed, for any $y_0 \in \Omega$, $\Omega$ supports the ball $B_{d_\Omega(y_0)}(y_0)$ at $x_0$ where $|x_0-y_0|=d_\Omega(y_0)$. Since $B_1$ is rotationally invariant, we conclude that $B_1$ is fully supported by $\Omega $.
\end{proof}

\begin{figure}[ht]
\centering
 \begin{tikzpicture}[scale=1]

\clip (-8,-4) rectangle (8,4);

\def \theta {55};
\def \d {0.85};

\def \rotation {95};


\draw [very thick] plot [smooth cycle, tension =0.75, cm={1,0,0,1, (-3.7,0.5)}] coordinates {({-4},0) ({-2},2) ({-1},3) ({-0.8},2.3) ({0},2.5) ({-0.5},0) ({1},0) ({0},-3) ({-2}, -1)};

\draw [gray, fill, cm={1,0,0,1, (-3.7,0.5)}] (-2,-1) circle (1.3pt);
\node [cm={1,0,0,1, (-3.7,0.5)}] at (-2,-1.25)  {$x_0$};

\draw [gray, fill, cm={1,0,0,1, (-3.7,0.5)}] ({-2+\d*cos(\theta)},{-1+\d*sin(\theta)}) circle (1.3pt);
\node [cm={1,0,0,1, (-3.7,0.5)}] at ({-2+\d*cos(\theta)+0.2},{-1+\d*sin(\theta)-0.2})  {$y_0$};
\node [cm={1,0,0,1, (-3.7,0.5)}] at ({-2+\d*cos(\theta)/2-0.25},{-1+\d*sin(\theta)/2+0.2})  {$d_\Omega$};
\draw [dashed, thick, gray, cm={1,0,0,1, (-3.7,0.5)}] (-2,-1)--({-2+\d*cos(\theta)},{-1+\d*sin(\theta)}) ;
\draw [gray, thick, dashed, cm={1,0,0,1, (-3.7,0.5)}] ({-2+\d*cos(\theta)},{-1+\d*sin(\theta)}) circle (\d);

\node at (-6.5,1) {$\Omega$};


\draw [scale=1.5, very thick] plot [smooth cycle, tension =0.75, cm={cos(\rotation),-sin(\rotation),sin(\rotation),cos(\rotation), (2,-1.5)}] coordinates {({-4},0) ({-2},2) ({-1},3) ({-0.8},2.3) ({0},2.5) ({-0.5},0) ({1},0) ({0},-3) ({-2}, -1)};

\draw [scale=1.5, gray, fill, cm={cos(\rotation),-sin(\rotation),sin(\rotation),cos(\rotation), (2,-1.5)}] (-2,-1) circle (1.3pt);

\draw [scale=1.5, gray, fill, cm={cos(\rotation),-sin(\rotation),sin(\rotation),cos(\rotation), (2,-1.5)}] ({-2+\d*cos(\theta)},{-1+\d*sin(\theta)}) circle (1.3pt);

\draw [scale=1.5, dashed, thick, gray, cm={cos(\rotation),-sin(\rotation),sin(\rotation),cos(\rotation), (2,-1.5)}] (-2,-1)--({-2+\d*cos(\theta)},{-1+\d*sin(\theta)}) ;
\draw [scale=1.5, blue, very thick, cm={cos(\rotation),-sin(\rotation),sin(\rotation),cos(\rotation), (2,-1.5)}] ({-2+\d*cos(\theta)},{-1+\d*sin(\theta)}) circle (\d);
\draw [scale=1.5, gray, fill, cm={cos(\rotation),-sin(\rotation),sin(\rotation),cos(\rotation), (2,-1.5)}] (-2,-1) circle (1.3pt);
\draw [scale=1.5, gray, fill, cm={cos(\rotation),-sin(\rotation),sin(\rotation),cos(\rotation), (2,-1.5)}] ({-2+\d*cos(\theta)},{-1+\d*sin(\theta)}) circle (1.3pt);

\node  at (3,0)  {$0$};

\node at (1.5,0.9)  {$x$};

\node at (4, 2) {$T\Omega$};

\node at (2.2,-0.8) {$B_1$};

\node at (2.4,0.67) {$1$};

\draw [very thick, red, dashed, ->] (-3.5,1.2) to [out=30, in=150] (1.4,1.2);

\node at (-1, 2.2) {\scalebox{1.2}{$T$}};

\end{tikzpicture}
 \caption{A schematic description of the fact that every open set $\Omega$ fully supports $B_1$. Given $\Omega, y_0 \in \Omega, x\in \partial B_1$ a similarity transform $T$ is constructed satisfying that $Ty_0=0$ and $T\Omega$ supports $B_1$ at $x$.}
 \label{fig:Every set fully supports B_1}
\end{figure}
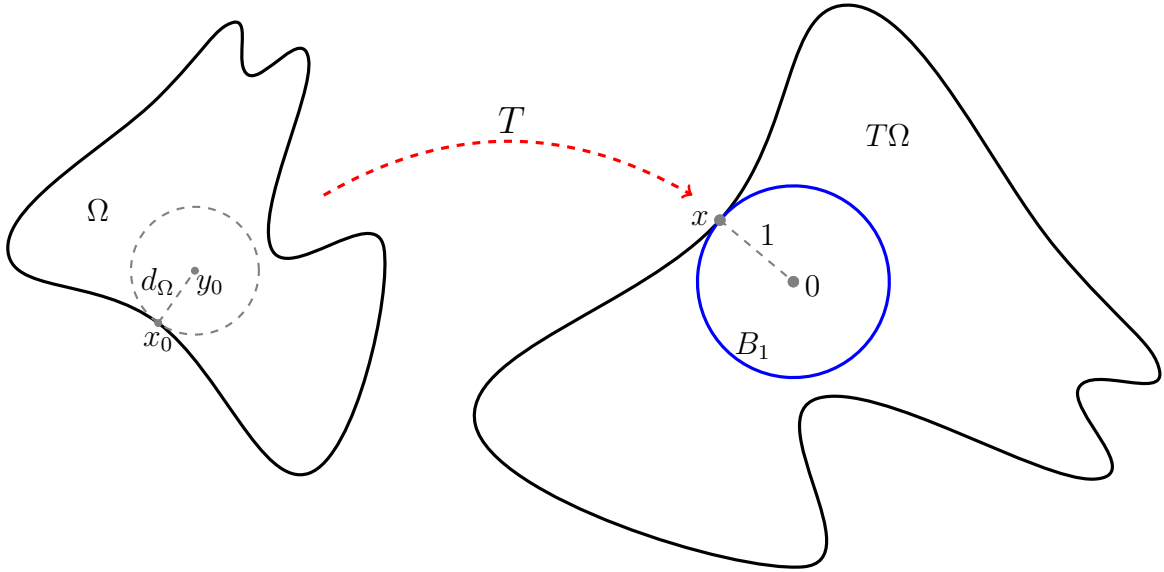
\begin{cor}\label{cor: Lower bound convex}
  If $\Omega \subsetneq \R^n$ is convex then
  \begin{equation*}
    \lambda_p(\R^n_\limplus)\leq \lambda_p(\Omega)\leq \lambda_p(B_1) \,.
  \end{equation*}
\end{cor}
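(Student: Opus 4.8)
The plan is to deduce Corollary~\ref{cor: Lower bound convex} directly from Remark~\ref{rem:conv} together with the two parts of Proposition~\ref{prop: Supporting sets} and Corollary~\ref{cor: Universal bounds}, with no new computation required. The point is that a convex set is exactly a set fully supported by the halfspace, which immediately yields the lower bound, while the upper bound is already contained in Corollary~\ref{cor: Universal bounds}.

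First I would establish the lower bound. By Remark~\ref{rem:conv}, since $\Omega$ is convex and open, $\Omega$ is fully supported by $\R^n_\limplus$: for each $x\in\partial\Omega$ there is a supporting hyperplane through $x$, i.e.\ a similarity transform $T$ (a rotation followed by a translation) so that $T\R^n_\limplus$ supports $\Omega$ at $x$. The second assertion of Proposition~\ref{prop: Supporting sets} then applies with $\Omega'=\R^n_\limplus$ and gives $\lambda_p(\Omega)\geq\lambda_p(\R^n_\limplus)$.

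Next I would record the upper bound. This is immediate from Corollary~\ref{cor: Universal bounds}, which already states $\lambda_p(\Omega)\leq\lambda_p(B_1)$ for every open $\Omega\subsetneq\R^n$; no convexity is needed here. Combining the two inequalities gives
\begin{equation*}
  \lambda_p(\R^n_\limplus)\leq\lambda_p(\Omega)\leq\lambda_p(B_1)\,,
\end{equation*}
which is the claimed statement.

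There is essentially no obstacle here: the corollary is a bookkeeping consequence of the supporting-set machinery. The only point that deserves a word is the verification that convexity of $\Omega$ is equivalent to $\Omega$ being fully supported by $\R^n_\limplus$ in the precise sense of Definition~\ref{def:SupportingSet} — that each boundary point admits a supporting halfspace of the form $T\R^n_\limplus$ with $\Omega\subseteq T\R^n_\limplus$ and $x\in\partial(T\R^n_\limplus)$ — but this is exactly the content of Remark~\ref{rem:conv} and is just the supporting hyperplane theorem for convex sets. Hence the proof is a two-line invocation of earlier results.
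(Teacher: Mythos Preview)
Your proof is correct and follows exactly the paper's approach: the upper bound is taken from Corollary~\ref{cor: Universal bounds}, and the lower bound comes from Remark~\ref{rem:conv} combined with the second part of Proposition~\ref{prop: Supporting sets}.
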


\begin{proof}
The upper bound was proved above, so we focus on the lower bound. As noted in Remark~\ref{rem:conv}, $\Omega$ is fully supported by $\R^n_\limplus$. We can therefore conclude that $\lambda_p(\Omega)\geq \lambda_p(\R^n_\limplus)$.
\end{proof}

\section{Proof of Theorem~\ref{thm: universal sharp bounds} and Theorem~\ref{thm: convex, punctured, exterior domains}}\label{sec: proof}

\label{sec: convex, punctured, exterior domains}

Our next aim is to prove Theorem~\ref{thm: universal sharp bounds} and Theorem~\ref{thm: convex, punctured, exterior domains}. We first establish the lemma below, where we study the particular cases $\Omega=\R^n_\limplus$ and $\Omega=\R^n \setminus\{0\}$. It turns out that both the value of $\lambda_p(\Omega)$ and extremals for these domains can be expressed in terms of the sharp constant in Morrey's inequality and Morrey extremals, respectively. Here we call $v\in \DD^{1,p}(\R^n)$ a {\it Morrey extremal} if $v$ is not constant throughout $\R^n$ and equality holds in~\eqref{MorreyInequality} with $C=C_{n,p}$. 

\begin{lem}\label{lem: half- and punctured space}
  The following assertions hold. 
  \begin{enumerate}[label=(\textit{\roman*})]
      \item\label{constantHalfspace} $\lambda_p(\R^n_\limplus) = 2^{n-1}C_{n,p}^{-p}$.\\[-6.5pt]
      
      \item\label{extremalHalfspace} If $y \in \R^n_\limplus$, then $w_{y}^{\R^n_{\footnotesize\limplus}}$ is an extremal for $\lambda_p(\R^n_\limplus)$. Furthermore, the odd reflection of $w_{y}^{\R^n_{\footnotesize\limplus}}$ through the hyperplane $x_n=0$ is a Morrey extremal.\\[-6.5pt]
    
      \item\label{constantPunctured} $\lambda_p(\R^n\setminus \{0\}) = C_{n,p}^{-p}$.\\[-6.5pt]
      
      \item\label{extremalPunctured} If $y\in \R^n\setminus\{0\}$, then $ w_{y}^{\R^n\setminus\{0\}}$ is an extremal for $\lambda_p(\R^n\setminus\{0\})$ and a Morrey extremal.
  \end{enumerate}
\end{lem}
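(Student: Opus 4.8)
The plan is to get the lower bounds from Morrey's inequality alone and to read off the upper bounds and the extremals from a Morrey extremal. Such functions exist, and I will use this together with the decay estimates~\eqref{veeLimOne}--\eqref{veeLimTwo}: they guarantee that for any Morrey extremal $v$ the continuous positive function $(x,y)\mapsto |v(x)-v(y)|/|x-y|^{1-n/p}$ on $\{x\neq y\}$ attains its maximum at some pair $a\neq b$; replacing $v$ by $-v$ if necessary, $v(a)>v(b)$, so that $v(a)-v(b)=C_{n,p}\|Dv\|_p\,|a-b|^{1-n/p}$. Throughout I use $d_{\R^n\setminus\{0\}}(x)=|x|$ and $d_{\R^n_\limplus}(x)=x_n$.

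For $\R^n\setminus\{0\}$ the bound $\lambda_p(\R^n\setminus\{0\})\ge C_{n,p}^{-p}$ is~\eqref{universal lambda lower bound}. For the reverse, I would take $v$ as above and set $w:=v(\cdot+b)-v(b)$. Then $w\in\DD_0^{1,p}(\R^n\setminus\{0\})$ (since $w(0)=0$), $\|Dw\|_p=\|Dv\|_p$, and, as the Morrey quotient is unchanged by translations and additive constants, the pair $(a-b,0)$ gives $\bigl\|w/|x|^{1-n/p}\bigr\|_\infty=\sup_{x\neq0}|w(x)-w(0)|/|x|^{1-n/p}=C_{n,p}\|Dw\|_p$, the inequality ``$\le$'' being Morrey's. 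Hence $\RR_p(\R^n\setminus\{0\},w)=C_{n,p}^{-p}$, which forces $\lambda_p(\R^n\setminus\{0\})=C_{n,p}^{-p}$ and makes $w$ an extremal. Proposition~\ref{prop: potentials are enough} applied at $x_0=a-b$ then identifies $w$ as a scalar multiple of $w^{\R^n\setminus\{0\}}_{a-b}$; since nonzero scalar multiples, translations, and similarity transforms all preserve both the property of being a Morrey extremal and the value of $\RR_p$ (the latter by Lemma~\ref{ScalingInvarianceLem}), it follows that for every $y\neq0$ the potential $w^{\R^n\setminus\{0\}}_y$ is simultaneously an extremal for $\lambda_p$ and a Morrey extremal.

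For $\R^n_\limplus$ the lower bound comes from odd reflection: if $u\in\DD_0^{1,p}(\R^n_\limplus)$ and $\hat u$ is its odd reflection across $\{x_n=0\}$, then $\hat u\in\DD^{1,p}(\R^n)$ with $\|D\hat u\|_p^p=2\|Du\|_p^p$, and Morrey's inequality applied to $x\in\R^n_\limplus$ and its mirror image (for which the $\hat u$-difference is $2|u(x)|$ and the distance is $2x_n$) gives $\bigl\|u/x_n^{1-n/p}\bigr\|_\infty\le 2^{(1-n)/p}C_{n,p}\|Du\|_p$, hence $\RR_p(\R^n_\limplus,u)\ge 2^{n-1}C_{n,p}^{-p}$ and so $\lambda_p(\R^n_\limplus)\ge 2^{n-1}C_{n,p}^{-p}$. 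For the matching upper bound I would symmetrize the Morrey extremal $v$ about the hyperplane $H$ that perpendicularly bisects $[a,b]$: with $\sigma$ the reflection across $H$ (so $\sigma a=b$), put $v^\sharp:=\tfrac12(v-v\circ\sigma)$. Then $v^\sharp(a)-v^\sharp(b)=v(a)-v(b)$ while $\|Dv^\sharp\|_p\le\|Dv\|_p$, and Morrey's inequality for $v^\sharp$ at $(a,b)$ forces equality, so $v^\sharp$ is again a Morrey extremal with the same optimal pair. Being antisymmetric, $v^\sharp$ vanishes on $H$; extending its restriction to the halfspace $H^+\ni a$ by zero yields an admissible competitor for $w^{H^+}_a$, and using the variational characterization of potentials, the identity $\|Dv^\sharp\|_{p,H^+}^p=\tfrac12\|Dv^\sharp\|_p^p$, the relation $v(a)-v(b)=C_{n,p}\|Dv\|_p|a-b|^{1-n/p}$, and the fact that a similarity carrying $(H^+,a)$ to $(\R^n_\limplus,e_n)$ must have dilation $2/|a-b|$ (because $\operatorname{dist}(a,H)=\tfrac12|a-b|$ maps to $\operatorname{dist}(e_n,\{x_n=0\})=1$), a short computation gives $\|Dw^{\R^n_\limplus}_{e_n}\|_p^p\le 2^{n-1}C_{n,p}^{-p}$. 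Since $\bigl\|w^{\R^n_\limplus}_{e_n}/x_n^{1-n/p}\bigr\|_\infty\ge w^{\R^n_\limplus}_{e_n}(e_n)=1$ and, by similarity invariance, the infimum over potentials in~\eqref{eq: variational prob potential} for $\R^n_\limplus$ is attained at every point, this gives $\RR_p(\R^n_\limplus,w^{\R^n_\limplus}_{e_n})\le 2^{n-1}C_{n,p}^{-p}$ and hence, with the lower bound, $\lambda_p(\R^n_\limplus)=2^{n-1}C_{n,p}^{-p}$ with $w^{\R^n_\limplus}_y$ an extremal for all $y$; equality must then hold throughout, so $\bigl\|w^{\R^n_\limplus}_{e_n}/x_n^{1-n/p}\bigr\|_\infty=1$ and $\|Dw^{\R^n_\limplus}_{e_n}\|_p^p=2^{n-1}C_{n,p}^{-p}$ exactly.

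Finally, for the Morrey-extremal claim in~\textit{(ii)}: letting $\hat w$ be the odd reflection of $w^{\R^n_\limplus}_{e_n}$, the last two exact identities give $\|D\hat w\|_p^p=2^nC_{n,p}^{-p}$, while $|\hat w(e_n)-\hat w(-e_n)|/|e_n-(-e_n)|^{1-n/p}=2/2^{1-n/p}=2^{n/p}=C_{n,p}\|D\hat w\|_p$, so $\hat w$ saturates Morrey's inequality and is non-constant, hence a Morrey extremal; the general $y$ follows by conjugating with a similarity preserving $\R^n_\limplus$, which commutes with the reflection. The only external input is the existence of a Morrey extremal (upgraded, via~\eqref{veeLimOne}--\eqref{veeLimTwo}, to a function with an honest optimal pair of points), and the step I expect to be the real heart of the matter is the symmetrization $v\mapsto v^\sharp$ in the halfspace case: it is what bridges the global Morrey problem and the half-space potential, and carefully carrying the dilation factor $2/|a-b|$ through the similarity is what manufactures the constant $2^{n-1}$.
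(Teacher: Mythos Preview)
Your argument is correct. The punctured-space case is essentially the same as the paper's. For the halfspace, however, you take a genuinely different route: the paper invokes the antisymmetry theorem for Morrey extremals from \cite{MR4160015} to identify $u|_{\R^n_\limplus}$ with the potential $w_y^{\R^n_\limplus}$ directly, and then reads off both the constant and the extremal in one computation. You instead manufacture an antisymmetric Morrey extremal by the averaging $v^\sharp=\tfrac12(v-v\circ\sigma)$, using convexity of $t\mapsto t^p$ to get $\|Dv^\sharp\|_p\le\|Dv\|_p$ and then forcing equality via the optimal pair $(a,b)$. This is more self-contained---it avoids the external symmetry result entirely---at the price of splitting the argument into separate lower and upper bounds and tracking the dilation factor $2/|a-b|$ by hand. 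The paper's route is shorter because the antisymmetry is already packaged; yours is a nice illustration that the symmetry is not really needed as input, only the existence of \emph{some} Morrey extremal. One small wording issue: in your last sentence for the punctured case, ``translations'' preserve Morrey extremality but not $\RR_p(\R^n\setminus\{0\},\cdot)$; what you actually use (and what works) is a similarity fixing $0$ to pass from $a-b$ to a general $y$.
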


\begin{proof} {\it Part 1:~\ref{constantHalfspace} and~\ref{extremalHalfspace}}. We claim that $\RR_p(\R^n_\limplus, w_y^{\R^n_{\footnotesize\limplus}})$ is independent of $y \in \R^n_\limplus$. Note that  if $y= (y', y_n) \in \R^n_\limplus$, 
   then 
   $$
      \tilde w(x) = w_{y}^{\R^n_{\footnotesize\limplus}}((y',0)+y_nx) \in \DD^{1,p}_0(\R^n_\limplus)
   $$
   satisfies 
   \begin{equation}
  \begin{cases}
   - \Delta_p \tilde w =0 & \mbox{in } \R^n_\limplus \setminus \{e_n\}\,,\\
   \hspace{.33in}\tilde w =0 & \mbox{on } \partial\R^n_\limplus\,,\\
      \hspace{.05in} \tilde w(e_n)=1\,.
  \end{cases}
\end{equation}
That is, $\tilde w = w_{e_n}^{\R^n_{\footnotesize\limplus}}$. Lemma~\ref{ScalingInvarianceLem} then implies that $\RR_p(\R^n_\limplus, w_y^{\R^n_{\footnotesize\limplus}})= \RR_p(\R^n_\limplus, w_{e_n}^{\R^n_{\footnotesize\limplus}})$. By~\eqref{eq: variational prob potential}, $\lambda_p(\R^n_\limplus)=\RR_p(\R^n_\limplus, w_{y}^{\R^n_{\footnotesize\limplus}}).$ We conclude that 
$w_{y}^{\R^n_{\footnotesize\limplus}}$ is an extremal. 

Again fix $y=(y',y_n)\in \R^n_\limplus$. By~\cite[Theorem 2.4]{MR4275749}, there exists a Morrey extremal $u$ satisfying
\begin{equation}\label{eq: properties Morrey extremal}
  u(y)=1\,, \quad u((y', -y_n)) =-1\,, \quad \mbox{and}\quad [u]_{1-n/p} = \frac{|u(y)-u((y', -y_n))|}{|y-(y',-y_n)|^{1-n/p}} \,.
\end{equation}
Here $[u ]_{1-n/p}$ is the $1-n/p$ H\"older seminorm of $u$. Moreover, $-\Delta_pu=0$ in $\R^n\setminus\{y,(y', -y_n)\}$. By~\cite[Theorem 1.1]{MR4160015}, $u$ is antisymmetric across the hyperplane $x_ n=0$. In particular, $u$ vanishes on this hyperplane. It follows that $u|_{\R^n_{\footnotesize\limplus}}= w_{y}^{\R^n_{\footnotesize\limplus}}$. Therefore, the odd reflection of $w_{y}^{\R^n_{\footnotesize\limplus}}$ through this hyperplane is $u$.

\par Let us write $w=w_{e_n}^{\R^n_{\footnotesize\limplus}}$ and $u$ for the odd reflection of $w$ through $x_n=0$. As noted above, $u$ is a Morrey extremal satisfying~\eqref{eq: properties Morrey extremal} with $y=e_n$. As a result,  
\begin{align*}
  1&=\frac{|w(e_n)|}{d_{\R^n_\limplus}(e_n)^{1-n/p}}\\
  &\leq \biggl\|\frac{w}{d_{\R^n_\limplus}^{1-n/p}}\biggr\|_\infty\\
  &=\sup_{x\in \R^n_\limplus}\frac{|w(x)|}{|x_n|^{1-n/p}} \\
  &= 2^{-n/p}\sup_{x\in \R^n_\limplus} \frac{| u(x)- u((x',-x_n))|}{|x-(x', -x_n)|^{1-n/p}}\\
  & \leq 2^{-n/p}[u]_{1-n/p}\\
  & =1\,
\end{align*}
and
\begin{equation*}
  \lambda_p(\R^n_\limplus) =  \biggl\|\frac{w}{d_{\R^n_\limplus}^{1-n/p}}\biggr\|^{-p}_{\infty}\|Dw\|^p_{p}  = 2^{n-1}[u]_{1-n/p}^{-p}\|D u\|_p^p = 2^{n-1}C_{n,p}^{-p}\,.
\end{equation*}

\medskip
\par {\it Part 2:~\ref{constantPunctured} and~\ref{extremalPunctured}}. As we argued above, we can deduce that $\RR_p(\R^n\setminus\{0\}, w_y^{\R^n\setminus\{0\}})$ does not depend on $y \in \R^n\setminus\{0\}$ by showing 
$$
w_{e_n}^{\R^n\setminus\{0\}}(x)=w_{y}^{\R^n\setminus\{0\}}(|y|Qx)
$$
for any $Q \in O(n)$ with $Qe_n=y/|y|$. Again,~\eqref{eq: variational prob potential} gives 
$\lambda_p(\R^n\setminus\{0\})=\RR_p(\R^n\setminus\{0\}, w_{y}^{\R^n\setminus\{0\}})$. It follows that $w_{y}^{\R^n\setminus\{0\}}$ is an extremal. 

\par Let $u$ be a Morrey extremal with 
\begin{equation*}
u(0)=0\,, \quad u(y)=1\,,\quad \text{and}\quad  [u]_{1-n/p} = \frac{|u(y)|}{|y|^{1-n/p}}\,.
\end{equation*}
As $u\in \DD^{1,p}_0(\R^n\setminus\{0\})$ and $-\Delta_p u= 0$ in $\R^n\setminus\{0, y\}$, $u=w^{\R^n\setminus\{0\}}_y$. 
It follows that 
\begin{equation*}
\lambda_p(\R^n\setminus\{0\}) = \biggl\|\frac{u}{d_{\R^n\setminus\{0\}}^{1-n/p}}\biggr\|_\infty^{-p}\|Du\|_p^p = [u]_{1-n/p}^{-p}\|Du\|^p_p= C_{n,p}^{-p}\,.\qedhere
\end{equation*}
\end{proof}

We will now prove Theorems ~\ref{thm: universal sharp bounds} and~\ref{thm: convex, punctured, exterior domains} by
applying various of the assertions derived above. \begin{proof}[Proof of Theorem~\ref{thm: universal sharp bounds}] We first observe that $\R^n_\limplus$ can be exhausted by $\{B_j(je_n)\}_{j\ge 1}$. Therefore, by Lemma~\ref{lem: interior exhaustion} and the similarity invariance
$$
\lambda_p(B_1)=\lim_{j\to \infty}\lambda_p(B_j(je_n))\leq \lambda_p(\R^n_\limplus)\,.
$$
  The theorem now follows from this inequality together with Lemma~\ref{lem: half- and punctured space} and Corollary~\ref{cor: Universal bounds}.
\end{proof}
\begin{proof}[Proof of Theorem~\ref{thm: convex, punctured, exterior domains}]
$(1)$ Assume $\Omega$ is convex. By Theorem~\ref{thm: universal sharp bounds} and Corollary~\ref{cor: Lower bound convex},
$$
\lambda_p(\R^n_\limplus)\le \lambda_p(\Omega)\le \lambda_p(B_1)\le \lambda_p(\R^n_\limplus)\,.
$$
That is, $\lambda_p(\Omega)=\lambda_p(\R^n_\limplus)$. 

\par Now suppose $u\in \DD^{1,p}_0(\Omega)$ is an extremal. By Corollary~\ref{ExtSignCor}, we may assume that $u>0$ in~$\Omega$. Let us also choose $x_0\in \Omega$ with
$$
\biggl\|\frac{u}{d_\Omega^{1-n/p}} \biggr\|_\infty=\frac{u(x_0)}{d_\Omega(x_0)^{1-n/p}}
$$
and $y_0\in \partial \Omega$ with $d_\Omega(x_0)=|x_0-y_0|$. Since $\Omega$ is convex, there exists a halfspace $\Pi$ such that 
\begin{equation}
  \Omega \subseteq \Pi \quad \mbox{and} \quad y_0 \in \partial \Pi\,.
\end{equation}
Then $u\in \DD^{1,p}_0(\Pi)$ and $d_\Pi(x_0)=|x_0-y_0|$. By Proposition~\ref{prop: Supporting sets},
$$
\RR_p(\Pi, u) =\RR_p(\Omega, u)=\lambda_p(\Omega)=\lambda_p(\R^n_\limplus)=\lambda_p(\Pi)\,.
$$
Hence, $u$ is also an extremal for $\Pi$. If $\Omega \subsetneq \Pi$, then $u$ vanishes in the open set $\Pi\setminus\overline{\Omega}$. This contradicts Corollary~\ref{ExtSignCor} since $\Pi$ is connected. As a result, $\Omega$ does not admit an extremal unless it is a halfspace. 

\medskip 

(2) Suppose $\Omega\subset \R^n$ is open. By translation invariance, we may assume $x_0=0$. We have already established that $\lambda_p(\Omega\setminus\{0\})\ge \lambda_p(\R^n\setminus\{0\})$ in Theorem~\ref{thm: universal sharp bounds}. As for the upper bound, $\R^n\setminus\{0\}$ is exhausted by $\{j\Omega\setminus\{0\}\}_{j\ge 1}$, so Lemma~\ref{lem: interior exhaustion} implies $\lambda_p(\Omega\setminus \{0\})\leq \lambda_p(\R^n\setminus \{0\})$. Arguing as in $(1)$, we can 
show that if $u\in\DD^{1,p}_0(\Omega\setminus\{0\})$ is an extremal, it is also an extremal in $\R^n\setminus\{0\}$ which vanishes at any point in the complement of $\Omega\setminus\{0\}$. As $\R^n\setminus\{0\}$ is connected, $u$ would have to vanish identically unless $\Omega=\R^n$. We conclude that no such extremal exists for $\Omega\setminus\{0\}$ unless $\Omega=\R^n$. 

\medskip 

(3) Let $K\subset \R^n$ be compact and nonempty. By translation invariance, we may assume $0\in K$. According to Theorem~\ref{thm: universal sharp bounds}, $\lambda_p(\R^n\setminus K)\ge \lambda_p(\R^n\setminus\{0\})$. Since $\R^n\setminus\{0\}$ is exhausted by $\{j^{-1}(\R^n\setminus K )\}_{j\ge 1}$, Lemma~\ref{lem: interior exhaustion} gives $\lambda_p(\R^n\setminus K)\leq \lambda_p(\R^n\setminus\{0\})$. Similar to how we reasoned in (2) and (3), we may conclude that the only way for $\R^n\setminus K$ to have an extremal is if $K=\{0\}$. 
\end{proof}

\begin{rem}\label{rem: onlyadmit} The method used to prove the nonexistence of extremals in Theorem~\ref{thm: convex, punctured, exterior domains} can also be used to show: if $\Omega$ has an extremal and $\lambda_p(\Omega)=\lambda_p(\R^n\setminus \{0\})$, then $\Omega=\R^n\setminus\{x_0\}$ for some $x_0$. The key observation here is that $\Omega$ is fully supported by $\R^n\setminus\{0\}$. We leave the details to the reader.
\end{rem}

\begin{rem} Modulo symmetry, the only connected sets whose extremals are restrictions of Morrey extremals are those covered by Lemma~\ref{lem: half- and punctured space}. Indeed, if $\Omega \subsetneq \R^n$ is a connected set and $u \in \DD^{1,p}_0(\Omega)$ is an extremal for $\lambda_p(\Omega)$ which is the restriction of a Morrey extremal $v$, then $\partial\Omega$ is the zero level set of this Morrey extremal. By~\cite{MR4275749}, each level set of a Morrey extremal is either a bounded convex set, a halfspace, or the complement of a bounded convex set. If $\Omega$ is convex and not a halfspace then $\Omega$ does not admit an extremal, contradicting that $u = v|_\Omega$ is an extremal. Similarly if $\Omega$ is the complement of a compact set then $\Omega$ admits an extremal if and only if this compact set is a singleton. Therefore, halfspaces and $\R^n \setminus\{x_0\}$ are the only connected sets having extremals given by restrictions of Morrey extremals.
\end{rem}

\section{Dilation invariant domains}
\label{sec: Dilation invariant domains}
We will say $\mathcal{C} \subset \R^n$ is a {\it cone} if 
$$
t\mathcal{C}=\mathcal{C} \mbox{ for each } t>0\,. 
$$
That is, $\mathcal{C}$ is a cone if it is dilation invariant. It is evident that the complement of a cone is also a cone. It is also easy to check that if a cone is convex, then it is closed under vector addition. In this section, we will give sufficient conditions under which a cone admits an extremal. We will assume throughout that $\Omega\subsetneq\R^n$ is open and nonempty and $n\ge 2$.

\begin{prop}\label{translationProp}
Suppose $\Omega \subsetneq \R^n$ is a cone. Further assume there is a compact 
$$
K\subset \{x\in \Omega: d_\Omega(x)=1\}
$$
such that for each $x\in K$ there is $y\in \R^n$ with 
$$
\begin{cases}
x+y\in K\,,\\
y+\Omega\subset \Omega\,.
\end{cases}
$$
Then $\Omega$ admits an extremal $u$ for which $|u|/d_\Omega^{1-n/p}$ attains its maximum in $K$.
\end{prop}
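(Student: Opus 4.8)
The strategy is to run the compactness machinery of Proposition~\ref{prop: Compactness threshold}: it suffices to show that a minimizing sequence for $\lambda_p(\Omega)$ (equivalently, a minimizing sequence of potentials, by~\eqref{eq: variational prob potential}) cannot concentrate at the boundary or escape to infinity, i.e. that $\lambda_p(\Omega)<\Lambda_p(\Omega)$, OR — and this is the cleaner route here — to show directly that the infimum in~\eqref{eq: variational prob potential} is attained by exploiting the translation hypothesis. I would go with the latter. First, since $\Omega$ is a cone, dilation invariance (Lemma~\ref{ScalingInvarianceLem}) reduces the study of potentials $w_x^\Omega$ to those with $d_\Omega(x)=1$: writing $\rho(x):=\RR_p(\Omega,w_x^\Omega)$, one checks $\rho(tx)=\rho(x)$ for $t>0$, so $\lambda_p(\Omega)=\inf\{\rho(x): d_\Omega(x)=1\}$. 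Thus it is enough to find a minimizer of $\rho$ on the "unit distance sphere" $S:=\{x\in\Omega: d_\Omega(x)=1\}$.

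Next I would take a minimizing sequence $\{x_k\}\subset S$ with $\rho(x_k)\to\lambda_p(\Omega)$ and use the compact set $K$ from the hypothesis to translate it back into $K$. For each $k$ the translation hypothesis gives, iteratively, a vector $y_k\in\R^n$ with $x_k+y_k\in K$ and $y_k+\Omega\subset\Omega$; one should argue that by composing finitely (or countably) many of the given translations one can move any point of $S$ into $K$ while staying in $S$ — more carefully, one first notes $K\subset S$ and that the hypothesis lets one "walk" within $K$, and one needs a covering/connectedness argument to guarantee every point of $S$ can be reached from $K$ by an admissible translation. (If that is too strong, the fallback is that it suffices to minimize over the orbit of $K$ under admissible translations, which is what the statement's conclusion "$|u|/d_\Omega^{1-n/p}$ attains its maximum in $K$" really needs.) The key monotonicity is: if $y+\Omega\subset\Omega$ then $w_{x+y}^\Omega$ restricted appropriately competes against a translate of $w_x^\Omega$, giving $\|Dw_{x+y}^\Omega\|_p\le\|Dw_x^\Omega\|_p$ — because $v(z):=w_x^\Omega(z-y)$ belongs to $\DD_0^{1,p}(y+\Omega)\subset\DD_0^{1,p}(\Omega)$, satisfies $v(x+y)=1$, and is hence a competitor for $w_{x+y}^\Omega$. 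Since translation by $y$ does not change $d_\Omega$ at the relevant point when $d_\Omega(x+y)=1=d_\Omega(x)$, this yields $\rho(x+y)\le\rho(x)$. Therefore the translated sequence $\{x_k+y_k\}\subset K$ is still minimizing for $\rho$.

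Now $K$ is compact, so $\{x_k+y_k\}$ has a subsequence converging to some $x_0\in K\subset\Omega$, and by continuity of $x\mapsto\rho(x)=\RR_p(\Omega,w_x^\Omega)$ on $\Omega$ (the $\DD^{1,p}_0$-continuity of potentials in the source point, used already after~\eqref{eq: variational prob potential}) we get $\rho(x_0)=\lambda_p(\Omega)$. Hence $w_{x_0}^\Omega$ is an extremal, and $x_0$ realizes $|w_{x_0}^\Omega(x_0)|/d_\Omega(x_0)^{1-n/p}=\|w_{x_0}^\Omega/d_\Omega^{1-n/p}\|_\infty$ (equality holds here because by Proposition~\ref{prop: potentials are enough} the potential $w_{x_0}^\Omega$ is an extremal exactly when its ratio is maximized at $x_0$, and minimality forces this), so $|u|/d_\Omega^{1-n/p}$ attains its maximum at $x_0\in K$, as claimed. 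The main obstacle I anticipate is the middle step: making precise the passage "every minimizing point can be translated into $K$ by an admissible $y$", i.e. proving the orbit of $K$ under admissible translations is all of $S$ (or conversely weakening the claim to "the minimum over the orbit of $K$"), together with the clean verification of the monotonicity $\rho(x+y)\le\rho(x)$ via the competitor argument — everything else is a routine compactness-plus-continuity wrap-up.
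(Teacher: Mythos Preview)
Your overall strategy is right and matches the paper's: rescale so the ``special point'' lies on $S=\{d_\Omega=1\}$, then translate into the compact $K$ using the hypothesis $y+\Omega\subset\Omega$, and pass to a limit. Two corrections are needed.

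\medskip
\textbf{The hypothesis.} Your ``main obstacle'' stems from reading the hypothesis as stated---``for each $x\in K$''---and then trying to cook up a covering/orbit argument to reach all of $S$. Don't: the paper's own proof applies the hypothesis to an arbitrary $x_k\in S$ (Step~2 reads ``By assumption, there exists $y_k$ with $x_k+y_k\in K$\ldots'' where $x_k=z_k/d_\Omega(z_k)$ is just some point of $S$), and the only application (Theorem~\ref{prop: complement of convex cone}) verifies it for every $x$ with $d_\Omega(x)=1$. So the intended hypothesis is ``for each $x\in\{d_\Omega=1\}$ there is $y$ with $x+y\in K$ and $y+\Omega\subset\Omega$''. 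With that reading, a single translation moves each $x_k$ into $K$, and no iteration or connectedness is needed.

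\medskip
\textbf{The monotonicity.} Your competitor argument correctly gives $\|Dw_{x+y}^\Omega\|_p\le\|Dw_x^\Omega\|_p$, but this does not immediately yield $\rho(x+y)\le\rho(x)$ for $\rho(x)=\RR_p(\Omega,w_x^\Omega)$: the Rayleigh quotient divides by the supremum of $|w_x^\Omega|/d_\Omega^{1-n/p}$, which need not be attained at $x$, so the two $\rho$'s have different denominators you haven't compared. The clean fix is to minimize instead $\tilde\rho(x):=d_\Omega(x)^{p-n}\|Dw_x^\Omega\|_p^p$. By Proposition~\ref{prop: potentials are enough} one has $\RR_p(\Omega,w_x^\Omega)\le\tilde\rho(x)$ and $\inf_x\tilde\rho(x)=\lambda_p(\Omega)$; dilation invariance gives $\tilde\rho(tx)=\tilde\rho(x)$; and on $S$ your competitor bound gives $\tilde\rho(x+y)\le\tilde\rho(x)$ directly. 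Then compactness of $K$ and continuity of $x\mapsto\|Dw_x^\Omega\|_p$ finish the job, and the equality case of Proposition~\ref{prop: potentials are enough} indeed forces the maximum of $|w_{x_0}^\Omega|/d_\Omega^{1-n/p}$ to sit at $x_0\in K$.

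\medskip
For comparison, the paper avoids this subtlety by translating \emph{functions} rather than points: it takes a general minimizing sequence $v_k$ with special point $x_k\in S$, sets $w_k(\cdot)=v_k(\cdot-y_k)$, and checks from $d_{y_k+\Omega}\le d_\Omega$ that $\RR_p(\Omega,w_k)=\RR_p(\Omega,v_k)$ \emph{exactly}, with the supremum now attained at $x_k+y_k\in K$. This sidesteps any comparison of potentials and feeds straight into the compactness argument of Proposition~\ref{prop: Compactness threshold}.
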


\begin{proof}
{\it Step 1.} Let $\{u_k\}_{k\geq 1}\subset \DD^{1,p}_0(\Omega)$ with 
$$
\lambda_p(\Omega)=\lim_{k\rightarrow\infty}\RR_p(\Omega,u_k)
$$
and choose $\{z_k\}_{k\geq 1}\subset \Omega$ which satisfies
\begin{equation*}
   \biggl\|\frac{u_k}{d_\Omega^{1-n/p}}\biggr\|_\infty = \frac{|u_k(z_k)|}{d_\Omega(z_k)^{1-n/p}}
\end{equation*} 
for each $k\ge 1$. Since $\Omega$ is a cone, we may define $v_k \in \DD_0^{1,p}(\Omega)$ by setting
$$
v_k(x)=\frac{u_k(d_\Omega(z_k)x)}{d_\Omega(z_k)^{1-n/p}}\,.
$$
By Lemma~\ref{ScalingInvarianceLem}, $ \RR_p(\Omega, v_k) = \RR_p(\Omega, u_k)$.
Therefore, $\{v_k\}_{k\geq 1}$ is also a minimizing sequence. 

\par Next we set
\begin{equation*}
  x_k = \frac{z_k}{d_\Omega(z_k)}
\end{equation*}
and note that
\begin{equation*}
  \biggl\|\frac{v_k}{d_\Omega^{1-n/p}}\biggr\|_\infty = \frac{|v_k(x_k)|}{d_\Omega(x_k)^{1-n/p}}\,.
\end{equation*}
By~\eqref{distScaling}, $d_\Omega$ is positively homogeneous as $\Omega$ is dilation invariant. It follows that $d_\Omega(x_k) = 1$, so $|v_k|/d_\Omega^{1-n/p}$ attains its maximum in the set $\{x\in \Omega: d_\Omega(x) =1\}$.

\medskip

{\noindent\it Step 2.} By assumption, there exists $y_k\in \R^n$ for each $k$ with
$$
\begin{cases}
x_k+y_k\in K\,,\\
y_k+\Omega\subset \Omega\,.
\end{cases}
$$
Setting
$$
w_k(x)=v_k(x-y_k)\,,
$$ 
we observe that $w_k\in \DD^{1,p}_0(\Omega)$ and
\begin{equation}\label{eq: translation to K gradnorm}
\|Dw_k\|_p=\|Dv_k\|_p\,.
\end{equation}
Moreover, since $y_k + \Omega \subseteq \Omega$
$$
d_{\Omega}(x-y_k)=d_{y_k+\Omega}(x)\le d_\Omega(x)
$$
for all $x\in y_k+\Omega$. Thus, 
\begin{equation}\label{eq: translation to K supnorm}
\frac{|w_k(x)|}{d_\Omega(x)^{1-n/p}}\le \frac{|w_k(x)|}{d_{\Omega}(x-y_k)^{1-n/p}}= \frac{|v_k(x-y_k)|}{d_{\Omega}(x-y_k)^{1-n/p}}\le \biggl\|\frac{v_k}{d_\Omega^{1-n/p}}\biggr\|_\infty
\end{equation}
for all $x\in \mathrm{supp}(w_k)\subseteq y_k+\Omega$ with equality when $x = x_k + y_k \in K$. Combining~\eqref{eq: translation to K gradnorm} and~\eqref{eq: translation to K supnorm} leads to
$$
\RR_p(\Omega, w_k) = \RR_p(\Omega, v_k)\,.
$$
It follows that $\{w_k\}_{k\geq 1}$ is a minimizing sequence satisfying that $|w_k|/d_\Omega^{1-n/p}$ is attained in $K$ for each $k\geq 1$.

\medskip

{\noindent \it Step 3.} Since $K$ is compact, $\{x_k+y_k\}_{k\geq 1}$ has a subsequence which converges to a limit $x\in K$. 
Arguing as at the end of the proof of Proposition~\ref{prop: Compactness threshold}, we deduce that along this subsequence $w_{k}$ 
converges locally uniformly and in $\DD^{1,p}_0(\Omega)$ to a function $u$ which is an extremal for $\lambda_p(\Omega)$. As $|w_k|/d_\Omega^{1-n/p}$ was maximized in $K$ for each $k\geq 1$, it follows that this property remains true for $|u|/d_\Omega^{1-n/p}$. We leave the details to the reader. 
\end{proof}

\par We will verify that $\Omega$ satisfies the hypotheses of Proposition~\ref{translationProp} whenever its complement is a convex cone. First we will need to justify the following lemma. 
\begin{lem}\label{lem: moving special point complement of convex cone}
Assume $\Omega^c$ is a convex cone, $x_0\in \Omega$, and $y_0\in \partial\Omega$ with $d_\Omega(x_0)=|x_0-y_0|$. Then 
$$
-y_0+\Omega\subset \Omega \quad \mbox{and} \quad
d_\Omega(x_0-y_0)=|x_0-y_0|\,.
$$
\end{lem}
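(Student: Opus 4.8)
The plan is to exploit that $\Omega^c$ is a \emph{convex cone}, so in particular it is closed under addition by its own elements and under nonnegative scaling. The first claim, $-y_0 + \Omega \subset \Omega$, is equivalent to $\Omega^c \subset \Omega^c - y_0$, i.e.\ $\Omega^c + y_0 \subset \Omega^c$. Since $y_0 \in \partial\Omega \subset \Omega^c$ (as $\Omega$ is open) and $\Omega^c$ is a convex cone, for any $z \in \Omega^c$ we have $z + y_0 \in \Omega^c$ because $\frac12(2z) + \frac12(2y_0) \in \Omega^c$ by convexity and $2z, 2y_0 \in \Omega^c$ by the cone property. Hence $\Omega^c + y_0 \subseteq \Omega^c$, which gives the first assertion.

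For the second claim, I would first record the standard fact that for a convex set $\Omega^c$ the nearest point $y_0$ to $x_0$ is characterized by the obtuse-angle condition
\begin{equation*}
\langle x_0 - y_0,\, z - y_0\rangle \le 0 \quad \text{for all } z \in \Omega^c\,.
\end{equation*}
I must then show $d_\Omega(x_0 - y_0) = |x_0 - y_0|$, i.e.\ that $y_0$ (which lies in $\Omega^c - y_0$, hence is a legitimate candidate) is in fact the nearest point of $\Omega^c - y_0$ to the shifted point $x_0 - y_0$; note $|(x_0-y_0) - 0| = |x_0 - y_0|$, so it suffices to show $|(x_0 - y_0) - w| \ge |x_0 - y_0|$ for every $w \in \Omega^c - y_0$. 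Writing $w = z - y_0$ with $z \in \Omega^c$, this becomes $|x_0 - z| \ge |x_0 - y_0|$, which is exactly the statement that $y_0$ is the nearest point of $\Omega^c$ to $x_0$ — true by hypothesis. Thus $0$ is a nearest point of $\Omega^c - y_0 = d_\Omega$'s relevant complement set to $x_0 - y_0$, giving $d_\Omega(x_0 - y_0) \le |x_0 - y_0|$; and since $0 \in \Omega^c - y_0$ realizes this distance, combined with the fact that $x_0 - y_0 \in \Omega$ (because $x_0 \in \Omega$ and $-y_0 + \Omega \subset \Omega$), we get equality $d_\Omega(x_0 - y_0) = |x_0 - y_0|$.

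The only genuinely delicate point is bookkeeping the relation between $d_{\Omega}(x_0 - y_0)$, $d_{-y_0+\Omega}(x_0-y_0)$, and $d_{\Omega}(x_0)$: since $-y_0 + \Omega \subset \Omega$ we have $d_{\Omega}(\cdot) \ge d_{-y_0+\Omega}(\cdot) = d_{\Omega}(\cdot + y_0)$ on $-y_0+\Omega$, so $d_\Omega(x_0 - y_0) \ge d_\Omega(x_0) = |x_0-y_0|$; the reverse inequality follows because $0 \in \Omega^c$ has $|x_0 - y_0 - 0| = |x_0-y_0|$, so $d_\Omega(x_0-y_0) \le |x_0 - y_0|$. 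Combining the two gives the desired equality, and no hard estimate beyond the convex projection inequality is needed — the argument is essentially a direct unwinding of the cone and convexity hypotheses.
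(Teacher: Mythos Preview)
Your proof is correct. The first claim is handled exactly as in the paper: the convex-cone property gives $\Omega^c + y_0 \subset \Omega^c$, equivalently $-y_0+\Omega\subset\Omega$.

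For the second claim your route diverges from the paper's and is in fact simpler. The paper invokes the convex-projection characterization $(x_0-y_0)\cdot(z-y_0)\le 0$ for $z\in\Omega^c$, specializes to $z=0$ and $z=2y_0$ to deduce $(x_0-y_0)\cdot y_0=0$, then encloses $\Omega^c$ in the halfspace $\Pi=\{x:(x_0-y_0)\cdot x\le 0\}$ and computes $|x_0-y_0-w|\ge|x_0-y_0|$ for $w\in\Pi$. Your final paragraph bypasses all of this: from the inclusion $-y_0+\Omega\subset\Omega$ you get $d_\Omega(x_0-y_0)\ge d_{-y_0+\Omega}(x_0-y_0)=d_\Omega(x_0)=|x_0-y_0|$, and the reverse inequality is immediate from $0\in\Omega^c$ (which holds since $\Omega^c$ is a nonempty closed cone). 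This is more economical and reuses part one directly rather than appealing to an independent convexity argument.

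One remark: your middle paragraph is muddled. You compute the distance from $x_0-y_0$ to $\Omega^c-y_0$, which is $d_{-y_0+\Omega}(x_0-y_0)$, not $d_\Omega(x_0-y_0)$; the phrase ``$\Omega^c-y_0=d_\Omega$'s relevant complement set'' is incorrect. That paragraph is also redundant, since the translation identity $d_{-y_0+\Omega}(x_0-y_0)=d_\Omega(x_0)$ makes the computation tautological. Your final paragraph stands on its own and should replace it.
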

\begin{proof} Let $x\in \Omega$. We aim to prove that $-y_0+x\in \Omega$. If this were not the case, then $x=(-y_0+x)+y_0\in \Omega^c$ since $\Omega^c$ is a convex cone. However, this would be a contradiction. As $x\in \Omega$ was arbitrary, we conclude that $-y_0+\Omega\subset \Omega$. 

\par Since $0\in \partial \Omega$,
$$
d_\Omega(x_0-y_0)\le |x_0-y_0|\,.
$$
We are left to verify the opposite inequality. To this end, we recall that 
$$
(x_0-y_0)\cdot (x-y_0)\le 0
$$
for all $x\in \Omega^c$ as $\Omega^c$ is closed and convex~\cite[Theorem 5.2]{MR2759829}. Choosing $x=2y_0$ and $x=0$ gives $(x_0-y_0)\cdot y_0=0$ and  hence
$$
\Omega^c\subset \Pi:=\{x\in \R^n: (x_0-y_0)\cdot x\le 0\}\,.
$$

\par It follows that 
\begin{equation*}
d_\Omega(x_0-y_0)=\inf_{w\in \Omega^c}|x_0-y_0-w|\ge \inf_{w\in \Pi}|x_0-y_0-w|\,.
\end{equation*}
And as $(x_0-y_0)\cdot w\le 0$ for $w\in \Pi$,
$$
|x_0-y_0-w|^2=|x_0-y_0|^2+|w|^2-2(x_0-y_0)\cdot w\ge |x_0-y_0|^2+|w|^2\ge |x_0-y_0|^2\,.
$$
We conclude that
\begin{equation*}
d_\Omega(x_0-y_0)\ge \inf_{w\in \Pi}|x_0-y_0-w|=|x_0-y_0|\,. \qedhere
\end{equation*}
\end{proof}

\begin{thm}\label{prop: complement of convex cone}
Assume that $\Omega^c$ is a convex cone. Then $\Omega$ has an extremal.
\end{thm}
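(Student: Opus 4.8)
The plan is to show that $\Omega$ satisfies the hypotheses of Proposition~\ref{translationProp} and then invoke that proposition.

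First I would record a few preliminary observations. Since $\Omega$ is open, $\Omega^c$ is closed, so by hypothesis $\Omega^c$ is a closed convex cone; being the complement of a cone, $\Omega$ is itself a (proper, open) cone, as required by Proposition~\ref{translationProp}. A nonempty closed cone contains the origin, so $0\in\Omega^c$; on the other hand $0\notin\mathrm{int}(\Omega^c)$, since otherwise dilation invariance of $\Omega^c$ would force $\Omega^c=\R^n$, contradicting $\Omega\neq\emptyset$. Hence $0\in\partial\Omega$, and in particular $d_\Omega(x)\le|x|$ for every $x\in\Omega$.

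Next I would choose the compact set. Let
\[
  K:=\{x\in\Omega: |x|=d_\Omega(x)=1\},
\]
which, thanks to the inequality $d_\Omega\le|x|$ on $\Omega$, coincides with $\{d_\Omega=1\}\cap\partial B_1$; equivalently, $K$ is the set of unit vectors whose nearest point of $\Omega^c$ is the origin. Since $d_\Omega$ is continuous, $K$ is a closed, bounded subset of $\Omega$ contained in $\{x\in\Omega:d_\Omega(x)=1\}$, so it is a legitimate candidate provided it is nonempty. To see $K\neq\emptyset$, pick any $x\in\Omega$ and a nearest point $z_0\in\partial\Omega$ with $|x-z_0|=d_\Omega(x)$; Lemma~\ref{lem: moving special point complement of convex cone} gives $d_\Omega(x-z_0)=|x-z_0|>0$, so $0$ is a nearest point of $\Omega^c$ to $x-z_0$, and dividing $x-z_0$ by $d_\Omega(x-z_0)$ (using that $d_\Omega$ is positively homogeneous on the cone $\Omega$) produces a point of $K$.

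It then remains to verify the translation property. Fix $x\in K$ and choose $y_0\in\partial\Omega$ with $|x-y_0|=d_\Omega(x)=1$. Setting $y:=-y_0$, Lemma~\ref{lem: moving special point complement of convex cone} yields both $y+\Omega\subset\Omega$ and $d_\Omega(x+y)=d_\Omega(x-y_0)=|x-y_0|=1$; since also $|x+y|=|x-y_0|=1$, we get $x+y\in K$. This is precisely the hypothesis of Proposition~\ref{translationProp}, which therefore furnishes an extremal $u$ for $\lambda_p(\Omega)$ (with $|u|/d_\Omega^{1-n/p}$ attaining its maximum on $K$).

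The one step requiring a bit of thought is the selection of $K$: the key realization is that the set of unit vectors at distance exactly one from $\Omega^c$ is simultaneously compact and stable under the translation $x\mapsto x-y_0$ by a nearest boundary point $y_0$, precisely because a nearest point automatically satisfies $|x-y_0|=d_\Omega(x)$, so the translated point again has both norm and distance-to-$\Omega^c$ equal to one. Beyond that, the proof is just an assembly of Lemma~\ref{lem: moving special point complement of convex cone} and Proposition~\ref{translationProp}.
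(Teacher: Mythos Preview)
Your proposal is correct and follows the same approach as the paper: apply Lemma~\ref{lem: moving special point complement of convex cone} to verify the hypotheses of Proposition~\ref{translationProp} with the same compact set (the paper writes $K=\{d_\Omega=1\}\cap\overline{B_1}$, which coincides with your $K$ since $0\in\partial\Omega$ forces $|x|\ge d_\Omega(x)$). The paper's version is slightly terser and verifies the translation property for every $x$ with $d_\Omega(x)=1$ rather than only $x\in K$, but your argument extends verbatim to that case.
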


\begin{proof} Note that
$$
K=\{x\in \Omega: d_\Omega(x)=1\}\cap \overline{B_1}
$$
is a compact subset of $\{x\in \Omega: d_\Omega(x)=1\}$. If $x\in \Omega$ and $y\in \partial\Omega$ with $d_\Omega(x)=|x-y|=1$, the previous lemma shows that
$$
\begin{cases}
x-y\in K\,,\\
-y+\Omega\subset \Omega\,.
\end{cases}
$$
We conclude the proof by appealing to Proposition~\ref{translationProp}.
\end{proof}

Theorem~\ref{prop: complement of convex cone} implies the existence of extremals proved in Lemma~\ref{lem: half- and punctured space} since the complement of $\R^n_\limplus$ and $\R^n\setminus\{0\}$ are convex cones. We will consider another family of examples below. 

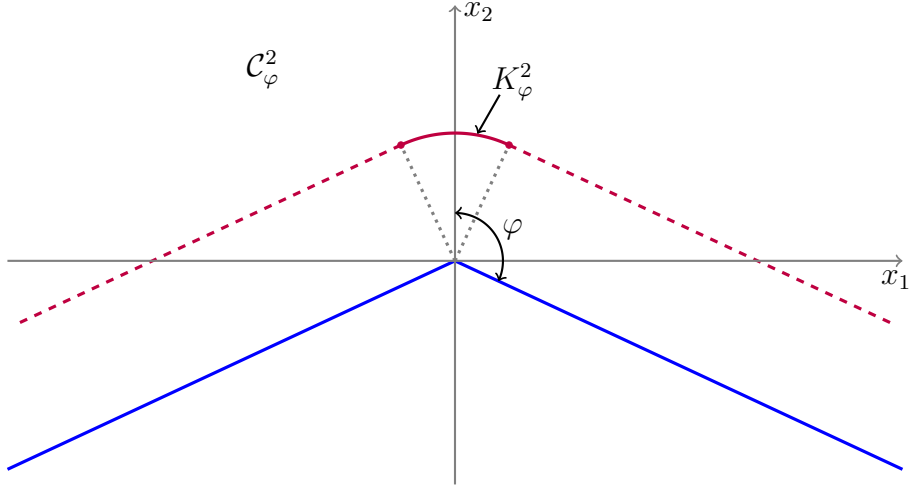
\begin{figure}[ht]
\centering
 \begin{tikzpicture}[scale=0.85]

\clip (-8,-3.5) rectangle (8,4.1);

\def \L {7};
\def \l {6};
\def \phi {115};

\draw [blue, very thick, domain=0:\L, samples=100] plot ({\x},{\x*cos(\phi)/sin(\phi)});
\draw [blue, very thick, domain=0:\L, samples=100] plot ({-\x},{\x*cos(\phi)/sin(\phi)});

\draw [purple, very thick, dashed, domain=0:\l, samples=100] plot ({\x+2*cos(180-\phi)},{\x*cos(\phi)/sin(\phi)+2*sin(180-\phi)});
\draw [purple, very thick, dashed, domain=0:\l, samples=100] plot ({-\x+2*cos(\phi)},{\x*cos(\phi)/sin(\phi)+2*sin(\phi)});

\draw [gray, dotted, very thick] (0,0)--({2*cos(\phi)},{2*sin(\phi)});
\draw [gray, dotted, very thick] (0,0)--({-2*cos(\phi)},{2*sin(\phi)});

\draw [gray, thick, ->] (0, -3.5)--(0, 4);
\draw [gray, thick, ->] (-7, 0)--(7, 0);

\draw [very thick, purple, domain=-\phi+180:\phi, samples=100] plot ({2*cos(\x)},{2*sin(\x)});

\draw [thick, black, domain=90-\phi:90, samples=100, <->] plot ({3*cos(\x)/4},{3*sin(\x)/4});

\node at (0.9, 0.5) {{$\displaystyle\varphi$}};

\node at (-3, 3) {{$\mathcal{C}^2_\varphi$}};
\node at (0.9, 2.8) {{$K^2_\varphi$}};

\node at (0.375, 3.9) {{$x_2$}};
\node at (6.9, -0.3) {{$x_1$}};

\draw [thick, ->] (0.7,2.6)--({2.02*cos(80)},{2.02*sin(80)});

\draw[fill, purple] ({2*cos(\phi)},{2*sin(\phi)}) circle (0.05);
\draw[fill, purple] ({-2*cos(\phi)},{2*sin(\phi)}) circle (0.05);

\end{tikzpicture}
 \caption{A domain $\Cee^2_\varphi\subset \R^2$ for $\varphi\in (\pi/2,\pi]$. The domain is the region of the plane above the blue curve. The purple curve indicates the set $d_{\Cee^2_\varphi}=1$, which is the union of three curves; two rays (dashed) and a circular arc $K_\varphi^2$ (solid). In higher dimensions the set $\Cee^n_\varphi$ can be obtained by rotation of $\Cee^2_\varphi$ around the axis of symmetry.}
 \label{fig:CeePhi}
\end{figure}

\begin{ex}\label{ComplementCircCone}For $\varphi \in (0, \pi)$, set
$$
\Cee_\varphi^n=\bigl\{x = (x', x_n) \in \R^n: x_n > \cot(\varphi)|x'|\bigr\}\,. 
$$
We will also consider the limiting case as the angle $\varphi$ tends to $\pi$
$$
  \Cee^n_\pi = \R^n \setminus\{x = (x', x_n) \in \R^n: |x'|=0, x_n\leq 0 \}\,.
$$
The set $\Cee^n_\varphi$ is a circular cone around the positive $x_n$-axis with opening angle $\varphi$ measured relative to the $x_n$-axis (see Figure~\ref{fig:CeePhi}). If $\varphi< \pi/2$, then $\Cee^n_\varphi$ is a convex cone which is not a halfspace. By Theorem~\ref{thm: convex, punctured, exterior domains}, $\lambda_p(\Cee^n_\varphi)=\lambda_p(\R^n_\limplus)$, and no extremals exist. Alternatively, if $\varphi\geq \pi/2$, the set $\Cee^n_\varphi$ is the complement of a convex cone. Theorem~\ref{prop: complement of convex cone} therefore implies that $\Cee^n_\varphi$ has an extremal for every $\varphi \in [\pi/2, \pi]$. 
\end{ex}
\par The family $\Cee^n_\varphi$ is of particular interest as it forms a natural class of limiting profiles that can occur in the analysis developed in Sections~\ref{sec: Blow-up analysis} and~\ref{sec: Examples and open problems} (especially, for $n=2$). With this in mind, we note for future reference that $[\pi/2, \pi]\ni \varphi \mapsto \lambda_p(\Cee^n_\varphi)$ is strictly decreasing. 
\begin{lem}\label{lem: CeePhi monotonicity}
For $\pi/2\le \varphi_1<\varphi_2\le \pi$,
$$
\lambda_p(\Cee^n_{\varphi_2})< \lambda_p(\Cee^n_{\varphi_1})\,.
$$
\end{lem}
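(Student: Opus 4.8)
\textbf{Proof proposal for Lemma~\ref{lem: CeePhi monotonicity}.}

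The plan is to exploit the monotonicity of the domains $\Cee^n_\varphi$ together with Proposition~\ref{prop: Supporting sets}, combined with a strict comparison coming from the fact that an extremal exists for every $\varphi \in [\pi/2,\pi]$ by Theorem~\ref{prop: complement of convex cone}. First I would observe that $\Cee^n_{\varphi_1} \subsetneq \Cee^n_{\varphi_2}$ when $\pi/2 \le \varphi_1 < \varphi_2 \le \pi$; indeed the defining condition $x_n > \cot(\varphi)|x'|$ becomes weaker as $\varphi$ increases (since $\cot$ is decreasing on $(0,\pi)$), and the limiting case $\varphi_2 = \pi$ is handled directly from the explicit description of $\Cee^n_\pi$. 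From domain monotonicity of $\RR_p$ (noted at the start of Section~\ref{sec: Universal bounds via supporting sets}) one immediately gets the non-strict inequality $\lambda_p(\Cee^n_{\varphi_2}) \le \lambda_p(\Cee^n_{\varphi_1})$, so the whole content of the lemma is the \emph{strictness}.

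To get strictness, I would argue by contradiction: suppose $\lambda_p(\Cee^n_{\varphi_2}) = \lambda_p(\Cee^n_{\varphi_1})$ for some $\pi/2 \le \varphi_1 < \varphi_2 \le \pi$. By Theorem~\ref{prop: complement of convex cone}, $\Cee^n_{\varphi_1}$ has an extremal $u$; by Corollary~\ref{ExtSignCor} we may take $u > 0$ on $\Cee^n_{\varphi_1}$, and we choose $x_0 \in \Cee^n_{\varphi_1}$ realizing the supremum in $\|u/d_{\Cee^n_{\varphi_1}}^{1-n/p}\|_\infty$ together with $y_0 \in \partial \Cee^n_{\varphi_1}$ with $d_{\Cee^n_{\varphi_1}}(x_0) = |x_0 - y_0|$. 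The key point is that $\Cee^n_{\varphi_2}$ \emph{supports} $\Cee^n_{\varphi_1}$ at $y_0$ in the sense of Definition~\ref{def:SupportingSet}: we have $\Cee^n_{\varphi_1} \subseteq \Cee^n_{\varphi_2}$ and, since $\partial \Cee^n_{\varphi_1} \subseteq \partial \Cee^n_{\varphi_2} \cup \Cee^n_{\varphi_2}$, we need $y_0 \in \partial \Cee^n_{\varphi_2}$. This is where a little care is needed: for $\varphi_2 < \pi$ the two cone boundaries meet only along the negative $x_n$-axis region, so not every boundary point of $\Cee^n_{\varphi_1}$ lies on $\partial \Cee^n_{\varphi_2}$ — however, by the dilation invariance of both cones and of $\lambda_p$ (Lemma~\ref{ScalingInvarianceLem}), and using Step~1 of the proof of Proposition~\ref{translationProp} together with Lemma~\ref{lem: moving special point complement of convex cone}, we may normalise the extremal so that the relevant contact point $y_0$ is the apex (the origin), which \emph{does} lie on $\partial \Cee^n_{\varphi_2}$. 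Then Proposition~\ref{prop: Supporting sets} gives $\RR_p(\Cee^n_{\varphi_2}, u) = \RR_p(\Cee^n_{\varphi_1}, u) = \lambda_p(\Cee^n_{\varphi_1}) = \lambda_p(\Cee^n_{\varphi_2})$, so $u$ is also an extremal for $\Cee^n_{\varphi_2}$. But $u$ vanishes on the nonempty open set $\Cee^n_{\varphi_2} \setminus \overline{\Cee^n_{\varphi_1}}$, contradicting Corollary~\ref{ExtSignCor} since $\Cee^n_{\varphi_2}$ is connected. This contradiction yields the strict inequality.

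I expect the main obstacle to be verifying that one can always arrange the contact point $y_0$ to lie at the apex (equivalently, on $\partial\Cee^n_{\varphi_2}$), i.e.\ making the supporting-set argument actually apply rather than just the crude domain inclusion. The resolution is exactly the translation-and-dilation normalisation used in Proposition~\ref{translationProp}: since $(\Cee^n_{\varphi_1})^c$ is a convex cone, Lemma~\ref{lem: moving special point complement of convex cone} lets us translate an extremal so its special point sits at distance $1$ from the apex, and then a dilation brings the nearest boundary point to the origin. One should also double-check the two genuinely different geometric regimes — $\varphi_1 = \pi/2$ (where $\Cee^n_{\varphi_1}$ is a halfspace) and $\varphi_2 = \pi$ (the slit space) — but in both the inclusion $\Cee^n_{\varphi_1} \subsetneq \Cee^n_{\varphi_2}$ is strict and the complement-of-the-larger-inside-the-smaller open set is nonempty, so the contradiction with Corollary~\ref{ExtSignCor} goes through unchanged.
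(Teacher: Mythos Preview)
Your overall strategy---use an extremal for $\Cee^n_{\varphi_1}$, apply the supporting-set machinery, and derive a contradiction with Corollary~\ref{ExtSignCor}---matches the paper's. But there is one genuine error and one place where you work harder than needed.

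The error is your claim that ``domain monotonicity of $\RR_p$'' immediately gives $\lambda_p(\Cee^n_{\varphi_2}) \le \lambda_p(\Cee^n_{\varphi_1})$. The inequality $\RR_p(\Omega,u)\le \RR_p(\Omega',u)$ for $u$ supported in $\Omega\subset\Omega'$ yields no comparison of the infima: the larger set admits more test functions (pushing $\lambda_p$ down) while each shared test function has \emph{larger} quotient there (pushing it up). Concretely, $B_1\setminus\{0\}\subset B_1$ yet $\lambda_p(B_1\setminus\{0\})=\lambda_p(\R^n\setminus\{0\})<\lambda_p(\R^n_\limplus)=\lambda_p(B_1)$ by Theorem~\ref{thm: convex, punctured, exterior domains}, so the direction you assert fails in general. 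Fortunately this sentence is redundant: once you normalise the extremal so its nearest boundary point is the apex $0$ (your use of Lemma~\ref{lem: moving special point complement of convex cone} and the translation from Proposition~\ref{translationProp} is correct here), the first part of Proposition~\ref{prop: Supporting sets} gives $\lambda_p(\Cee^n_{\varphi_2})\le \RR_p(\Cee^n_{\varphi_2},\tilde u)=\RR_p(\Cee^n_{\varphi_1},\tilde u)=\lambda_p(\Cee^n_{\varphi_1})$ and the strictness follows as you say.

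The paper avoids your normalisation by a cleaner geometric observation: for \emph{every} $y_0\in\partial\Cee^n_{\varphi_1}$ the translate $y_0+\Cee^n_{\varphi_2}$ supports $\Cee^n_{\varphi_1}$ at $y_0$, since $(\Cee^n_{\varphi_1})^c$ is a convex cone containing $(\Cee^n_{\varphi_2})^c$ and hence $y_0+(\Cee^n_{\varphi_2})^c\subset(\Cee^n_{\varphi_1})^c$. Thus $\Cee^n_{\varphi_1}$ is \emph{fully} supported by $\Cee^n_{\varphi_2}$, and the second part of Proposition~\ref{prop: Supporting sets} gives the non-strict inequality outright; for strictness one applies the first part at whatever $y_0$ the extremal happens to pick, with no need to move it to the apex. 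Your route through Lemma~\ref{lem: moving special point complement of convex cone} is valid but longer; the full-support observation is the intended shortcut.
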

\begin{proof}
Given $x_0 \in \partial \Cee^n_{\varphi_1}$ we observe that $\Cee^n_{\varphi_1} \subset x_0+\Cee^n_{\varphi_2}$ and $x_0 \in \partial(x_0+\Cee^n_{\varphi_2})$. Therefore, $\Cee^n_{\varphi_1}$ is fully supported by $\Cee^n_{\varphi_2}$; Proposition~\ref{prop: Supporting sets} implies that $\lambda_p(\Cee^n_{\varphi_2})\leq \lambda_p(\Cee^n_{\varphi_1})$. By Proposition~\ref{translationProp}, $\Cee^n_{\varphi_1}$ has an extremal $u\in \DD^{1,p}_0(\Cee^n_{\varphi_1})$. If $\lambda_p(\Cee^n_{\varphi_2})=  \lambda_p(\Cee^n_{\varphi_1})$, then $u$ would be an extremal also in $ \Cee^n_{\varphi_2}$. As $u$ vanishes in $ \Cee^n_{\varphi_2}\setminus\overline{\Cee^n_{\varphi_1}}\neq \emptyset$, this contradicts Corollary~\ref{ExtSignCor}. Thus,  $\lambda_p(\Cee_{\varphi_2})<\lambda_p(\Cee_{\varphi_1})$.
\end{proof}
\begin{rem}\label{lem: CeePhi continuity}
It is also possible to show that $[\pi/2,\pi]\ni \varphi \mapsto \lambda_p(\Cee^n_\varphi)$ is continuous. 
\end{rem}

\section{Local and global analysis of \texorpdfstring{$\Lambda_p(\Omega)$}{Lambda}}
\label{sec: Blow-up analysis}

In this section, we will focus on understanding how $\Lambda_p$ compares to the value of $\lambda_p$ in simpler model sets. The typical model sets are dilation invariant sets such as halfspaces or cones, which were discussed in Section~\ref{sec: Dilation invariant domains}.
Our aim is to prove upper bounds on $\Lambda_p(\Omega)$, and in turn on $\lambda_p(\Omega)$, in terms of blow-up/blow-down limits of $\Omega$. We will also establish lower bounds on $\Lambda_p(\Omega)$, by studying the asymptotic behavior of $\RR_p(\Omega, w_{x_k}^\Omega)$ when $\{x_k\}_{k\geq 1} \subset \Omega$ does not have a limit in $\Omega$. The combination of these lower bounds and Proposition~\ref{prop: Compactness threshold} will be one our main tools in proving the existence of extremals for $\lambda_p(\Omega)$.

For a given $\Omega \subsetneq \R^n$, we will deduce upper bounds on $\Lambda_p(\Omega)$ by making the following observation: if we can locally approximate a dilation invariant set $\mathcal{C}$ to arbitrary precision by a sequence of dilations and translations acting on $\Omega$, then a trial sequence for $\Lambda_p(\Omega)$ can be constructed from an almost minimizer of $\lambda_p(\mathcal{C})$. 
There are two important cases to have in mind. The first is zooming in at a boundary point of $\Omega$, in which case $\mathcal{C}$ is the blow-up of $\Omega$ at this point. The second case occurs 
when zooming out so far that only asymptotic features of $\Omega$ remain visible and are described by a cone $\mathcal{C}$.
\begin{lem}\label{lem: blow-up upper bound}
Let $\mathcal{C}\subsetneq \R^n$ be an open cone. Assume for all small enough $\delta \in (0, 1)$ there are $t=t(\delta)>0$ and $y=y(\delta)\in \R^n$ such that
\begin{equation*}
  (t\Omega-y)\cap B_1 \supset \{x \in \mathcal{C}: d_\mathcal{C}(x)>\delta\}\cap B_1
\end{equation*}
and for each $x \in \mathcal{C}\cap B_{1/2}$
\begin{equation*}
\lim_{\delta \to 0} d_{t\Omega-y}(x) = d_{\mathcal{C}}(x)\,.
\end{equation*}
Then $$\Lambda_p(\Omega)\leq \lambda_p(\mathcal{C})\,.$$ 
\end{lem}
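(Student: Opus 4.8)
The plan is to produce an explicit trial sequence $\{x_k\}_{k\ge1}\in\mathcal{Y}_\Omega$ for the variational problem defining $\Lambda_p(\Omega)$ by transplanting almost-minimizing potentials for $\lambda_p(\mathcal{C})$ into $\Omega$ via the similarity transforms $T_k = T_{t_k,\,\mathrm{Id},\,-y_k}$ supplied by the hypothesis with $\delta=\delta_k\to0$. Concretely, fix $\epsilon>0$; by Lemma~\ref{lem: variational def cpt support} choose $\varphi\in C_c^\infty(\mathcal{C})$ with $\RR_p(\mathcal{C},\varphi)\le\lambda_p(\mathcal{C})+\epsilon$, and by similarity invariance (Lemma~\ref{ScalingInvarianceLem}) we may rescale and translate so that $\mathrm{supp}(\varphi)\subset B_{1/2}$ and $\|\varphi/d_\mathcal{C}^{1-n/p}\|_\infty$ is attained at some point $x_\varphi\in\mathcal{C}\cap B_{1/2}$ — this is where being able to shrink the support into a small ball around the maximizing point, itself an interior point by Lemma~\ref{TechLimLem}, matters. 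Then for $k$ large the set inclusion $(t_k\Omega-y_k)\cap B_1\supset\{x\in\mathcal{C}:d_\mathcal{C}(x)>\delta_k\}\cap B_1$ guarantees $\mathrm{supp}(\varphi)\subset t_k\Omega-y_k$, so $\varphi\in\DD_0^{1,p}(t_k\Omega-y_k)$ and hence $v_k:=\varphi\circ T_k\in\DD_0^{1,p}(\Omega)$ is a legitimate competitor.

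Next I would compare $\RR_p(\Omega,v_k)=\RR_p(t_k\Omega-y_k,\varphi)$ (by Lemma~\ref{ScalingInvarianceLem}) with $\RR_p(\mathcal{C},\varphi)$. The gradient term is literally unchanged. For the sup term, the domain inclusion gives $d_{t_k\Omega-y_k}\le d_\mathcal{C}$ pointwise on the support of $\varphi$ (since a larger complement means a smaller distance — here $(t_k\Omega-y_k)^c \cap B_1 \subset \mathcal{C}^c$ on the relevant region, so we must be slightly careful and use that $\mathrm{supp}\varphi$ is well inside $B_1$, staying away from $\partial B_1$), and the assumed pointwise convergence $d_{t_k\Omega-y_k}(x)\to d_\mathcal{C}(x)$ for $x\in\mathcal{C}\cap B_{1/2}$ together with the uniform lower bound $d_\mathcal{C}\ge\min_{\mathrm{supp}\varphi}d_\mathcal{C}>0$ on the compact support yields
\begin{equation*}
\biggl\|\frac{\varphi}{d_{t_k\Omega-y_k}^{1-n/p}}\biggr\|_\infty\ \longrightarrow\ \biggl\|\frac{\varphi}{d_\mathcal{C}^{1-n/p}}\biggr\|_\infty
\end{equation*}
as $k\to\infty$. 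Hence $\limsup_{k\to\infty}\RR_p(\Omega,v_k)\le\RR_p(\mathcal{C},\varphi)\le\lambda_p(\mathcal{C})+\epsilon$.

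It then remains to convert this into a bound on $\Lambda_p(\Omega)$, which requires exhibiting the points $x_k\in\Omega$ and checking $\{x_k\}\in\mathcal{Y}_\Omega$. I would take $x_k=T_k^{-1}(x_\varphi)$, the point in $\Omega$ where $|v_k|/d_\Omega^{1-n/p}$ attains its maximum. By Proposition~\ref{prop: potentials are enough}, $d_\Omega(x_k)^{p-n}\|Dw_{x_k}^\Omega\|_p^p\le\RR_p(\Omega,v_k)$. The key point is that $\{x_k\}$ must leave every compact subset of $\Omega$: since the transform $T_k=T_{t_k,\,\mathrm{Id},\,-y_k}$ rescales distances by $t_k$, we have $d_\Omega(x_k)=t_k^{-1}d_{t_k\Omega-y_k}(x_\varphi)\to t_k^{-1}d_\mathcal{C}(x_\varphi)$, and one shows $t_k\to\infty$ (forcing $d_\Omega(x_k)\to0$) — otherwise, passing to a subsequence with $t_k$ bounded, the sets $t_k\Omega-y_k$ would converge, after a further subsequence, to a genuine limit set that is all of a fixed translate/dilate rather than an unbounded cone, contradicting that $\mathcal{C}$ is a cone (equivalently, contradicting the precision of the approximation as $\delta_k\to0$); if $t_k\to\infty$ along a subsequence and the corresponding $|x_k|$ stays bounded we still get $d_\Omega(x_k)\to0$. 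So in all cases $\{x_k\}\in\mathcal{Y}_\Omega$, giving $\Lambda_p(\Omega)\le\liminf_k d_\Omega(x_k)^{p-n}\|Dw_{x_k}^\Omega\|_p^p\le\lambda_p(\mathcal{C})+\epsilon$, and letting $\epsilon\to0$ finishes the proof.

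The main obstacle I anticipate is the bookkeeping around the distance functions near $\partial B_1$: the inclusion hypothesis only controls $t_k\Omega-y_k$ \emph{inside} $B_1$, so one must ensure $\mathrm{supp}(\varphi)$ is compactly contained in $B_{1/2}$ (hence bounded away from $\partial B_1$) before any comparison of $d_{t_k\Omega-y_k}$ with $d_\mathcal{C}$ is valid — points of $(t_k\Omega-y_k)^c$ outside $B_1$ are too far from $\mathrm{supp}(\varphi)$ to affect its distance once $\varphi$ is supported deep inside. A secondary technical point is verifying $t_k\to\infty$ rigorously; this is essentially the statement that $\mathcal{C}$, being a cone, cannot be approximated arbitrarily well near the origin by a bounded rescaling of a fixed open set unless that rescaling itself degenerates, and it can be argued cleanly by contradiction using the scaling of $d_\Omega$.
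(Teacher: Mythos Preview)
Your overall strategy matches the paper's: transplant a compactly supported almost-minimizer $\varphi\in C_c^\infty(\mathcal{C}\cap B_{1/2})$ into $\Omega$ via the similarity $T_k$, invoke Proposition~\ref{prop: potentials are enough} to pass to potentials, and feed the resulting points into the definition of $\Lambda_p(\Omega)$. The handling of the distance convergence is also essentially the same (the paper only uses the pointwise hypothesis at the single maximizing point $x_\varphi$, which is cleaner than your sup-norm convergence, but your version works too since distance functions are $1$-Lipschitz and hence equicontinuous).

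There is, however, a genuine gap in the last step. Your claim that $t_k\to\infty$ is false in general, and your contradiction argument does not go through. Take $\Omega=\mathcal{C}$ itself: then $t(\delta)\equiv 1$, $y(\delta)\equiv 0$ satisfy both hypotheses for every $\delta$, so $t_k\equiv 1$. More broadly, nothing in the hypotheses forces $t_k\Omega-y_k$ to converge to $\mathcal{C}$; it only has to contain $\mathcal{C}_\delta\cap B_1$ and match $d_\mathcal{C}$ on $\mathcal{C}\cap B_{1/2}$, which is perfectly compatible with $t_k,y_k$ having finite limits $t_0>0,\ y_0$. In that situation $x_k=t_k^{-1}(x_\varphi+y_k)$ converges to an interior point of $\Omega$ with $d_\Omega(x_k)\to t_0^{-1}d_\mathcal{C}(x_\varphi)>0$, so $\{x_k\}\notin\mathcal{Y}_\Omega$ and you get no bound on $\Lambda_p(\Omega)$.

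The paper repairs exactly this by a case split. When $t_k\to\infty$, $t_k\to 0$, or $|y_k|\to\infty$, your argument works. In the remaining case $t_k\to t_0\in(0,\infty)$, $y_k\to y_0$, one deduces $\mathcal{C}\cap B_{1/2}\subset t_0\Omega-y_0$ with $d_{t_0\Omega-y_0}=d_\mathcal{C}$ there, and then \emph{uses the cone property of $\mathcal{C}$ a second time}: the rescaled functions $v_s(x)=\varphi((t_0x-y_0)/s)$ for $s\in(0,1)$ still lie in $\DD_0^{1,p}(\Omega)$ with $\RR_p(\Omega,v_s)=\RR_p(\mathcal{C},\varphi)$, and their maximizing points $x_s=(sx_\varphi+y_0)/t_0$ converge as $s\to 0$ to $y_0/t_0\in\partial\Omega$. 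This second dilation is the missing idea.
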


\begin{figure}[ht]
\centering
\begin{tikzpicture}[scale=0.59]

\def \Scale {4};

\begin{scope}[scale=1]

\def \Xshift {-5};
\def \Yshift {0};

\clip (\Xshift,\Yshift) circle (4.5);

\draw [very thick, domain=-5:0, samples=100] plot ({\x+sin(180*\x/pi)*3/5+\Xshift},{exp(\x)*(-2*\x*\x*\x-7*\x*\x+1.5*\x)+\Yshift});
\draw [very thick, domain=0:4, samples=100] plot ({\x+\Xshift},{-(\x+1)*sin(90*\x/pi)+\x*\x/4+\Yshift});
\draw [very thick, domain=4:6, samples=100] plot ({\x-1.5*(\x-4)^2-(\x-4)/6+\Xshift},{-(\x+1)*sin(90*\x/pi)+\x*\x/4+\Yshift});

\draw [red, dashed, very thick, domain=0:360, samples=50] plot ({cos(\x)+\Xshift},{sin(\x)+\Yshift});

\draw [blue, thick] (-2*8/5+\Xshift,-2*1.5+\Yshift)--(0+\Xshift,0+\Yshift);
\draw [blue, thick] (4+\Xshift,-2+\Yshift)--(0+\Xshift,0+\Yshift);
\end{scope}


\begin{scope}[scale=\Scale]
\def \Xshift {5/\Scale};
\def \Yshift {0};

\clip (\Xshift,\Yshift) circle ({4.5/\Scale});

\draw [very thick, domain=-2:0, samples=100] plot ({\x+sin(180*\x/pi)*3/5+\Xshift},{exp(\x)*(-2*\x*\x*\x-7*\x*\x+1.5*\x)+\Yshift});
\draw [very thick, domain=0:2, samples=100] plot ({\x+\Xshift},{-(\x+1)*sin(90*\x/pi)+\x*\x/4+\Yshift});

\draw [red, dashed, very thick, domain=0:360, samples=50] plot ({cos(\x)+\Xshift},{sin(\x)+\Yshift});
\draw [orange, dashed, very thick, domain=0:360, samples=50] plot ({cos(\x)/\Scale+\Xshift},{sin(\x)/\Scale+\Yshift});

\draw [blue, thick] (-2*8/5+\Xshift,-2*1.5+\Yshift)--(0+\Xshift,0+\Yshift);
\draw [blue, thick] (4+\Xshift,-2+\Yshift)--(0+\Xshift,0+\Yshift);
\end{scope}


\begin{scope}[scale={\Scale*\Scale}]

\def \Xshift {5/\Scale/\Scale};
\def \Yshift {-9/\Scale/\Scale};

\clip (\Xshift,\Yshift) circle ({4.5/\Scale/\Scale});

\draw [very thick, domain=-1:0, samples=100] plot ({\x+sin(180*\x/pi)*3/5+\Xshift},{exp(\x)*(-2*\x*\x*\x-7*\x*\x+1.5*\x)+\Yshift});
\draw [very thick, domain=0:1, samples=100] plot ({\x+\Xshift},{-(\x+1)*sin(90*\x/pi)+\x*\x/4+\Yshift});

\draw [red, dashed, very thick, domain=0:360, samples=50] plot ({2*cos(\x)+\Xshift},{2*sin(\x)+\Yshift});

\draw [orange, dashed, very thick, domain=0:360, samples=50] plot ({cos(\x)/\Scale+\Xshift},{sin(\x)/\Scale+\Yshift});
\draw [red, dashed, very thick, domain=0:360, samples=50] plot ({cos(\x)/\Scale/\Scale+\Xshift},{sin(\x)/\Scale/\Scale+\Yshift});

\draw [blue, thick] (-2*8/5+\Xshift,-2*1.5+\Yshift)--(0+\Xshift,0+\Yshift);
\draw [blue, thick] (4+\Xshift,-2+\Yshift)--(0+\Xshift,0+\Yshift);
\end{scope}


\begin{scope}[scale={\Scale*\Scale*\Scale}]

\def \Xshift {-5/\Scale/\Scale/\Scale};
\def \Yshift {-9/\Scale/\Scale/\Scale};

\clip (\Xshift,\Yshift) circle ({4.5/\Scale/\Scale/\Scale});

\draw [very thick, domain=-1:0, samples=100] plot ({\x+sin(180*\x/pi)*3/5+\Xshift},{exp(\x)*(-2*\x*\x*\x-7*\x*\x+1.5*\x)+\Yshift});
\draw [very thick, domain=0:1, samples=100] plot ({\x+\Xshift},{-(\x+1)*sin(90*\x/pi)+\x*\x/4+\Yshift});

\draw [red, dashed, very thick, domain=0:360, samples=50] plot ({cos(\x)/\Scale/\Scale+\Xshift},{sin(\x)/\Scale/\Scale+\Yshift});

\draw [blue, thick] (-2*8/5+\Xshift,-2*1.5+\Yshift)--(0+\Xshift,0+\Yshift);
\draw [blue, thick] (4+\Xshift,-2+\Yshift)--(0+\Xshift,0+\Yshift);
\end{scope}

\draw [very thick, dotted, red] ({-5+cos(105)},{sin(105)})--({5+4*cos(105)},{4*sin(105)});
\draw [very thick, dotted, red] ({-5+cos(105)},{-sin(105)})--({5+4*cos(105)},{-4*sin(105)});

\draw [very thick, dotted, orange] ({5+sin(105)},{-cos(105)})--({5+4*sin(105)},{-9-4*cos(105)});
\draw [thick, dotted, orange] ({5-sin(105)},{-cos(105)})--({5-4*sin(105)},{-9-4*cos(105)});

\draw [very thick, dotted, red] ({5-cos(105)},{-9+sin(105)})--({-5-4*cos(105)},{-9+4*sin(105)});
\draw [very thick, dotted, red] ({5-cos(105)},{-9-sin(105)})--({-5-4*cos(105)},{-9-4*sin(105)});

\node at (-7, 1) {$\Omega$};
\node at (-8.5, -2.6) {$\mathcal{C}$};

\node at (3, 1) {$\Omega$};
\node at (1.5, -2.6) {$\mathcal{C}$};

\node at (3, {1-9}) {$\Omega$};
\node at (1.5, {-2.6-9}) {$\mathcal{C}$};

\node at (-7, {1-9}) {$\Omega$};
\node at (-8.5, {-2.6-9}) {$\mathcal{C}$};

\end{tikzpicture}
\caption{A sequence of four blow-ups around a point on the boundary of a domain $\Omega$ with a limiting profile $\mathcal{C}$ depicted in blue.}
 \label{fig:Blowup}
\end{figure}
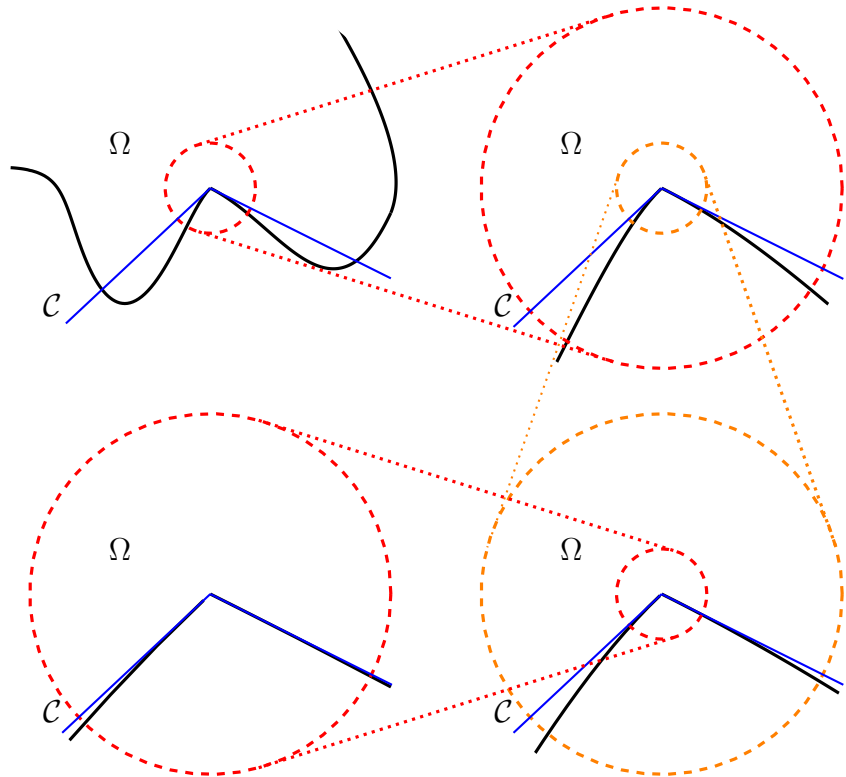

\begin{proof} 
Fix $\epsilon >0$. By Lemma~\ref{lem: variational def cpt support}, there exists an $r>0$ and a function $u \in C_c^\infty(\mathcal{C}\cap B_r)$ so that
  \begin{equation*}
  \RR_p(\mathcal{C},u)\leq \lambda_p(\mathcal{C})+\epsilon\,.
  \end{equation*}
  Since both $\mathcal{C}$ and the Rayleigh quotient are invariant under dilations, we can ensure that $r=1/2$. By Lemma~\ref{TechLimLem}, there is $x^* \in \mathcal{C}\cap B_{1/2}$ such that
  \begin{equation*}
    \frac{|u(x^*)|}{d_\mathcal{C}(x^*)^{1-n/p}} = \biggl\|\frac{u}{d_{\mathcal{C}}^{1-n/p}}\biggr\|_\infty\,.
  \end{equation*}

   For $\delta>0$, define
   \begin{equation*}
     \mathcal{C}_\delta = \{x\in \mathcal{C}: d_\mathcal{C}(x)>\delta\}\,.
   \end{equation*}
   By assumption, there are $t>0, y\in \R^n$ such that $(t\Omega-y)\cap B_1 \supset \mathcal{C}_\delta \cap B_1$. 
   As the support of $u$ is a compact subset of $\mathcal{C}\cap B_{1/2}$, $\mathrm{supp}(u)\subset \mathcal{C}_\delta \cap B_{1/2}$ for sufficiently small $\delta$. Consequently, $v_\delta$ defined by $v_\delta(x)=u(tx-y)$ belongs to $\DD_0^{1,p}(\Omega)$ for all small enough $\delta$.

  Set $x_\delta= t^{-1}(x^*+y)$. By similarity invariance, 
  \begin{equation*}
    d_\Omega(x_\delta)^{p-n}\frac{\|Dv_\delta\|_p^p}{|v_\delta(x_\delta)|^p}=d_{t\Omega-y}(x^*)^{p-n}\frac{\|Du\|_p^p}{|u(x^*)|^p}\,.
  \end{equation*}
  It follows from the definition of $u$ and $x^*$ that
  \begin{equation*}
    d_{t\Omega-y}(x^*)^{p-n}\frac{\|Du\|_p^p}{|u(x^*)|^p}= \frac{d_{t\Omega-y}(x^*)^{p-n}}{d_{\mathcal{C}}(x^*)^{p-n}} \RR_p(\mathcal{C}, u) \leq  \frac{d_{t\Omega-y}(x^*)^{p-n}}{d_{\mathcal{C}}(x^*)^{p-n}} (\lambda_p(\mathcal{C})+\epsilon)\,.
  \end{equation*}
By hypothesis  
  \begin{equation}\label{eq: convergence of dist at special point}
    \lim_{\delta \to 0} d_{t\Omega-y}(x^*) = d_{\mathcal{C}}(x^*)\,,
  \end{equation}
which gives
  \begin{equation}\label{eq: trial funcs 1 blowup}
    \limsup_{\delta \to 0}d_\Omega(x_\delta)^{p-n}\frac{\|Dv_\delta\|_p^p}{|v_\delta(x_\delta)|^p} \leq  \lambda_p(\mathcal{C}) + \epsilon\,.
  \end{equation}
Moreover, the variational characterization of $w_{x_\delta}^\Omega$ implies that 
  \begin{equation}\label{eq: trial funcs 2 blowup}
    \limsup_{\delta \to 0}d_\Omega(x_\delta)^{p-n}\|Dw_{x_\delta}^\Omega\|_p^p \leq  \lambda_p(\mathcal{C})+\epsilon\,.
  \end{equation}

  We split the remainder of this proof into two cases depending on the asymptotic behavior of $t$ and $y$ as $\delta\rightarrow 0$.

  \medskip

  {\noindent\it Case 1:}
  If along a sequence $\{\delta_k\}_{k\geq 1}$ with $\lim_{k\to \infty}\delta_k =0$ it holds that
  \begin{equation}\label{eq: divergence condition}
    \limsup_{k\to \infty} t(\delta_k)=\infty\,,\quad  
    \liminf_{k\to \infty} t(\delta_k)=0\,,\quad  \mbox{or}\quad
    \limsup_{k\to \infty} |y(\delta_k)|= \infty\,,
  \end{equation}
  then we claim that a subsequence of $\{x_{\delta_k}\}_{k\geq 1}$ belongs to $\mathcal{Y}_\Omega$. Thus along this subsequence $w^\Omega_{x_{\delta_{k}}}$ is admissible in the definition of $\Lambda_p(\Omega)$, which when combined with~\eqref{eq: trial funcs 2 blowup} implies $$\Lambda_p(\Omega) \leq \lambda_p(\mathcal{C})+\epsilon\,.$$  Since $\epsilon$ was arbitrary, we would then conclude our proof in this case. 

  To prove the claim, we argue as follows. By~\eqref{eq: convergence of dist at special point},
  $$
  d_\Omega(x_{\delta_k}) = t(\delta_k)^{-1}d_{t(\delta_k)\Omega-y(\delta_k)}(x^*) = t(\delta_k)^{-1}(d_{\mathcal{C}}(x^*)+o(1))\,
  $$
  as $k\rightarrow\infty$. Therefore, if $\liminf_{k\to \infty}t(\delta_k)=0$, then $\limsup_{k\to \infty}|x_{\delta_k}|= \infty$, and a subsequence of $\{x_{\delta_k}\}_{k\geq 1}$ belongs to $\mathcal{Y}_\Omega$. If instead $\limsup_{k\to \infty}t(\delta_k)=\infty$, then $\liminf_{k\to \infty}d_\Omega(x_{\delta_k}) = 0$ and again a subsequence of $\{x_{\delta_k}\}_{k\geq 1}$ belongs to $\mathcal{Y}_\Omega$. Finally if there exist $c,C>0$ so that $c<t(\delta_k)<C$ for all $k$ but $\limsup_{k\to \infty}|y(\delta_k)|=\infty$, then
  $$\limsup_{k\to \infty}|x_{\delta_k}| = \limsup_{k\to \infty} |t(\delta_k)^{-1}(x^*+y(\delta_k))| =\infty\,.$$
  Again we deduce that $\{x_{\delta_k}\}_{k\geq 1}$ belongs to $\mathcal{Y}_\Omega$.

  \medskip

  {\noindent\it Case 2:}
  If~\eqref{eq: divergence condition} fails, then there exists $t_0>0, y_0\in \R^n$, and a sequence $\{\delta_k\}_{k\geq 1}$ such that
  \begin{equation*}
    \lim_{k \to \infty}\delta_k=0\,, \quad \lim_{k\to \infty} t(\delta_k) = t_0\,, \quad \mbox{and} \quad \lim_{k\to \infty}y(\delta_k) = y_0\,.
  \end{equation*}
 As $\lim_{\delta\to0}d_{t\Omega-y}(x) = d_{\mathcal{C}}(x)$ for $x \in  \mathcal{C}\cap B_{1/2}$,
  \begin{equation*}
   \lim_{k\to \infty}d_{t(\delta_k)\Omega-y(\delta_k)}(x)= d_{t_0\Omega-y_0}(x) = d_{\mathcal{C}}(x) \quad \mbox{ for }x\in  \mathcal{C}\cap B_{1/2}\,.
  \end{equation*}
Here we used that $d_\Omega$ is continuous and $d_{t\Omega-y}(x) = td_\Omega (t^{-1}(x+y))$. As a result, $\mathcal{C}\cap B_{1/2}\subset t_0\Omega-y_0$. 

\par Since $\mathcal{C}$ is dilation invariant, the function defined by $v_s(x) = u((t_0x-y_0)/s)$ belongs to $\DD_0^{1,p}(\Omega)$ for any $s\in (0, 1)$. Moreover, we claim that
  \begin{equation*}
    \biggl\|\frac{v_s}{d_\Omega^{1-n/p}}\biggr\|_\infty = \frac{|v_s(x_s)|}{d_\Omega(x_s)^{1-n/p}}
    \quad \text{with}\quad  x_s= \frac{sx^*+y_0}{t_0}
  \end{equation*}
  for $s\in (0,1)$. To see this, recall that $d_\mathcal{C}= d_{t_0\Omega-y_0}$ in $B_{1/2} \cap\mathcal{C}$, $\mathrm{supp}(u(\cdot /s)) \subset \mathcal{C}\cap B_{s/2}$, and $d_{\mathcal{C}}(x/s) = s^{-1}d_\mathcal{C}(x)$ for all $x\in \mathrm{supp}(u(\cdot/s))$. It follows that
  \begin{align*}
    \frac{|v_s(x)|}{d_\Omega(x)^{1-n/p}} 
    &= t_0^{1-n/p}\frac{|u((t_0x-y_0)/s)|}{d_{t_0\Omega-y_0}((t_0x-y_0))^{1-n/p}}\\
    &= (t_0/s)^{1-n/p}\frac{|u((t_0x-y_0)/s)|}{d_{\mathcal{C}}((t_0x-y_0)/s)^{1-n/p}}\\
    &\leq (t_0/s)^{1-n/p} \frac{|u(x^*)|}{d_{\mathcal{C}}(x^*)^{1-n/p}}\,,
  \end{align*}
  with equality for $x= x_s$. Thus,
  \begin{equation}\label{eq: trial function second blow up}
    \RR_p(\Omega, v_s) = \RR_p(\mathcal{C}, u) \leq \lambda_p(\mathcal{C})+\epsilon\,.
  \end{equation}

Notice that $y_0/t_0=\lim_{s\rightarrow 0}x_s\in \partial\Omega$ since
  $$
  \lim_{s\to 0}d_\Omega(x_s) = t_0^{-1}\lim_{s\to 0}d_{t_0\Omega-y_0}(sx^*) = t_0^{-1}\lim_{s\to 0} d_{\mathcal{C}}(sx^*) = t_0^{-1}\lim_{s\to 0}s d_{\mathcal{C}}(x^*)=0\,.
  $$
  Therefore, along any sequence $\{s_k\}_{k\geq 1}$ with $s_k\to 0$ the sequence $\{x_{s_k}\}_{k\geq 1} \in \mathcal{Y}_\Omega$. 
  Consequently, the potentials $\{w^\Omega_{x_{s_{k}}}\}_{k\geq1}$ are admissible in the definition of $\Lambda_p(\Omega)$. 
Combining this observation with Proposition~\ref{prop: potentials are enough} and~\eqref{eq: trial function second blow up}, we have
  \begin{equation*}
    \Lambda_p(\Omega) \leq \liminf_{k\to \infty}d_\Omega(x_{s_k})^{p-n}\|Dw^\Omega_{x_{s_{k}}}\|_p^p\leq \liminf_{k\to \infty}\RR_p(\Omega, v_{s_k}) \leq  \lambda_p(\mathcal{C})+\epsilon\,.
  \end{equation*}
  Since $\epsilon$ was arbitrary, this completes our proof.
\end{proof}
The most evident case when Lemma~\ref{lem: blow-up upper bound} can be applied is if the boundary has at least one point at which it is differentiable. More 
generally it holds when $\partial\Omega$ is asymptotically a cone at some boundary point.
\begin{cor}\label{cor: boundaryblowup}  
Assume
$$
\Omega\cap B_r=\{x=(x',x_n)\in B_r: x_n>f(x')\}
$$
for some $r>0$, where $f\colon \R^{n-1}\rightarrow \R$ is continuous with $f(0)=0$. Further suppose 
the limit 
$$
F(x')=\lim_{t\to\infty}tf(x'/t)
$$
exists locally uniformly. Then $\Lambda_p(\Omega)\le\lambda_p(\mathcal{C})$,
where 
$$
\mathcal{C}=\{x=(x',x_n)\in \R^n: x_n>F(x')\}\,.
$$ 
\end{cor}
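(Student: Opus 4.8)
The plan is to derive this from Lemma~\ref{lem: blow-up upper bound} applied to the cone $\mathcal{C}$, taking the translation parameter $y(\delta)\equiv 0$ and a dilation parameter $t=t(\delta)\to\infty$ as $\delta\to 0$. Write $f_t(x')=tf(x'/t)$. The preliminary observations are: $0\in\partial\Omega$ because $f(0)=0$; $F(0)=\lim_t tf(0)=0$ and $F$ is continuous, so $\mathcal{C}=\{x_n>F(x')\}$ is a nonempty open proper subset of $\R^n$; a substitution in the defining limit shows $t_0F(x'/t_0)=F(x')$, so $\mathcal{C}$ is a cone; scaling the hypothesis gives $(t\Omega)^c\cap B_{tr}=\{x\in B_{tr}:x_n\le f_t(x')\}$; and $f_t\to F$ locally uniformly on $\R^{n-1}$. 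I will also use the one-sided bound $d_{\mathcal{C}}(x)\le x_n-F(x')$ for $x\in\mathcal{C}$, obtained by testing the infimum defining $d_\mathcal{C}(x)$ against the boundary point $(x',F(x'))$.

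\emph{First hypothesis of Lemma~\ref{lem: blow-up upper bound}.} Fix a small $\delta\in(0,1)$ and let $x\in\mathcal{C}$ satisfy $d_\mathcal{C}(x)>\delta$ and $|x|<1$. Then $x_n>F(x')+\delta$ by the bound above, and $|x'|<1$. By uniform convergence $f_t\to F$ on $\{|x'|\le 1\}$, there is $T(\delta)$ with $f_t(x')<F(x')+\delta$ for all such $x'$ once $t\ge T(\delta)$. Choosing $t(\delta)=\max\{T(\delta),2/r,1/\delta\}$, we get $x_n>f_t(x')$ and $|x|<1<t(\delta)r$, hence $x\in t(\delta)\Omega\cap B_1$. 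This is exactly the required inclusion $\{x\in\mathcal{C}:d_\mathcal{C}(x)>\delta\}\cap B_1\subset(t(\delta)\Omega)\cap B_1$, and $t(\delta)\to\infty$ as $\delta\to0$.

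\emph{Second hypothesis.} It remains to show $d_{t(\delta)\Omega}(x)\to d_\mathcal{C}(x)$ as $\delta\to0$ for each fixed $x\in\mathcal{C}\cap B_{1/2}$; since $t(\delta)\to\infty$ this is the statement $\lim_{t\to\infty}d_{t\Omega}(x)=d_\mathcal{C}(x)$. First, $d_{t\Omega}(x)\le x_n-f_t(x')\to x_n-F(x')<\infty$ by testing against $(x',f_t(x'))$, which lies in $(t\Omega)^c\cap B_{tr}$ for $t$ large since $|x'|$ and $f_t(x')$ stay bounded; consequently the minimum defining $d_{t\Omega}(x)$ is attained at some $y_t\in(t\Omega)^c$ with $|y_t|<tr$ for $t$ large (by the distance bound), so $(y_t)_n\le f_t((y_t)')$. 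For $\limsup_t d_{t\Omega}(x)\le d_\mathcal{C}(x)$: choose a near-optimal $y^*=(y^{*\prime},F(y^{*\prime}))\in\partial\mathcal{C}$ with $|x-y^*|\le d_\mathcal{C}(x)+\epsilon$ and use the competitor $(y^{*\prime},\min\{F(y^{*\prime}),f_t(y^{*\prime})\})\in(t\Omega)^c$, which converges to $y^*$ because $f_t(y^{*\prime})\to F(y^{*\prime})$. For $\liminf_t d_{t\Omega}(x)\ge d_\mathcal{C}(x)$: the $y_t$ are bounded, so along a subsequence realizing the $\liminf$ we have $y_t\to y_\infty$; passing to the limit in $(y_t)_n\le f_t((y_t)')$ using locally uniform convergence of $f_t$ and continuity of $F$ yields $(y_\infty)_n\le F((y_\infty)')$, i.e.\ $y_\infty\in\mathcal{C}^c$, so $d_\mathcal{C}(x)\le|x-y_\infty|=\liminf_t d_{t\Omega}(x)$.

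With both hypotheses verified, Lemma~\ref{lem: blow-up upper bound} gives $\Lambda_p(\Omega)\le\lambda_p(\mathcal{C})$. The only genuinely delicate step is the distance convergence $d_{t\Omega}(x)\to d_\mathcal{C}(x)$: one must first confine the relevant near-minimizers to a fixed bounded region, so that the uncontrolled far-away portion of $\partial(t\Omega)$ plays no role, and only then exploit the locally uniform convergence $f_t\to F$; everything else is bookkeeping with the definitions.
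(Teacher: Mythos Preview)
Your proof is correct and follows the same overall approach as the paper: apply Lemma~\ref{lem: blow-up upper bound} with $y(\delta)\equiv 0$ and $t(\delta)\to\infty$, then verify its two hypotheses. The only difference is in how you establish the distance convergence $d_{t\Omega}(x)\to d_{\mathcal{C}}(x)$. The paper derives from the uniform bound $|f_t-F|<\delta$ on $|x'|\le 1$ the two-sided set inclusion $(\mathcal{C}+\delta e_n)\cap B_1\subset t\Omega\cap B_1\subset(\mathcal{C}-\delta e_n)\cap B_1$, observes that for $|x|<1/2$ the nearest point of $(t\Omega)^c$ must lie in $B_1$ (since $0\in\partial(t\Omega)$ forces $d_{t\Omega}(x)\le|x|<1/2$), and then reads off $|d_{t\Omega}(x)-d_{\mathcal{C}}(x)|\le\delta$ directly from monotonicity of $d$ under inclusion. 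Your explicit $\limsup/\liminf$ argument with competitors and a subsequential compactness step reaches the same conclusion and is equally valid, if slightly longer.
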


\begin{proof} As $F$ is positively homogeneous and continuous, $\mathcal{C}$ is an open cone. Therefore, it suffices to check the two hypotheses of the previous lemma for each $\delta\in (0,1)$. 

\par By assumption, there is $t>r^{-1}$ with 
$
F(x')-\delta<t f(x'/t)<F(x')+\delta
$
uniformly in $|x'|\le 1$. Since 
$$
t\Omega\cap B_1=\{(x',x_n)\in B_1: x_n>t f(x'/t)\}\,,
$$
it follows that 
\begin{equation}\label{ex: blowup limit inclusion}
(\mathcal{C}+\delta e_n)\cap B_1\subset t\Omega\cap B_1\subset (\mathcal{C}-\delta e_n)\cap B_1.
\end{equation}
Observe that if $d_{\mathcal{C}}(x)>\delta$, then $\overline{B_\delta}(x)\subset {\mathcal{C}}$. In this case, $x-\delta e_n\in \overline{B_\delta}(x)\subset {\mathcal{C}}$ and thus $x\in  {\mathcal{C}}+\delta e_n.$ In view of the first inclusion in~\eqref{ex: blowup limit inclusion}, 
$$
\{x\in \R^n: d_{\mathcal{C}}(x)>\delta\}\cap B_1\subset t\Omega\cap B_1\,.
$$
This verifies the first hypothesis of the lemma. 

\par For the remainder of this proof, fix $x\in {\mathcal{C}}$ with $|x|<1/2$. We claim that
$$
d_{t\Omega\cap B_1}(x)=d_{t\Omega}(x) 
$$
To see this, we note 
$d_{t\Omega\cap B_1}(x)\le |x|<1/2$ since $0\notin t\Omega $. Moreover, if the distance from $x$ to the complement is realized for some $y\not\in B_1$, then $d_{t\Omega\cap B_1}(x)=|x-y|\ge 1-|x|>1/2.$  The claim follows. As $0\notin \mathcal{C}+\delta e_n$, we also conclude 
$$
d_{(\mathcal{C}+\delta e_n)\cap B_1}(x)=d_{(\mathcal{C}+\delta e_n)}(x)\,.
$$

\par Based on the first inclusion in~\eqref{ex: blowup limit inclusion} and the observations just made, 
$$
d_{t\Omega}(x)\ge d_{\mathcal{C}+\delta e_n}(x)\ge d_{\mathcal{C}}(x)-\delta\,.
$$
Moreover, by the second inclusion in~\eqref{ex: blowup limit inclusion}
$$
d_{t\Omega}(x)\le d_{(\mathcal{C}-\delta e_n)\cap B_1}(x)\le d_{\mathcal{C}-\delta e_n}(x)\le  d_{\mathcal{C}}(x)+\delta\,. 
$$ 
We deduce that the second hypothesis of the lemma holds as $|d_{t\Omega}(x)- d_{\mathcal{C}}(x)|\le \delta$.
\end{proof}
\begin{rem}
The corollary holds under the weaker assumption that  
$$
F(x')=\lim_{k\to\infty}t_k f(x'/t_k)
$$
locally uniformly for some sequence $t_k\to\infty$ provided that $F$ is positively homogeneous. 
\end{rem}
\begin{rem}\label{rem: blow-up at a smooth point remark}
In the above corollary, if $f$ is differentiable at $0$, then $F(x')=Df(0)\cdot x'$. In this case, $\mathcal{C}$ is a halfspace. We conclude that $\Lambda_p(\Omega)\le \lambda_p(\R^n_\limplus)$ whenever $\partial\Omega$ has at least one point where it is differentiable.
\end{rem}

Now we turn our attention to a lower bound on $\Lambda_p$ and the behavior of sequences of potentials associated with sequences belonging to $\mathcal{Y}_\Omega$. In the statement below, we will use the notation $T_{r,Q,y}(x)=rQx+y$ for a similarity transformation as discussed in subsection~\ref{sec: siminv}.

\begin{prop}\label{prop: blow-up lower bound on Lambda}
  Assume $\{x_k\}_{k\geq1}\subset \Omega$ and $\{y_k\}_{k\geq1}\subset \partial\Omega$ satisfy
  \begin{equation*}
    |y_k -x_k|= d_\Omega(x_k)\,
      \end{equation*}
  and choose $Q_k \in O(n)$ with $Q_k(e_n) = (x_k-y_k)/d_\Omega(x_k)$ for each $k\ge 1$. Then
  \begin{equation*}
    v_k = 
    w_{x_k}^\Omega \circ T_{d_\Omega(x_k), Q_k, y_k}\,,
  \end{equation*}
 defines a bounded sequence in  $\DD^{1,p}_0(\R^n\setminus\{0\})$.
  If $v_k \rightharpoonup v$ in $\DD^{1,p}_0(\R^n\setminus\{0\})$ then $\{v >0\}\subset \R^n$ is open, 
  $$
  B_1(e_n)\subset \{v>0\} \subset \R^n\setminus\{0\}\,,
  $$
  and 
  \begin{equation*}
    \liminf_{k \to \infty} d_\Omega(x_k)^{p-n}\|Dw^\Omega_{x_k}\|_p^p \geq \|Dv\|_p^p\geq \lambda_p(\{v>0\})\,.
  \end{equation*}
\end{prop}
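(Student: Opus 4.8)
The plan is to first set up the rescaled functions and control their size, then identify the weak limit and its positivity set. Write $r_k=d_\Omega(x_k)$ and $T_k=T_{r_k,Q_k,y_k}$, so that $T_k(e_n)=x_k$, $T_k(0)=y_k\in\partial\Omega$, and $T_k$ carries $B_1(e_n)$ onto $B_{r_k}(x_k)$. Since the open ball $B_{r_k}(x_k)$ is contained in $\Omega$ and $w_{x_k}^\Omega$ is $p$-harmonic and strictly positive there (Corollary~\ref{ExtSignCor}), the function $v_k$ is $p$-harmonic and positive on $B_1(e_n)\setminus\{e_n\}$ with $v_k(e_n)=w_{x_k}^\Omega(x_k)=1$. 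Because $0=T_k^{-1}(y_k)\in\partial(T_k^{-1}\Omega)$, we have $T_k^{-1}\Omega\subseteq\R^n\setminus\{0\}$; by Lemma~\ref{ScalingInvarianceLem} (equivalently, a direct change of variables using \eqref{distScaling}) it follows that $v_k\in\DD^{1,p}_0(\R^n\setminus\{0\})$ and $\|Dv_k\|_p^p=r_k^{p-n}\|Dw_{x_k}^\Omega\|_p^p$. For a uniform bound I would test the variational characterization of $w_{x_k}^\Omega$ against the tent function $\phi_k(x)=\max\{0,1-|x-x_k|/r_k\}\in\DD^{1,p}_0(\Omega)$, which gives $\|Dw_{x_k}^\Omega\|_p^p\le |B_1|\,r_k^{n-p}$ and hence $\|Dv_k\|_p^p\le |B_1|$ for all $k$. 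So $\{v_k\}$ is bounded in $\DD^{1,p}_0(\R^n\setminus\{0\})$.

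Assume now $v_k\rightharpoonup v$. As in the proof of Proposition~\ref{prop: Compactness threshold}, weak convergence together with Morrey's inequality and Arzel\`a--Ascoli (using $v_k(e_n)=1$ for a pointwise bound) forces $v_k\to v$ locally uniformly on $\R^n$. Hence $v$ is continuous, $v\ge 0$, $v(e_n)=1$, and $v(0)=0$; in particular $\{v>0\}$ is open, nonempty, and contained in $\R^n\setminus\{0\}$. To upgrade $v\ge 0$ on $B_1(e_n)$ to $v>0$ there, I would use that a locally uniform limit of $p$-harmonic functions is $p$-harmonic (standard interior regularity estimates give local $C^{1,\alpha}$ bounds, so the limiting equation passes): $v$ is $p$-harmonic on $B_1(e_n)\setminus\{e_n\}$, nonnegative, and not identically zero since $v(e_n)=1$, so the strong minimum principle yields $v>0$ on $B_1(e_n)\setminus\{e_n\}$, and therefore on all of $B_1(e_n)$. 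Thus $B_1(e_n)\subseteq\{v>0\}$. Combining this with $0\notin\{v>0\}$ and $|e_n-0|=1$ pins down $d_{\{v>0\}}(e_n)=1$.

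For the energy estimate, note that since $v\ge0$ vanishes outside $\{v>0\}$ we have $v\in\DD^{1,p}_0(\{v>0\})\setminus\{0\}$, so $\RR_p(\{v>0\},v)\ge\lambda_p(\{v>0\})$ by definition of $\lambda_p$. On the other hand $\|v/d_{\{v>0\}}^{1-n/p}\|_\infty\ge v(e_n)/d_{\{v>0\}}(e_n)^{1-n/p}=1$, so $\RR_p(\{v>0\},v)\le\|Dv\|_p^p$, and hence $\|Dv\|_p^p\ge\lambda_p(\{v>0\})$. Finally, weak lower semicontinuity of $u\mapsto\|Du\|_p$ gives $\liminf_{k\to\infty}\|Dv_k\|_p^p\ge\|Dv\|_p^p$, and since $\|Dv_k\|_p^p=r_k^{p-n}\|Dw_{x_k}^\Omega\|_p^p$ this is precisely the asserted chain $\liminf_k d_\Omega(x_k)^{p-n}\|Dw_{x_k}^\Omega\|_p^p\ge\|Dv\|_p^p\ge\lambda_p(\{v>0\})$.

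I expect the main obstacle to be the inclusion $B_1(e_n)\subseteq\{v>0\}$, i.e.\ passing strict positivity through the weak limit: equicontinuity from Morrey only delivers $v\ge0$, so one genuinely needs that $v$ is $p$-harmonic on the \emph{punctured} ball (being a local limit of $p$-harmonic functions) and then a strong minimum principle. The subtlety is that $v_k$ need not be $p$-harmonic at $e_n$ itself—there is a point source there—so the argument must be localized to $B_1(e_n)\setminus\{e_n\}$ before invoking continuity at $e_n$. This inclusion is exactly what makes the normalization $d_{\{v>0\}}(e_n)=1$ available, which in turn is what forces $\RR_p(\{v>0\},v)\le\|Dv\|_p^p$ in the last step.
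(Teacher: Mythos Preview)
Your proof is correct and follows essentially the same route as the paper: the tent-function competitor for the energy bound, the change of variables giving $\|Dv_k\|_p^p=d_\Omega(x_k)^{p-n}\|Dw_{x_k}^\Omega\|_p^p$, locally uniform convergence via Morrey and Arzel\`a--Ascoli, the identification $d_{\{v>0\}}(e_n)=1$, and the final chain of inequalities all match.

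The one point of divergence is how you obtain $B_1(e_n)\subset\{v>0\}$. You argue that $v_k$ is $p$-harmonic on the \emph{punctured} ball $B_1(e_n)\setminus\{e_n\}$, pass this to the limit, and invoke the strong minimum principle there before using continuity at $e_n$. The paper instead observes that each $v_k$ is $p$-\emph{superharmonic} on the \emph{full} ball $B_1(e_n)$ (since $-\Delta_p w_{x_k}^\Omega\ge 0$ in $\Omega$), passes superharmonicity through the limit, and applies the strong minimum principle directly on $B_1(e_n)$. Both arguments are valid; the paper's avoids the detour through the punctured domain and the connectedness issue you would otherwise need to check (e.g.\ when $n=1$). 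Your anticipated ``main obstacle'' is thus handled more simply by working with superharmonicity rather than harmonicity.
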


\begin{proof}
As $w(x) = (1-|x-x_k|/d_\Omega(x_k))_\limplus$  is a competitor for $w_{x_k}^\Omega$, 
  \begin{equation*}
    \|Dw_{x_k}^\Omega\|_p^p \leq |B_1|d_\Omega(x_k)^{n-p}\,.
  \end{equation*}
  We also have 
  \begin{equation*}
  \|Dv_{k}^\Omega\|_p^p =   d_\Omega(x_k)^{p-n} \|Dw_{x_k}^\Omega\|_p^p \geq \RR_p(\Omega, w_{x_k}^\Omega) \geq  \lambda_p(\Omega)
  \end{equation*}
by Proposition~\ref{prop: potentials are enough}.
Therefore, the sequence $\{v_k\}_{k\geq 1}$ is bounded in $\DD^{1,p}_0(\R^n\setminus\{0\})$ with
  \begin{equation*}
    \lambda_p(\Omega)^{1/p} \leq\|Dv_k\|_p \leq |B_1|^{1/p}\,.
  \end{equation*}

  Assume that $v_k \rightharpoonup v$ in $\DD^{1,p}_0(\R^n\setminus\{0\})$. By the weak lower semi-continuity of the Dirichlet energy,
  \begin{equation*}
     \liminf_{k\to \infty}d_\Omega(x_k)^{p-n}\|Dw_{x_k}^\Omega\|_p^p =\liminf_{k\to \infty}\|Dv_k\|_p^p \geq \|Dv\|_p^p\,.
  \end{equation*} 
This proves the first inequality in the proposition.

  Since we always identify $v\in \DD^{1,p}(\R^n)$ with its H\"older continuous representative the set $\{v>0\}$ is open.
  To see that $B_1(e_n)\subset \{v>0\}\subset \R^n \setminus \{0\}$ we observe that for each $k\geq 1$, $v_k$ is $p$-superharmonic in $B_1(e_n)$, nonnegative and satisfies $v_k(e_n)=1$. By passing to a subsequence if necessary, we may assume $v_k \to v$ uniformly on compact subsets, by the Arzel\`a--Ascoli theorem and Morrey's inequality. Therefore, the above properties hold true also for $v$. The strong minimum principle implies that $v>0$ in $B_1(e_n)$.
  
  As $0 \in \partial\{v>0\}$ and $B_1(e_n)\subset \{v>0\}$, it follows that $d_{\{v>0\}}(e_n)=1$. By combining these two observations,
  \begin{equation*}
    \|Dv\|_p^p= \biggl(\frac{v(e_n)}{d_{\{v>0\}}(e_n)^{1-n/p}}\biggr)^{-p}\|Dv\|^p_p \ge \biggl\|\frac{v}{d_{\{v>0\}}^{1-n/p}}\biggr\|^{-p}_{\infty}\|Dv\|^p_{p} \geq  \lambda_p(\{v>0\}) \,.
  \end{equation*}
  This completes the proof of the proposition.
\end{proof}

The application of the previous result gives the following lemma.
\begin{lem}\label{lem: potentials concentrating at regular boundary point}
  Assume $n\geq 2$. Suppose further that $\{x_k\}_{k\geq 1}\in \mathcal{Y}_\Omega, x_0 \in \partial \Omega$ and $r>0$ are such that 
  \begin{equation*}
    \lim_{k\to \infty} x_k = x_0
  \end{equation*}
  and that $\partial \Omega \cap B_r(x_0)$ is $C^1$ regular or $\Omega \cap B_r(x_0)$ is convex. Then
  \begin{equation*}
    \liminf_{k\to \infty} d_\Omega(x_k)^{p-n}\|Dw_{x_k}^\Omega\|_p^p \geq  \lambda_p(\R^n_\limplus)\,.
  \end{equation*}
\end{lem}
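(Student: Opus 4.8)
The plan is to reduce the lemma to Proposition~\ref{prop: blow-up lower bound on Lambda} and then to show that the blow-up limit it produces is an admissible competitor for the halfspace problem, which will pin down its energy from below. First I would pass to a subsequence of $\{x_k\}$ (not relabelled) along which $d_\Omega(x_k)^{p-n}\|Dw_{x_k}^\Omega\|_p^p$ converges to the liminf in the statement. Since $x_k\to x_0\in\partial\Omega$ we have $d_\Omega(x_k)\le|x_k-x_0|\to0$, so I can choose $y_k\in\partial\Omega$ with $|y_k-x_k|=d_\Omega(x_k)$, note that $y_k\to x_0$, pick $Q_k\in O(n)$ with $Q_k(e_n)=(x_k-y_k)/d_\Omega(x_k)$, and set $T_k=T_{d_\Omega(x_k),Q_k,y_k}$ and $\Omega_k=T_k^{-1}\Omega$. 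By Proposition~\ref{prop: blow-up lower bound on Lambda} the functions $v_k=w_{x_k}^\Omega\circ T_k$ are bounded in $\DD_0^{1,p}(\R^n\setminus\{0\})$; passing to a further subsequence I may assume $v_k\rightharpoonup v$ in $\DD_0^{1,p}(\R^n\setminus\{0\})$ and (Arzel\`a--Ascoli together with Morrey's inequality) $v_k\to v$ locally uniformly, with $v\ge0$, $v(e_n)=1$, $\{v>0\}$ open, $B_1(e_n)\subset\{v>0\}\subset\R^n\setminus\{0\}$, and
\[
\liminf_{k\to\infty}d_\Omega(x_k)^{p-n}\|Dw_{x_k}^\Omega\|_p^p\ \ge\ \|Dv\|_p^p .
\]
Thus it will suffice to prove $\|Dv\|_p^p\ge\lambda_p(\R^n_\limplus)$.

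The crux will be the inclusion $\{v>0\}\subset\R^n_\limplus$, which I would establish in the two cases after first recording that $\{v>0\}\subset\liminf_k\Omega_k$: indeed, if $v(z)>0$ then $v_k(z)>0$ for large $k$, and since $w_{x_k}^\Omega$ vanishes off $\Omega$ this forces $T_k(z)\in\Omega$, i.e.\ $z\in\Omega_k$. In the \emph{convex case}, for $k$ large the ball $B_{d_\Omega(x_k)}(x_k)$ lies in the convex set $\Omega\cap B_r(x_0)$ and $y_k\in\partial\bigl(\Omega\cap B_r(x_0)\bigr)$; the supporting hyperplane at $y_k$ bounds a closed halfspace containing $\Omega\cap B_r(x_0)$, hence containing $B_{d_\Omega(x_k)}(x_k)$, and since $y_k$ lies on the boundary of that ball the inner normal of the halfspace at $y_k$ must equal $(x_k-y_k)/d_\Omega(x_k)=Q_k(e_n)$. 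Therefore $\Omega\cap B_r(x_0)\subset\{x:(x-y_k)\cdot Q_k(e_n)\ge0\}$, and applying $T_k^{-1}$ turns this into $\Omega_k\cap B_R\subset\overline{\R^n_\limplus}$ for every fixed $R$ and all large $k$. In the \emph{$C^1$ case}, the sphere $\partial B_{d_\Omega(x_k)}(x_k)$ is internally tangent to the $C^1$ boundary at $y_k$, so $Q_k(e_n)$ is the inner unit normal $\nu_{\mathrm{in}}(y_k)$, which converges to $\nu_{\mathrm{in}}(x_0)$ by continuity; the uniform flatness of a $C^1$ boundary on compact subsets of its regular portion then gives, for every $\varepsilon>0$ and every fixed $R$, that $\Omega_k\cap B_R\subset\{\xi_n>-\varepsilon\}$ for all large $k$. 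In either case, fixing $z\in\{v>0\}$, taking $R>|z|$, and letting $k\to\infty$ (and then $\varepsilon\to0$ in the second case) yields $z_n\ge0$; since $\{v>0\}$ is open this gives $\{v>0\}\subset\R^n_\limplus$.

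To finish, I would argue as follows: since $v\ge0$ vanishes outside $\{v>0\}\subset\R^n_\limplus$, we have $v\in\DD_0^{1,p}(\R^n_\limplus)\setminus\{0\}$, so $\RR_p(\R^n_\limplus,v)\ge\lambda_p(\R^n_\limplus)$ by definition of $\lambda_p(\R^n_\limplus)$. Because $d_{\R^n_\limplus}(e_n)=1$ and $v(e_n)=1$,
\[
\Bigl\|\tfrac{v}{d_{\R^n_\limplus}^{1-n/p}}\Bigr\|_\infty\ \ge\ \frac{v(e_n)}{d_{\R^n_\limplus}(e_n)^{1-n/p}}\ =\ 1 ,
\]
hence $\|Dv\|_p^p=\RR_p(\R^n_\limplus,v)\,\Bigl\|\tfrac{v}{d_{\R^n_\limplus}^{1-n/p}}\Bigr\|_\infty^p\ge\lambda_p(\R^n_\limplus)$. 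Combining this with the displayed inequality from Proposition~\ref{prop: blow-up lower bound on Lambda} gives $\liminf_{k\to\infty}d_\Omega(x_k)^{p-n}\|Dw_{x_k}^\Omega\|_p^p\ge\lambda_p(\R^n_\limplus)$, as desired.

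I expect the main obstacle to be the inclusion $\{v>0\}\subset\R^n_\limplus$, and within it the $C^1$ case: one must convert "$\partial\Omega$ is $C^1$ near $x_0$" into a genuine uniform one-sided tangent-halfspace estimate for the rescaled domains $\Omega_k$, and check carefully that the nearest-point direction $Q_k(e_n)$ really is the inner normal at $y_k$ and that $\nu_{\mathrm{in}}(y_k)\to\nu_{\mathrm{in}}(x_0)$. The convex case is comparatively clean, resting only on the supporting hyperplane theorem and the fact that the separating direction is forced to be $Q_k(e_n)$. A routine but necessary bookkeeping point is the order of the subsequence extractions — first realize the liminf, then pass to a weakly convergent $\{v_k\}$, then to a locally uniformly convergent one — so that Proposition~\ref{prop: blow-up lower bound on Lambda} may legitimately be applied along the final subsequence.
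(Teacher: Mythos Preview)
Your proof is correct and the overall architecture matches the paper's: pass to a subsequence realizing the liminf, apply Proposition~\ref{prop: blow-up lower bound on Lambda} to get a weak limit $v$ with $v(e_n)=1$ and $\liminf_k d_\Omega(x_k)^{p-n}\|Dw_{x_k}^\Omega\|_p^p\ge\|Dv\|_p^p$, then bound $\|Dv\|_p^p$ from below by $\lambda_p(\R^n_\limplus)$. The difference is in how that last bound is obtained.

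The paper writes $\partial\Omega$ near $x_0$ as a Lipschitz graph $\{x_n>f(x')\}$, shows the rescaled graph functions $f_k(z')=(f(d_\Omega(x_k)z'+y_k')-f(y_k'))/d_\Omega(x_k)$ converge locally uniformly to some $f_0$, and identifies $\{v>0\}$ (after a rotation) exactly as $\{z_n>f_0(z')\}$. In the $C^1$ case $f_0$ is linear so $\{v>0\}$ is a halfspace; in the convex case $f_0$ is convex so $\{v>0\}$ is convex. Either way $\lambda_p(\{v>0\})=\lambda_p(\R^n_\limplus)$ by Theorem~\ref{thm: convex, punctured, exterior domains}, and Proposition~\ref{prop: blow-up lower bound on Lambda} then gives the conclusion directly. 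Your route is more geometric and in a sense more economical: you only prove the inclusion $\{v>0\}\subset\R^n_\limplus$ via supporting or tangent hyperplanes at the nearest points $y_k$, and then feed $v$ straight into the halfspace Rayleigh quotient, bypassing Theorem~\ref{thm: convex, punctured, exterior domains} entirely. What you lose is the precise description of $\{v>0\}$ (which the paper exploits in the remark following the lemma); what you gain is a shorter path to the stated inequality. Your flagged concern about the $C^1$ case is well placed but not a gap: the uniform one-sided tangent-halfspace estimate you invoke is exactly the uniform continuity of $Df$ near $x_0$, and the identification $Q_k(e_n)=\nu_{\mathrm{in}}(y_k)$ is the standard fact that the nearest boundary point lies along the inward normal.
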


\begin{proof}
  Let $\{x_k\}_{k\geq 1}$ be as in the statement of the lemma. By passing to a subsequence we may without loss of generality assume that
  \begin{equation*}
    \liminf_{k\to \infty} d_\Omega(x_k)^{p-n}\|Dw_{x_k}^\Omega\|_p^p = \lim_{k\to \infty} d_\Omega(x_k)^{p-n}\|Dw_{x_k}^\Omega\|_p^p\,.
  \end{equation*}
   Let $\{y_k\}_{k\geq 1} \subset \partial \Omega$ be a sequence such that
  \begin{equation*}
    |x_k-y_k| = d_\Omega(x_k) \quad \mbox{for each }k\geq 1\,,
  \end{equation*}
  and $\{Q_k\}_{k \geq 1} \in O(n)$ be a sequence of rotations such that $Q_k(e_n) = \frac{x_k-y_k}{d_\Omega(x_k)}$. Define 
  $$v_k = w_{x_k}^\Omega \circ T_{d_\Omega(x_k),Q_k,y_k}
  $$
as in Proposition~\ref{prop: blow-up lower bound on Lambda}. We will analyze $\{v>0\}$, where $v$ is a subsequential limit of $v_k$.

\par Performing a translation and rotation, we can assume that $x_0 = 0$ without loss of generality. By assumption, there exists a function $f \colon \R^{n-1} \to \R$ with
  \begin{equation*}
    \Omega \cap B_r = \{x=(x', x_n)\in \R^n: x_n > f(x')\}\cap B_r\,.
  \end{equation*}
  If $\Omega \cap B_r$ is convex, $f$ is convex. While if $\partial\Omega \cap B_r$ is $C^1$, then $f$ is $C^1$. In either case, we may assume that $f$ is Lipschitz.

  Since $\{x_k\}_{k \geq 1}$ converges and $\lim_{k\to \infty}d_\Omega(x_k)=\lim_{k\to \infty}|x_k-y_k|=0$, it follows that $\lim_{k\to \infty} y_k = x_0=0$. 
  In particular, $|y_k|<r/2$ for $k$ large enough and
  \begin{equation*}
    (y_k)_n = f(y_k')\,.
  \end{equation*}
  Therefore, $x \in \Omega \cap B_{r}$ if and only if $|x|<r$ and
  \begin{equation*}
    (x-y_k)_n > f(x')-f(y_k')\,.
  \end{equation*}
  Consequently, if $z \in B_R$ with $R<r/(2d_\Omega(x_k))$ then $y_k + d_\Omega(x_k)Q_k(z)\in \Omega$ if and only if
  \begin{equation*}
    (Q_k(z))_n > \frac{f(d_\Omega(x_k)(Q_k(z))'+y_k')-f(y_k')}{d_\Omega(x_k)}\,.
  \end{equation*}

  Since $\{v_k\}_{k\geq 1}\subset \DD^{1,p}_0(\R^n\setminus\{0\})$ is bounded we may by passing to a subsequence assume that $v_k \rightharpoonup v$ in $\DD^{1,p}_0(\R^n\setminus\{0\})$ and $v_k \to v$ uniformly on compact sets. Since $O(n)$ is compact we may also pass to a further subsequence such that $Q_k \to Q_0$. In particular, this implies that $\tilde v_k :=w_{x_k}^\Omega(d_\Omega(x_k)\cdot+y_k) \rightharpoonup v(Q_0^{-1} \cdot) := \tilde v$. We claim that $\{\tilde v>0\} = Q_0\{v>0\}$ is given by the region above a Lipschitz graph. We first observe that since $w_{x_k}^\Omega$ is a potential
  \begin{equation}\label{eq: vanishing/positivity set}
  \begin{aligned}
    \tilde v_k(z) = 0 \quad &\mbox{if }z_n \leq \frac{f(d_\Omega(x_k)z'+y_k')-f(y_k')}{d_\Omega(x_k)}\,,\\
    \tilde v_k(z) > 0 \quad &\mbox{if }z_n > \frac{f(d_\Omega(x_k)z'+y_k')-f(y_k')}{d_\Omega(x_k)}\,,
  \end{aligned}
  \end{equation}
  for $z \in B_R$.
Moreover, $\tilde v(Q_0 e_n)=1$  and since the $p$-Laplacian is invariant under the action of $O(n)$, $\tilde v$ is $p$-superharmonic whenever $\tilde v>0$. 

\par Since $f$ is Lipschitz, the functions 
  \begin{equation*}
    f_k(z'):=\frac{f(d_\Omega(x_k)z'+y_k')-f(y_k')}{d_\Omega(x_k)}
  \end{equation*}
  are uniformly bounded and Lipschitz for $|z'|<R$ for any fixed $R>0$ provided $k$ is large enough (with a Lipschitz constant independent of $R$). Therefore, we can pass to a subsequence along which $f_k$ converges uniformly to a limit $f_{0, R}$ in $|z'|<R$. It follows that $\tilde v$ is nonnegative, $\tilde v=0$ in $\{z_n < f_{0,R}(z')\}\cap B_R$ and $\tilde v$ is $p$-superharmonic in $\{z_n > f_{0,R}(z')\}\cap B_R$. Since, $\tilde v(Q_0 e_n)=1$, the strong minimum principle implies 
  $$\{z \in \R^n:\tilde v(z)>0, |z|<R\} = \{z\in \R^n: z_n>f_{0,R}(z'), |z|<R\}.$$
  Since $\{\tilde v>0\}$ is independent of $R$ this implies that $f_{0,R}$ is the restriction of some globally Lipschitz function $f_0$ and $\{\tilde v>0\} = \{z_n >f_0(z')\}$.

  If $f$ was convex, then $f_0$ is convex; so $\{\tilde v>0\}$ and thus $\{v>0\}$ is convex. If $f$ was $C^1$ at $0$, then $f_0(z') = Df(0)\cdot z'$; so $\{\tilde v>0\}$ and hence $\{v>0\}$ is a halfspace. In either case $\lambda_p(\{v>0\})= \lambda_p(\R^n_\limplus)$ by Theorem~\ref{thm: convex, punctured, exterior domains}. Appealing to Proposition~\ref{prop: blow-up lower bound on Lambda}, we finally conclude that
  \begin{equation*}
    \lim_{k\to \infty} d_\Omega(x_k)^{p-n}\|Dw_{x_k}^\Omega\|_p^p \geq \lambda_p(\R^n_\limplus)\,. \qedhere
  \end{equation*}
\end{proof}
\begin{rem}
  In the above proof, we only used the regularity assumption on  $\partial \Omega \cap B_r(x_0)$ when concluding that $\lambda_p(\{v>0\})=\lambda_p(\R^n_\limplus)$. Whenever $\partial \Omega \cap B_r(x_0)$ is given by a Lipschitz graph it follows in the same manner that $\{v>0\}$ is the region above a Lipschitz graph. 
\end{rem}
Combining the results of this section we are able to determine $\Lambda_p(\Omega)$ if $\Omega$ is regular.
\begin{cor}\label{cor: potentials concentrating at bdry}
  Assume $n\geq 2$. If $\Omega$ is bounded and $C^1$, then $\Lambda_p(\Omega) = \lambda_p(\R^n_\limplus).$
\end{cor}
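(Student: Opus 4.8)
The plan is to establish the two inequalities $\Lambda_p(\Omega)\le\lambda_p(\R^n_\limplus)$ and $\Lambda_p(\Omega)\ge\lambda_p(\R^n_\limplus)$ separately; both follow readily from the results already assembled in this section, together with the regularity and boundedness of $\Omega$.

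For the upper bound I would simply invoke Remark~\ref{rem: blow-up at a smooth point remark}: since $\partial\Omega$ is $C^1$, it has (many) points of differentiability, so Corollary~\ref{cor: boundaryblowup} applies with the blow-up cone $\mathcal{C}$ equal to a halfspace, giving $\Lambda_p(\Omega)\le\lambda_p(\R^n_\limplus)$. Boundedness of $\Omega$ plays no role in this direction.

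For the lower bound I would use that the infimum defining $\Lambda_p(\Omega)$ in~\eqref{eq: biglambda} is attained: choose $\{x_k\}_{k\ge1}\in\mathcal{Y}_\Omega$ realizing it, and pass to a subsequence (still in $\mathcal{Y}_\Omega$) along which $d_\Omega(x_k)^{p-n}\|Dw_{x_k}^\Omega\|_p^p$ converges to $\Lambda_p(\Omega)$. Because $\Omega$ is bounded, this subsequence is bounded, so $\liminf_k|x_k|<\infty$; by the definition of $\mathcal{Y}_\Omega$ this forces $\limsup_k d_\Omega(x_k)=0$, hence $d_\Omega(x_k)\to0$. Compactness of $\overline{\Omega}$ then allows one to extract a further subsequence with $x_k\to x_0$, and since $d_\Omega(x_k)\to0$ we get $x_0\in\partial\Omega$. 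As $\Omega$ is $C^1$, $\partial\Omega\cap B_r(x_0)$ is $C^1$ regular for some $r>0$, so Lemma~\ref{lem: potentials concentrating at regular boundary point} applies to the chosen subsequence and yields $\liminf_k d_\Omega(x_k)^{p-n}\|Dw_{x_k}^\Omega\|_p^p\ge\lambda_p(\R^n_\limplus)$; along that subsequence the left-hand side equals $\Lambda_p(\Omega)$, so $\Lambda_p(\Omega)\ge\lambda_p(\R^n_\limplus)$, completing the proof.

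I do not expect any genuine obstacle: the analytic content is entirely contained in Corollary~\ref{cor: boundaryblowup}/Remark~\ref{rem: blow-up at a smooth point remark} and in Lemma~\ref{lem: potentials concentrating at regular boundary point}. The only points that require a little care are the subsequence bookkeeping — checking that subsequences of elements of $\mathcal{Y}_\Omega$ remain in $\mathcal{Y}_\Omega$ and that passing to a further subsequence preserves the limit $\Lambda_p(\Omega)$ — and the use of boundedness to exclude the ``escape to infinity'' alternative $\liminf_k|x_k|=\infty$ in the definition of $\mathcal{Y}_\Omega$, which is precisely what guarantees that a minimizing sequence of points for $\Lambda_p(\Omega)$ must concentrate at a boundary point.
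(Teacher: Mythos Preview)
Your proposal is correct and follows essentially the same approach as the paper: the upper bound via Corollary~\ref{cor: boundaryblowup}/Remark~\ref{rem: blow-up at a smooth point remark}, and the lower bound by taking a minimizing sequence for $\Lambda_p(\Omega)$, using boundedness to force a subsequence to converge to a boundary point, and then applying Lemma~\ref{lem: potentials concentrating at regular boundary point}. The paper's own proof is slightly terser about the subsequence extraction but is otherwise identical in structure.
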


\begin{proof}
 Fix a sequence $\{x_k\}_{k\geq 1} \in \mathcal{Y}_\Omega$ such that 
  \begin{equation*}
    \lim_{k\to \infty} d_\Omega(x_k)^{p-n}\|Dw_{x_k}^\Omega\|_p^p = \Lambda_p(\Omega)\,.
  \end{equation*}
 Since $\overline{\Omega}$ is compact, we may assume that $\{x_k\}_{k\geq 1}$ converges to some limit $x_0$. As $\{x_k\}_{k\geq 1}\in \mathcal{Y}_\Omega$, it follows that $x_0\in \partial\Omega$. By Lemma~\ref{lem: potentials concentrating at regular boundary point}, $\Lambda_p(\Omega) \geq \lambda_p(\R^n_\limplus)$. 
In view of Corollary~\ref{cor: boundaryblowup} (and Remark~\ref{rem: blow-up at a smooth point remark}), we also have 
 $\Lambda_p(\Omega)\leq \lambda_p(\R^n_\limplus)$.
\end{proof}
\begin{rem}\label{cor: universal bounds Lambda} While the equality in Corollary~\ref{cor: potentials concentrating at bdry} is not true in general, it is possible to prove an analogue of Theorem~\ref{thm: universal sharp bounds} for $\Lambda_p$. Namely, for any $\Omega \subsetneq \R^n$,
 $$\lambda_p(\R^n \setminus \{0\})\leq \Lambda_p(\Omega)\leq \lambda_p(\R^n_\limplus)\,.$$
 Indeed, the lower bound follows directly from the general fact that $\lambda_p(\Omega)\leq \Lambda_p(\Omega)$ and the lower bound in Theorem~\ref{thm: universal sharp bounds}. The upper bound can be obtained by transplanting a sequence of potentials realizing $\Lambda_p(B_1) = \lambda_p(\R^n_\limplus)$ into $\Omega$ in the spirit of our proof of Corollary~\ref{cor: Universal bounds}.
\end{rem}

We are now ready to prove Theorem~\ref{thm: strict energy ineq implies existence} by combining our previous results.

\begin{proof}[Proof of Theorem~\ref{thm: strict energy ineq implies existence}]
  By Corollary~\ref{cor: potentials concentrating at bdry}, $\Lambda_p(\Omega)= \lambda_p(\R^n_\limplus)$. Therefore, the assumption of this theorem implies that $\lambda_p(\Omega)<\Lambda_p(\Omega)$. Proposition~\ref{prop: Compactness threshold} yields the desired conclusion.
\end{proof}


\section{Refined analysis at points of negative mean curvature}
\label{sec: Refined blow-up neg mean curvature}

The aim of this section is to prove that if $\partial\Omega$ has a point $x_0$ where the mean curvature is negative then $\lambda_p(\Omega)<\lambda_p(\R^n_\limplus)$. The idea is to take a sequence of potentials whose singular point approaches $x_0$ from within $\Omega$ and show that along this sequence $\RR_p$ is eventually smaller than when it is evaluated on a potential in $\R^n_\limplus$. We begin by considering a model situation in which $\partial\Omega$ is a parabola in a neighborhood of $x_0$. We will assume throughout this section that $p>n\ge 2$ and $\Omega \subsetneq \R^n$ is open.

Let $K \in \R^{(n-1)\times (n-1)}$ be a real symmetric matrix and denote its operator norm by $\|K\|$. Define
\begin{equation*}
  \Omega_K = \{x= (x', x_n)\in \R^n : x_n > x'\cdot Kx', |x|<1\}\,,
\end{equation*}
and set $u_{K,\epsilon}= w^{\Omega_K}_{\epsilon e_n}$ for $\epsilon\in (0, 1/2)$. Recall that $u_{K,\epsilon}\in \DD^{1,p}_0(\Omega_K)$ is the solution of 
\begin{equation}\label{eq: def uKeps}
   \begin{cases}
    - \Delta_p u_{K,\epsilon} =0 & \mbox{in } \Omega_K \setminus\{\epsilon e_n\}\,,\\
     \hspace{.33in}   u_{K,\epsilon} =0 & \mbox{on }\partial \Omega_K\,,\\
       u_{K,\epsilon}(\epsilon e_n) =1\,.
   \end{cases}
\end{equation} 
We will also write $u_0 = w^{{\R^n_{\footnotesize\limplus}}}_{e_n}$ and use that $u_0 \in  \DD^{1,p}_0(\R^n_\limplus)$ is characterized as the solution of  
\begin{equation}\label{eq: def u0}
   \begin{cases}
    - \Delta_p u_0 =0 & \mbox{in }\R^n_\limplus \setminus\{e_n\}\,,\\
     \hspace{.33in}  u_0 =0 & \mbox{on }\partial \R_\limplus^n\,,\\
     \hspace{.06in}  u_0(e_n) =1\,.
   \end{cases}
\end{equation} 

The crucial ingredient in our proof of Theorem~\ref{thm: neg mean curvature} is the following proposition.
\begin{prop}\label{prop: mean curvature expansion}
 Assume $K \in \R^{(n-1)\times (n-1)}$ is symmetric with $\|K\|<1/2$. There are constants $C>0$ and $\gamma \in (0, 1]$ so that
\begin{equation*}
  \epsilon^{p-n}\|Du_{K,\epsilon}\|_p^p \leq \|Du_0\|_p^p + \epsilon\,  \mathrm{tr}(K)\frac{p-1}{n-1} \int_{\R^{n-1}}|Du_0(x', 0)|^{p}|x'|^2\,dx' + C\epsilon^{1+\gamma}
\end{equation*}
for $\epsilon \in (0, 1/8)$.
\end{prop}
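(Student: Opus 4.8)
The plan is to derive the stated expansion by a careful comparison of the potential $u_{K,\epsilon}$ on the parabolic domain $\Omega_K$ with the halfspace potential $u_0$, transported so that both singularities sit at $e_n$. First I would rescale: set $\tilde u_{K,\epsilon}(x) = u_{K,\epsilon}(\epsilon x)$, which is the potential of the rescaled domain $\epsilon^{-1}\Omega_K = \{x_n > \epsilon\, x'\cdot K x',\ |x|<\epsilon^{-1}\}$ with singularity at $e_n$, and note that $\epsilon^{p-n}\|Du_{K,\epsilon}\|_p^p = \|D\tilde u_{K,\epsilon}\|_p^p$. As $\epsilon\to 0$ this domain converges (locally) to $\R^n_\limplus$, and the boundary is a graph $x_n = \psi_\epsilon(x') := \epsilon\, x'\cdot Kx'$ which is a first-order perturbation of the flat boundary. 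So the whole estimate becomes: how does the $p$-Dirichlet energy of the potential change when the flat boundary of $\R^n_\limplus$ is perturbed by the small graph $\psi_\epsilon$? This is precisely a domain-variation (Hadamard-type) computation for the $p$-Laplacian with a point source.

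The key steps, in order: (1) Construct an explicit competitor for $\tilde u_{K,\epsilon}$ by pulling back $u_0$ under a diffeomorphism $\Phi_\epsilon$ that straightens the boundary — e.g. $\Phi_\epsilon(x) = (x', x_n + \chi(x)\psi_\epsilon(x'))$ where $\chi$ is a smooth cutoff equal to $1$ near the origin and supported in a fixed ball, chosen so that $\Phi_\epsilon$ maps $\R^n_\limplus$ onto $\epsilon^{-1}\Omega_K$ near the relevant region and fixes $e_n$; set $v_\epsilon = u_0 \circ \Phi_\epsilon^{-1}$, which is an admissible competitor (it vanishes on $\partial(\epsilon^{-1}\Omega_K)$ and satisfies $v_\epsilon(e_n)=1$), so by the variational characterization of potentials $\|D\tilde u_{K,\epsilon}\|_p^p \le \|Dv_\epsilon\|_p^p$. (2) Expand $\|Dv_\epsilon\|_p^p = \int_{\R^n_\limplus} |Du_0\, (D\Phi_\epsilon)^{-1}|^p \, |\det D\Phi_\epsilon|\, dx$ to first order in $\epsilon$; since $D\Phi_\epsilon = \Id + \epsilon\, D(\chi\, x'\cdot Kx'\, e_n\otimes\cdot) $, both $|\det D\Phi_\epsilon| = 1 + \epsilon\, \mathrm{div}(\dots) + O(\epsilon^2)$ and the Jacobian factor contribute linear-in-$\epsilon$ terms with explicit integrands built from $u_0$. (3) Integrate by parts / use the equation $-\Delta_p u_0 = \|Du_0\|_p^p\,\delta_{e_n}$ in $\R^n_\limplus$ to collapse the bulk first-order term into a boundary integral over $\{x_n=0\}$; here the normal derivative of $u_0$ appears, $|Du_0(x',0)| = |\partial_{x_n} u_0(x',0)|$, and the weight $x'\cdot Kx'$ survives, with $\mathrm{tr}(K)$ emerging after using rotational symmetry of $u_0$ about the $x_n$-axis to replace $x'\cdot Kx'$ under the integral by $\frac{\mathrm{tr}(K)}{n-1}|x'|^2$. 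The combinatorial constant $\frac{p-1}{n-1}$ should appear from differentiating $|Du_0|^p$ (giving a factor $p$, hence $p-1$ after combining with the $-1$ from the boundary-term bookkeeping) together with the $\frac{1}{n-1}$ from the symmetrization. (4) Control the remainder: the $O(\epsilon^2)$ terms from the Taylor expansion of the Jacobian, plus the error from the cutoff $\chi$ (supported away from $e_n$, where $u_0$ and its gradient are smooth and decay like $|x|^{1-n/p}$ and $|x|^{-n/p}$ respectively), plus the error from the fact that $\Phi_\epsilon(\R^n_\limplus)$ only matches $\epsilon^{-1}\Omega_K$ in a fixed ball — outside that ball one needs a separate crude bound showing the mismatch costs at most $C\epsilon^{1+\gamma}$. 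This last point is where the exponent $\gamma\in(0,1]$ (rather than a clean $\gamma=1$) enters: the gradient of $u_0$ is only Hölder, not Lipschitz, so the substitution error in $|Du_0\,(D\Phi_\epsilon)^{-1}|^p$ near the boundary is of order $\epsilon^{1+\gamma}$ with $\gamma$ related to the regularity $1-n/p$ of $Du_0$ up to the boundary.

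The main obstacle I expect is step (4), the quantitative error control, for two reasons. First, $u_0$ is a $p$-harmonic function with a point singularity, so its regularity near $\partial\R^n_\limplus$ away from $e_n$ must be pinned down precisely — one needs that $u_0 \in C^{1,\beta}$ up to the flat boundary locally (standard for $p$-harmonic functions near a smooth boundary piece, e.g. by Lieberman-type estimates) to justify the first-order Taylor expansion of the integrand and to bound the Hölder-type remainder. Second, because we only straighten the boundary inside a fixed ball, the competitor $v_\epsilon$ is not exactly the image of $u_0$ globally, and one must argue that $u_0$ restricted to $\epsilon^{-1}\Omega_K$ (with the small correction) is still admissible and contributes only a higher-order error; this requires using the decay of $Du_0$ at infinity (the bounds $|u_0(x)|\lesssim |x_n|^{1-n/p}$, $|Du_0(x)|\lesssim |x|^{-n/p}$ for large $|x|$) to make the tail integrals summable against the $\epsilon^{-1}$-scale of the domain. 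A secondary subtlety is keeping track of the sign and exact numerical constant in the linear term through the integration by parts against the Dirac mass; the claim is an inequality ($\le$), obtained precisely because $\tilde u_{K,\epsilon}$ is energy-minimizing among competitors with value $1$ at $e_n$, so only one direction of the domain-variation identity is needed, which slightly eases the bookkeeping. I would organize the proof so that the exact first-order coefficient is computed cleanly for the model competitor and all genuinely error-type contributions are lumped into the single $C\epsilon^{1+\gamma}$ term at the end.
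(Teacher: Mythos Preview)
Your overall strategy matches the paper's: build a competitor for $u_{K,\epsilon}$ by transporting $u_0$ through a boundary-straightening diffeomorphism, invoke the variational characterization for the one-sided inequality, expand the energy to first order, extract $\mathrm{tr}(K)/(n-1)$ from the rotational symmetry of $u_0$ about the $x_n$-axis, and convert the linear bulk term to a boundary integral by a Pohozaev-type identity (the paper uses the vector field $V=(0,\dots,0,|x'|^2/2)$). The paper works at the original scale with $\Phi(y)=(y',y_n-y'\cdot Ky')$ and the competitor $\phi(2-4|\Phi(y)|)\,u_0(\Phi(y)/\epsilon)$ rather than rescaling first, but that is cosmetic.

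The genuine gap is in your step~(4). First, your competitor is not admissible as described: with $\chi$ supported in a fixed ball $B_R$, outside $B_R$ your $v_\epsilon$ is just $u_0$, which vanishes on $\{x_n=0\}$ but \emph{not} on the parabola $\{x_n=\epsilon\,x'\cdot Kx'\}$ or on the sphere $\{|x|=\epsilon^{-1}\}$, so $v_\epsilon\notin\DD^{1,p}_0(\epsilon^{-1}\Omega_K)$. The paper fixes this with a radial cutoff, and the price of that cutoff is exactly where $\gamma$ enters. Second, you misidentify the source of $\gamma$: it is \emph{not} boundary H\"older regularity of $Du_0$ --- since $u_0$ is the restriction of a Morrey extremal, $Du_0$ is nonvanishing and hence $u_0$ is smooth in $\overline{\R^n_\limplus}\setminus\{e_n\}$. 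The exponent $\gamma$ comes from the \emph{decay of $u_0$ at infinity}. Your stated rates $|u_0(x)|\lesssim x_n^{1-n/p}$ and $|Du_0(x)|\lesssim|x|^{-n/p}$ are not decay (the first grows, the second would make $|Du_0|^p$ borderline non-integrable) and are insufficient. The paper relies on a separate nontrivial result (the companion paper \cite{HyndLarsonLindgren-Decay}) giving $|u_0(x)|\le C|x|^{-\beta}$ and $|Du_0(x)|\le C|x|^{-\beta-1}$ for $|x|\ge 2$ with any $\beta<\beta_0(p)$, together with the verification that $\beta_0(p)>1/p$. This is what makes the tail integrals --- e.g.\ $\epsilon^{2}\int_{B_{1/(4\epsilon)}}|Du_0|^p|x'|^2\,dx$ and $\int_{B_{1/(4\epsilon)}^c}|Du_0|^p\,dx$ --- of order $\epsilon^{p-n+\beta p}$, yielding $\gamma=\min\{p-n+\beta p-1,1\}>0$. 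Without this decay input your remainder estimate cannot close.
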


\begin{figure}[ht]
\centering
 \begin{tikzpicture}[scale=0.65]

\clip (-10,-5.5) rectangle (10,5.5);

\def \K {-0.02};
\def \L {5};
\def \angle {15};

\draw [gray!70, thick, ->] (0, {-\L-.5})--(0, {\L+.5});
\draw [gray!70, thick, ->] (-9.5, 0)--(9.5, 0);

\node at (0.5, {\L+0.3}) {{$x_n$}};
\node at (9.3, -0.4) {{$x'$}};
\node at ({3*\L/4*cos(65)},{3*\L/4*sin(65)-1}) {{$r$}};

\node at (-7,-1.5) {{$x'\cdot Kx'$}};
\node at (-8,0.5) {{$\partial\Omega$}};

\draw [very thick, gray, domain=0:360, samples=100] plot ({\L*cos(\x)},{\L*sin(\x)});

\draw [ultra thick,black,samples=10,domain=-10:-2] plot ({\x}, {sin(\x*600)/\x+ \x*\x/50-1.5});
\draw [ultra thick] (2,{-(2+2)/5+(-sin(-2*600)/2+4/50-1.5)})--(-0.8+3,-0.15-1)--(-0.4+3,0-1)--(0.2+3,0-1)--(0.5+3, -0.1-1)--(1+3, 0-1)--(2.1+3,-1/2-1)--(3.5+3,1/3-1)--(5+3, 1-1)--(5.8+3, 1.9-1)--(10,0.7);

\draw [ultra thick, samples=20, domain=-2:2] plot ({\x},{(-(\x+2)/5+(-sin(-2*600)/2+4/50-1.5))*\x*\x/4)});

\draw [very thick, blue, domain=-10:10, samples=100] plot ({\x},{\K*\x*\x});

\draw [thick, gray, dashed, <->] (0,0)--({\L*cos(65)},{\L*sin(65)});

\end{tikzpicture}
 \caption{A depiction of the geometric assumptions in Theorem~\ref{thm: neg mean curvature implies energy estimate}. Here $\Omega$ is the set above the black curve, $x_0$ is the origin, $Q$ the identity, and $\partial\Omega$ can be touched at $x_0$ from the inside with a negatively curved parabola (blue).}
 \label{fig:NegCurvature}
\end{figure}
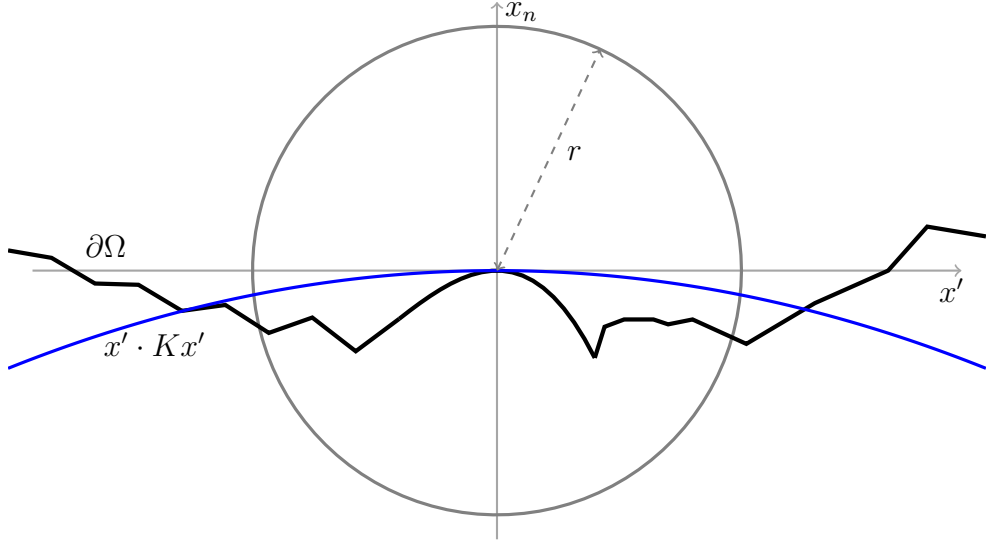

With this proposition in hand we can prove the following theorem.
\begin{thm}\label{thm: neg mean curvature implies energy estimate}
 Assume that there exist $x_0\in \partial \Omega, Q \in O(n), r>0$, and a symmetric matrix $K \in \R^{(n-1)\times (n-1)}$ with $\mathrm{tr}(K)<0$ such that
  \begin{equation*}
    \{ x = (x', x_n)\in \R^n: x_n> x'\cdot Kx'\} \cap B_r \subset Q(\Omega-x_0)\,.
  \end{equation*}
  Then $\lambda_p(\Omega)<\lambda_p(\R^n_\limplus)$.
\end{thm}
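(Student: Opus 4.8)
The plan is to use the potentials $u_{K,\epsilon}$ as trial functions in the variational problem for $\lambda_p(\Omega)$ and to read off the sign of the leading correction from Proposition~\ref{prop: mean curvature expansion}. The only auxiliary identity needed is
\begin{equation*}
  \lambda_p(\R^n_\limplus) = \|Du_0\|_p^p\,,
\end{equation*}
which follows from Lemma~\ref{lem: half- and punctured space}: there $u_0 = w^{\R^n_\limplus}_{e_n}$ is an extremal with $\bigl\|u_0/d_{\R^n_\limplus}^{1-n/p}\bigr\|_\infty = u_0(e_n)/d_{\R^n_\limplus}(e_n)^{1-n/p} = 1$, so $\lambda_p(\R^n_\limplus) = \RR_p(\R^n_\limplus, u_0) = \|Du_0\|_p^p$.

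First I would normalize the geometry using similarity invariance (Lemma~\ref{ScalingInvarianceLem}). Replacing $\Omega$ by $Q(\Omega - x_0)$ affects neither $\lambda_p(\Omega)$ nor $\lambda_p(\R^n_\limplus)$, so we may assume $x_0 = 0$, $Q = \Id$, and $\{x_n > x'\cdot Kx'\}\cap B_r \subset \Omega$ with $0 \in \partial\Omega$. The same inclusion holds with $B_r$ replaced by any smaller ball $B_{r'}$; picking $r' \in (0, r]$ with $r'\|K\| < 1/2$ and then dilating by $1/r'$, we may further assume $\|K\| < 1/2$ and $\Omega_K = \{x = (x', x_n) : x_n > x'\cdot Kx',\, |x| < 1\} \subset \Omega$, while still $\mathrm{tr}(K) < 0$ and $0 \in \partial\Omega$. (The dilation rescales $K$ to $r'K$, preserving the sign of its trace and the property $0 \in \partial\Omega$, and the bound $\|K\|<1/2$ is exactly what is needed to invoke Proposition~\ref{prop: mean curvature expansion}.)

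Next, extending by zero gives $u_{K,\epsilon} = w^{\Omega_K}_{\epsilon e_n} \in \DD^{1,p}_0(\Omega_K) \subset \DD^{1,p}_0(\Omega)$ for $\epsilon \in (0, 1/2)$, and since $0 \in \partial\Omega$ we have $0 < d_\Omega(\epsilon e_n) \le |\epsilon e_n| = \epsilon$. As $p > n$, this yields
\begin{equation*}
  \lambda_p(\Omega) \le \RR_p(\Omega, u_{K,\epsilon}) \le \biggl(\frac{u_{K,\epsilon}(\epsilon e_n)}{d_\Omega(\epsilon e_n)^{1-n/p}}\biggr)^{-p}\|Du_{K,\epsilon}\|_p^p = d_\Omega(\epsilon e_n)^{p-n}\|Du_{K,\epsilon}\|_p^p \le \epsilon^{p-n}\|Du_{K,\epsilon}\|_p^p\,.
\end{equation*}
Applying Proposition~\ref{prop: mean curvature expansion} to the right-hand side gives, for $\epsilon \in (0, 1/8)$,
\begin{equation*}
  \lambda_p(\Omega) \le \|Du_0\|_p^p + \epsilon\,\mathrm{tr}(K)\frac{p-1}{n-1}\int_{\R^{n-1}}|Du_0(x', 0)|^p|x'|^2\,dx' + C\epsilon^{1+\gamma}\,.
\end{equation*}
Since $\mathrm{tr}(K) < 0$ and the integral is strictly positive, the $\epsilon$-linear term is negative and, because $\gamma > 0$, dominates the $C\epsilon^{1+\gamma}$ remainder for all sufficiently small $\epsilon$. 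Hence $\lambda_p(\Omega) < \|Du_0\|_p^p = \lambda_p(\R^n_\limplus)$.

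The entire weight of the argument sits in Proposition~\ref{prop: mean curvature expansion}; granting it, the reduction above is short and essentially formal. The one extra point that must be checked in the present step is the strict positivity (and finiteness) of $\int_{\R^{n-1}}|Du_0(x', 0)|^p|x'|^2\,dx'$: this ought to follow from the regularity and decay of $u_0$ obtained in the proof of the proposition (or from the description of Morrey extremals in~\cite{MR4275749}), using that $u_0 > 0$ in $\R^n_\limplus$ and vanishes on $\partial\R^n_\limplus$, so that $Du_0$ is not identically zero on the boundary hyperplane; without this the sign of the first correction cannot be determined.
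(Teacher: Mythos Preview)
Your proof is correct and follows essentially the same route as the paper's: both reduce to $x_0=0$, $Q=\Id$, normalize so that $\|K\|<1/2$ (you do this by dilating $\Omega$, the paper by rescaling the trial function to $u_{rK,\epsilon/r}(\cdot/r)$), insert the potential as a competitor, and read off the sign of the linear-in-$\epsilon$ term from Proposition~\ref{prop: mean curvature expansion}. The only place where the paper is more explicit is in justifying that $\int_{\R^{n-1}}|Du_0(x',0)|^p|x'|^2\,dx'$ is strictly positive and finite, citing \cite[Proposition~3.6]{MR4275749} for nonvanishing of $Du_0$ on $\partial\R^n_\limplus$ and \cite{HyndLarsonLindgren-Decay} for the decay that gives integrability---exactly the point you flagged as needing verification.
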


\begin{proof}
  Since $\lambda_p(\Omega) = \lambda_p(Q(\Omega-x_0))$, we may assume without loss of generality that $x_0=0$ and $Q=\Id$. Also observe that if the assumption of the theorem holds for some $r>0$, then it is also valid for any smaller value of $r$. As such, we may assume that
  \begin{equation}\label{r smallness assumption}
r\|K\|< \frac{1}{2}\,. 
  \end{equation}
 \par It suffices to construct $w\in \DD^{1,p}_0(\Omega)$ which satisfies $\RR_p(\Omega, w)<\lambda_p(\R^n_\limplus)$. 
  With this in mind, we define $w_\epsilon \in \DD_0^{1,p}(r\Omega_{rK})$ via $w_\epsilon(x) = u_{rK,\epsilon/r}(x/r)$. As
  \begin{equation*}
    r\Omega_{rK} = \{x=(x', x_n) \in \R^n: x_n > x'\cdot Kx'\}\cap B_r \subset \Omega\,,
  \end{equation*}
  we also have $w_\epsilon \in \DD^{1,p}_0(\Omega)$. Moreover,  $\|Dw_\epsilon\|_p = r^{n/p-1}\|Du_{rK,\epsilon/r}\|_p$. And since $w_\epsilon(\epsilon e_n) =1$ for $\epsilon <r$ and $0\in \partial\Omega$,
  \begin{equation*}
    \Bigl\|\frac{w_\epsilon}{d_\Omega^{1-n/p}}\Bigr\|_\infty \geq \frac{|w_\epsilon(\epsilon e_n)|}{d_\Omega(\epsilon e_n)^{1-n/p}} \geq \epsilon^{n/p-1}\,.
  \end{equation*}
 Consequently,
  \begin{equation*}
    \RR_p(\Omega, w_\epsilon) \leq (\epsilon/r)^{p-n}\|Du_{rK,\epsilon/r}\|_p^p\,.
  \end{equation*}
 
 \par In view of~\eqref{r smallness assumption}, $\|rK\|<1/2$, so Proposition~\ref{prop: mean curvature expansion} can be applied. As $\|Du_0\|_p^p = \lambda_p(\R^n_\limplus)$,
  \begin{equation*}
    \RR_p(\Omega, w_\epsilon) \leq \lambda_p(\R^n_\limplus) + \epsilon\,\mathrm{tr}(K) \frac{p-1}{n-1}\int_{\R^{n-1}}|Du_0(x', 0)|^p |x'|^2\,dx' + C (\epsilon/r)^{1+\gamma}
  \end{equation*}
for all $\epsilon$ sufficiently small. By Lemma~\ref{lem: half- and punctured space}, $u_0$ is the restriction of a Morrey extremal to $\R^n_\limplus$. It then follows that $|Du_0(x',0)|>0$ for $x'\in \R^{n-1}$~\cite[Proposition 3.6]{MR4275749}. As a result, 
  \begin{equation*}
    \int_{\R^{n-1}}|Du_0(x', 0)|^p |x'|^2\,dx'>0\,.
  \end{equation*}
 In addition, this integral is finite by the decay estimate proved in~\cite{HyndLarsonLindgren-Decay}. Finally, as $\mathrm{tr}(K)<0$ and $\gamma >0$, we can choose $\epsilon$ sufficiently small so that $\RR_p(\Omega, w_\epsilon)< \lambda_p(\R^n_\limplus)$. 
\end{proof}

We are now ready to prove Theorem~\ref{thm: neg mean curvature}. Let us briefly recall the formula for the mean curvature of a surface in $\R^n$ given by the graph of $C^2$ function $f\colon\R^{n-1}\rightarrow \R$. The mean curvature $H$ at $x=(x',f(x'))$ is given by
$$
(n-1)H=\text{div}\biggl(\frac{Df}{\sqrt{1+|Df|^2}}\biggr)\,.
$$ 
This is the mean curvature with respect to the unit normal  $(Df, -1)/\sqrt{1+|Df|^2}$ at $x$, and the derivatives of $f$ are evaluated at $x'$. Also note that $(n-1)H=\Delta f$ at $x=(x',f(x'))$ whenever $Df(x')=0$. 
\begin{proof}[Proof of Theorem~\ref{thm: neg mean curvature}]
  By assumption, $ \Omega$ is $C^2$ and there is $x_0\in \partial\Omega$ which has negative mean curvature. After translating and rotating $\Omega$, we may assume $x_0=0$ and  
 \begin{equation}\label{graph equation proof theorem neg mean curv}
\{x=(x',x_n)\in \R^n: x_n>f(x')\}\cap B_r=\Omega\cap B_r
\end{equation}
 for some $r>0$. Here $f\colon \R^{n-1}\to \R$ is $C^2$ when $|x'|<r$ and satisfies that $f(0)=|Df(0)|=0$, and $\Delta f(0)<0$. 
 
 \par Fix $\epsilon$ so small that $\mathrm{tr}(K)<0$, where
 $$
 K=\frac{1}{2}D^2f(0)+\epsilon\,\Id'\,.
 $$
 Here $\Id'$ is the $(n-1)\times (n-1)$ identity matrix. 
 Reducing $r$ if necessary, $f(x')\le Kx'\cdot x'$ for $|x'|<r$ by Taylor's theorem. In view of~\eqref{graph equation proof theorem neg mean curv},
    $$
\{ x = (x', x_n)\in \R^n: x_n> x'\cdot Kx'\} \cap B_r\subset \Omega\,.
 $$
We conclude that $\lambda_p(\Omega)<\lambda_p(\R^n_\limplus)$ by Theorem~\ref{thm: neg mean curvature implies energy estimate}. Theorem~\ref{thm: strict energy ineq implies existence} in turn implies that $\Omega$ has an extremal. 
\end{proof}

\begin{rem}
 The assumptions in Theorem~\ref{thm: neg mean curvature} can be weakened significantly. It suffices to assume that
  \begin{enumerate}
    \item $\Omega$ is bounded,
    \item that after an appropriate rotation and translation $0 \in \partial \Omega$ and there exists $r>0$ so that 
    \begin{equation*}
        \Omega \cap B_r \supset \{x =(x', x_n)\in \R^n : x_n > x'\cdot Kx'\}\cap B_r 
      \end{equation*}
      where $K \in \R^{(n-1)\times(n-1)}$ is a symmetric matrix with $\mathrm{tr}(K)<0$, and
    \item that for every $x \in \partial \Omega$ there exists $r>0$ so that either
    \begin{enumerate}
        \item $\Omega \cap B_r(x)$ is convex, or 
        \item $\partial\Omega \cap B_r(x)$ is $C^1$-regular.
  \end{enumerate}
  \end{enumerate}
  Indeed, under these assumptions Lemma~\ref{lem: potentials concentrating at regular boundary point} implies that $\Lambda_p(\Omega)\geq \lambda_p(\R^n_\limplus)$. By Theorem~\ref{thm: neg mean curvature implies energy estimate}, $\lambda_p(\Omega)<\lambda_p(\R^n_\limplus)$. Then Proposition~\ref{prop: Compactness threshold} yields the existence of an extremal.
\end{rem}

\subsection{Proof of Proposition~\ref{prop: mean curvature expansion}}
Proposition~\ref{prop: mean curvature expansion} is a direct consequence of the following two lemmas. 

\begin{lem}\label{lem: mean curvature expansion}
Assume $K \in \R^{(n-1)\times (n-1)}$ is symmetric with $\|K\|<1/2$. There are constants $C>0$ and $\gamma \in (0, 1]$ so that
\begin{equation*}
  \epsilon^{p-n}\|Du_{K,\epsilon}\|_p^p \leq \|Du_0\|_p^p - \epsilon\frac{2p}{n-1}\mathrm{tr}(K) \int_{\R^n_\limplus}|Du_0(x)|^{p-2}\partial_{x_n} u_0(x)Du_0(x)\cdot (x', 0)\,dx + C\epsilon^{1+\gamma}\,
\end{equation*}
for $\epsilon \in (0, 1/8)$.
 \end{lem}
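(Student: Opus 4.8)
The plan is to bound $\|Du_{K,\epsilon}\|_p$ from above by the energy of a competitor obtained by transplanting $u_0$ into $\Omega_K$ through a volume-preserving, near-identity diffeomorphism that straightens the parabolic boundary, then to expand the resulting $p$-energy to first order in $\epsilon$ and absorb all remainders into $C\epsilon^{1+\gamma}$. First I would rescale: by the similarity invariance of potential energies (Lemma~\ref{ScalingInvarianceLem}),
\[
  \epsilon^{p-n}\|Du_{K,\epsilon}\|_p^p = \|Dw^{\tilde\Omega_\epsilon}_{e_n}\|_p^p, \qquad \tilde\Omega_\epsilon = \{x\in\R^n : x_n > \epsilon\, x'\cdot Kx'\}\cap B_{1/\epsilon},
\]
so that $\partial\tilde\Omega_\epsilon$ is, near the origin, the graph of the \emph{small} quadratic $x'\mapsto\epsilon\,x'\cdot Kx'$. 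I would fix a mesoscale $\rho=\rho(\epsilon)\to\infty$ (tuned at the end), cutoffs $\eta$ (with $\eta\equiv1$ on $B_\rho$, $\mathrm{supp}\,\eta\subset B_{2\rho}$) and $\zeta\colon\R^{n-1}\to[0,1]$ (with $\zeta\equiv1$ on $\{|x'|\le 3\rho\}$ and compact support), and set $\Psi_\epsilon(y)=\bigl(y',\ y_n+\epsilon\,(y'\cdot Ky')\,\zeta(y')\bigr)$. Because the shift depends only on $y'$, this is a $C^\infty$-diffeomorphism of $\R^n$ (a vertical shear) with $\det D\Psi_\epsilon\equiv1$ and inverse $(x',x_n)\mapsto(x',x_n-\epsilon\,(x'\cdot Kx')\zeta(x'))$; it fixes $e_n$ and maps $\overline{\R^n_\limplus}$ into $\{x_n\ge\epsilon\,(x'\cdot Kx')\zeta(x')\}$. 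Hence, provided $\rho\le 1/(2\epsilon)$, the function $v:=(\eta\,u_0)\circ\Psi_\epsilon^{-1}$ lies in $\DD^{1,p}_0(\tilde\Omega_\epsilon)$, satisfies $v(e_n)=1$, and therefore $\|Dw^{\tilde\Omega_\epsilon}_{e_n}\|_p^p\le\|Dv\|_p^p$.

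The expansion is essentially algebraic. Writing $D\Psi_\epsilon=\Id+\epsilon A$, the matrix $A$ has nonzero entries only in its $n$-th row, so $A^2=0$ and $(D\Psi_\epsilon)^{-1}=\Id-\epsilon A$ \emph{exactly}; combined with $\det D\Psi_\epsilon\equiv1$, the substitution $x=\Psi_\epsilon(y)$ yields
\[
  \|Dv\|_p^p=\int_{\R^n}\bigl|\,D(\eta u_0)(y)-\epsilon\,A(y)^{T}D(\eta u_0)(y)\,\bigr|^p\,dy .
\]
On $\mathrm{supp}(\eta u_0)\subset B_{2\rho}$ one has $\zeta\equiv1$, so there $A$ has $(n,j)$-entry $2(Ky')_j$, and $A^{T}D(\eta u_0)=\bigl(2\,\partial_{y_n}u_0\,Ky',\ 0\bigr)$ on $B_\rho$. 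Using $p>2$ and the elementary bound $\bigl|\,|a+b|^p-|a|^p-p|a|^{p-2}a\cdot b\,\bigr|\lesssim |b|^2\bigl(|a|^{p-2}+|b|^{p-2}\bigr)$, I would expand the integrand and collect
\[
  \|Dv\|_p^p=\int_{\R^n}|D(\eta u_0)|^p\,dy-2\epsilon p\int_{\R^n_\limplus}|Du_0|^{p-2}\,\partial_{x_n}u_0\,(D'u_0\cdot Kx')\,dx+(\text{remainders}),
\]
where $D'$ is the gradient in the first $n-1$ variables; the remainders consist of a boundary layer supported in $B_{2\rho}\setminus B_\rho$ and the $O(\epsilon^2)$ Taylor terms, and the leading term equals $\|Du_0\|_p^p$ up to a truncation error. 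To match the first-order coefficient with the one in the statement I would invoke that $u_0$ is rotationally symmetric about the $x_n$-axis, so $D'u_0(x',x_n)$ is parallel to $x'$. Hence the relevant integrand has the form $g(|x'|,x_n)\,x'\cdot Kx'$ with $g$ radial in $x'$, the off-diagonal entries of $K$ integrate to zero in $x'$, and $\int_{\R^{n-1}}g(|x'|,x_n)\,x'\cdot Kx'\,dx'=\tfrac{\mathrm{tr}(K)}{n-1}\int_{\R^{n-1}}g(|x'|,x_n)\,|x'|^2\,dx'$. Since $|x'|^2 g=|Du_0|^{p-2}\partial_{x_n}u_0\,(D'u_0\cdot x')=|Du_0|^{p-2}\partial_{x_n}u_0\,Du_0\cdot(x',0)$, the second term above becomes $-\epsilon\,\tfrac{2p}{n-1}\,\mathrm{tr}(K)\int_{\R^n_\limplus}|Du_0|^{p-2}\partial_{x_n}u_0\,Du_0\cdot(x',0)\,dx$, which is exactly the first-order expression in the lemma.

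The crux of the argument is the quantitative control of all remainders by $C\epsilon^{1+\gamma}$, which dictates the choice of $\rho(\epsilon)$. The truncation error $\int_{|x|>\rho}\bigl(|Du_0|^p+\rho^{-p}|u_0|^p\bigr)\,dx$ and the boundary-layer term vanish as $\rho\to\infty$ at a rate governed by the polynomial decay of $u_0$ and $Du_0$ established in~\cite{HyndLarsonLindgren-Decay}, whereas the $O(\epsilon^2)$ Taylor remainders are of order $\epsilon^2\int_{\R^n_\limplus}|x'|^2|Du_0|^p\,dx$ (again finite by that decay estimate) and, more crudely, $O(\epsilon^2\rho^{c})$, which forbids $\rho$ from growing too quickly. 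The main technical work is to verify that these two requirements are compatible — this is precisely where the explicit decay rates of~\cite{HyndLarsonLindgren-Decay} and the hypothesis $p>n\ge2$ (guaranteeing $p>2$ and convergence of the weighted integrals) are used — and then to take $\rho=\epsilon^{-s}$ for a suitable $s\in(0,1)$, which produces the exponent $\gamma$; enlarging $C$ if necessary then covers the remaining range of $\epsilon\in(0,1/8)$ bounded away from $0$.
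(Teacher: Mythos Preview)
Your proposal is correct and follows essentially the same route as the paper: build a competitor for the potential by pulling back $u_0$ through a volume-preserving vertical shear that straightens the paraboloid, expand the $p$-energy to first order, extract $\mathrm{tr}(K)/(n-1)$ via the axial symmetry of $u_0$, and control all remainders with the decay estimates of~\cite{HyndLarsonLindgren-Decay}. The only cosmetic differences are that the paper works directly in $\Omega_K$ (with the rescaling done inside the integrals rather than up front), uses the fixed cutoff scale coming from the domain constraint $|y|<1$ (equivalently $\rho\sim1/\epsilon$ in your notation) rather than a tunable mesoscale $\rho=\epsilon^{-s}$, and applies the pointwise inequality $|z+h|^p\le|z|^p+p|z|^{p-2}z\cdot h+\tfrac{p(p-1)}{2}|z|^{p-2}|h|^2$ for $|h|\le|z|$ (valid here since $\|K\|<1/2$) in place of your two-sided Taylor bound; none of this changes the substance.
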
 

\begin{lem}\label{lem: integration by parts identity}
 The equality 
  \begin{equation*}
    \int_{\R^n_\limplus}|Du_0(x)|^{p-2}\partial_{x_n} u_0(x)Du_0(x)\cdot (x', 0)\,dx = -\frac{p-1}{2p}\int_{\R^{n-1}} |Du_0(x', 0)|^{p} |x'|^2\,dx'\,
  \end{equation*}
holds and both integrals are convergent.
\end{lem}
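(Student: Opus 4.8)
The plan is to integrate by parts in the $x_n$-variable and exploit the $p$-harmonicity of $u_0$ away from $e_n$ together with the homogeneity structure of the problem. Write $F(x) = |Du_0(x)|^{p-2}\partial_{x_n}u_0(x)\,Du_0(x)$; this is the vector field whose divergence vanishes where $u_0$ is $p$-harmonic, after we account for the $\partial_{x_n}u_0$ factor. More precisely, the integrand $F(x)\cdot(x',0) = \sum_{i=1}^{n-1}|Du_0|^{p-2}\partial_{x_n}u_0\,\partial_{x_i}u_0\,x_i$, so I would rewrite $\int_{\R^n_\limplus} F\cdot(x',0)\,dx$ as $\sum_{i=1}^{n-1}\int_{\R^n_\limplus} x_i\,\partial_{x_i}u_0\,\bigl(|Du_0|^{p-2}\partial_{x_n}u_0\bigr)\,dx$ and first integrate by parts in $x_i$. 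The boundary terms at $|x'|\to\infty$ vanish by the decay estimates from~\cite{HyndLarsonLindgren-Decay}, and there is no boundary contribution on $\{x_n=0\}$ from this step since we are differentiating in a tangential variable. This produces a term with $\partial_{x_i}\bigl(x_i\,\partial_{x_i}u_0\bigr) = \partial_{x_i}u_0 + x_i\partial_{x_i x_i}u_0$ multiplied against $|Du_0|^{p-2}\partial_{x_n}u_0$, plus a term where the $x_i$ factor remains and the derivative hits $|Du_0|^{p-2}\partial_{x_n}u_0$.

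The key structural identity I would use is that $\mathrm{div}\bigl(|Du_0|^{p-2}Du_0\bigr) = 0$ in $\R^n_\limplus\setminus\{e_n\}$, i.e. $u_0$ is $p$-harmonic there, so that $\sum_j \partial_{x_j}\bigl(|Du_0|^{p-2}\partial_{x_j}u_0\bigr)=0$. Combining the tangential integrations by parts for $i=1,\dots,n-1$ and adding and subtracting the $j=n$ term lets me replace $\sum_{i<n}\partial_{x_i}\bigl(|Du_0|^{p-2}\partial_{x_i}u_0\bigr)$ by $-\partial_{x_n}\bigl(|Du_0|^{p-2}\partial_{x_n}u_0\bigr)$. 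At this point one is left with an integral of the form $\int_{\R^n_\limplus} (\text{something})\,\partial_{x_n}\bigl(|Du_0|^{p-2}\partial_{x_n}u_0\bigr)\,dx$ together with lower-order pieces; integrating by parts once more in $x_n$ now does produce a genuine boundary integral over $\{x_n=0\}$. On that hyperplane $u_0$ vanishes, hence its tangential gradient vanishes, so $Du_0(x',0) = \partial_{x_n}u_0(x',0)\,e_n$ and $|Du_0(x',0)|^{p-2}\partial_{x_n}u_0(x',0) = |Du_0(x',0)|^{p-1}$ up to sign; this is exactly how the factor $|Du_0(x',0)|^p|x'|^2$ in the claimed formula arises, with the combinatorial constant $-\tfrac{p-1}{2p}$ emerging from carefully tracking the $x_i^2$ weights and the number of tangential directions. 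The singular point $e_n$ contributes nothing because $u_0$ is Lipschitz near it and the measure of a point is zero, so all manipulations are legitimate on $\R^n_\limplus\setminus\overline{B_\rho(e_n)}$ and one lets $\rho\to0$.

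The main obstacle will be justifying convergence of all the integrals and the vanishing of the boundary terms at infinity: one needs the pointwise decay bounds for $u_0$ and $Du_0$ (and implicitly $D^2u_0$ away from $e_n$, or an integrated substitute for it) from~\cite{HyndLarsonLindgren-Decay} to control the weighted integrands like $|x'|\,|Du_0|^{p-1}|D^2u_0|$ and $|x'|^2|Du_0|^p$ at large $|x|$, and to check that the $p$-Laplacian being degenerate does not obstruct the second derivatives appearing in the intermediate steps — here one can mollify or work on the region where $Du_0\neq0$, which by~\cite[Proposition 3.6]{MR4275749} is the whole of $\overline{\R^n_\limplus}\setminus\{e_n\}$ (the gradient of the Morrey extremal does not vanish), so $u_0$ is in fact smooth away from $e_n$ by elliptic regularity and all the integrations by parts are classical. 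Once smoothness and decay are in hand, the computation is a bookkeeping exercise in tracking constants, and the convergence of the final boundary integral $\int_{\R^{n-1}}|Du_0(x',0)|^p|x'|^2\,dx'$ is again guaranteed by the same decay estimates, as already noted in the proof of Theorem~\ref{thm: neg mean curvature implies energy estimate}.
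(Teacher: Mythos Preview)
Your overall architecture---integrate by parts, use $p$-harmonicity, pick up the boundary term on $\{x_n=0\}$---is correct and would eventually succeed, but there is one factual error and the paper's execution is considerably cleaner.

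The error: $u_0$ is \emph{not} Lipschitz near $e_n$. Since $-\Delta_p u_0 = c\,\delta_{e_n}$, the gradient blows up there like $|Du_0(x)|\lesssim |x-e_n|^{-(n-1)/(p-1)}$ (see \cite[Proposition~2.8]{zbMATH07126544}). Your excision-and-limit step $\rho\to 0$ is still the right move, but you must actually estimate the surface integrals on $\partial B_\rho(e_n)$: with the $|x'|^2$ weight they are of order $\rho^{n-1}\cdot \rho^{-p(n-1)/(p-1)}\cdot\rho^2 = \rho^{\,1+(p-n)/(p-1)}$, which does vanish since $p>n$. This computation is essential and cannot be waved away with a Lipschitz claim.

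On the method: the paper bypasses your iterated tangential-then-normal integrations by parts via a single Pohozaev-type identity. For any smooth vector field $V$, wherever $u_0$ is $p$-harmonic one has
\[
|Du_0|^{p-2}Du_0\cdot DV^\top Du_0 \;=\; \mathrm{div}\bigl(|Du_0|^{p-2}Du_0\,(Du_0\cdot V)\bigr) - \tfrac{1}{p}\,D\bigl(|Du_0|^{p}\bigr)\cdot V\,.
\]
Choosing $V(x)=(0,\dots,0,|x'|^2/2)$ makes the left-hand side exactly the integrand in question, while $\mathrm{div}\,V\equiv 0$. A single application of the divergence theorem on $\R^n_\limplus\setminus B_\delta(e_n)$ then produces the boundary integral over $\partial\R^n_\limplus$ (where $Du_0=-\nu\,\partial_{x_n}u_0$, giving the factor $|Du_0(x',0)|^p|x'|^2$ and the constant $-(p-1)/(2p)$ directly) together with the $\partial B_\delta(e_n)$ terms that vanish as above. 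This packages all of your bookkeeping into one step and, in particular, never requires isolating second derivatives of $u_0$ in the interior.
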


\begin{proof}[Proof of Lemma~\ref{lem: mean curvature expansion}]
We prove this lemma by exploiting the variational characterization of $u_{K,\epsilon}$. In particular, we will derive the desired inequality by constructing an appropriate competitor for $u_{K,\epsilon}$. 
\medskip

{\it Step 1.}  For $y=(y',y_n)\in \R^n$, define
\begin{equation}
  \Phi(y) = (y', y_n-y'\cdot K y')\,.
\end{equation}
This mapping is a diffeomorphism from $\R^n$ to itself, $\Phi(t e_n)=t e_n$ for all $t\in \R$, 
\begin{equation}
  \Phi(\{y\in \R^n: y_n>y'\cdot K y'\}) = \R^n_\limplus\,, \quad \mbox{and} \quad \Phi^{-1}(x) = (x', x_n+x'\cdot Kx')\,.
\end{equation}
As
\begin{equation}\label{JacobianPhi}
  D\Phi(y) = \biggl(\begin{matrix}
    \Id & 0\\
    -2(Ky')^\top & 1
  \end{matrix}\biggr),
\end{equation}
it also follows that  ${\det D\Phi(y)}=1$ for all $y \in \R^n$. 

In addition, we claim  
\begin{equation}\label{PhiClaim}
|\Phi(y)| \geq 1/2 \mbox{ for all } y \in B_{1}^c\,.
\end{equation}
We first note that if $|y|\geq 1$ and $|y'|\geq  1/2$, then
 \begin{equation*}
   |\Phi(y)|^2 =  |y'|^2 +(y_n -y'\cdot Ky')^2\geq |y'|^2\geq  1/4\,.
 \end{equation*}
 Next, we observe that if $|y| \geq 1$ and $|y'|< 1/2$, then
 $$
 |y_n|^2 =|y|^2-|y'|^2 \geq 3/4\,.
 $$ 
As $\|K\|< 1/2$ and $t^2-t/4$ is increasing for $t\ge \sqrt{3}/2$, 
   \begin{align*}
   |\Phi(y)|^2 &= |y'|^2 +(y_n -y'\cdot Ky')^2 \\
                &\geq |y_n|^2- 2y_n y'\cdot Ky' \\                   
                 &\geq |y_n|^2-\frac{1}{4} |y_n|\\
                &\geq \Bigl(\frac{3}{4}\Bigr)-\frac{1}{4} \Bigl(\frac{\sqrt{3}}{2}\Bigr)\\
              &>\frac{1}{4}\,.
 \end{align*} 
We conclude that~\eqref{PhiClaim} holds. 

\par Set
$$
\phi(s) = \begin{cases}
  0 & \mbox{if }s\leq 0\,,\\
  s & \mbox{if }0<s<1\,,\\
  1 & \mbox{if }s\geq 1\,,
\end{cases}
$$
and define 
\begin{equation*}
  w(y) = \phi\bigl(2-4|\Phi(y)|\bigr)u_0(\Phi(y)/\epsilon)
\end{equation*}
for $\epsilon\in (0,1/8)$ and $y\in \R^n$. Since $|\Phi(y)|\geq 1/2$ whenever $|y|\geq 1$, the first factor of $w$ vanishes for $|y|\geq 1$. The second factor of $w$ vanishes for all $y$ with $y_n < y'\cdot K y'$. Indeed $\Phi$ maps this set of points to $(\R^n_\limplus)^c$, where $u_0$ vanishes. Therefore, $w$ is supported in $\Omega_K$. 

\par Since the first factor of $w$ is Lipschitz with compact support and the second factor belongs to $ \DD^{1,p}(\R^n)$,  $w \in \DD^{1,p}_0(\Omega_K)$. Further, $w(\epsilon e_n)=\phi(2-4\epsilon)u_0(e_n)=1$ as $0<\epsilon<1/4$. It follows that
\begin{equation*}
  \|Du_{K,\epsilon}\|_p^p \leq \|Dw\|_p^p
\end{equation*}
by the variational characterization of $u_{K, \epsilon}$. We now wish to estimate $\|Dw\|_p^p$ in terms of $u_0$ and $\epsilon$. 

\medskip 

{\it Step 2.} In view of~\eqref{JacobianPhi}, 
\begin{equation}\label{DPhi Inequalities}
D\Phi(\Phi^{-1}(x))^\top z=z-z_n(2Kx',0)\quad \mbox{and} \quad |D\Phi(\Phi^{-1}(x))^\top z|\le |z|(1+|x'|)
\end{equation}
for $x,z\in \R^n$. Using the change of variables $y=\Phi^{-1}(x)$ in the integral below leads to
\begin{equation}\label{eq: first bound Dw integral}
\begin{aligned}
 &\hspace{-15pt} \int_{\R^n} |Dw(y)|^p\,dy\\
  &=
  \int_{B_{1/2}} \bigl|D\Phi(\Phi^{-1}(x))^\top D\bigr(\phi(2-4|x|)u_0(x/\epsilon)\bigr)\bigr|^p\,dx\\
  &=
  \epsilon^{-p}\int_{B_{1/4}} \bigl|D\Phi(\Phi^{-1}(x))^\top Du_0(x/\epsilon)\bigr|^p\,dx\\
  &\quad  + \int_{B_{1/2} \setminus B_{1/4}}\Bigl|D\Phi(\Phi^{-1}(x))^\top \Bigl( -4u_0(x/\epsilon)\frac{x}{|x|}+\frac{1}{\epsilon}(2-4|x|) Du_0(x/\epsilon)\Bigr)\Bigr|^p\,dx\\
  &\leq
  \epsilon^{-p} \int_{B_{1/4}}|Du_0(x/\epsilon)-\partial_{x_n} u_0(x/\epsilon)(2Kx',0)|^p\,dx\\
    &\quad  + (3/2)^p\int_{B_{1/2} \setminus B_{1/4}}\Bigl| -4u_0(x/\epsilon)\frac{x}{|x|}+\frac{1}{\epsilon}(2-4|x|) Du_0(x/\epsilon)\Bigr|^p\,dx\\
      &\leq
  \epsilon^{-p} \int_{B_{1/4}}|Du_0(x/\epsilon)-\partial_{x_n} u_0(x/\epsilon)(2Kx',0)|^p\,dx\\
    &\quad  + (3/2)^p4^{p}2^{p-1}\int_{B_{1/2} \setminus B_{1/4}}|u_0(x/\epsilon)|^p\,dx+(3/2)^p\epsilon^{-p}2^{p-1}\int_{B_{1/2} \setminus B_{1/4}}|Du_0(x/\epsilon)|^p\,dx\,. \hspace{-5pt}
\end{aligned}
\end{equation}
Note that the first inequality is due to~\eqref{DPhi Inequalities} and the second follows from the elementary inequality:  
$(a+b)^p \leq 2^{p-1}a^p+2^{p-1}b^p$ for $a,b\ge 0$.

\par We shall also utilize the estimate 
\begin{equation*}
|z+h|^p\le |z|^p+p|z|^{p-2}z\cdot h+\frac{p(p-1)}{2}|z|^{p-2}|h|^2
\end{equation*}
for $z,h\in \R^n$ with $|h|\le |z|$; see Lemma 10.2.1 in~\cite{MR2401600}, for example. With $z=Du_0(x/\epsilon)$ and $h=-\partial_{x_n} u_0(x/\epsilon)(2Kx',0)$, the above inequality and that $|Kx'|\leq \|K\||x'| <|x'|/2$ gives 
\begin{equation*}
\begin{aligned}
  &\int_{B_{1/4}}|Du_0(x/\epsilon)-\partial_{x_n} u_0(x/\epsilon)(2Kx',0)|^p\,dx\\
  &\quad \le   
    \int_{B_{1/4}}|Du_0(x/\epsilon)|^p\,dx-2p\int_{B_{1/4}}|Du_0(x/\epsilon)|^{p-2}\partial_{x_n} u_0(x/\epsilon)Du_0(x/\epsilon)\cdot(Kx', 0)\,dx\\
  &\quad\quad 
    +\frac{p(p-1)}{2}\int_{B_{1/4}}|Du_0(x/\epsilon)|^p|x'|^2\,dx\\
  &\quad \le  
    \int_{B_{1/4}}|Du_0(x/\epsilon)|^p\,dx-2p\int_{\R^n_\limplus}|Du_0(x/\epsilon)|^{p-2}\partial_{x_n} u_0(x/\epsilon)Du_0(x/\epsilon)\cdot(Kx', 0)\,dx\\
  &\quad\quad 
    +\frac{p(p-1)}{2}\int_{B_{1/4}}\!|Du_0(x/\epsilon)|^p|x'|^2\,dx\\
  &\quad\quad
    +2p\int_{B_{1/4}^c}|Du_0(x/\epsilon)|^{p-2}\partial_{x_n} u_0(x/\epsilon)Du_0(x/\epsilon)\cdot(Kx', 0)\,dx\\
  &\quad \le 
    \int_{\R^n_\limplus}|Du_0(x/\epsilon)|^p\,dx-2p\int_{\R^n_\limplus}|Du_0(x/\epsilon)|^{p-2}\partial_{x_n} u_0(x/\epsilon)Du_0(x/\epsilon)\cdot(Kx', 0)\,dx\\
  &\quad\quad 
    + \frac{p(p-1)}{2}\int_{B_{1/4}}|Du_0(x/\epsilon)|^p|x'|^2\,dx+p\int_{B_{1/4}^c}|Du_0(x/\epsilon)|^p|x'|\,dx\,.
\end{aligned}
\end{equation*}
Changing variables $x\mapsto \epsilon x$ in the integrals above and combining this inequality with the upper bound on $\|Dw\|_p^p$ in~\eqref{eq: first bound Dw integral} gives 
\begin{equation}\label{eq: preliminary upper bound Dw integral}
\begin{aligned}
  &\int_{\R^n} |Dw(y)|^p\,dy \\
  &\quad  \le 
    \epsilon^{n-p}\int_{\R^n_\limplus}|Du_0(x)|^p\,dx-2p\epsilon^{n+1-p}\int_{\R^n_\limplus}|Du_0(x)|^{p-2}\partial_{x_n} u_0(x)Du_0(x)\cdot(Kx', 0)\,dx\\
  &\quad\quad 
    + C_0 \epsilon^{n+2-p}\int_{B_{1/(4\epsilon)}}|Du_0(x)|^p|x'|^2\,dx+ C_0\epsilon^{n+1-p}\int_{B_{1/(4\epsilon)}^c}|Du_0(x)|^p|x'|\,dx\\
  &\quad\quad  
    +C_0 \epsilon^n\int_{B_{1/(2\epsilon)}\setminus B_{1/(4\epsilon)}}|u_0(x)|^p\,dx+C_0\epsilon^{n-p}\int_{B_{1/(2\epsilon)} \setminus B_{1/(4\epsilon)}}|Du_0(x)|^p\,dx\,.
\end{aligned}
\end{equation}
Here $C_0$ is a constant which only depends on $p$. 

Since $u_0$ is the restriction of a Morrey extremal, $u_0$ is axially-symmetric about the $x_n$-axis~\cite[Theorem 1.1]{MR4160015}. Therefore, 
$$
\partial_{x_n} u_0(r \theta, x_n) = \partial_{x_n} u_0(r\theta_0, x_n)\quad\text{ and }\quad (Du_0(r\theta, x_n))'= \theta |(Du_0(r\theta_0, x_n))'|
$$
for any $\theta, \theta_0 \in \mathbb{S}^{n-2}, r>0, x_n>0$.
Furthermore, if $\theta_0 \in \mathbb{S}^{n-2}$ is fixed, then
\begin{equation}\label{eq: neg curvature extracting TrK}
\begin{aligned}
  \hspace{-20pt}\int_{\R^n_\limplus}&|Du_0(x)|^{p-2}\partial_{x_n} u_0(x)Du_0(x)\cdot(Kx', 0)\,dx\\
  &\hspace{-3pt}=
  \int_0^\infty\hspace{-8.5pt} \int_0^{\infty}\hspace{-8.5pt}\int_{\mathbb{S}^{n-2}} \hspace{-3pt}|Du_0(r \theta_0, x_n)|^{p-2}\partial_{x_n} u_0(r\theta_0, x_n)|Du_0(r\theta_0, x_n)'|(\theta\cdot K\theta) r^{n-1}\,d\sigma(\theta)drdx_n \\
  &\hspace{-3pt}=
  \frac{\mathrm{tr}(K)}{n-1}\int_0^\infty\hspace{-8.5pt} \int_0^{\infty}\hspace{-8.5pt} \int_{\mathbb{S}^{n-2}}\hspace{-3pt}|Du_0(r \theta_0, x_n)|^{p-2}\partial_{x_n} u_0(r\theta_0, x_n)|Du_0(r\theta_0, x_n)'||\theta|^2r^{n-1}\,d\sigma(\theta)drdx_n \hspace{-13pt}\\
  &\hspace{-3pt}=
  \frac{\mathrm{tr}(K)}{n-1}\int_{\R^n_\limplus}|Du_0(x)|^{p-2}\partial_{x_n} u_0(x)Du_0(x)\cdot (x', 0)\,dx.
\end{aligned}
\end{equation}
Here $\sigma$ denotes the surface measure and we used the identity 
\begin{equation*}
  \frac{1}{\sigma(\mathbb{S}^{n-2})}\int_{\mathbb{S}^{n-2}}\theta\cdot K\theta\,d\sigma(\theta) = \frac{\mathrm{tr}(K)}{n-1}\,.
\end{equation*}

Combining~\eqref{eq: preliminary upper bound Dw integral},~\eqref{eq: neg curvature extracting TrK}, we arrive at the estimate
\begin{equation}\label{eq: expansion with integral errors}
\begin{aligned}
  \epsilon^{p-n}\|Du_{K,\epsilon}\|_{p}^p &\leq \|Du_0\|_p^p - \epsilon\frac{2p\,\mathrm{tr}(K)}{n-1} \int_{\R^n_\limplus}|Du_0(x)|^{p-2}\partial_{x_n} u_0(x)Du_0(x)\cdot (x', 0)\,dx \\
  &\quad +C_0 \epsilon^{2}\int_{B_{1/(4\epsilon)}} |Du_0(x)|^p|x'|^2\,dx+C_0 \epsilon\int_{B_{1/(4\epsilon)}^c} |Du_0(x)|^p|x'|\,dx\\
  &\quad  + C_0\epsilon^{p}\int_{B_{1/(2\epsilon)}\setminus B_{1/(4\epsilon)}}|u_0(x)|^p\,dx + C_0 \int_{B_{1/(2\epsilon)}\setminus B_{1/(4\epsilon)}}|Du_0(x)|^p\,dx\,.
\end{aligned}
\end{equation}
It remains is to bound the last four terms.

\medskip 
{\it Step 3.} It follows from~\cite{HyndLarsonLindgren-Decay} that for any
\begin{equation}\label{eq: decay beta}
   0<\beta < \beta_0(p):=-\frac{1}{3}+\frac{2}{3(p-1)}+\sqrt{\Bigl(-\frac{1}{3}+\frac{2}{3(p-1)}\Bigr)^2+ \frac{1}{3}}
 \end{equation} 
 there is a constant $C_1$ depending on $\beta$ and $p$ such that
\begin{equation}\label{eq: decay bounds Morrey}
  |u_0(x)|\leq C_1|x|^{-\beta} \quad \mbox{and} \quad |Du_0(x)| \leq C_1 |x|^{-\beta -1}\,
\end{equation}
 for all $|x|\geq 2$. Note that $[2, \infty) \ni p \mapsto \beta_0(p)$ is decreasing. As $\lim_{p\to \infty}\beta_0(p)=\frac{1}{3}$, it must be that $\beta_0(p)>1/p$ for all $p>3$. Moreover, the monotonicity of $\beta_0$ implies that 
 $$\beta_0(p) \geq  \beta_0(3) = \frac{1}{\sqrt{3}}>\frac{1}2 \geq \frac{1}p \;\; \mbox{for all } p \in [2, 3]\,.$$
 Therefore, $\beta_0(p)>1/p$ for all $p \in [2, \infty)$.

We now fix 
$$
\frac{1}{p}<\beta< \beta_0(p).
$$
and use the decay estimates~\eqref{eq: decay bounds Morrey} to bound the four error terms in~\eqref{eq: expansion with integral errors}. To estimate the integral of $|Du_0(x)|^p|x'|^2$ over $B_{1/(4\epsilon)}$ we split it into two parts; one away from the singularity where we can employ the decay estimates, and one near to the singularity where we simply use that $|Du_0|\in L^p$. 

\par Observe that
 \begin{align*}
   \epsilon\int_{B_{1/(4\epsilon)}^c}|Du_0(x)|^p|x'|\,dx &\leq \sigma(\mathbb{S}^{n-1})C_1 \epsilon \int_{1/(4\epsilon)}^\infty r^{-(\beta+1)p}r^{n}\,dr \leq C_2 \epsilon^{p-n+\beta p}\,,\\
   \epsilon^{2}\int_{B_{2}}|Du_0(x)|^p|x'|^2\,dx & \leq 2^2 \epsilon^{2}\int_{B_{2}}|Du_0(x)|^p\,dx \leq C_2 \epsilon^{2}\,,\\
   \epsilon^{2}\int_{B_{1/(4\epsilon)}\setminus B_{2}}|Du_0(x)|^p|x'|^2\,dx &\leq \sigma(\mathbb{S}^{n-1})C_1 \epsilon^{2}\int_{2}^{1/(4\epsilon)}r^{-(\beta+1)p+n+1}\,dr \leq C_2 \epsilon^{p-n+\beta p}\,,\\
  \epsilon^p\int_{B_{1/(2\epsilon)}\setminus B_{1/(4\epsilon)}}|u_0(x)|^p\,dx &\leq \sigma(\mathbb{S}^{n-1})C_1 \epsilon^p\int_{1/(4\epsilon)}^{1/(2\epsilon)}r^{-\beta p +n-1}\,dr \leq C_2 \epsilon^{p-n+\beta p}\,,\\
   \int_{B_{1/(2\epsilon)}\setminus B_{1/(4\epsilon)}}|Du_0(x)|^p\,dx & \leq \sigma(\mathbb{S}^{n-1})C_1 \int_{1/(4\epsilon)}^{1/(2\epsilon)}r^{-(\beta+1)p+n-1}\,dr \leq C_2 \epsilon^{p-n+\beta p}\,.
 \end{align*}
Here $C_2$ is a constant which only depends on $p,n$, and $\beta$. Combining these estimates with~\eqref{eq: expansion with integral errors}, we deduce
\begin{align*}
  \epsilon^{p-n}\|Du_{K,\epsilon}\|_{p}^p &\leq \|Du_0\|_p^p - \epsilon \frac{2p\, \mathrm{tr}(K)}{n-1} \int_{\R^n_\limplus}|Du_0(x)|^{p-2}\partial_{x_n} u_0(x)Du_0(x)\cdot (x', 0)\,dx\\
  &\quad + C_3(\epsilon^{p-n+\beta p}+\epsilon^{2})
\end{align*}
for some constant $C_3$. Choosing 
$$
\gamma = \min\{p-n +\beta p-1, 1\}
$$
completes the proof since $\gamma>0$ and $ C_3(\epsilon^{p-n+\beta p}+\epsilon^{2})\le 2C_3 \epsilon^{1+\gamma}$. 
\end{proof}

\begin{proof}[Proof of Lemma~\ref{lem: integration by parts identity}]
 We first note that $|Du_0|^p|x'| \in L^1(\R^n_\limplus)$ by the decay estimate proved in~\cite{HyndLarsonLindgren-Decay}. 
 It follows that the integral on the left-hand side of the lemma is well-defined. Therefore, dominated convergence implies that
  \begin{equation*}
    \int_{\R^n_\limplus}\hspace{-4pt}|Du_0(x)|^{p-2}\partial_{x_n} \hspace{-2pt}u_0(x)Du_0(x)\cdot (x'\hspace{-3pt}, 0)\hspace{0.6pt}dx =\hspace{-1pt} \lim_{\delta \to 0^\limplus}\hspace{-2pt}\int_{\R^n_\limplus\!\setminus B_\delta(e_n)}\hspace{-4pt}|Du_0(x)|^{p-2}\partial_{x_n} \hspace{-2pt}u_0(x)Du_0(x)\cdot (x'\hspace{-3pt}, 0)\hspace{0.6pt}dx\,.
  \end{equation*}
  Next observe that since $u_0$ is smooth and $p$-harmonic in $\R^n_\limplus\setminus \{e_n\}$,   \begin{align*}
    |Du_0(x)|^{p-2}Du_0(x)\cdot DV(x)^\top Du_0(x) 
    &= \mathrm{div}\bigl(|Du_0(x)|^{p-2}Du_0(x) Du_0(x)\cdot V(x)\bigr)\\
    &\quad - \frac{1}{p}D\bigl(|Du_0(x)|^{p}\bigr)\cdot V(x)\\
    &\quad - \underbrace{\mathrm{div}\bigl(|Du_0(x)|^{p-2}Du_0(x)\bigr)}_{\Delta_pu_0=0}Du_0(x)\cdot V(x)\\
    &=\mathrm{div}\bigl(|Du_0(x)|^{p-2}Du_0(x) Du_0(x)\cdot V(x)\bigr)\\
    &\quad - \frac{1}{p}D\bigl(|Du_0(x)|^{p}\bigr)\cdot V(x)\,
  \end{align*}
for any $x\in \R^n_\limplus\setminus \{e_n\}$ and $V \in C^\infty(\R^n; \R^n)$.

  \par    Employing this identity with $V(x) = (0, \ldots, 0, |x'|^2/2)$ and integrating by parts gives
    \begin{align*}
    \int_{\R^n_\limplus\setminus B_\delta(e_n)}&|Du_0(x)|^{p-2}\partial_{x_n} u_0(x)Du_0(x)\cdot (x', 0)\,dx \\
    &= \int_{\R^n_\limplus\setminus B_\delta(e_n)}\mathrm{div}\bigl(|Du_0(x)|^{p-2}Du_0(x) Du_0(x)\cdot V(x)\bigr)\,dx\\
    &\quad - \frac{1}{p}\int_{\R^n_\limplus\setminus B_\delta(e_n)} D\bigl(|Du_0(x)|^{p}\bigr)\cdot V(x)\,dx\\
    &=
    \int_{\partial(\R^n_\limplus\setminus B_\delta(e_n))} |Du_0(x)|^{p-2}Du_0(x)\cdot V(x) Du_0(x)\cdot \nu(x)\,d\sigma(x)\\
    &+\frac{1}{p}\int_{\R^n_\limplus\setminus B_\delta(e_n)}|Du_0(x)|^p\mathrm{div}(V(x))\,dx - \frac{1}{p}\int_{\partial (\R^n_\limplus\setminus B_\delta(e_n))}|Du_0(x)|^p V(x)\cdot \nu(x)\,d\sigma(x)\,,
  \end{align*}
where $\nu$ is the outward unit normal to the surface and $\sigma$ is the surface measure. The expression above simplifies as $\mathrm{div}(V)\equiv 0$ for our choice of $V$. Moreover,  $Du_0(x) = -\nu \partial_{x_n} u_0(x)$ for $x \in \partial \R^n_\limplus$ since $u_0$ vanishes on $\partial \R^n_\limplus$. As a result, the equality above reduces to
\begin{equation}\label{need to send delta to 0 to get second derivative identity}
\begin{aligned}
    \int_{\R^n_\limplus\setminus B_\delta(e_n)}&|Du_0(x)|^{p-2}\partial_{x_n} u_0(x)Du_0(x)\cdot (x', 0)\,dx \\
    &=
    -\frac{p-1}{2p}\int_{\partial \R^n_\limplus} |Du_0(x)|^{p} |x'|^2\,dx'\\
    & \quad+ \frac{1}{2}\int_{\partial B_\delta(e_n)} |Du_0(x)|^{p-2}\partial_{x_n}u_0(x)|x'|^2 Du_0(x)\cdot \frac{e_n-x}{|e_n-x|}\,d\sigma(x)\\
    & \quad- \frac{1}{2p}\int_{\partial B_\delta(e_n)} |Du_0(x)|^p |x'|^2\frac{1-x_n}{|e_n-x|}\,d\sigma(x)\,.
  \end{aligned}
  \end{equation}

  Since $|x'|\leq \delta$ on $\partial B_\delta(e_n)$ and $|Du_0(x)| \leq C |x-e_n|^{-\frac{n-1}{p-1}}$ (by, e.g.,~\cite[Proposition 2.8]{zbMATH07126544}), the integrals above over $\partial B_\delta(e_n)$ can be estimated as
  \begin{align*}
    \Biggl|\int_{\partial B_\delta(e_n)} |Du_0(x)|^{p-2}\partial_{x_n}u_0(x)|x'|^2 Du_0(x)\cdot \frac{e_n-x}{|e_n-x|}\,d\sigma(x)\Biggr| \leq \sigma(\mathbb{S}^{n-1})C \delta^{n+1-p\frac{n-1}{p-1}}
  \end{align*}
  and 
    \begin{align*}
    \Biggl|\int_{\partial B_\delta(e_n)} |Du_0(x)|^p |x'|^2\frac{1-x_n}{|e_n-x_n|}\,d\sigma(x)\Biggr|\leq \sigma(\mathbb{S}^{n-1})C \delta^{n+1-p\frac{n-1}{p-1}}\,.
  \end{align*}
Our assumption $p>n$ gives
  \begin{equation*}
    n+1-p \frac{n-1}{p-1} = 1+\frac{p-n}{p-1} > 1\,.
  \end{equation*}
We conclude upon sending $\delta \to 0$ in~\eqref{need to send delta to 0 to get second derivative identity}.
\end{proof}


\section{On the range of \texorpdfstring{$\lambda_p(\Omega)$}{the best constants}}\label{sec: Range of lambdap}
Theorem~\ref{thm: universal sharp bounds} asserts that 
$$
\lambda_p(\R^n \setminus\{0\})\leq \lambda_p(\Omega) \leq \lambda_p(\R^n_\limplus)
$$
for any open $\Omega\subsetneq\R^n$. In this section, we shall use Theorem~\ref{thm: neg mean curvature} to deduce that every point in the interval
$$
\bigl[\lambda_p(\R^n\setminus\{0\}), \lambda_p(\R^n_\limplus)\bigr]
$$
equals $\lambda_p(\Omega)$ for some $\Omega$. In fact, we shall prove that it suffices to consider the annular regions
$$
A_{r_1,r_2}=\{x\in \R^n: r_1<|x|<r_2\} \quad \mbox{for }0<r_1<r_2\,.
$$  
\begin{thm}\label{thm: range of lambda}
  Assume $p>n \geq 2$. For any 
  \begin{equation*}
    \lambda_p(\R^n\setminus\{0\}) < \lambda <\lambda_p(\R^n_\limplus)
  \end{equation*}
  there exists a unique $\delta \in (0, 1)$ such that $\lambda = \lambda_p(A_{\delta,1})$.
\end{thm}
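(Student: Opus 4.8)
The plan is to study the function $\delta \mapsto \lambda_p(A_{\delta,1})$ on the interval $(0,1)$ and show it is continuous, strictly decreasing, and has the correct limits at the endpoints; the theorem then follows from the intermediate value theorem together with strict monotonicity. Note that $A_{\delta,1}$ is bounded and $C^\infty$, so all the machinery of the previous sections applies.

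\emph{Endpoint limits.} As $\delta \to 1^-$ the annulus $A_{\delta,1}$ collapses; after rescaling it approaches a slab, whose blow-up at any boundary point is a halfspace. More directly, for $\delta$ close to $1$ the boundary of $A_{\delta,1}$ has small curvature relative to its thickness and one expects $\lambda_p(A_{\delta,1}) \to \lambda_p(\R^n_\limplus)$. I would make this rigorous by exhibiting a suitable exhaustion: rescaling $A_{\delta,1}$ by $1/(1-\delta)$ in the radial direction produces sets that locally look like $\R^n_\limplus$ and exhaust it, so Lemma~\ref{lem: interior exhaustion} and similarity invariance give $\limsup_{\delta \to 1}\lambda_p(A_{\delta,1}) \le \lambda_p(\R^n_\limplus)$; the reverse inequality is the universal bound of Theorem~\ref{thm: universal sharp bounds}. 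As $\delta \to 0^+$, the sets $A_{\delta,1}$ exhaust $B_1 \setminus \{0\}$, so Lemma~\ref{lem: interior exhaustion} gives $\limsup_{\delta\to 0}\lambda_p(A_{\delta,1}) \le \lambda_p(B_1\setminus\{0\})$, and by Theorem~\ref{thm: convex, punctured, exterior domains}(2) (with $\Omega = B_1$, $x_0=0$) we have $\lambda_p(B_1\setminus\{0\}) = \lambda_p(\R^n\setminus\{0\})$; again the matching lower bound is Theorem~\ref{thm: universal sharp bounds}. Hence $\lambda_p(A_{\delta,1}) \to \lambda_p(\R^n\setminus\{0\})$ as $\delta \to 0^+$.

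\emph{Strict monotonicity.} For $0 < \delta_1 < \delta_2 < 1$, the inclusion $A_{\delta_1,1} \supset A_{\delta_2,1}$ only gives $\lambda_p(A_{\delta_2,1}) \le \lambda_p(A_{\delta_1,1})$ in the wrong comparison direction, so I instead argue via supporting sets or extremals. The cleanest route: the outer boundary sphere $\{|x|=1\}$ has \emph{negative} mean curvature with respect to the outward normal of $A_{\delta,1}$ (it curves toward the domain), so Theorem~\ref{thm: neg mean curvature} applies and $A_{\delta,1}$ has an extremal $u_\delta$ with $\lambda_p(A_{\delta,1}) < \lambda_p(\R^n_\limplus)$ for every $\delta \in (0,1)$. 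Now given $\delta_1 < \delta_2$, rescale so that the outer radii match after a dilation bringing $A_{\delta_2,1}$ to $A_{\delta_2/\delta_1 \cdot \delta_1, \ldots}$; more simply, $\frac{1}{1}A_{\delta_1,1} \supset A_{\delta_2,1}$ and $A_{\delta_2,1}$ is fully supported by $A_{\delta_1,1}$ at every outer-boundary point (a translated/rescaled copy of $A_{\delta_1,1}$ touches from outside), while at inner-boundary points one uses that $\R^n\setminus\{0\}$ supports — this needs care. I would therefore argue as in the proof of Lemma~\ref{lem: CeePhi monotonicity}: show $\lambda_p(A_{\delta_2,1}) \le \lambda_p(A_{\delta_1,1})$ by a supporting-set argument, then note that equality would force the extremal $u_{\delta_1}$ of $A_{\delta_1,1}$ (which exists by Theorem~\ref{thm: neg mean curvature}) to also be an extremal of $A_{\delta_2,1}$, hence to vanish on the nonempty open set $A_{\delta_1,1}\setminus \overline{A_{\delta_2,1}}$, contradicting Corollary~\ref{ExtSignCor} since $A_{\delta_1,1}$ is connected. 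Thus $\lambda_p(A_{\delta_2,1}) < \lambda_p(A_{\delta_1,1})$.

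\emph{Continuity.} Upper semicontinuity of $\delta \mapsto \lambda_p(A_{\delta,1})$ from the right follows from Lemma~\ref{lem: interior exhaustion} since $A_{\delta,1} = \bigcup_{\delta' > \delta} A_{\delta',1}$ as $\delta' \downarrow \delta$; together with monotonicity this gives right-continuity. For left-continuity (equivalently, upper semicontinuity with respect to shrinking $\delta$), I would transplant a near-optimal competitor for $A_{\delta,1}$ into $A_{\delta',1}$ for $\delta' < \delta$ close to $\delta$ via a radial diffeomorphism mapping $A_{\delta,1}$ onto $A_{\delta',1}$ that is close to the identity, controlling the distortion of both $\|Du\|_p$ and $\|u/d^{1-n/p}\|_\infty$; this is the kind of estimate hinted at in Remark~\ref{lem: CeePhi continuity}. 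Combined with monotonicity, continuity follows. With continuity, strict monotonicity, and the two endpoint limits in hand, the intermediate value theorem produces, for each $\lambda \in (\lambda_p(\R^n\setminus\{0\}), \lambda_p(\R^n_\limplus))$, a value $\delta \in (0,1)$ with $\lambda_p(A_{\delta,1}) = \lambda$, and strict monotonicity makes it unique.

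\emph{Main obstacle.} The subtle point is the continuity (really the left-continuity / non-existence of downward jumps) of $\delta \mapsto \lambda_p(A_{\delta,1})$; the transplantation estimate must simultaneously handle the supremum norm $\|u/d_\Omega^{1-n/p}\|_\infty$, whose denominator is only Lipschitz, and this requires being somewhat careful near both boundary spheres. The monotonicity argument's supporting-set step at inner-boundary points is also a place where one must verify the geometric configuration precisely rather than wave hands.
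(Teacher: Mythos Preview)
Your monotonicity claim is backwards, and this contradicts your own endpoint limits. You correctly argue that $\lambda_p(A_{\delta,1}) \to \lambda_p(\R^n\setminus\{0\})$ as $\delta\to 0^+$ and $\lambda_p(A_{\delta,1}) \to \lambda_p(\R^n_\limplus)$ as $\delta\to 1^-$; since $\lambda_p(\R^n\setminus\{0\}) < \lambda_p(\R^n_\limplus)$, the map $\delta\mapsto\lambda_p(A_{\delta,1})$ must be \emph{increasing}. Yet your supporting-set argument concludes $\lambda_p(A_{\delta_2,1}) < \lambda_p(A_{\delta_1,1})$ for $\delta_1<\delta_2$, i.e.\ decreasing. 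The correct direction is the one the paper proves: $A_{\delta_1,1}$ fully supports $A_{\delta_2,1}$ (at outer boundary points trivially, at inner boundary points via the dilation $x\mapsto(\delta_2/\delta_1)x$), whence Proposition~\ref{prop: Supporting sets} gives $\lambda_p(A_{\delta_1,1})\le\lambda_p(A_{\delta_2,1})$. Your strictness argument is also reversed: one must transplant the extremal of the \emph{smaller} annulus $A_{\delta_2,1}$ into a similarity copy of the larger one, not the other way around, and this requires the observation (which the paper makes) that the maximum of $|u_\delta|/d^{1-n/p}$ is realized closer to the inner sphere. Incidentally, it is the \emph{inner} boundary $\{|x|=\delta\}$ that has negative mean curvature, not the outer one; this does not affect the applicability of Theorem~\ref{thm: neg mean curvature}, but it matters for the geometry of the strictness argument.

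Your treatment of the limit $\delta\to 1^-$ is also confused. The inequality $\lambda_p(A_{\delta,1})\le\lambda_p(\R^n_\limplus)$ is free from Theorem~\ref{thm: universal sharp bounds}; what requires work is $\liminf_{\delta\to 1}\lambda_p(A_{\delta,1})\ge\lambda_p(\R^n_\limplus)$, and Theorem~\ref{thm: universal sharp bounds} does not give this. Your proposed route via ``rescaling in the radial direction'' and Lemma~\ref{lem: interior exhaustion} fails twice: radial stretching is not a similarity, and bounded annuli can never exhaust the unbounded halfspace. The paper handles both this limit and the left-continuity by a compactness argument on the rescaled extremals $w_k(y)=u_{\delta_k}((r_{\delta_k}-\delta_k)y+\delta_k e_n)$, showing that any weak limit lies in $\DD^{1,p}_0(\R^n_\limplus)$ with value $1$ at $e_n$; this is where the work is, and your transplantation sketch does not address it.
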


\begin{proof} If suffices to show the function $(0, 1)\ni \delta \mapsto \lambda_p(A_{\delta,1})$
is increasing, continuous, and satisfies
     \begin{equation*}
        \lim_{\delta \to 0^\limplus}\lambda_p(A_{\delta,1}) = \lambda_p(\R^n \setminus \{0\}) \quad \mbox{and}\quad \lim_{\delta \to 1^\limminus}\lambda_p(A_{\delta,1}) = \lambda_p(\R^n_\limplus)\,.
      \end{equation*} 
First we will make a few preliminary observations. 

\par By Theorem~\ref{thm: neg mean curvature},  $A_{\delta,1}$ admits an extremal $u_\delta$ and $\lambda_p(A_{\delta,1})<\lambda_p(\R^n_\limplus)$ for each $\delta\in (0,1)$. As $A_{\delta,1}$ is rotationally symmetric, we may assume 
  $$
  u_\delta=w_{r_\delta e_n}^{A_{\delta,1}}
  $$
for some $r_\delta \in (\delta, 1)$. If $1-r_\delta \leq r_\delta-\delta$, then $d_{A_{\delta,1}}(r_\delta e_n) = d_{B_1}(r_\delta e_n)$. It would then follow from Proposition~\ref{prop: Supporting sets} that 
  $$
    \lambda_p(A_{\delta,1}) = \RR_p(A_{\delta,1}, u_\delta) = \RR_p(B_1, u_\delta) \geq \lambda_p(B_1) = \lambda_p(\R^n_\limplus)\,,$$ 
  which is a contradiction. Therefore, 
  $$
  r_\delta-\delta < 1-r_\delta\,.
  $$
  That is, $r_\delta e_n$ is closer to the inner boundary sphere of $A_{\delta, 1}$ than to its outer boundary sphere. 

  \medskip
  
   {\it Part 1} (Monotonicity):   Suppose $0<\delta_1<\delta_2<1$. Observe that $A_{\delta_1,1}$ fully supports $A_{\delta_2,1}$. Indeed, if $y_0 \in \partial A_{\delta_2,1}$ and $|y_0|=1$, then $A_{\delta_1,1}$ supports $A_{\delta_2,1}$ at $y_0$. If $|y_0|=\delta_2$ instead, then $TA_{\delta_1,1}$ supports $A_{\delta_2,1}$ at $y_0$, where $T(x)=(\delta_2/\delta_1)x$. By Proposition~\ref{prop: Supporting sets}, 
   $$
   \lambda_p(A_{\delta_1,1})\le \lambda_p(A_{\delta_2,1})\,.
   $$
   We claim that this inequality is strict.

  Note that  $\tilde w(x) = w_{r_{\delta_{2}} e_n}^{A_{\delta_{2},1}}\in \DD^{1,p}_0(A_{\delta_2,\delta_2/\delta_1})$. And as $r_{\delta_2}-\delta_2< 1-r_{\delta_2} < \delta_2/\delta_1-r_{\delta_2}$, 
  \begin{align*}
    \lambda_p(A_{\delta_1,1}) &= \lambda_p(A_{\delta_2,\delta_2/\delta_1})\leq \RR_p(A_{\delta_2,\delta_2/\delta_1}, \tilde w)=\RR_p(A_{\delta_2,1}, \tilde w)= \lambda_p(A_{\delta_2,1})\,. 
  \end{align*}
  Consequently, if $\lambda_p(A_{\delta_1,1})=\lambda_p(A_{\delta_2,1})$, $\tilde w$ is an extremal for $\lambda_p(A_{\delta_2,\delta_2/\delta_1})$. However, this would contradict Corollary~\ref{ExtSignCor} since $\tilde w$ vanishes in $A_{1,\delta_2/\delta_1}\neq \emptyset$. Therefore, $ \lambda_p(A_{\delta_1,1})< \lambda_p(A_{\delta_2,1})$.

  \medskip
  { \it Part 2} (Continuity): We will argue that $\delta \mapsto \lambda_p(A_{\delta,1})$ is both right and left continuous at a fixed but arbitrary $\delta_0\in (0,1)$. By the monotonicity from Part 1, the function $\delta \mapsto \lambda_p(A_{\delta,1})$ has both left and right limits at $\delta_0$. By Lemma~\ref{lem: interior exhaustion}, 
  \begin{equation*}
    \lim_{\delta \to \delta_0^\limplus} \lambda_p(A_{\delta,1})  \leq \lambda_p(A_{\delta_0,1})\,.
  \end{equation*}
 Also note that the monotonicity proved above implies $\lambda_p(A_{\delta_0,1})\le  \lambda_p(A_{\delta,1})$ for 
 $\delta>\delta_0$. Therefore, 
  \begin{equation*}
\lambda_p(A_{\delta_0,1})\le \lim_{\delta \to \delta_0^\limplus} \lambda_p(A_{\delta,1}) \,.
  \end{equation*}
  This verifies right continuity at $\delta_0$. 
  
  \par Monotonicity also gives 
  \begin{equation*}
    \lim_{\delta \to \delta_0^\limminus} \lambda_p(A_{\delta,1})  \leq \lambda_p(A_{\delta_0,1})\,.
  \end{equation*}
  In order to conclude the left continuity at $\delta_0$, we need to verify that
  \begin{equation}\label{left continuity limit lambda_p}
    \lim_{\delta \to \delta_0^\limminus} \lambda_p(A_{\delta,1})  \geq \lambda_p(A_{\delta_0,1})\,.
  \end{equation}
  To this end, we choose an increasing sequence $\{\delta_k\}_{k\geq 1}$ of positive numbers tending to $\delta_0$. Passing to a subsequence of $\{\delta_k\}_{k\geq 1}$ if necessary, we may also assume that $r_{\delta_k}$ converges. As $r_{\delta_k}\ge \delta_k$ for all $k$,  
  \begin{equation}\label{first r_delta_k limit}
    \lim_{k\to \infty} r_{\delta_k} = \delta_0
  \end{equation}
  or
  \begin{equation}\label{second r_delta_k limit}
    \lim_{k\to \infty} r_{\delta_k} > \delta_0\,.
  \end{equation}
  We will consider both cases separately below. 

  \smallskip {\noindent \it Case 1}. We claim that~\eqref{first r_delta_k limit} cannot occur. If~\eqref{first r_delta_k limit} holds, we will show that
    \begin{equation}\label{eq: limit at jump}
     \lim_{k\to \infty}\lambda_p(A_{\delta_k,1}) = \lambda_p(\R^n_\limplus)\,.
    \end{equation}
  As $\delta \mapsto \lambda_p(A_{\delta,1})$ is increasing,~\eqref{first r_delta_k limit} would imply that $\lambda_p(A_{\delta,1}) >\lambda_p(\R^n_\limplus)$ for $\delta >\delta_0$ but this contradicts Theorem~\ref{thm: universal sharp bounds}. As a result, we can focus on proving that~\eqref{first r_delta_k limit} implies~\eqref{eq: limit at jump}.

  Assume that~\eqref{first r_delta_k limit} holds. Consider the rescaled sequence of extremals defined by
  \begin{equation*}
    w_k(y) = w_{r_{\delta_{k}}e_n}^{A_{\delta_{k},1}}\bigl((r_{\delta_k}-\delta_k)y+\delta_ke_n\bigr)\,.
  \end{equation*}
  Notice that $w_k = w_{e_n}^{\Omega_k}$, where $\Omega_k$ is the annulus
  \begin{equation*}
    \Omega_k = \biggl\{y\in \R^n: \frac{\delta_k}{r_{\delta_k}-\delta_k}<\biggl| y+\frac{\delta_ke_n}{r_{\delta_k}-\delta_k}\biggr|<\frac{1}{r_{\delta_k}-\delta_k}\biggr\}\,.
  \end{equation*}
  As
    $$
      \lambda_p(A_{\delta_k,1})=(\delta_k-r_{\delta_k})^{p-n}\|Dw_{r_{\delta_{k}}e_n}^{A_{\delta_{k},1}}\|_p^p=\|D w_k\|_p^p\,,
    $$
  $\{w_k\}_{k\geq 1}\subset \DD^{1,p}_0(\R^n \setminus \{0\})$ is bounded. By passing to a subsequence if needed, we may assume $w_k \rightharpoonup w_0$ in $\DD^{1,p}_0(\R^n \setminus \{0\})$ and $w_k \to w_0$ uniformly in $B_2$. It follows that 
  $$
    \lim_{k \to \infty} \lambda_p(A_{\delta_k,1})=\liminf_{k \to \infty} \|Dw_k\|_p^p \ge  \|Dw_0\|_p^p. 
  $$

 \par Recall that  $B_{s_k}(-s_ke_n)$ is an exhaustion of $\{y\in \R^n: y_n<0\}$ for any increasing sequence $s_k\rightarrow\infty$. 
 Since
 $$
 B_{\frac{\delta_k}{r_{\delta_k}-\delta_k}}\Bigl(-\frac{\delta_ke_n}{r_{\delta_k}-\delta_k}\Bigr)
 \subset \Omega_k^c
 $$
and $\delta_k/(r_{\delta_k}-\delta_k) \to \infty$,  $B_s(-se_n)\subset \Omega_k^c$ for any $s>0$ provided $k$ is large enough. It follows that $w_0$ vanishes in $B_s(-se_n)$ for every $s>0$. This implies $w_0(y)=0$ whenever $y_n\le 0$. Therefore, $w_0 \in \DD_0^{1,p}(\R^n_\limplus)$. By uniform convergence in $B_2$, we also have $w_0(e_n)=1$. Thus,
   \begin{equation*}
     \lim_{k\to \infty}\lambda_p(A_{\delta_k,1})  \geq \|Dw_0\|_p^p \geq \RR_p(\R^n_\limplus,w_0)\ge \lambda_p(\R^n_\limplus)\,.
   \end{equation*}
By Theorem~\ref{thm: universal sharp bounds}, $\lambda_p(A_{\delta_k,1})\leq \lambda_p(\R^n_\limplus)$ for each $k$ and thus we conclude that~\eqref{first r_delta_k limit} implies~\eqref{eq: limit at jump}.

   \smallskip {\it Case 2.} Now we assume that~\eqref{second r_delta_k limit} holds. By passing to a subsequence, we may assume that the sequence $w_k =w_{r_{\delta_{k}}e_n}^{A_{\delta_{k},1}}$ converges weakly to some $w_0 \in \DD^{1,p}_0(A_{\delta_0,1})$ and uniformly to $w_0$ on compact sets. In particular, $w_0(r_0e_n)=1$. We deduce that
  \begin{align*}
  \lim_{k\to \infty}\lambda_p(A_{\delta_k,1}) 
    &= \lim_{k\to \infty} (\delta_k-r_{\delta_k})^{p-n}\|Dw_k\|_p^p\\
    & \geq (\delta_0-r_0)^{p-n}\|Dw_0\|_p^p \\
    &\ge \RR_p(A_{\delta_0,1}, w_0)\\
    &\geq \lambda_p(A_{\delta_0,1})\,.
  \end{align*}
Therefore,~\eqref{left continuity limit lambda_p} holds. This concludes the proof that $\delta \mapsto \lambda_p(A_{\delta,1})$ is continuous.

   \medskip {\it Part 3} (Endpoint limits): By Theorem~\ref{thm: universal sharp bounds}, $\lambda_p(A_{\delta,1}) \ge \lambda_p(\R^n\setminus\{0\})$ for any $\delta\in (0,1)$. Lemma~\ref{lem: interior exhaustion} yields that  
   $$
   \lim_{\delta\rightarrow 0^\limplus}\lambda_p(A_{\delta,1}) \le  \lambda_p(B_1\setminus\{0\})\,.
   $$
   Therefore, in view of Theorem~\ref{thm: convex, punctured, exterior domains},
  \begin{equation*}
    \lim_{\delta \to 0^\limplus}\lambda_p(A_{\delta,1}) = \lambda_p(B_1\setminus \{0\}) = \lambda_p(\R^n\setminus\{0\})\,. 
  \end{equation*}
   
 \par As for the limit  
   $$
   \lim_{\delta\rightarrow 1^\limminus}\lambda_p(A_{\delta,1}) =  \lambda_p(\R^n_\limplus)\,,
   $$
   we can adapt our argument in Case 1 of Part 2 by choosing a sequence of positive numbers $\delta_k$ tending to 1 from below. In this case, we also have $r_{\delta_k}\in (\delta_k, 1)$ for each $k$. Therefore, it must be that $r_{\delta_k}\to 1$. As a result, the argument used in Case 1 of Part 2 translates directly to establish the above limit. 
\end{proof}


\section{Examples}
\label{sec: Examples and open problems}

In Corollary~\ref{cor: potentials concentrating at bdry}, we computed $\Lambda_p(\Omega)=\lambda_p(\R^n_\limplus)$ under the assumption that all boundary points of $\Omega$ are regular. The key to this result was that we were able to characterize all limiting geometries as halfspaces. The existence of well-defined limits may hold under weaker regularity assumptions on $\partial \Omega$. However, it is in general difficult both to characterize all possible limits and even more so to compute the value of $\lambda_p$ at these limits. In this section, we shall illustrate how the analysis can be carried for certain classes of domains.

\subsection{Polygonal domains}
We will now proceed to study the best constant $\lambda_p$ in polygonal domains. Specifically, we say that a domain $\Omega\subset \R^2$ is {\it polygonal} if $\partial\Omega$ is nonempty and consists of the union of finitely many line segments or rays $\{\Gamma_j\}_{j=1}^K$ only intersecting at their endpoints. We'll also require that each endpoint belongs to exactly two of the $\Gamma_j$'s. We will denote by $\mathcal{S}_\Omega$  the collection of corners of a polygonal domain $\Omega$, which are defined as the set of endpoints of the $\Gamma_j$. To each $y \in \mathcal{S}_\Omega$ we can associate the corresponding interior angle $\varphi_y \in (0, 2\pi)$. We emphasize that we do not exclude $\varphi_y = \pi$ even though such an angle could be removed by merging the two line segments that meet at $y$. 

The following claim is a corollary of Lemma~\ref{lem: blow-up upper bound}. It basically asserts that the family of cones $\Cee^2_\varphi$ for $\varphi\in (0,\pi)$ constitutes the natural class of local model sets for polygonal domains.

\begin{lem}\label{lem: Polygonal domains}
If $\Omega \subsetneq \R^2$ is polygonal with $\mathcal{S}_\Omega \neq \emptyset$, then
  \begin{equation*}
    \Lambda_p(\Omega) \leq  \min_{y \in \mathcal{S}_\Omega} \lambda_p(\Cee^2_{\varphi_y/2})= \lambda_p(\Cee^2_{\max\limits_{\scalebox{0.5}{$y\! \in\! \mathcal{S}_\Omega$}}\varphi_y/2})\,.
  \end{equation*}
  Furthermore, if $\Omega$ is bounded equality holds.
\end{lem}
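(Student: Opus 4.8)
The plan is to prove the upper bound $\Lambda_p(\Omega)\le \min_{y\in\mathcal S_\Omega}\lambda_p(\Cee^2_{\varphi_y/2})$ by applying Lemma~\ref{lem: blow-up upper bound} with $\mathcal C=\Cee^2_{\varphi_y/2}$ obtained as the blow-up of $\Omega$ at a corner $y$, then to establish the middle equality via the monotonicity of $\varphi\mapsto\lambda_p(\Cee^2_\varphi)$, and finally to prove the reverse inequality in the bounded case using Corollary~\ref{cor: boundaryblowup} and Lemma~\ref{lem: potentials concentrating at regular boundary point}.

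First I would fix a corner $y\in\mathcal S_\Omega$ with interior angle $\varphi_y\in(0,2\pi)$. After a translation and rotation we may assume $y=0$ and that, in a neighborhood $B_r$, $\Omega$ coincides with the infinite sector of opening $\varphi_y$ bisected by a fixed axis; that is, $\Omega\cap B_r=\mathcal S_{\varphi_y}\cap B_r$, where $\mathcal S_{\varphi_y}$ is this sector. A sector is a cone, so the blow-ups $t\Omega$ stabilize: for $t>r^{-1}$ one has $t\Omega\cap B_1=\mathcal S_{\varphi_y}\cap B_1$ exactly, so with $\mathcal C=\mathcal S_{\varphi_y}$ both hypotheses of Lemma~\ref{lem: blow-up upper bound} hold trivially (take $y(\delta)=0$, $t(\delta)=2/r$ say; the distance functions agree on $B_{1/2}$ for the reasons spelled out in the proof of Corollary~\ref{cor: boundaryblowup}, since $0\notin t\Omega$). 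Lemma~\ref{lem: blow-up upper bound} then gives $\Lambda_p(\Omega)\le\lambda_p(\mathcal S_{\varphi_y})$. Now one identifies $\mathcal S_{\varphi_y}$ with a rotated copy of $\Cee^2_{\varphi_y/2}$: indeed $\Cee^2_\varphi=\{(x_1,x_2):x_2>\cot(\varphi)|x_1|\}$ is, by definition, the sector around the positive $x_2$-axis whose half-opening relative to that axis is $\varphi$, hence whose \emph{full} opening is $2\varphi$. So a sector of full opening $\varphi_y$ is $\Cee^2_{\varphi_y/2}$ up to rotation, and by rotation-invariance of $\lambda_p$ (Lemma~\ref{ScalingInvarianceLem}) we get $\Lambda_p(\Omega)\le\lambda_p(\Cee^2_{\varphi_y/2})$. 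Taking the minimum over $y\in\mathcal S_\Omega$ yields the first inequality.

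For the middle equality, note that Example~\ref{ComplementCircCone} and Lemma~\ref{lem: CeePhi monotonicity} show $\varphi\mapsto\lambda_p(\Cee^2_\varphi)$ is (strictly) decreasing on $[\pi/2,\pi]$; for $\varphi<\pi/2$ all the cones $\Cee^2_\varphi$ are convex and not halfspaces, so $\lambda_p(\Cee^2_\varphi)=\lambda_p(\R^2_\limplus)$ by Theorem~\ref{thm: convex, punctured, exterior domains}(1), which is the maximal possible value by Theorem~\ref{thm: universal sharp bounds}. Hence $\varphi\mapsto\lambda_p(\Cee^2_\varphi)$ is non-increasing on all of $(0,\pi]$, so $\min_{y}\lambda_p(\Cee^2_{\varphi_y/2})$ is attained at the corner with the largest angle $\varphi_y$ (here one uses $\varphi_y/2\in(0,\pi)$, which holds since $\varphi_y\in(0,2\pi)$); this gives $\min_{y\in\mathcal S_\Omega}\lambda_p(\Cee^2_{\varphi_y/2})=\lambda_p(\Cee^2_{\max_y\varphi_y/2})$.

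Finally, for bounded $\Omega$ I would prove $\Lambda_p(\Omega)\ge\lambda_p(\Cee^2_{\varphi_*/2})$ with $\varphi_*=\max_{y}\varphi_y$. Pick $\{x_k\}\in\mathcal Y_\Omega$ realizing $\Lambda_p(\Omega)=\lim_k d_\Omega(x_k)^{p-n}\|Dw_{x_k}^\Omega\|_p^p$; since $\overline\Omega$ is compact and $\{x_k\}$ leaves every compact subset of $\Omega$, after passing to a subsequence $x_k\to x_0\in\partial\Omega$. If $x_0$ lies on the relative interior of one of the segments $\Gamma_j$, then near $x_0$ the boundary is a line, so $\partial\Omega$ is $C^1$ there and Lemma~\ref{lem: potentials concentrating at regular boundary point} gives $\liminf_k d_\Omega(x_k)^{p-n}\|Dw_{x_k}^\Omega\|_p^p\ge\lambda_p(\R^2_\limplus)=\lambda_p(\Cee^2_{\varphi/2})$ for $\varphi=\pi$, which is $\ge\lambda_p(\Cee^2_{\varphi_*/2})$ by the monotonicity just established (equivalently, $\partial\Omega$ near $x_0$ is a graph whose blow-up is a halfspace, so Corollary~\ref{cor: boundaryblowup} combined with the blow-up lower bound applies). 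If instead $x_0=y\in\mathcal S_\Omega$ is a corner, then locally $\Omega$ is the sector $\mathcal S_{\varphi_y}=\Cee^2_{\varphi_y/2}$ (up to rotation), which is a Lipschitz graph near $x_0$; running the blow-up argument of Lemma~\ref{lem: potentials concentrating at regular boundary point} — whose conclusion, as the remark after it notes, works for any Lipschitz-graph boundary — the rescaled potentials converge to a nonnegative $p$-superharmonic $v$ with $\{v>0\}$ a cone of opening $\varphi_y$, i.e.\ $\{v>0\}=\Cee^2_{\varphi_y/2}$ up to rotation, so Proposition~\ref{prop: blow-up lower bound on Lambda} gives $\liminf_k d_\Omega(x_k)^{p-n}\|Dw_{x_k}^\Omega\|_p^p\ge\lambda_p(\Cee^2_{\varphi_y/2})\ge\lambda_p(\Cee^2_{\varphi_*/2})$. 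In either case $\Lambda_p(\Omega)\ge\lambda_p(\Cee^2_{\varphi_*/2})$, which together with the upper bound gives equality. The main obstacle is the corner case of the lower bound: one must verify that the blow-up of a sequence of potentials concentrating at a corner $y$ genuinely produces a function positive exactly on the model sector $\Cee^2_{\varphi_y/2}$ — this requires checking that the local-coordinate rescalings of the Lipschitz function describing $\partial\Omega$ near $y$ converge to the function describing the sector (which they do, since that function is already positively homogeneous), and then invoking the strong minimum principle exactly as in the proof of Lemma~\ref{lem: potentials concentrating at regular boundary point}.
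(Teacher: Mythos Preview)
Your approach follows the paper's: upper bound via Lemma~\ref{lem: blow-up upper bound} at each corner, monotonicity for the middle equality, and a blow-up analysis of a minimizing sequence for the lower bound in the bounded case. The first two parts are fine, but there is a genuine gap in your treatment of the corner case of the lower bound.

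You assert that near a corner the sector $\Cee^2_{\varphi_y/2}$ ``is a Lipschitz graph near $x_0$'' and then invoke the remark after Lemma~\ref{lem: potentials concentrating at regular boundary point}. This fails precisely in the interesting case $\varphi_y>\pi$ (a reentrant corner): a sector with opening greater than $\pi$ cannot be written as the region above the graph of any function in any rotated coordinate system, so neither that lemma nor its Lipschitz remark applies. Moreover, your claim that the blow-up set $\{v>0\}$ is always (a rotation of) $\Cee^2_{\varphi_y/2}$ is not correct. Recall from Proposition~\ref{prop: blow-up lower bound on Lambda} that the rescaling is centred at the nearest boundary point $y_k$, not at $x_0$. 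If $x_k\to x_0$ \emph{tangentially} (along one of the edges), then $|y_k-x_0|/d_\Omega(x_k)\to\infty$, the rescaled corner escapes to infinity, and $\{v>0\}$ is a halfspace; if the approach is \emph{non-tangential}, this ratio stays bounded and $\{v>0\}$ is a translated and rotated copy of the sector. The paper's proof makes exactly this distinction: in the tangential case one gets $\Lambda_p(\Omega)\ge\lambda_p(\R^2_\limplus)$, and in the non-tangential case $\Lambda_p(\Omega)\ge\lambda_p(\Cee^2_{\varphi_{x_0}/2})$; both dominate $\lambda_p(\Cee^2_{\varphi_*/2})$ by the monotonicity you already used. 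No graph representation is needed---one works directly with the fact that $\Omega$ coincides with a fixed sector in a neighbourhood of $x_0$ and tracks where the vertex lands under the rescalings.
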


\begin{proof}
Each $y \in \mathcal{S}_\Omega$ is isolated, so there exists exists an $r>0$ small enough so that up to translation and rotation $B_r(y) \cap \Omega$ agrees with $B_r\cap \Cee^2_{\varphi_y/2}$. Therefore the inequality follows by applying Lemma~\ref{lem: blow-up upper bound} at each of the corners and recalling that $\varphi \mapsto \lambda_p(\Cee^2_\varphi)$ is non-increasing by Lemma~\ref{lem: CeePhi monotonicity}.

To deduce that we have equality in the case when $\Omega$ is bounded we argue as in Corollary~\ref{cor: potentials concentrating at bdry}. Take a sequence $\{x_k\}_{k \geq 1} \in \mathcal{Y}_\Omega$ such that
  \begin{equation*}
    \lim_{k\to \infty}d_\Omega(x_k)^{p-n}\|Dw_{x_k}^\Omega\|_p^p = \Lambda_p(\Omega)\,.
  \end{equation*}
Since $\{x_k\}_{k\geq 1}\in \mathcal{Y}_\Omega$ and $\overline{\Omega}$ is compact, $\{x_k\}_{k\geq 1}$ has a convergent subsequence whose limit $x_0$ belongs to $\partial\Omega$. If $x_0 \notin \mathcal{S}_\Omega$, then $\partial\Omega$ is $C^1$-regular in a neighborhood of $x_0$ so $\Lambda_p(\Omega)\geq \lambda_p(\R^n_\limplus)$ by Lemma~\ref{lem: potentials concentrating at regular boundary point}. Alternatively, if $x_0 \in \mathcal{S}_\Omega$, then following the argument in Lemma~\ref{lem: potentials concentrating at regular boundary point} one proves that $\Lambda_p(\Omega) \ge \lambda_p(\Cee^2_{\varphi_{x_0}/2})$ if the approach of $\{x_k\}_{k\geq 1}$ to $x_0$ is non-tangential while $\Lambda_p(\Omega) \ge \lambda_p(\R^n_\limplus)$ if the approach is tangential. Since we established above that $\Lambda_p(\Omega) \leq \min_{y \in \mathcal{S}_\Omega} \lambda_p(\Cee^2_{\varphi_y/2})$, it follows from Lemma~\ref{lem: CeePhi monotonicity} and Theorem~\ref{thm: universal sharp bounds} that either 
$$
  \Lambda_p(\Omega)=\Lambda_p(\Cee^2_{\varphi_{x_0}/2})=\min_{y \in \mathcal{S}_\Omega}\lambda_p(\Cee^2_{\varphi_y/2}) \quad
  \mbox{or} 
  \quad \Lambda_p(\Omega)= \lambda_p(\R^n_\limplus) = \min_{y \in \mathcal{S}_\Omega}\lambda_p(\Cee^2_{\varphi_y/2})\,.
$$
We note that the latter case can only happen if $\max_{y \in \mathcal{S}_\Omega}\varphi_y \leq \pi$. In either case, we have proved the desired equality. 
\end{proof}

Recall that Theorem~\ref{thm: convex, punctured, exterior domains} asserts that convex domains in $\R^n$ which are not halfspaces do not admit extremals. By applying Proposition~\ref{prop: Supporting sets} there is a similar criterion for a class of polygonal domains. We say that a polygonal domain $\Omega$ is {\it fully supported} if $\Omega$ is fully supported by $\Cee^2_{\varphi^*/2}$ with $\varphi^* = \max_{y\in \mathcal{S}_\Omega}\varphi_y$. In the case $\varphi^* \leq \pi$,  $\Omega$ is a convex polygon, while for $\varphi^*>\pi$, $\Omega$ satisfies a uniform (infinite) outer cone condition with opening angle $2\pi-\varphi^*$.

 \begin{figure}
\centering
\begin{subfigure}{.45\textwidth}
  \centering
  \begin{tikzpicture}[scale=0.7]

\clip (-4.1,-4.6) rectangle (4.1,4.6);

\draw [very thick] (0,1/2)--(4,4+1/2)--(4,-4-1/2)--(0,-1/2)--(-4,-4-1/2)--(-4,4+1/2)--(0,1/2);

\draw [thick, gray, domain=135:405, samples=100, <->] plot ({3*cos(\x)/4},{1/2+3*sin(\x)/4});

\node at (1.1, 0) {\scalebox{0.8}{$\displaystyle\frac{3\pi}{2}$}};

\node at (-2, 0) {{$\mathcal{P}_1$}};

\end{tikzpicture}
\end{subfigure}
\begin{subfigure}{.45\textwidth}
  \centering
  \begin{tikzpicture}[scale=0.7]

\clip (-4.9,-4.3) rectangle (4.9,5);

\def \L {5};
\def \l {1.90983};

\draw [very thick] ({\L*cos(90)},{\L*sin(90)})--({\l*cos(90+36)},{\l*sin(90+36)})--({\L*cos(90+72)},{\L*sin(90+72)})--({\l*cos(90+36+72)},{\l*sin(90+36+72)})--({\L*cos(90+2*72)},{\L*sin(90+2*72)})--({\l*cos(90+36+2*72)},{\l*sin(90+36+2*72)})--({\L*cos(90+3*72)},{\L*sin(90+3*72)})--({\l*cos(90+36+3*72)},{\l*sin(90+36+3*72)})--({\L*cos(90+4*72)},{\L*sin(90+4*72)})--({\l*cos(90+36+4*72)},{\l*sin(90+36+4*72)})--({\L*cos(90)},{\L*sin(90)});

\draw [thick, gray, domain=108:360, samples=100, <->] plot ({\l*cos(90+36+4*72)+3*cos(\x)/4},{\l*sin(90+36+4*72)+3*sin(\x)/4});

\node at (2.2, 0.8) {{\scalebox{0.8}{$\displaystyle\frac{7\pi}{5}$}}};

\node at (-2, 0.7) {{$\mathcal{P}_2$}};

\end{tikzpicture}
\end{subfigure}
\medskip
\begin{subfigure}{.45\textwidth}
  \centering
  \begin{tikzpicture}[scale=0.7]

\clip (-4.1,-4.1) rectangle (4.1,5);

\def \L {5};
\def \l {1.90983};

\draw [very thick] (-4,3.5)--(-4,-2)--(-2,-4)--(2,-4)--(4,-2)--(4,3.5)--(3.5,4)--(1,4)--(0.5,3.5)--(0.5,2)--(1.5,2)--(1.5,-1)--(1,-1.5)--(-1,-1.5)--(-1.5,-1)--(-1.5,2)--(-0.5,2)--(-0.5,3.5)--(-1,4)--(-3.5,4)--(-4,3.5);

\draw [thick, gray, domain=-90:180, samples=100, <->] plot ({1.5+3*cos(\x)/4},{2+3*sin(\x)/4});

\node at (2.65, 2) {{\scalebox{0.8}{$\displaystyle\frac{3\pi}{2}$}}};

\node at (-2.7, 0.5) {{$\mathcal{P}_3$}};

\end{tikzpicture}
\end{subfigure}
\begin{subfigure}{.45\textwidth}
  \centering
  \begin{tikzpicture}[scale=0.7]

\clip (-4.1,-4.1) rectangle (4.1,5);

\def \L {5};
\def \l {1.90983};
\def \t {1};

\draw [very thick] ({-\l*cos(90+36+4*72)+2},{\l*sin(90+36+4*72)-\t})--({\l*cos(90+36+4*72)-2},{\l*sin(90+36+4*72)-\t})--({\L*cos(90)-2},{\L*sin(90)-\t})--({\L*cos(90)-4},{\L*sin(90)-\t})--({\L*cos(90)-4},{\L*sin(90+3*72)})--({-\L*cos(90)+4},{\L*sin(90+3*72)})--({-\L*cos(90)+4},{\L*sin(90)-\t})--({-\L*cos(90)+2},{\L*sin(90)-\t})--({-\l*cos(90+36+4*72)+2},{\l*sin(90+36+4*72)-\t});

\draw [thick, gray, domain=108:360, samples=100, <->] plot ({\l*cos(90+36+4*72)-2+3*cos(\x)/4},{\l*sin(90+36+4*72)+3*sin(\x)/4-\t});

\node at (0.2, -0.2) {{\scalebox{0.8}{$\displaystyle\frac{7\pi}{5}$}}};

\node at (-2.5, 0) {{$\mathcal{P}_4$}};

\end{tikzpicture}
\end{subfigure}
\caption{Four polygonal domains. For $\mathcal{P}_1, \mathcal{P}_3$ the largest interior angle is $\frac{3\pi}{2}$ and for $\mathcal{P}_2, \mathcal{P}_4$ the largest interior angle is $\frac{7\pi}{5}$. The polygons $\mathcal{P}_1, \mathcal{P}_2$ are fully supported while $\mathcal{P}_3, \mathcal{P}_4$ are not. Therefore, by Lemmas~\ref{lem: Polygonal domains} and~\ref{FullySupportedPolyLem}, we know that $\lambda_p(\mathcal{P}_1) = \lambda_p(\Cee^2_{3\pi/4})$, $\lambda_p(\mathcal{P}_2) = \lambda_p(\Cee^2_{7\pi/10})$, $\lambda_p(\mathcal{P}_3) \leq \lambda_p(\Cee^2_{3\pi/4})$, $\lambda_p(\mathcal{P}_4) \leq \lambda_p(\Cee^2_{7\pi/10})$.}
\label{fig:Polygons}
\end{figure}
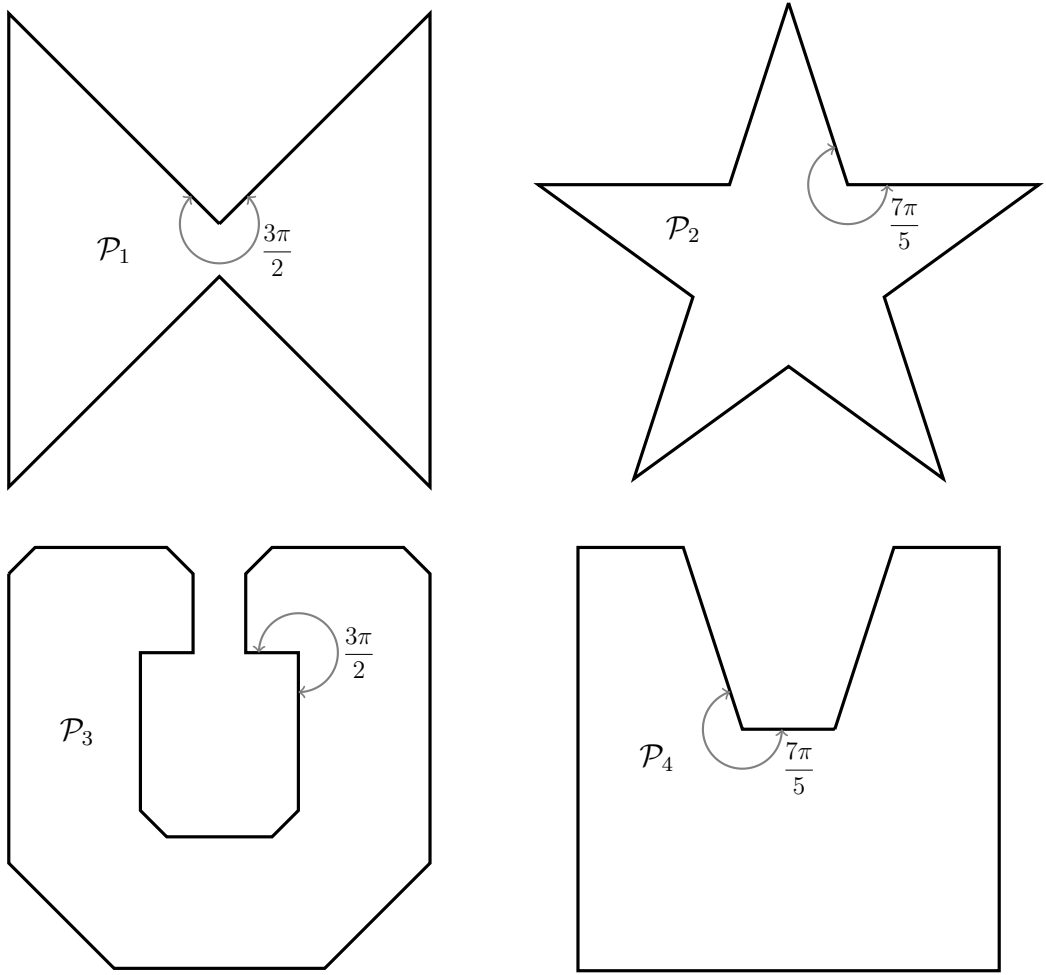

\begin{lem}\label{FullySupportedPolyLem}
Suppose $\Omega\subset \R^2$ is a fully supported polygonal domain with $\varphi^* = \max_{y\in\mathcal{S}_\Omega}\varphi_y$. Then 
$$
\lambda_p(\Omega)=\lambda_p(\Cee^2_{\varphi^*/2}).
$$
Moreover, $\Omega$ does not admit an extremal unless $\Omega=x+Q \Cee^2_{\varphi^*/2}$ for some $x\in \R^2$ and $Q\in O(2)$. 
\end{lem}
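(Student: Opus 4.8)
## Proof proposal for Lemma~\ref{FullySupportedPolyLem}

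\textbf{Overall strategy.} The plan is to follow the same two-part structure used in Theorem~\ref{thm: convex, punctured, exterior domains}(1): first pin down the value of $\lambda_p(\Omega)$ by squeezing it between a lower bound (from full support) and an upper bound (from the corner blow-up analysis), and then rule out extremals by transplanting a hypothetical extremal into the supporting cone $\Cee^2_{\varphi^*/2}$ and invoking the strong minimum principle (Corollary~\ref{ExtSignCor}).

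\textbf{Step 1: the value of $\lambda_p(\Omega)$.} Write $\mathcal{C}^* = \Cee^2_{\varphi^*/2}$. Since $\Omega$ is fully supported by $\mathcal{C}^*$, Proposition~\ref{prop: Supporting sets} gives the lower bound $\lambda_p(\Omega)\ge \lambda_p(\mathcal{C}^*)$. For the upper bound I would distinguish two cases according to whether $\Omega$ is bounded. If $\Omega$ is bounded, Lemma~\ref{lem: Polygonal domains} gives directly $\lambda_p(\Omega)\le \Lambda_p(\Omega) = \min_{y\in\mathcal{S}_\Omega}\lambda_p(\Cee^2_{\varphi_y/2}) = \lambda_p(\mathcal{C}^*)$, using that $\varphi\mapsto\lambda_p(\Cee^2_\varphi)$ is non-increasing (Lemma~\ref{lem: CeePhi monotonicity}) together with $\lambda_p(\Omega)\le\Lambda_p(\Omega)$. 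If $\Omega$ is unbounded, only the first (inequality) half of Lemma~\ref{lem: Polygonal domains} is available, but that already yields $\lambda_p(\Omega)\le\Lambda_p(\Omega)\le\lambda_p(\Cee^2_{\varphi_{y}/2})=\lambda_p(\mathcal{C}^*)$ by choosing a corner $y$ realizing $\varphi^*$ and applying Lemma~\ref{lem: blow-up upper bound} there (a point of maximal interior angle always exists since $\mathcal{S}_\Omega$ is finite and nonempty). Combining the two bounds gives $\lambda_p(\Omega)=\lambda_p(\mathcal{C}^*)$.

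\textbf{Step 2: nonexistence of extremals unless $\Omega$ is (a rotate/translate of) $\mathcal{C}^*$.} Suppose $u\in\DD^{1,p}_0(\Omega)\setminus\{0\}$ is an extremal; by Corollary~\ref{ExtSignCor} we may assume $u>0$ on the connected component of $\Omega$ containing its singular point. Pick $x_0\in\Omega$ with $\|u/d_\Omega^{1-n/p}\|_\infty = u(x_0)/d_\Omega(x_0)^{1-n/p}$ and $y_0\in\partial\Omega$ with $d_\Omega(x_0)=|x_0-y_0|$. Because $\Omega$ is fully supported by $\mathcal{C}^*$, there is a similarity transform $T$ with $\Omega\subseteq T\mathcal{C}^*$ and $y_0\in\partial(T\mathcal{C}^*)$; then $d_{T\mathcal{C}^*}(x_0)=|x_0-y_0|=d_\Omega(x_0)$, so Proposition~\ref{prop: Supporting sets} gives $\RR_p(T\mathcal{C}^*,u)=\RR_p(\Omega,u)=\lambda_p(\Omega)=\lambda_p(\mathcal{C}^*)=\lambda_p(T\mathcal{C}^*)$, i.e.\ $u$ is an extremal for $T\mathcal{C}^*$ as well. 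If $\Omega\subsetneq T\mathcal{C}^*$, then $u$ vanishes on the nonempty open set $(T\mathcal{C}^*)\setminus\overline{\Omega}$, while $T\mathcal{C}^*$ is connected (it is a similar copy of a circular sector or its supplement, which is connected for $\varphi^*\in(0,2\pi)$); this contradicts Corollary~\ref{ExtSignCor}. Hence $\Omega = T\mathcal{C}^*$, and since $\lambda_p$ and hence the defining geometry is dilation-invariant, $\Omega = x + Q\mathcal{C}^*$ for some $x\in\R^2$, $Q\in O(2)$. Conversely, when $\Omega = x+Q\mathcal{C}^*$ with $\varphi^*\ge\pi$ the complement is a convex cone and Theorem~\ref{prop: complement of convex cone} furnishes an extremal; for $\varphi^*<\pi$ (so $\Omega$ is a convex sector that is not a halfspace) Theorem~\ref{thm: convex, punctured, exterior domains}(1) says there is no extremal, so one should state the nonexistence conclusion exactly as ``$\Omega$ does not admit an extremal unless $\Omega = x+Q\Cee^2_{\varphi^*/2}$,'' without claiming the converse.

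\textbf{Main obstacle.} The only delicate point is the upper bound $\lambda_p(\Omega)\le\lambda_p(\mathcal{C}^*)$ when $\Omega$ is \emph{unbounded}: there the equality in Lemma~\ref{lem: Polygonal domains} is not asserted, so one must go back to Lemma~\ref{lem: blow-up upper bound} and verify its two hypotheses directly at a corner of maximal angle $\varphi^*$ — namely that, after a suitable dilation and translation, $\{x\in\mathcal{C}^*: d_{\mathcal{C}^*}(x)>\delta\}\cap B_1$ is contained in the rescaled $\Omega$ and that the distance functions converge on $\mathcal{C}^*\cap B_{1/2}$. This is routine because near an isolated corner $\Omega$ agrees exactly with a rotated copy of $\Cee^2_{\varphi^*/2}$ in a small ball (so no genuine blow-up limit is needed, the inclusion is an equality on $B_r$), but it is the one place where the unbounded case needs a separate argument rather than a black-box citation.
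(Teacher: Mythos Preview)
Your proof is correct and follows essentially the same approach as the paper: Proposition~\ref{prop: Supporting sets} for the lower bound, Lemma~\ref{lem: Polygonal domains} for the upper bound, and the transplantation argument via Corollary~\ref{ExtSignCor} (exactly as in Theorem~\ref{thm: convex, punctured, exterior domains}) for nonexistence of extremals. Your ``main obstacle'' is not actually an obstacle, since the inequality $\Lambda_p(\Omega)\le \lambda_p(\Cee^2_{\varphi^*/2})$ in Lemma~\ref{lem: Polygonal domains} is stated for \emph{all} polygonal domains (bounded or not), so no separate treatment of the unbounded case is needed.
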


\begin{proof}
  That $\lambda_p(\Omega) = \lambda_p(\Cee^2_{\varphi^*/2})$ is a direct consequence of Proposition~\ref{prop: Supporting sets} and Lemma~\ref{lem: Polygonal domains}. To see that $\Omega$ does not admit an extremal, we can argue that the existence of an extremal would contradict Corollary~\ref{ExtSignCor} (as in the proof of Theorem~\ref{thm: convex, punctured, exterior domains}).
\end{proof}

Lemmas~\ref{lem: Polygonal domains} and~\ref{FullySupportedPolyLem} can be directly extended to planar domains whose boundary is $C^1$ outside of a finite set of corners at which the boundary is given by two simple $C^1$ curves that meet at a common endpoint. In this setting it is natural to allow for corners with interior angles equal to $0$ or $2\pi$ to allow for boundaries with cusps. Lemmas~\ref{lem: Polygonal domains} and~\ref{FullySupportedPolyLem} can also be generalized to polytopes in higher dimension by following the argument given above almost verbatim. However, the statements obtained become more complicated as the class of relevant model sets is much larger and our understanding of $\lambda_p$ for these sets is limited. Nevertheless, in the next subsection, we will provide a family of bounded, non-smooth, and non-convex domains in arbitrary dimension where we can determine the value of $\lambda_p$.

\subsection{Examples of non-smooth domains in higher dimensions}
Using the strategy discussed in the previous subsection, we can construct examples of bounded simply connected domains $\Omega$ whose boundary is regular except at a single point and $\lambda_p(\Omega) = \lambda_p(\Cee^n_\varphi)$ for every $n\geq 2$ and $\varphi \in (0, \pi]$.
Specifically one can construct such domains by starting from $B_1\cap \Cee^n_\varphi$ and regularizing the boundary in a small neighborhood of $\partial B_1\cap \partial\Cee^n_\varphi$ in such a manner that the resulting set remains fully supported by $\Cee^n_\varphi$. Two such domains are depicted in Figure~\ref{fig:Smooth Pacman}. The resulting set $\Omega$ satisfies that $\lambda_p(\Omega) = \lambda_p(\Cee^n_\varphi)$ by combining Proposition~\ref{prop: Supporting sets} and Lemma~\ref{lem: blow-up upper bound} with a blow-up around the singular boundary point.

\begin{figure}[h!]
\centering
\begin{subfigure}{.49\textwidth}
  \centering
  \begin{tikzpicture}[scale=0.7]

\clip (-5,-5) rectangle (5,5);

\def \angleInner {60};
\def \angleOuter {45};
\def \L {5};

\draw [very thick, domain=-\angleOuter:180+\angleOuter, samples=100] plot ({\L*cos(\x)},{\L*sin(\x)});

\draw [very thick] (0,0)--({\L*cos(-\angleOuter)-(\L-\L/(1+sin(\angleInner-\angleOuter)))*cos(-\angleOuter)+(\L-\L/(1+sin(\angleInner-\angleOuter)))*cos(-90-\angleInner)},{\L*sin(-\angleOuter)-(\L-\L/(1+sin(\angleInner-\angleOuter)))*sin(-\angleOuter)+(\L-\L/(1+sin(\angleInner-\angleOuter)))*sin(-90-\angleInner)});

\draw [very thick, domain=-\angleOuter:-90-\angleInner, samples=100] plot ({\L*cos(-\angleOuter)-(\L-\L/(1+sin(\angleInner-\angleOuter)))*cos(-\angleOuter)+(\L-\L/(1+sin(\angleInner-\angleOuter)))*cos(\x)},{\L*sin(-\angleOuter)-(\L-\L/(1+sin(\angleInner-\angleOuter)))*sin(-\angleOuter)+(\L-\L/(1+sin(\angleInner-\angleOuter)))*sin(\x)});

\draw [very thick] (0,0)--({\L*cos(180+\angleOuter)-(\L-\L/(1+sin(\angleInner-\angleOuter)))*cos(180+\angleOuter)+(\L-\L/(1+sin(\angleInner-\angleOuter)))*cos(270+\angleInner)},{\L*sin(180+\angleOuter)-(\L-\L/(1+sin(\angleInner-\angleOuter)))*sin(180+\angleOuter)+(\L-\L/(1+sin(\angleInner-\angleOuter)))*sin(270+\angleInner)});

\draw [very thick, domain=180+\angleOuter:270+\angleInner, samples=100] plot ({\L*cos(180+\angleOuter)-(\L-\L/(1+sin(\angleInner-\angleOuter)))*cos(180+\angleOuter)+(\L-\L/(1+sin(\angleInner-\angleOuter)))*cos(\x)},{\L*sin(180+\angleOuter)-(\L-\L/(1+sin(\angleInner-\angleOuter)))*sin(180+\angleOuter)+(\L-\L/(1+sin(\angleInner-\angleOuter)))*sin(\x)});

\draw [thick, gray, domain=-\angleInner+2:178+\angleInner, samples=100, <->] plot ({3*cos(\x)/4},{3*sin(\x)/4});

\node at (1, 0.8) {\scalebox{0.8}{$\displaystyle\frac{5\pi}3$}};

\end{tikzpicture}
\end{subfigure}
\begin{subfigure}{.49\textwidth}
  \centering
  \begin{tikzpicture}[scale=0.7]

\clip (-5,-5) rectangle (5,5.5);

\def \angleInner {90};
\def \angleOuter {80};
\def \L {5};

\draw [very thick, domain=-\angleOuter:180+\angleOuter, samples=100] plot ({\L*cos(\x)},{\L*sin(\x)});

\draw [very thick] (0,0)--({\L*cos(-\angleOuter)-(\L-\L/(1+sin(\angleInner-\angleOuter)))*cos(-\angleOuter)+(\L-\L/(1+sin(\angleInner-\angleOuter)))*cos(-90-\angleInner)},{\L*sin(-\angleOuter)-(\L-\L/(1+sin(\angleInner-\angleOuter)))*sin(-\angleOuter)+(\L-\L/(1+sin(\angleInner-\angleOuter)))*sin(-90-\angleInner)});

\draw [very thick, domain=-\angleOuter:-90-\angleInner, samples=100] plot ({\L*cos(-\angleOuter)-(\L-\L/(1+sin(\angleInner-\angleOuter)))*cos(-\angleOuter)+(\L-\L/(1+sin(\angleInner-\angleOuter)))*cos(\x)},{\L*sin(-\angleOuter)-(\L-\L/(1+sin(\angleInner-\angleOuter)))*sin(-\angleOuter)+(\L-\L/(1+sin(\angleInner-\angleOuter)))*sin(\x)});

\draw [very thick] (0,0)--({\L*cos(180+\angleOuter)-(\L-\L/(1+sin(\angleInner-\angleOuter)))*cos(180+\angleOuter)+(\L-\L/(1+sin(\angleInner-\angleOuter)))*cos(270+\angleInner)},{\L*sin(180+\angleOuter)-(\L-\L/(1+sin(\angleInner-\angleOuter)))*sin(180+\angleOuter)+(\L-\L/(1+sin(\angleInner-\angleOuter)))*sin(270+\angleInner)});

\draw [very thick, domain=180+\angleOuter:270+\angleInner, samples=100] plot ({\L*cos(180+\angleOuter)-(\L-\L/(1+sin(\angleInner-\angleOuter)))*cos(180+\angleOuter)+(\L-\L/(1+sin(\angleInner-\angleOuter)))*cos(\x)},{\L*sin(180+\angleOuter)-(\L-\L/(1+sin(\angleInner-\angleOuter)))*sin(180+\angleOuter)+(\L-\L/(1+sin(\angleInner-\angleOuter)))*sin(\x)});

\draw [thick, gray, domain=-\angleInner+2:178+\angleInner, samples=100, <->] plot ({3*cos(\x)/4},{3*sin(\x)/4});

\node at (1, 0.8) {\scalebox{0.8}{$2\pi$}};

\end{tikzpicture}
\end{subfigure}
\caption{Two planar domains constructed as described. The value of $\lambda_p$ for domain on the left is $\lambda_p(\Cee^2_{5\pi/6})$ and for the domain to the right $\lambda_p(\Cee^2_{\pi})$. Examples for $n>2$ can be obtained by rotation around the axis of symmetry.}
\label{fig:Smooth Pacman}
\end{figure}
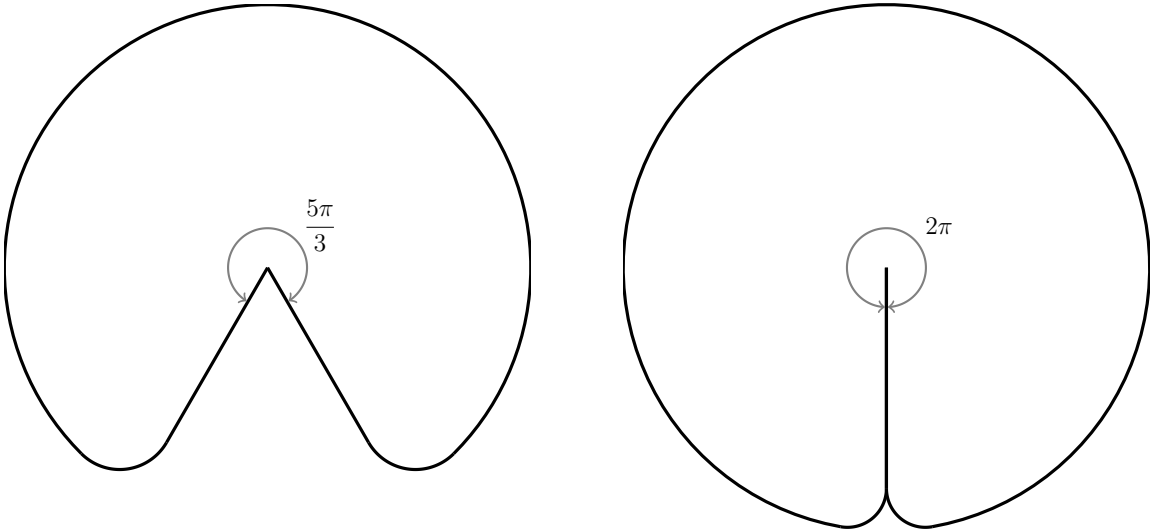

\subsection{Epigraphs}

In much of our analysis we have utilized Lemma~\ref{lem: blow-up upper bound} and Proposition~\ref{prop: blow-up lower bound on Lambda} by looking at the local geometry near some point on the boundary. In this subsection we demonstrate how these results can yield interesting information when instead zooming out. An argument of this form was applied in the proof of case (3) in Theorem~\ref{thm: convex, punctured, exterior domains} but the idea is interesting enough to deserve including a second example. Specifically, we shall consider sets given by the region that lies above the graph of a function.

Let $f \colon \R^{n-1}\to \R$ be a continuous function with $f(0)=0$ and such that along a sequence $\{t_k\}_{k\geq 1}\subset (0, \infty)$ with $\lim_{k\to \infty} t_k =0$ it holds that $F_k\colon \mathbb{R}^{n-1}\to \R$ defined by $z \mapsto t_k f(z/t_k)$ converges locally uniformly to a one-homogeneous function $F\colon \mathbb{R}^{n-1}\to \R$. If we define
\begin{equation*}
  \Omega_f = \{x =(x', x_n)\in \R^n: x_n >f(x')\}
\end{equation*}
and the dilation invariant set
\begin{equation*}
  \Omega_F = \{x \in (x', x_n) \in \R^n: x_n> F(x')\}
\end{equation*}
then it holds that $\Lambda_p(\Omega_f)\leq \lambda_p(\Omega_F)$. Indeed, by the uniform convergence when zooming further and further out the set $\Omega_f$ locally converges to $\Omega_F$ allowing us to apply Lemma~\ref{lem: blow-up upper bound}. Indeed, given $\delta>0$ there is $k$ sufficiently large so that
\begin{align*}
  (\Omega_F + \delta e_n) \cap B_1
  \subset t_k\Omega_f \cap B_1 
  \subset (\Omega_F - \delta e_n) \cap B_1\,.
\end{align*}
Thus by arguing as in the proof of Corollary~\ref{cor: boundaryblowup}, the assumptions of Lemma~\ref{lem: blow-up upper bound} are fulfilled.

A similar argument can be made to work even if the limit of $tf(\theta/t)$ is infinite on some set. However, in this situation one needs to replace the locally uniform convergence in a suitable manner which is in general difficult. But in special cases there are natural ways to do this. For instance if $f \colon \R^n \to \R$ is such that $f(0)=0$ and $f(x) \leq -c|x|^\alpha$ if $|x|\geq R$ for some constants $c>0, \alpha>1,$ and $R>0$, 
then arguing as above one proves
\begin{equation*}
   \Lambda_p(\Omega_f) \leq \lambda_p(\Cee^n_\pi)\,.
 \end{equation*} 
In fact, since $\Omega_f$ (for any $f$) is fully supported by $\Cee^n_\pi$ the prescribed asymptotic behavior implies that $\lambda_p(\Omega_f) = \lambda_p(\Cee^n_\pi)$ by Proposition~\ref{prop: Supporting sets} . Indeed, the picture to keep in mind is that if viewing $\Omega_f$ from farther and farther away the asymptotic behavior of $f$ will lead to the epigraph more and more resembling $\Cee^n_\pi$.

\subsection{Instability under small perturbations}

Here we provide a few examples of domains which are almost identical but in which the value of $\lambda_p$ and the existence/non-existence of extremals are different.

\begin{ex}\label{ex: Removing a conical piece} Let $\Omega \subsetneq \R^n$ be an open set satisfying the assumptions of Theorem~\ref{thm: neg mean curvature}. Then $\lambda_p(\Omega)< \lambda_p(\R^n_\limplus)$ and $\Omega$ admits an extremal. Assume that $\lambda_p(\Omega)> \lambda_p(\Cee^n_\varphi)$ for some $\varphi \in (\pi/2, \pi]$. Given any $r>0$ and $x_0 \in \partial\Omega$ we can construct a set $\Omega'$ such that $\Omega \Delta \Omega'\subset B_r(x_0)$ and $\lambda_p(\Omega')\leq \lambda_p(\Cee^n_\varphi)< \lambda_p(\Omega)$. The idea is to remove a small conical piece of $\Omega$ near $x_0$, see Figure~\ref{fig:InstabilityFig1}.
  
  By translation and rotation we can without loss of generality assume that $x_0 = 0$ and that the outward unit pointing normal to $\partial\Omega$ at $x_0$ is $(0, \ldots, -1)$. The regularity of $\partial \Omega$ ensures that there is an $0<r'<r$ so that $\partial\Omega \cap B_{r'}$ is contained in a $r'/4$ neighborhood of the hyperplane $x_n=0$. We can then take $\Omega'$ as 
  $$(\Omega \cap B_{r'}^c) \cup (\Omega \cap (\Cee^n_\varphi+\tfrac{r'}2e_n))\,.$$
  That is we locally remove a conical piece of $\Omega$ in such a manner that we create a singular boundary point matching that of $\Cee^n_\varphi$. That $\lambda_p(\Omega') \leq \lambda_p(\Cee^n_\varphi)$ now follows directly from Lemma~\ref{lem: blow-up upper bound}.
\end{ex}

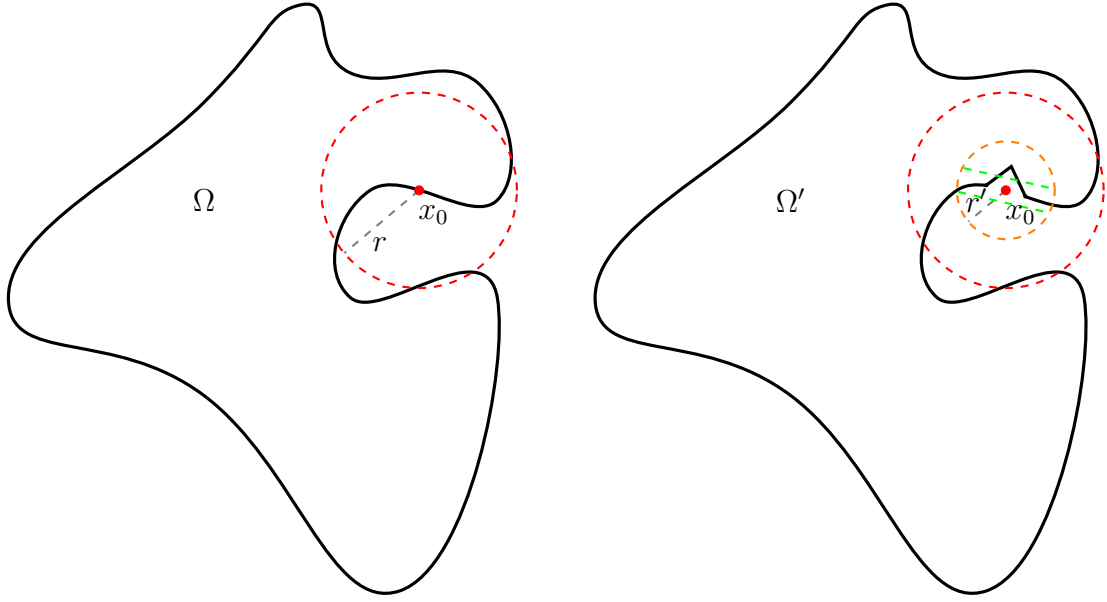
\begin{figure}[ht]
\centering
 \begin{tikzpicture}[scale=1.3]

\clip (-7.1,-3.1) rectangle (4.3,3.1);

\def \r  {1};
\def \rr  {0.2};
\def \R  {0.5};
\def \angle {77};
\def \Lshift {-3};
\def \Rshift {3};


\draw [very thick] plot [smooth cycle, tension =0.85, cm={1,0,0,1, (\Lshift,0)}] coordinates {({-4},0) ({-2},2) ({-1},3) ({-0.5},2.3) (0.8, 2.2) ({1}, 1) (-0.3, 1.1) ({-0.5},0) ({1},0) ({0},-3) ({-2}, -1)};

\draw [gray, dashed, thick, cm={1,0,0,1, (\Lshift,0)}] (0.2,1.1)--({0.2+\r*cos(-140)}, {1.1+\r*sin(-140)});
\draw [red, fill, cm={1,0,0,1, (\Lshift,0)}] (0.2,1.1) circle (1.3pt);
\draw [red, thick, dashed, cm={1,0,0,1, (\Lshift,0)}] (0.2,1.1) circle (\r);

\node at (0.35+\Lshift,0.85)  {$x_0$};
\node at (-0.2+\Lshift,0.55)  {$r$};
\node at (-2+\Lshift,1) {$\Omega$};


\draw [very thick] plot [smooth cycle, tension =0.85, cm={1,0,0,1, (\Rshift, 0)}] coordinates {({-4},0) ({-2},2) ({-1},3) ({-0.5},2.3) (0.8, 2.2) ({1}, 1) (-0.3, 1.1) ({-0.5},0) ({1},0) ({0},-3) ({-2}, -1)};

\draw [white, fill, cm={1,0,0,1, (\Rshift, 0)}] (0.2,1.1) circle (\rr);
\draw [very thick, cm={1,0,0,1, (\Rshift, 0)}] (0,1.152)--({0.2+\R*cos(\angle)/2},{1.1+\R*sin(\angle)/2})--(0.4,1.038);
\draw [fill, cm={1,0,0,1, (\Rshift, 0)}] (0,1.152) circle (0.45pt);
\draw [fill, cm={1,0,0,1, (\Rshift, 0)}] (0.4,1.038) circle (0.45pt);

\draw [gray, thick, dashed, cm={1,0,0,1, (\Rshift, 0)}] (0.2,1.1)--({0.2+\R*cos(-140)}, {1.1+\R*sin(-140)});
\draw [green, thick, dashed, cm={1,0,0,1, (\Rshift, 0)}]  ({0.2+\R*cos(\angle)/4+(\R-0.03)*cos(\angle +90)},{1.1+\R*sin(\angle)/4+(\R-0.03)*sin(\angle +90)})--({0.2+\R*cos(\angle)/4-(\R-0.03)*cos(\angle +90)},{1.1+\R*sin(\angle)/4-(\R-0.03)*sin(\angle +90)});
\draw [green, thick, dashed, cm={1,0,0,1, (\Rshift, 0)}]  ({0.2-\R*cos(\angle)/4+(\R-0.03)*cos(\angle +90)},{1.1-\R*sin(\angle)/4+(\R-0.03)*sin(\angle +90)})--({0.2-\R*cos(\angle)/4-(\R-0.03)*cos(\angle +90)},{1.1-\R*sin(\angle)/4-(\R-0.03)*sin(\angle +90)});
\draw [orange, dashed, thick, cm={1,0,0,1, (\Rshift, 0)}] (0.2,1.1) circle (\R);

\draw [red, fill, cm={1,0,0,1, (\Rshift, 0)}] (0.2,1.1) circle (1.3pt);

\draw [red, thick, dashed, cm={1,0,0,1, (\Rshift, 0)}] (0.2,1.1) circle (\r);

\node at (-2+\Rshift,1) {$\Omega'$};
\node at (0.35+\Rshift,0.85)  {$x_0$};
\node at (-0.1+\Rshift,1)  {$r'$};

\end{tikzpicture}
 \caption{A pictorial description of Example~\ref{ex: Removing a conical piece}. Given $\Omega, x_0, r$ we construct the modified set $\Omega'$ by locally removing a conical piece from $\Omega$.}
 \label{fig:InstabilityFig1}
\end{figure}

\begin{ex} 
Let $\Omega \subsetneq \R^n$ be an open, bounded, convex set with $C^2$-regular boundary.  Part (1) of Theorem~\ref{thm: convex, punctured, exterior domains} implies that $\lambda_p(\Omega)=\lambda_p(\R^n_\limplus)$ and that $\Omega$ does not admit an extremal. Given $r>0, x_0 \in \partial\Omega$ we can construct $\Omega'\subset \R^n$ so that $\Omega \Delta \Omega' \subset B_r(x_0)$ and, either 
\begin{enumerate}
   \item $\lambda_p(\Omega') = \lambda_p(\Cee^n_\varphi)$ for any $\varphi \in (\pi/2, \pi]$ and $\Omega'$ does not admit an extremal, or
   \item $\lambda_p(\Omega') <\lambda_p(\R^n_\limplus)$ and $\Omega'$ admits an extremal.
 \end{enumerate} 

The construction for (1) is identical to that in the previous example take $\Omega' = \Omega \cap (Q\Cee^n_\varphi + y_0)$ for a suitably chosen $y_0 \in \Omega \cap B_r(x_0)$ and $Q\in O(n)$. That $\lambda_p(\Omega')\geq \lambda_p(\Cee^n_\varphi)$ follows by noting that $\Omega$ being convex implies that $\Omega'$ is fully supported by $\Cee^n_\varphi$. By Lemma~\ref{lem: blow-up upper bound} and a blow-up at $y_0$ we also have $\lambda_p(\Omega')\leq \Lambda_p(\Omega') \leq \lambda_p(\mathcal{C}^n_{\varphi})$. Non-existence of an extremal follows by observing that if an extremal existed it would be extremal also for some rotated and translated copy of $\Cee^n_\varphi$ and so Corollary~\ref{ExtSignCor} would imply that $\Omega'$ coincides with $\Cee^n_\varphi$ up to translation and rotation which is impossible. 

The construction for (2) is similar but instead of introducing a singular point at the boundary we make a smooth indentation and apply Theorem~\ref{thm: neg mean curvature}. Since $\Omega$ is convex there exists an $0<r'<r$ so that $\partial\Omega \cap B_{r'}(x_0)$ can be represented as the graph of a $C^2$ convex function. By rotation and translating we may assume that $x_0 =0$ and that there is a convex function $f \colon \R^{n-1} \to [0, \infty)$ such that $f(0)=|Df(0)|=0$ and
\begin{equation*}
  \Omega \cap B_{r'} = \{x = (x', x_n)\in \R^n: x_n >f(x')\} \cap B_{r'}\,.
\end{equation*}
If we set $$\phi_\delta(x') = \begin{cases}
  e^{1-1/(1-|x'|^2/\delta^2)} & \mbox{for }|x'|<\delta\,,\\
  0 & \mbox{otherwise},
\end{cases}$$ 
then we can define $\Omega'$ by letting 
\begin{equation*}
  \Omega' \cap B_{r'} =  \Bigl\{x = (x', x_n)\in \R^n: x_n >f(x')+ \frac{r'}{2} \phi_\delta(x')\Bigl\} \cap B_{r'}
\end{equation*}
for some $0<\delta<r'$. Provided $\delta$ is chosen sufficiently small the mean curvature of the boundary at $(0, \ldots, 0, r'/2)$ will be negative and we can apply Theorem~\ref{thm: neg mean curvature} to draw the desired conclusion.
\end{ex}
\begin{ex}
  Fix a nontrivial $f\in C^2(\R^{n-1})$ which has compact support and define
  \begin{equation*}
    \Omega_f = \{x = (x', x_n) \in \R^n: x_n >f(x')\}\,.
  \end{equation*}
  Then $\Omega_f$ admits an extremal and $\lambda_p(\Omega_f) < \lambda_p(\R^n_\limplus)$. To see this we first argue that $\Lambda_p(\Omega_f)=\lambda_p(\R^n_\limplus)$. By Remark~\ref{rem: blow-up at a smooth point remark}, $\Lambda_p(\Omega_f)\leq \lambda_p(\R^n_\limplus)$. For the reverse inequality, let $\{x_k\}_{k\geq 1}\in \mathcal{Y}_{\Omega_f}$ realize the infimum defining $\Lambda_p(\Omega_f)$. Then up to passing to a subsequence we may assume that either $\liminf_{k\to \infty}|x_k| = \infty$ or $\limsup_{k\to \infty}d_{\Omega_f}(x_k)=0$. In either case, as $f$ is $C^1$ and has compact support, the set $\{v>0\}$ in Proposition~\ref{prop: blow-up lower bound on Lambda} will be a halfspace so $\Lambda_p(\Omega_f)\geq \lambda_p(\R^n_\limplus)$. Since 
  $$
    \int_{\R^{n-1}} \mathrm{div}\biggl(\frac{Df(x')}{\sqrt{1+|Df(x')|^2}}\biggr)\,dx' =0
  $$ it follows that the mean curvature of $\partial\Omega_f$ must be negative somewhere. Therefore, Theorem~\ref{thm: neg mean curvature implies energy estimate} implies that $\lambda_p(\Omega_f)<\lambda_p(\R^n_\limplus)$. The desired conclusion thus follows from Proposition~\ref{prop: Compactness threshold}.
\end{ex}

\section{Open problems} 
We bring this paper to an end by listing a few open problems that appear as natural possible extensions of the results we obtained. 

\bigskip
\begin{adjustwidth}{15pt}{15pt}
{\noindent\it \!Open problem 1.} Assume that $\Omega \subsetneq \R^n$ is connected, that $\lambda_p(\Omega)=\lambda_p(\R^n_\limplus)$, and that $\Omega$ admits an extremal. Is it true that $\Omega$ is a halfspace? 

\bigskip
{\noindent\it \!Open problem 2.} Assume that $\Omega \subsetneq \R^n$ is mean-convex. Is it true that $\lambda_p(\Omega) =\lambda_p(\R^n_\limplus)$? 

\bigskip
{\noindent\it \!Open problem 3.} Assume that $\Omega \subsetneq \R^n$ is bounded and has boundary homeomorphic to $\mathbb{S}^{n-1}$. Is it true that $\lambda_p(\Omega) \geq \lambda_p(\Cee^n_\pi)$?

\bigskip
{\noindent \it \!Open problem 4.} If $\Omega \subsetneq \R^n$ admits an extremal, is the extremal unique up to multiplication by constants and similarity transforms that leave $\Omega$ invariant?
\end{adjustwidth}

\bigskip 

{\noindent \it Remark on open problem 1}. By Theorem~\ref{thm: convex, punctured, exterior domains}, the only convex domains with an extremal are halfspaces. We also recall that if $\Omega\subset \R^2$ is a bounded $C^2$ domain, then either $\Omega$ is convex, $\Omega$ does not have an extremal, and $\lambda_p(\Omega)=\lambda_p(\R^2_\limplus)$ or $\Omega$ is not convex, $\Omega$ has an extremal, and $\lambda_p(\Omega)<\lambda_p(\R^2_\limplus)$. 

\bigskip 

{\noindent \it Remark on open problem 2}.
We saw that if $\Omega$ is bounded and fails to be mean-convex, then $\lambda_p(\Omega)<\lambda_p(\R^n_\limplus)$. Furthermore, the hypothesis of this problem implies that every connected component of $\Omega$ is convex when $n=2$. It follows that in the plane, the answer is yes by Theorem~\ref{thm: convex, punctured, exterior domains}. It is also worth noting that for Hardy's inequality~\eqref{eq: phardy}, the sharp constant in any bounded, $C^2$, and mean-convex domain coincides with the constant of the halfspace~\cite{MR2885951}. That is, the analog of open problem 2 is settled for inequality~\eqref{eq: phardy}. 

\bigskip 

{\noindent \it Remark on open problem 3}. The motivation behind this problem is to further understand to what degree $\lambda_p$ is governed by local and/or global geometric properties. The assumptions entail that $\Omega$ is topologically very simple and the suggested lower bound is motivated by the fact that $\Cee^n_\pi$ should be the blow-up that gives the lowest value of $\lambda_p$ possible under the assumptions. We note that Proposition~\ref{prop: Supporting sets} yields the desired conclusion if $\Omega$ is additionally assumed to be fully supported by $\Cee^n_\pi$. In particular, this includes the case when $\Omega$ is a star-shaped domain. Indeed, suppose $\Omega$ is star-shaped with respect to the $0\in \Omega$ and $x\in \partial\Omega$. Then $tx\in \Omega$ for $t\in [0,1)$ and $tx\not\in \Omega$ for $t\ge 1$. It follows that there is $Q\in O(n)$ with $Q(\Omega-x)\subset \Cee^n_\pi$. As $x\in \partial\Omega$ was arbitrary, $\Omega$ is fully supported by $\Cee^n_\pi$. 

\bigskip 

{\noindent \it Remark on open problem 4}.
The answer is yes when $\Omega$ is a halfspace or a punctured wholespace for $n\ge 2$. This follows from Proposition~\ref{lem: half- and punctured space} and the uniqueness of Morrey extremals up to similarity transformations (Section 3 of~\cite{MR4275749}).

\appendix

\section{Approximation}
\label{sec: App A Approximiation}
This appendix is dedicated to proving Lemma~\ref{lem: approx with compact support}. To this end, we suppose $u\in \DD^{1,p}_0(\Omega)$ and $\epsilon>0$. Recall that our goal is to find $v\in C^\infty_c(\Omega)$ with 
\begin{equation}\label{appendix approx inequality}
\|Du-Dv\|_p\le \epsilon \|Du\|_p\,. 
\end{equation}
By translating $\Omega$ if necessary, we may assume that $0\in \partial \Omega$. And to ease notation in the following proof, we will write $d(x)$ for $d_\Omega(x)$. 

\medskip 

{\it Step 1.}  First, we choose a non-increasing $\eta \in C^\infty([0, \infty))$ with $0\leq \eta \leq 1$, $\eta(1/2)=1$, and $\eta(1)=0$. Next, we set
\begin{equation}
  f(x) = (1-\eta(d(x)/\delta))\eta(|x|/r)
  \end{equation}
for $x\in \Omega$ and $r, \delta >0$. It is evident that $f$ is supported in $\{x \in \Omega: d(x)\geq  \delta/2, |x|\leq r\}$, 
  \begin{equation}\label{the appendix multiplier function} 
  f=1 \mbox{ in } \{x \in \Omega: d(x)\geq \delta, |x|\leq r/2\}\,,
  \end{equation}
and $f$ is Lipschitz continuous. It follows that 
$$
v_1 =fu\in \DD^{1,p}_0(\Omega)
$$
and 
\begin{equation}\label{v1 support} 
  \text{supp}(v_1)\subset \{x \in \Omega: d(x)\geq  \delta/2, |x|\leq r\}\,.
\end{equation}
  
\par  Direct computation gives 
$$
Dv_1(x) = f(x)Du(x) -\delta^{-1}\eta'(d(x)/\delta)\eta(|x|/r)u(x) Dd(x)+ r^{-1}(1-\eta(d(x)/\delta))\eta'(|x|/r)u(x) \frac{x}{|x|}\,.
$$
Employing~\eqref{the appendix multiplier function} and recalling $|Dd|\leq 1$ almost everywhere, we also find 
  \begin{align*}
 & \|Du-Dv_1\|_p \\
   &\quad \leq
    \|\chi_{\{d<\delta\}} Du\|_p+\|\chi_{\R^n\setminus  B_{r/2}} Du\|_p +\delta^{-1}\|\eta'\|_\infty \|\chi_{\{d<\delta\}}u\|_p+ r^{-1}\|\eta'\|_\infty \|\chi_{B_r \setminus B_{r/2}}u\|_p\,.
  \end{align*}
 By dominated convergence, 
  $$
   \|\chi_{\{d<\delta\}} Du\|_p+\|\chi_{\R^n\setminus  B_{r/2}} Du\|_p \le \frac{\epsilon}4 \|Du\|_p\,,
  $$
  provided we choose $\delta$ sufficiently small and $r$ sufficiently large. To see that the remaining terms $\delta^{-1}\|\eta'\|_\infty \|\chi_{\{d<\delta\}}u\|_p$ and $r^{-1}\|\eta'\|_\infty \|\chi_{B_r \setminus B_{r/2}}u\|_p $ can also be made small, we argue as follows.

\medskip

{\it Step 2.}  By Morrey's estimate~\eqref{MorreyEstimate},
there is a constant $c$ such that
  \begin{equation*}
    |u(x)|^p \leq c d(x)^{p-n} \int_{B_{d(x)}(x)}|Du(w)|^p\,dw\,
  \end{equation*}
  for all $x\in \Omega$. Therefore,
  \begin{align*}
    \delta^{-p}\int_\Omega \chi_{\{d<\delta\}}(x)|u(x)|^p\,dx
    &\leq 
    c\delta^{-p}\int_\Omega \chi_{\{d<\delta\}}(x)d(x)^{p-n} \int_{B_{d(x)}(x)}|Du(w)|^p\,dwdx\\
    &=
    c\delta^{-p}\int_{\Omega}\int_{\Omega} \chi_{\{d<\delta\}}(x)\chi_{B_{d(x)}(x)}(w)d(x)^{p-n}|Du(w)|^p\,dwdx\,.
  \end{align*}
  It is not hard to see that if $d(x)<\delta$ and $w\in B_{d(x)}(x)$, then $d(w)<2\delta$ and $x\in B_{\delta}(w)$. As a result, 
  \begin{equation*}
    d(x)^{p-n}\chi_{\{d<\delta\}}(x)\chi_{B_{d(x)}(x)}(w) \leq \delta^{p-n}\chi_{\{d<\delta\}}(x)\chi_{B_{d(x)}(x)}(w)\leq \delta^{p-n}\chi_{\{d<2\delta\}}(w)\chi_{B_\delta(w)}(x)\,.
  \end{equation*}
  Thus, there is a constant $C$ with 
  \begin{align*}
    \delta^{-p}\int_{\Omega}\chi_{\{d<\delta\}}(x)|u(x)|^p\,dx
    &\leq 
    c\delta^{-n}\int_{\Omega}\int_{\Omega} \chi_{\{d<2\delta\}}(w)\chi_{B_{\delta}(w)}(x)|Du(w)|^p\,dwdx\\
    &\leq
    C\int_{\Omega} \chi_{\{d<2\delta\}}(w)|Du(w)|^p\,dw\,.
  \end{align*}
  By dominated convergence, 
  $$
  \delta^{-1}\|\eta'\|_\infty\|\chi_{\{d<\delta\}}u\|_p \leq \frac{\epsilon}4 \|Du\|_p
  $$
  provided $\delta>0$ is small enough.

\medskip 

{\it Step 3.}  In order to estimate $r^{-1}\|u \chi_{B_{r}\setminus B_{r/2}}\|_p$, we first change variables in the integral
  \begin{equation*}
    r^{-p}\int_{B_{r}\setminus B_{r/2}} |u(x)|^p\,dx = \int_{B_1\setminus B_{1/2}}|r^{n/p-1}u(ry)|^p\,dy\,.
  \end{equation*}
  Recall that we assumed that $0 \in \Omega^c$. Thus for $y \in B_1\setminus B_{1/2}$ such that $ry \in \mathrm{supp}(v)\subset\Omega$,
  \begin{equation*}
    |r^{n/p-1}u(ry)| = \frac{d(ry)^{1-n/p}}{r^{1-n/p}}\frac{|u(ry)|}{d(ry)^{1-n/p}} 
    \leq |y|^{1-n/p}\frac{|u(ry)|}{d(ry)^{1-n/p}} \leq \frac{|u(ry)|}{d(ry)^{1-n/p}}\,.
  \end{equation*}
 Consequently,
  \begin{align*}
    r^{-1}\|\chi_{B_r\setminus B_{r/2}}u\|_p &\leq \biggl(\int_{B_1\setminus B_{1/2}} \frac{|u(ry)|^p}{d(ry)^{p-n}}\,dy\biggr)^{1/p}\,.
  \end{align*}
  By~\eqref{eq: inequality intro}, the functions $B_1\ni y \mapsto \frac{|u(ry)|}{d(ry)^{1-n/p}}$ are bounded uniformly in $r>0$; and by Lemma~\ref{TechLimLem}~\ref{limitatinfinity}, they tend to zero pointwise in the limit as $r \to \infty$. Therefore, 
  \begin{equation*}
    r^{-1}\|\eta'\|_\infty\|\chi_{B_r\setminus B_{r/2}}u\|_p \leq \frac{\epsilon}4 \|Du\|_p
  \end{equation*}
  provided $r>0$ is large enough. In summary, for $r>0$ sufficiently large and $\delta>0$ sufficiently small  
  \begin{equation*}
    \|Du-Dv_1\|_p \leq \frac{3\epsilon}{4}\|Du\|_p\,.
  \end{equation*}

 \medskip

{\it Step 4.}  Select any $\psi \in C_c^\infty(\R^n)$ with $\mathrm{supp}(\psi)\in B_1$ and $\int_{\R^n}\psi(x)\,dx =1$.
Also define $\psi_\tau= \tau^{-n}\psi(\cdot/\tau)$ and note that $\mathrm{supp}(\psi_\tau)\in B_\tau$. Using~\eqref{v1 support},  it is straightforward to verify
$$
v = \psi_{\tau}*v_1 \in C_c^\infty(\Omega)
$$
for $\tau<\delta/2$. Note that
  \begin{equation*}
    \|Du-Dv\|_p \leq \|Du-Dv_1\|_p + \|Dv_1-Dv\|_p \leq \frac{3\epsilon}{4}\|Du\|_p + \|Dv_1-\psi_{\tau}*Dv_1\|_p\,.
  \end{equation*}
  Since $|Dv_1|\in L^p(\R^n)$, it follows from standard results on mollification that the last term is smaller than $\frac{\epsilon}{4}\|Du\|_p$ for $\tau>0$  chosen sufficiently small (see, e.g.~\cite{MR3409135}). This concludes the proof of~\eqref{appendix approx inequality}.

\section{\texorpdfstring{$\Lambda_p(\Omega)$}{Lambda} is attained}
\label{app: Lambda is attained}
In this appendix, we argue that for any open $\Omega\subsetneq\R^n$ there exists a sequence $\{x_k\}_{k\geq1} \subset \mathcal{Y}_\Omega$ which realizes the infimum defining $\Lambda_p(\Omega)$. Namely, we will show
\begin{equation*}
  \liminf_{k\to \infty}d_\Omega(x_k)^{p-n} \|Dw_{x_k}^\Omega\|_p^p = \Lambda_p(\Omega)\,.
\end{equation*}
With this goal in mind, we let $\{\epsilon_j\}_{j\geq 1}\subset (0, 1)$ satisfy $\lim_{j\to \infty}\epsilon_j =0$. For each $j$, there exists a sequence $\{x_k^j\}_{k\geq 1}\in \mathcal{Y}_\Omega$ with 
\begin{equation}\label{limsup appendix inequality}
\liminf_{k\to\infty} d_\Omega(x_k^j)^{p-n}\|Dw_{x_k^j}^\Omega\|_p^p \leq \Lambda_p(\Omega)+\epsilon_j\,.
\end{equation}
Let $A \subset \N$ be the subset of indexes $j$ such that $\liminf_{k\to \infty}|x_k^j| = \infty$. Recall that for $j \in A^c$, $\limsup_{k\to \infty}d_\Omega(x_k^j)=0$. Notice that at most one of $A$ and $A^c$ is a finite set. 

\par First suppose that $A$ is infinite, and consider the subsequence of the sequence of sequences $\{x_k^j\}_{k\geq 1}$ for which $j \in A$. Relabelling the subsequence if necessary, we obtain a collection of sequences $\{x_k^j\}_{k\geq 1} \in \mathcal{Y}_\Omega$ which satisfy~\eqref{limsup appendix inequality} for some sequence of positive numbers $\{\epsilon_j\}_{j\geq 1}$ with $\lim_{j\to \infty}\epsilon_j = 0$ and for which $\liminf_{k\to \infty}|x_k^j|=\infty$ for each $j$. 

\par We now iteratively construct a new sequence $\{x^*_k\}_{k\geq 1}\in \mathcal{Y}_\Omega$ as follows. Let $x^*_1 = x_1^1$. Given $\{x^*_k\}_{k=1}^{N-1}$ for $N\ge 2$, choose $x^*_N=x_l^N$, where $l$ is the first index so that 
\begin{equation*}
  |x_l^N|\geq |x^*_{N-1}|+1 \quad \mbox{and} \quad d_\Omega(x_l^N)^{p-n}\|Dw_{x_l^N}^\Omega\|_p^p \leq \Lambda_p(\Omega)+2\epsilon_N\,.
\end{equation*}
The construction implies that 
$$\liminf_{k\to \infty}|x^*_k|\geq \liminf_{k\to \infty}(k-1) = \infty$$  
and
\begin{equation*}
   \liminf_{k\to \infty}d_\Omega(x_k^*)^{p-n}\|Dw_{x_k^*}^\Omega\|_p^p \leq \liminf_{k\to \infty}\Lambda_p(\Omega)+2\epsilon_k = \Lambda_p(\Omega)\,.
 \end{equation*} 
Thus $\{x^*_k\}_{k\geq 1} \in \mathcal{Y}_\Omega$ and $$\liminf_{k\to \infty}d_\Omega(x_k^*)^{p-n}\|Dw_{x_k^*}^\Omega\|_p^p= \Lambda_p(\Omega)\,.
$$ 

The case when $A$ is finite can be treated similarly except the first criteria when choosing $x^*_N$ is replaced with $d_\Omega(x_l^N)\leq d_\Omega(x_{N-1}^*)/2$.


\bibliographystyle{plain}

\typeout{get arXiv to do 4 passes: Label(s) may have changed. Rerun}

\end{document}